\newtheorem{THM}{Theorem}
\newtheorem{Thm}{Theorem}[section]
\newtheorem{Lem}[Thm]{Lemma}
\newtheorem{Cor}[Thm]{Corollary}
\newtheorem{Prop}[Thm]{Proposition}
\newtheorem{Def}[Thm]{Definition}
\newcommand{\Z}{\mathbb{Z}}
\newcommand{\Q}{\mathbb{Q}}
\newcommand{\N}{\mathbb{N}}
\newcommand{\C}{\mathbb{C}}
\newcommand{\df}{\colon}
\newcommand{\F}{{\mathcal F}}
\newcommand{\tF}{{\widetilde{\mathcal F}}}
\newcommand{\tG}{{\widetilde{\mathcal G}}}
\newcommand{\sur}{{\mathrm{sur}}}
\newcommand{\Gr}{{\rm Gr}}
\newcommand{\U}{{\mathcal U}}
\newcommand{\G}{{\mathcal G}}
\newcommand{\cD}{{\mathcal D}}
\newcommand{\cE}{{\mathcal E}}
\newcommand{\cI}{{\mathcal I}}
\newcommand{\cM}{{\mathcal M}}
\newcommand{\cA}{{\mathcal A}}
\newcommand{\cS}{{\mathcal S}}
\newcommand{\cP}{{\mathcal P}}
\newcommand{\cV}{{\mathcal V}}
\newcommand{\cG}{{\mathcal G}}
\newcommand{\cX}{{\mathcal X}}
\newcommand{\stCC}{{\underline{\mathcal C}}}
\newcommand{\g}{\mathfrak{g}}
\newcommand{\n}{\mathfrak{n}}
\newcommand{\ov}{\overline}
\newcommand{\mm}{\mathbf{m}}
\newcommand{\dd}{{\mathbf d}}
\newcommand{\bb}{{\mathbf b}}
\newcommand{\ii}{{\mathbf i}}
\newcommand{\jj}{{\mathbf j}}
\newcommand{\ff}{{\mathbf f}}
\newcommand{\LL}{\Lambda}
\newcommand{\GG}{\Gamma}
\newcommand{\vpi}{\varpi}
\newcommand{\vph}{\varphi}
\newcommand{\vphi}{\varphi}
\newcommand{\orb}{\mathcal O}
\newcommand{\md}{\operatorname{mod}}
\newcommand{\nil}{\operatorname{nil}}
\newcommand{\rep}{\operatorname{rep}}
\newcommand{\add}{\operatorname{add}}
\newcommand{\Gen}{\operatorname{Fac}}
\newcommand{\Cogen}{\operatorname{Sub}}
\newcommand{\soc}{\operatorname{soc}}
\newcommand{\tp}{\operatorname{top}}
\newcommand{\rad}{\operatorname{rad}}
\newcommand{\wt}{\operatorname{wt}}
\newcommand{\Hom}{\operatorname{Hom}}
\newcommand{\Ext}{\operatorname{Ext}}
\newcommand{\End}{\operatorname{End}}
\newcommand{\Ima}{\operatorname{Im}}
\newcommand{\Ker}{\operatorname{Ker}}
\newcommand{\Coker}{\operatorname{Coker}}
\newcommand{\dimv}{\underline{\dim}}
\newcommand{\Span}{\operatorname{Span}}
\newcommand{\ebrace}[1]{\langle #1 \rangle}
\newcommand{\cyc}{{\operatorname{cyc}}}
\newcommand{\op}{{\rm op}}
\newcommand{\irr}{\operatorname{Irr}}
\newcommand{\srirr}{\operatorname{Irr^{sr}}}
\newcommand{\bsm}{\begin{smallmatrix}}
\newcommand{\esm}{\end{smallmatrix}}
\newcommand{\bbm}{\begin{matrix}}
\newcommand{\ebm}{\end{matrix}}
\newcommand{\GL}{\operatorname{GL}}
\newtheorem{Rem}[Thm]{Remark}
\newcommand{\ra}{\rightarrow}
\newcommand{\Rmi}{{R_-}}
\newcommand{\Rma}{{R_{\mathrm{max}}}}
\newcommand{\cDp}{{\cD_{\text{perf}}}}
\newcommand{\hvph}{\hat{\vph}}
\newcommand{\hx}{\hat{x}}
\newcommand{\hy}{\hat{y}}
\newcommand{\sx}{\underline{x}}
\newcommand{\ba}{\mathbf{a}}
\newcommand{\bd}{\mathbf{d}}
\newcommand{\be}{\mathbf{e}}
\newcommand{\bg}{\mathbf{g}}
\newcommand{\bh}{\mathbf{h}}
\newcommand{\bi}{\mathbf{i}}
\newcommand{\bt}{\mathbf{t}}
\newcommand{\sHom}{\underline{\Hom}}
\newcommand{\sEnd}{\underline{\End}}
\newcommand{\scE}{{\underline{\cE}}}
\newcommand{\scC}{{\underline{\cC}}}
\newcommand{\codim}{\operatorname{codim}}
\newcommand{\Gam}{\Gamma}
\newcommand{\Lam}{\Lambda}
\newcommand{\Sig}{\Sigma}
\newcommand{\Ome}{\Omega}
\newcommand{\sGG}{\underline{\Gamma}}
\newcommand{\ie}{i.e.\@\xspace}
\newcommand{\Cpl}{\operatorname{Null}}
\newcommand{\tGam}{\widetilde{\Gam}}
\newcommand{\tW}{\widetilde{W}}
\newcommand{\tM}{\widetilde{M}}
\newcommand{\sGam}{\underline{\Gam}}
\newcommand{\stGam}{\widetilde{\sGam}}
\newcommand{\fm}{\mathfrak{m}}
\newcommand{\epow}[1]{\langle\langle #1\rangle\rangle}
\newcommand{\CC}{\mathbb{C}}
\newcommand{\cB}{\mathcal{B}}
\newcommand{\cC}{\mathcal{C}}
\newcommand{\cF}{\mathcal{F}}
\newcommand{\sB}{\underline{B}}
\newcommand{\cU}{\mathcal{U}}
\newcommand{\cT}{\mathcal{T}}
\newcommand{\cY}{\mathcal{Y}}
\newcommand{\cZ}{\mathcal{Z}}
\newcommand{\alp}{\alpha}
\newcommand{\lam}{\lambda}
\newcommand{\bul}{\bullet}
\newcommand{\Bi}{\operatorname{Im}}
\newcommand{\Fac}{\operatorname{Fac}}
\newcommand{\pex}{{\phantom{1}}}
\newcommand{\pexx}{{\phantom{-1}}}
\numberwithin{equation}{section}
\begin{document}


\title[Generic bases for cluster algebras and the Chamber Ansatz]
{Generic bases for cluster algebras\\ and the Chamber Ansatz}

\author{Christof Gei{\ss}}
\address{Christof Gei{\ss}\newline
Instituto de Matem\'aticas\newline
Universidad Nacional Aut{\'o}noma de M{\'e}xico\newline
Ciudad Universitaria\newline
04510 M{\'e}xico D.F.\newline
M{\'e}xico}
\email{christof@math.unam.mx}

\author{Bernard Leclerc}
\address{Bernard Leclerc\newline
LMNO, Univ. de Caen\newline
CNRS, UMR 6139\newline
F-14032 Caen Cedex\newline
France}
\email{leclerc@math.unicaen.fr}

\author{Jan Schr\"oer}
\address{Jan Schr\"oer\newline
Mathematisches Institut\newline
Universit\"at Bonn\newline
Endenicher Allee 60\newline
53115 Bonn\newline
Germany}
\email{schroer@math.uni-bonn.de}

\thanks{Mathematics Subject Classification (2010): 
13F60, 14M15, 14M99, 16G20, 20G44.
}


\begin{abstract}
Let $Q$ be a finite quiver without oriented cycles, and let
$\LL$ be the corresponding preprojective algebra.
Let $\g$ be the Kac-Moody Lie algebra with Cartan datum
given by $Q$, and let $W$ be its Weyl group.
With $w \in W$, there is associated a unipotent cell $N^w$ of the Kac-Moody group with
Lie algebra $\g$.
In previous work we proved that 
the coordinate ring $\C[N^w]$ of $N^w$ is a cluster algebra in
a natural way. 
A central role is played by generating functions
$\vph_X$ of Euler characteristics of certain varieties of
partial composition series of $X$,
where $X$ runs through all modules in a Frobenius subcategory
$\cC_w$ of the category of nilpotent $\LL$-modules.
The first aim of this article is to compare the function $\vph_X$
with the so-called cluster character of $X$, which is 
defined in terms of the Euler characteristics of
quiver Grassmannians.
We show that
for every $X$ in $\cC_w$, $\vph_X$ coincides,
after an appropriate change of variables, 
 with the cluster character of Fu and Keller
associated with $X$ using any cluster-tilting
object $T$ of $\cC_w$.
A crucial ingredient of the proof is the construction of an isomorphism
between varieties of partial composition series of $X$ and certain quiver
Grassmannians.
This isomorphism is obtained in a very general setup and should be of
interest in itself.
Another important tool of the proof is a representation-theoretic 
version of the Chamber Ansatz of Berenstein, Fomin and Zelevinsky,
adapted to Kac-Moody groups.
As an application, we get a new description of a generic basis of the cluster algebra
$\cA(\sGG_T)$ obtained from $\C[N^w]$ via specialization of coefficients to 1. 
Here \emph{generic} refers to the representation varieties
of a quiver potential arising from the cluster-tilting module $T$.
For the special case of coefficient-free acyclic cluster algebras this
proves a conjecture by Dupont.
\end{abstract}

\maketitle

\setcounter{tocdepth}{1}

\tableofcontents

\parskip2mm



\section{Introduction and main results}\label{section1}


\subsection{}
In the recent literature on cluster algebras, calculations of 
Euler characteristics of certain varieties related to quiver
representations play a prominent role.
In \cite{GLSRigid,GLSPartial,GLSUni2}, cluster variables of
coordinate rings of unipotent cells of algebraic groups and Kac-Moody
groups were shown to be expressible in terms of Euler characteristics
of varieties of flags of submodules of preprojective 
algebra representations. In another direction, starting with a formula of Caldero
and Chapoton \cite{CC}, the coefficients of the Laurent polynomial
expansions of cluster variables of some cluster algebras were 
described as Euler characteristics of Grassmannians of submodules 
of quiver representations.
This was first achieved for acyclic cluster algebras \cite{CK1}, 
later for cluster algebras admitting a 2-Calabi-Yau categorification
\cite{P,FK}, and more recently for general antisymmetric cluster algebras
of geometric type \cite{DWZ2}. There is a 
posterior but essentially different proof in \cite{Pl}, see also
\cite{N}. 
The first aim of this paper is to compare these two types of formulas
for the large class of cluster algebras 
which can be realized as coordinate rings of unipotent 
cells of Kac-Moody groups.

To do this, we will return to the very source of cluster algebras,
namely to the Chamber Ansatz of Berenstein, Fomin and Zelevinsky
\cite{BFZ,BZ}, which describes parametrizations of Lusztig's 
totally positive parts of unipotent subgroups and Schubert varieties. 
The second aim of this paper is to provide a new understanding of
the Chamber Ansatz formulas in terms of representations of preprojective
algebras, together with a generalization to the Kac-Moody case.
In particular the mysterious twist automorphisms of the unipotent cells
needed in these formulas turn out to be just shadows of the Auslander-Reiten
translations of the corresponding Frobenius categories of modules
over the preprojective algebras. Our treatment of the Chamber Ansatz
shows that the numerators of the twisted minors of \cite{BFZ,BZ}
form a cluster, and that the Laurent expansions with respect to these
special clusters have coefficients equal to Euler characteristics
of varieties of flags of submodules of preprojective
algebra representations. This provides the desired link between the two types of
Euler characteristics mentioned above, and it allows us to show that
the cluster characters of Fu and Keller \cite{FK} coincide after an
appropriate change of variables with the $\varphi$-functions of \cite{GLSRigid,GLSUni2}.

Finally, our third aim is to exploit these results for studying natural
bases of cluster algebras containing the cluster monomials. 
We consider the class of coefficient-free cluster algebras obtained
by specializing to 1 the coefficients of the cluster algebra structures on 
unipotent cells.
In \cite[Section~15.6]{GLSUni2} we have found such bases, consisting of appropriate 
subsets of Lusztig's dual semicanonical bases.
Here, using the above connection with Fu-Keller cluster characters, 
we give a new description of the same bases in terms of module varieties
of endomorphism algebras of cluster-tilting modules.
In the special case when the cluster algebra is acyclic, this proves 
Dupont's generic basis conjecture~\cite{D}.
In general, the elements of these bases are 
generating functions of Euler characteristics of quiver Grassmannians, 
at generic points of some particular irreducible components of the module varieties. 
These special irreducible components
can be characterized in terms of the new $E$-invariant introduced
by Derksen, Weyman and Zelevinsky \cite{DWZ2} for representations of
quivers with potential, and one may therefore conjecture that a similar description 
of a generic basis can be extended to any antisymmetric cluster algebra.

\subsection{}\label{introV}
To state our results more precisely, we need to introduce some notation.
Let $Q$ be a finite quiver with vertex set $\{1,\ldots, n\}$ and without oriented cycles.
Denote by $\LL$ the corresponding preprojective algebra.
Let $\g$ be the Kac-Moody Lie algebra with 
Cartan datum given by $Q$, and let $W$ be the Weyl group of~$\g$.
The graded dual $U(\n)_{\rm gr}^*$ of the universal enveloping algebra $U(\n)$
of the positive part $\n$ of $\g$ can be identified with the coordinate ring 
$\C[N]$ of an associated pro-unipotent pro-group $N$ with Lie algebra $\n$.

For $w\in W$, let $N^w := N \cap (B_-wB_-)$ be the corresponding unipotent cell
in $N$, where $B_-$ denotes the standard negative Borel subgroup of the Kac-Moody 
group $G$ attached to $\g$.
Here we use the same notation as in \cite{GLSUni2}.
For details on Kac-Moody groups we refer to
\cite[Sections~6 and~7.4]{Ku}.
Let $x_i(t)$ denote the one-parameter subgroup of $N$ associated
to the simple root $\alp_i$. 
For each reduced expression $\ii = (i_r,\ldots,i_1)$ of $w$, the map 
\[
\sx_\ii : (t_r,\ldots,t_2,t_1) \mapsto 
x_{i_r}(t_r) \cdots x_{i_2}(t_2)x_{i_1}(t_1) 
\]
gives a birational isomorphism from $\C^r$ to $N^w$.
In \cite{GLSUni2} we have described a cluster algebra structure on $\C[N^w]$
in terms of the representation theory of the preprojective algebra $\LL$.

For a nilpotent $\LL$-module $X$ and $\ba = (a_r,\ldots,a_1) \in \N^r$
let $\F_{\ii,\ba,X}$ be the projective variety
of flags 
$$
X_\bullet = (0 = X_r \subseteq \cdots \subseteq X_1 \subseteq X_0 = X)
$$
of submodules of $X$ such that $X_{k-1}/X_k\cong S_{i_k}^{a_k}$ 
for all $1 \le k \le r$, where $S_j$ denotes the one-dimensio\-nal
$\LL$-module supported on the vertex $j$ of $Q$.
The varieties $\F_{\bi,\ba,X}$  were first introduced by Lusztig \cite{L1}
for his Lagrangian construction of $U(\n)$. Dualizing Lusztig's construction, we can associate
with $X$ a regular function $\varphi_X \in \C[N]$ satisfying
\[
\vph_X(\sx_\ii(\bt)) = \sum_{\ba \in \N^r} \chi(\F_{\ii,\ba,X}) \bt^\ba.
\]
Here $\bt=(t_r,\ldots,t_1)\in\C^r$, $\bt^\ba := t_r^{a_r} \cdots t_2^{a_2}t_1^{a_1}$, 
and $\chi$ denotes the topological Euler characteristic.

Buan, Iyama, Reiten, and Scott \cite{BIRS} have attached to $w$
a 2-Calabi-Yau Frobenius subcategory $\cC_w$ of the category of
finite-dimensional nilpotent $\LL$-modules. (The same categories
were studied independently in \cite{GLSUni1} for special elements
$w$ called adaptable.) In \cite{GLSUni2} we showed that the $\C$-span
of 
\[
 \{\vphi_X \mid X\in\cC_w\}
\]
is a subalgebra of $\C[N]$, which becomes isomorphic to $\C[N^w]$
after localization at the multiplicative subset 
$\{\vphi_P \mid P \mbox{ is $\cC_w$-projective-injective} \}$.
Moreover, we showed that 
$\C[N^w]$ carries a 
cluster algebra structure, whose cluster variables are of the form $\vphi_X$
for indecomposable modules $X$ in $\cC_w$ without self-extension.
In Section~\ref{reminder} we explain
this in more detail.
 
The category $\cC_w$ comes with a remarkable module $V_\bi$ 
for each reduced expression $\bi$ of~$w$ (see \cite[Section~III.2]{BIRS}, 
\cite[Section~2.4]{GLSUni2}). 
The $\vphi$-functions of the indecomposable
direct summands of $V_\bi$ are some generalized minors on $N$ which
form a natural initial cluster of $\C[N^w]$.
We introduce the new module
\[
 W_\bi := I_w \oplus \Omega_w(V_\bi),
\]
where $\Omega_w=\tau_w^{-1}$ is the inverse Auslander-Reiten translation 
of $\cC_w$, and $I_w$
is the direct sum of the indecomposable $\cC_w$-projective-injectives. 
For a $\LL$-module $X$, the set $\Ext^1_\LL(W_\ii,X)$ is in a
natural way a left module over the stable endomorphism algebra
\[
\scE := \sEnd_{\cC_w}(W_\ii)^\op \cong \sEnd_{\cC_w}(V_\ii)^\op.
\]
Denote by $\Gr_\bd^\scE(\Ext^1_\LL(W_\ii,X))$ the projective variety
of $\scE$-submodules of $\Ext^1_\LL(W_\ii,X)$ with dimension vector $\bd$,
a so-called quiver Grassmannian. 
Our first main result is

\begin{THM}\label{THM1}
For $X \in \cC_w$ and all $\ba \in \N^r$, there is an isomorphism of algebraic varieties
\[
\F_{\bi,\ba,X} \cong \Gr_{d_{\bi,X}(\ba)}^\scE(\Ext^1_\LL(W_\ii,X)),
\]
where $d_{\bi,X}$ is an explicit bijection from 
$\{ \ba \mid \F_{\bi,\ba,X} \not= \varnothing \}$
to $\{ \bd \mid \Gr_\bd^\scE(\Ext^1_\LL(W_\ii,X)) \not= \varnothing \}$.
\end{THM}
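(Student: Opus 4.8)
The plan is to establish the isomorphism by realizing both sides as fibers of a single functorial construction, using the interplay between the category $\cC_w$, the module $W_\ii = I_w \oplus \Omega_w(V_\ii)$, and the stable endomorphism algebra $\scE$. First I would recall the standard description (from \cite{BIRS}, \cite{GLSUni2}) of how the reduced expression $\ii = (i_r,\ldots,i_1)$ equips $\cC_w$ with the cluster-tilting object $V_\ii$ whose indecomposable summands $V_{\ii,k}$ are indexed by $1 \le k \le r$, and how the relative simples, together with the flags $\F_{\ii,\ba,X}$, can be read off the radical layers of $X$ against this filtration data. The key functorial object is $\Hom$- or $\Ext$-functor $X \mapsto \Ext^1_\LL(W_\ii, X)$; since $\cC_w$ is $2$-Calabi--Yau and Frobenius, for $X \in \cC_w$ we have $\Ext^1_\LL(W_\ii,X) \cong D\sHom_{\cC_w}(X, W_\ii)$ (or a suitable variant), and via $W_\ii = I_w \oplus \Omega_w(V_\ii)$ together with $\sHom_{\cC_w}(X,\Omega_w Y) \cong \sHom_{\cC_w}(\tau_w X, Y) \cong D\Ext^1_{\cC_w}(Y, \tau_w X)$-type Auslander--Reiten formulas, this becomes a concrete $\scE$-module functorially in $X$. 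The first substantive step, then, is to pin down this isomorphism of functors $\cC_w \to \scE\text{-}\mathrm{mod}$ and record how it acts on dimension vectors — this is where the AR-translation $\Omega_w = \tau_w^{-1}$ enters and explains the ``twist'' alluded to in the introduction.

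Second, I would construct the comparison map at the level of varieties. A flag $X_\bullet = (0 = X_r \subseteq \cdots \subseteq X_0 = X)$ with $X_{k-1}/X_k \cong S_{i_k}^{a_k}$ is equivalent, after grouping by the index $i_k$, to a filtration of $X$ by submodules compatible with the ordering coming from $\ii$; applying the exact functor $\Ext^1_\LL(W_\ii,-)$ (exact on short exact sequences in $\cC_w$ up to the projective-injective correction, which is where $I_w$ is needed) turns such a flag into a chain of $\scE$-submodules of $\Ext^1_\LL(W_\ii,X)$. One checks that the associated graded pieces $\Ext^1_\LL(W_\ii, S_{i_k})$ are exactly the relative simple $\scE$-modules (the simple tops/socles of $\scE$), so that a flag of $\LL$-submodules of the prescribed type maps to a flag of $\scE$-submodules whose subquotients are prescribed multiples of simple $\scE$-modules — equivalently, an $\scE$-submodule of each fixed dimension vector. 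Conversely, any $\scE$-submodule $U \subseteq \Ext^1_\LL(W_\ii,X)$ of dimension vector $d_{\ii,X}(\ba)$ pulls back, under the equivalence identifying $\scE$-submodules of $\Ext^1_\LL(W_\ii,X)$ with quotient modules (or submodules) of $X$ in $\cC_w$ — a Gabriel--type / Auslander-algebra correspondence — to a submodule of $X$, and iterating along the filtration order of $\ii$ produces the flag. This gives mutually inverse morphisms of varieties, and since both are constructed by taking images/kernels of linear maps depending algebraically on the flag data, they are morphisms of algebraic varieties, hence the claimed isomorphism. The bijection $d_{\ii,X}$ is then read off explicitly: $d_{\ii,X}(\ba)$ records, in the basis of simple $\scE$-modules indexed by the summands $V_{\ii,k}$, the jumps of the flag, i.e. partial sums of the $a_k$ along each color class, which is manifestly a bijection onto the nonempty locus on the Grassmannian side.

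Third, I would verify the non-emptiness bijection and the well-definedness of the correction term. The point is that $\F_{\ii,\ba,X} \ne \varnothing$ exactly when the ``$S_{i_k}$-layer'' constraints are compatible with the actual module structure of $X$, and this translates under the functor precisely into the condition that $\Gr^\scE_{d_{\ii,X}(\ba)}(\Ext^1_\LL(W_\ii,X)) \ne \varnothing$; both conditions are governed by the same numerical data (the graded dimension of $X$ refined by the $\ii$-order), which is why $d_{\ii,X}$ restricts to a bijection between the two nonempty loci. I expect the \textbf{main obstacle} to be the bookkeeping around the projective-injective summands: $\cC_w$ is Frobenius, not hereditary, so $\Ext^1_\LL(W_\ii,-)$ is not literally exact, and one must show that incorporating $I_w$ into $W_\ii$ exactly cancels the failure of exactness (equivalently, that $\Ext^1_\LL(I_w,X) = 0$ for $X \in \cC_w$ and that the long exact sequence degenerates appropriately), so that flags really do go to flags of $\scE$-modules without spurious dimension jumps. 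A secondary delicate point is establishing the equivalence ``$\scE$-submodules of $\Ext^1_\LL(W_\ii,X)$'' $\leftrightarrow$ ``subobjects of $X$ in an appropriate subquotient category of $\cC_w$'' with full functoriality in $X$ simultaneously — i.e. constructing the isomorphism of varieties in families, not just fiberwise — but this should follow from the fact that all the maps involved (applying $\Ext^1$, taking kernels and images) are defined by evaluation of a fixed exact functor and therefore automatically algebraic in the flag parameters.
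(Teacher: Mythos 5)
There is a structural mismatch between your construction and what the theorem asserts. You propose to apply the functor $\Ext^1_\LL(W_\ii,-)$ to the flag $X_\bullet = (0 = X_r \subseteq \cdots \subseteq X_0 = X)$ to produce a chain of $\scE$-submodules, and then to invert this via a ``Gabriel-type / Auslander-algebra correspondence.'' But the theorem asserts that a point of $\F_{\ii,\ba,X}$ --- a length-$r$ flag of $\LL$-submodules --- corresponds to a \emph{single} point of the quiver Grassmannian, i.e.\ to a single $\scE$-submodule of fixed dimension vector $d_{\ii,X}(\ba)$, not to a flag of $\scE$-submodules; a functorial image of a flag would be a flag with increasing dimensions, which is not the shape of the statement. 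Moreover, the sequences $0 \to X_k \to X_{k-1} \to S_{i_k}^{a_k} \to 0$ are not controlled by $W_\ii$ being cluster-tilting: $\Hom_\LL(W_\ii, S_{i_k})$ is typically nonzero, so $\Ext^1_\LL(W_\ii,X_k) \to \Ext^1_\LL(W_\ii,X_{k-1})$ need not be injective. And there is no equivalence identifying $\scE$-submodules of $\Ext^1_\LL(W_\ii,X)$ with subobjects of $X$ --- the functor $\Ext^1_\LL(W_\ii,-)$ annihilates $\add(W_\ii)$ and is far from faithful, so the inverse direction cannot come from such a correspondence.

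What the paper actually does (Theorem~\ref{variso} and Section~\ref{section2.9}) is a linear-algebra construction at the vector-space level, not a functorial one, and it hinges on ingredients absent from your sketch. The essential tools are: (i) the refined top and socle series $X_\bullet^-, X_\bullet^+$ of type $\ii$, together with the sandwich $X_k^- \subseteq X_k \subseteq X_k^+$ for every flag in $\F_{\ii,\ba,X}$ (Lemma~\ref{lemma3}); (ii) the fact that $(\LL,\ii)$ is \emph{balanced} (Theorem~\ref{preprobalanced}, via \cite[Prop.~9.6]{GLSUni2}), which is what makes $\bigoplus_k e_{i_k}(X_k^+/X_k^-) \cong D\ov{\Hom}_\LL(X,V_\ii) \cong \Ext^1_\LL(W_\ii,X)$ an identification of $\scE$-modules, with an explicit matching of the $\GG_\ii$-arrows (Propositions~\ref{st7} and~\ref{endoquiver}); and (iii) the assignment $X_\bullet \mapsto (\ov{U}_k)_k$ with $\ov{U}_k := e_{i_k}(X_k/X_k^-) \subseteq e_{i_k}(X_k^+/X_k^-)$. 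Showing that $(\ov{U}_k)_k$ is actually an $\scE$-submodule, and reconstructing a flag of $\LL$-submodules from an arbitrary $\scE$-submodule, is the real content (steps (a), (b) in the proof) and uses the concrete description of horizontal and ordinary arrows of $\GG_\ii$, not an abstract duality. Finally, your description of $d_{\ii,X}$ as ``partial sums of the $a_k$ along each color class'' misses the crucial $X$-dependence: $d_{\ii,X}(\ba)_k = \sum_{s \le k,\, i_s = i_k}(a_s^- - a_s)$ with $(a^-_s)_s := \ba^-(X)$, an \emph{affine} map whose offset is the weight of the refined top series of $X$; in particular the nonemptiness bijection is not ``manifest'' but requires both directions of the flag--Grassmannian correspondence.
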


It follows easily that the set $\{ \ba \mid 
\chi(\F_{\bi,\ba,X}) \not= 0 \}$
has a unique element if and only if $\Ext^1_\LL(W_\ii,X) = 0$.
Now by construction, $W_\bi$ is a cluster-tilting module of $\cC_w$, that is,
$\Ext^1_\LL(W_\bi,X) = 0$ if and only if $X$ belongs to the additive hull
$\add(W_\ii)$ of $W_\ii$. Moreover, in this case $\F_{\bi,\ba,X}$ is reduced
to a point. Hence Theorem~\ref{THM1} has the following important consequence:

\begin{THM}\label{THM2}
For $X\in\cC_w$,
the polynomial function $\bt \mapsto \vphi_X(\sx_\ii(\bt))$ is reduced 
to a single monomial $\bt^\ba$ if and only if $X\in\add(W_\ii)$. 
\end{THM}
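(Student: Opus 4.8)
The plan is to deduce Theorem~\ref{THM2} directly from Theorem~\ref{THM1}, together with the two structural facts stated just before the theorem. First I would observe that, by definition of $\vphi_X$, we have $\vphi_X(\sx_\ii(\bt)) = \sum_{\ba} \chi(\F_{\bi,\ba,X})\,\bt^\ba$, so the function $\bt\mapsto\vphi_X(\sx_\ii(\bt))$ is a single monomial $\bt^\ba$ precisely when the set $\{\ba \mid \chi(\F_{\bi,\ba,X})\neq 0\}$ is a singleton. Since each $\F_{\bi,\ba,X}$ is a nonempty projective variety exactly when $\ba$ lies in the index set of the bijection $d_{\bi,X}$ of Theorem~\ref{THM1}, and since a nonempty quiver Grassmannian that is a single point has Euler characteristic $1\neq 0$, the key point is to control when $\{\ba \mid \F_{\bi,\ba,X}\neq\varnothing\}$ has exactly one element.

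Next I would use the bijection $d_{\bi,X}$ from Theorem~\ref{THM1}: it identifies $\{\ba\mid \F_{\bi,\ba,X}\neq\varnothing\}$ with $\{\bd\mid \Gr^\scE_\bd(\Ext^1_\LL(W_\ii,X))\neq\varnothing\}$. The latter set is a singleton if and only if $\Ext^1_\LL(W_\ii,X)=0$, because the zero $\scE$-module has only the zero submodule, whereas any nonzero module has at least two distinct submodule dimension vectors (namely $\mathbf 0$ and the dimension vector of a simple submodule, which is nonzero). This gives the equivalence: $\bt\mapsto\vphi_X(\sx_\ii(\bt))$ is a single monomial $\iff \Ext^1_\LL(W_\ii,X)=0$. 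It then remains to identify the condition $\Ext^1_\LL(W_\ii,X)=0$ with $X\in\add(W_\ii)$. For this I would invoke that $W_\bi = I_w\oplus\Omega_w(V_\bi)$ is a cluster-tilting module of the $2$-Calabi-Yau Frobenius category $\cC_w$ — this is asserted in the paragraph preceding Theorem~\ref{THM2}, with the cluster-tilting property of $V_\bi$ coming from \cite{BIRS} and that of $\Omega_w(V_\bi)$ following since $\Omega_w$ is an autoequivalence of the stable category. By definition of cluster-tilting, $\Ext^1_\LL(W_\bi,X)=0$ for $X\in\cC_w$ holds if and only if $X\in\add(W_\ii)$. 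Finally, in that case $\F_{\bi,\ba,X}$ reduces to a point by Theorem~\ref{THM1}, so $\vphi_X(\sx_\ii(\bt))$ is indeed a single monomial, closing the loop.

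The main obstacle is not in this chain of implications, which is short once Theorem~\ref{THM1} is available; rather, all the real work has been pushed into Theorem~\ref{THM1} (the variety isomorphism and the explicit bijection $d_{\bi,X}$) and into verifying that $W_\bi$ is genuinely cluster-tilting in $\cC_w$. For the present proof I would only need to make sure that the two-element argument for nonzero $\scE$-modules is stated cleanly (a nonzero finite-dimensional module over a finite-dimensional algebra has a simple submodule, hence at least the dimension vectors $\mathbf 0$ and $\dimv S$ with $S$ simple) and that the identification $\Ext^1_\LL(W_\ii,X)\cong\Ext^1_\LL(V_\ii,X)$ as $\scE$-modules (implicit in $\scE\cong\sEnd_{\cC_w}(W_\ii)^\op\cong\sEnd_{\cC_w}(V_\ii)^\op$) is compatible with the vanishing statement, so that the cluster-tilting property of $W_\bi$ can be read off from that of $V_\bi$. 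I expect these to be routine given the setup already established in the excerpt and in \cite{BIRS,GLSUni2}.
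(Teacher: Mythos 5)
Your overall route is the same as the paper's: combine Theorem~\ref{THM1} with the fact that $W_\ii$ is cluster-tilting, so that $\Ext^1_\LL(W_\ii,X)=0$ iff $X\in\add(W_\ii)$, and then argue about the number of $\ba$ with nonzero Euler characteristic. However, there is a small but genuine gap in the ``only if'' direction. The theorem concerns the set $\{\ba\mid\chi(\F_{\bi,\ba,X})\neq 0\}$, whereas your two-element argument controls the \emph{a priori} larger set $\{\ba\mid\F_{\bi,\ba,X}\neq\varnothing\}$. You exhibit two nonempty Grassmannians when $Y:=\Ext^1_\LL(W_\ii,X)\neq 0$, namely $\Gr^\scE_{\mathbf 0}(Y)$ and $\Gr^\scE_{\dimv S}(Y)$ for a simple submodule $S$; the first is a point and hence contributes a nonzero coefficient, but the second need not be a point (it is a projective space $\mathbb P(K)$ for a suitable subspace $K$ of $Y$), and you only invoke the ``single point has $\chi=1$'' fact. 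To close the gap you should either observe that $\Gr^\scE_{\dimv S}(Y)$ is a nonempty projective space and hence has strictly positive Euler characteristic, or — as the paper does — take the second witness to be $\ff=\dimv Y$, for which $\Gr^\scE_{\dimv Y}(Y)$ is again a single point; equivalently, quote Lemma~\ref{lemma3.1}, which says $\chi(\F_{\ii,\ba,X})=1$ whenever $\ba$ is $\wt(X_\bullet^-)$ or $\wt(X_\bullet^+)$ (these correspond to $\ff=\mathbf 0$ and $\ff=\dimv Y$ under the bijection $d_{\bi,X}$). With that one-line fix the argument is complete and coincides with the paper's.
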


\subsection{}
Let $W_{\bi,1},\ldots,W_{\bi,r}$ denote the indecomposable direct summands of $W_\bi$.
The $r$-tuple of regular functions $(\vphi_{W_{\bi,1}},\ldots,\vphi_{W_{\bi,r}})$ is a cluster of $\C[N^w]$,
and it follows from Theorem~\ref{THM2} that the $\vphi_{W_{\bi,k}}(\sx_\ii(\bt))$
are monomials in the variables $t_1,\ldots,t_r$. 
Inverting this monomial transformation yields expressions of the $t_k$'s
as explicit rational functions on $N^w$, a result originally called the Chamber Ansatz 
by Berenstein, Fomin and Zelevinsky \cite{BFZ} in type $A_n$, because of a convenient
description of these formulas in terms of chambers in a wiring diagram.
To present these formulas in the general Kac-Moody setting, we need more notation.
By construction, the summands $V_{\bi,k}$ of $V_\bi$ are related to the modules $W_{\bi,k}$
by short exact sequences
\[
 0\to W_{\bi,k} \to P(V_{\bi,k}) \to V_{\bi,k} \to 0
\]
where for $X\in\cC_w$, $P(X)$ denotes the projective cover in $\cC_w$.
We set 
\[
\vphi'_{V_{\bi,k}} := \frac{\vphi_{W_{\bi,k}}}{\vphi_{P(V_{\bi,k})}},
\]
a Laurent monomial in the $\vphi_{W_{\bi,k}}$ (since $\add(W_\ii)$ contains all $\cC_w$-projectives). 
As will be explained in Section~\ref{sect_twist} below, the regular functions 
$\vphi'_{V_{\bi,k}}$ on $N^w$ are the twisted generalized minors of \cite{BZ} corresponding to $\bi$
(in the Dynkin case).

Denote by $q(i,j)$ the number of edges between two vertices $i$ and $j$ 
of the underlying unoriented graph of the quiver $Q$.
For $1 \le k \le r$, put
\begin{equation}\label{ch5}
C_{\ii,k} := \frac{1}{\vph_{V_{\ii,k}}'\vph_{V_{\ii,k^-(i_k)}}'} \cdot
\prod_{j=1}^n \left(\vph_{V_{\ii,k^-(j)}}'\right)^{q(i_k,j)},
\end{equation}
where
$k^-(j) := \max\{ 0,1 \le s \le k-1 \mid i_s = j \}$ and
$V_{\ii,0}$ is by convention the zero module.

\begin{THM}\label{THM5}
For $1 \le k \le r$ and $\bt = (t_r,\ldots,t_1)$ we have 
$C_{\ii,k}(\sx_\ii(\bt))=t_k$.
Therefore, for $X \in \cC_w$ we get an equality in $\C[N^w]$:
\begin{equation}\label{Wclusterexpansion}
\vph_X = \sum_{\ba \in \N^r} \chi(\F_{\ii,\ba,X}) 
C_{\ii,r}^{a_r} \cdots C_{\ii,2}^{a_2}C_{\ii,1}^{a_1}.
\end{equation}
\end{THM}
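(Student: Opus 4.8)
The plan is to prove the two assertions in turn, the second being a formal consequence of the first. For the identity $C_{\ii,k}(\sx_\ii(\bt)) = t_k$, the starting point is Theorem~\ref{THM2}: each $\vph_{W_{\bi,k}}(\sx_\ii(\bt))$ is a single Laurent monomial $\bt^{\bb(k)}$ in $t_1,\dots,t_r$, and hence so is each twisted minor $\vph'_{V_{\bi,k}}(\sx_\ii(\bt)) = \bt^{\bb(k)}/\bt^{\bb(P(V_{\bi,k}))}$. First I would pin down these exponent vectors explicitly. By Theorem~\ref{THM1} the relevant $\ba$ is $d_{\bi,W_{\bi,k}}^{-1}(0)$, i.e.\ the unique weight $\ba$ with $\F_{\bi,\ba,W_{\bi,k}}\neq\varnothing$; this is determined by the $\LL$-module structure of $W_{\bi,k}$, concretely by the composition series data of $V_{\bi,k}$ and of its projective cover, which in turn are governed by the combinatorics of the reduced word $\ii$ (the functions $k^-(j)$ record exactly when a given simple appears). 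So the first main step is a bookkeeping lemma computing $\bb(k)$ and $\bb(P(V_{\bi,k}))$ in terms of $\ii$ and the Cartan matrix entries $2\delta_{ij}-q(i,j)$.

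The second, and I expect principal, step is to verify that plugging these monomials into the right-hand side of \eqref{ch5} collapses to $t_k$. Writing everything additively in the exponents, the claim $C_{\ii,k}(\sx_\ii(\bt))=t_k$ becomes a linear identity among the vectors $\bb(\cdot)$: the contribution $-\bb(V_{\bi,k})-\bb(V_{\bi,k^-(i_k)}) + \sum_j q(i_k,j)\,\bb(V_{\bi,k^-(j)})$ must equal the $k$-th standard basis vector $e_k$. This is precisely a representation-theoretic avatar of the Chamber Ansatz relations of \cite{BFZ,BZ}; the natural way to get it is to recognize the combination in \eqref{ch5} as coming from the defining short exact sequences $0\to W_{\bi,k}\to P(V_{\bi,k})\to V_{\bi,k}\to 0$ together with the mesh/almost-split relations in $\cC_w$ that link $V_{\bi,k}$, $V_{\bi,k^-(i_k)}$, the $V_{\bi,k^-(j)}$ and $\Omega_w$. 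Concretely, I would use the exact sequence in $\cC_w$ expressing the AR-translate $\tau_w$ (equivalently $\Omega_w$) of $V_{\bi,k}$ via the word combinatorics — this is the statement that the "twist" is a shadow of the AR translation, established in Section~\ref{sect_twist} — to rewrite the product formula and cancel all terms except $\bt^{e_k}=t_k$. An alternative, perhaps cleaner, route is to invert the monomial map $\bt\mapsto(\vph_{W_{\bi,1}},\dots,\vph_{W_{\bi,r}})(\sx_\ii(\bt))$ directly: since this is a monomial automorphism of the torus (the $\vph_{W_{\bi,k}}$ form a cluster, hence are algebraically independent and the exponent matrix is unimodular by the cluster structure), its inverse expresses each $t_k$ as a Laurent monomial in the $\vph_{W_{\bi,k}}$, and one then only has to check that this inverse monomial is exactly $C_{\ii,k}$ — again a finite linear-algebra verification, but now with unimodularity guaranteed a priori.

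Granting $C_{\ii,k}(\sx_\ii(\bt))=t_k$, the expansion \eqref{Wclusterexpansion} is immediate: by the defining property of $\vph_X$ from Section~\ref{introV} we have
\[
\vph_X(\sx_\ii(\bt)) = \sum_{\ba\in\N^r}\chi(\F_{\ii,\ba,X})\,\bt^\ba
= \sum_{\ba\in\N^r}\chi(\F_{\ii,\ba,X})\,C_{\ii,r}^{a_r}(\sx_\ii(\bt))\cdots C_{\ii,1}^{a_1}(\sx_\ii(\bt)),
\]
and since $\sx_\ii$ is a birational isomorphism $\C^r\dashrightarrow N^w$, the equality of regular functions on a dense open subset forces equality in $\C[N^w]$. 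The one point needing a line of care here is that each $C_{\ii,k}$ is a genuine element of $\C[N^w]$ (a Laurent monomial in the cluster $(\vph_{W_{\bi,1}},\dots,\vph_{W_{\bi,r}})$, with the projective-injective $\vph$'s being units), so the right-hand side of \eqref{Wclusterexpansion} indeed lives in $\C[N^w]$ and the comparison is legitimate.

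The main obstacle I anticipate is the combinatorial/representation-theoretic identity in the second step: keeping precise track of the dimension vectors $\bb(\cdot)$ and the indices $k^-(j)$ across the short exact sequences, and matching the product in \eqref{ch5} term-by-term with the mesh relations for $\Omega_w$ in $\cC_w$. The unimodularity-based variant sidesteps the hardest bookkeeping, so if the direct mesh-relation computation proves unwieldy I would fall back on inverting the monomial cluster transformation.
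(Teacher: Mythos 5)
Your plan has the right overall structure --- Theorem~\ref{THM2} gives monomiality of the $\vph_{W_{\bi,k}}(\sx_\ii(\bt))$, then a combinatorial identity reduces $C_{\ii,k}(\sx_\ii(\bt))$ to $t_k$, then density of $\Ima(\sx_\ii)$ lifts this to $\C[N^w]$ --- and this matches the paper's proof in outline. But there are two substantive issues. First, your preferred route for the ``principal step'' appeals to the AR-translate/twist statement of Section~\ref{sect_twist} (``the twist is a shadow of the AR translation'') as an available tool. This is circular: Theorem~\ref{THM6} is proved \emph{after} and \emph{using} Theorem~\ref{THM5} (via Theorem~\ref{THM3} and Proposition~\ref{prp:exp}), so the mesh/twist relation cannot be invoked here. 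Your fallback (unimodularity of the monomial exponent matrix coming from the cluster structure) is sound as a sanity check that the inverse exists as a Laurent monomial, but it does not by itself identify the inverse with $C_{\ii,k}$; you still must carry out the explicit linear identity.

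Second, the crucial computation of the monomial exponents --- which you defer to a ``bookkeeping lemma'' --- is the actual content. The paper handles it by an inductive radical reduction: passing from $\ii=(i_r,\ldots,i_1)$ to $\jj=(i_r,\ldots,i_2)$ via $\rad_{S_{i_1}}(-)$, showing (Lemmas~\ref{ch1}--\ref{ch3}) that this takes the defining sequence $0\to W_{\ii,k}\to P(V_{\ii,k})\to V_{\ii,k}\to 0$ to the corresponding sequence for $\jj$ up to a projective-injective summand, and then deriving Proposition~\ref{ch4}, which gives $\vph'_{V_{\ii,k}}(\sx_\ii(\bt))=\prod_l t_l^{-b_\ii(l,k)}$ with $b_\ii(l,k)=-(s_{i_l}\cdots s_{i_k}(\vpi_{i_k}),\alp_{i_l})$, using the already-known formula \eqref{eq:a-V_k}. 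With that explicit form, the verification of $C_{\ii,k}(\sx_\ii(\bt))=t_k$ becomes the purely combinatorial identity \eqref{ch7} on the $b_\ii(l,k)$'s, checked exactly as in Berenstein--Zelevinsky (showing an auxiliary quantity $\zeta(s)$ vanishes for $s<k$). So the genuine gap in your proposal is the absence of this exponent formula and the bilinear-form verification; once supplied, your final step and conclusion via density are correct as written.
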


\subsection{}\label{section1.4}
Using Theorem~\ref{THM1}, we now want to compare Equation~(\ref{Wclusterexpansion}) with
similar formulas of Fu and Keller.
To simplify our notation, we define
\begin{eqnarray*}
R &:= &\{ 1,2,\ldots,r \},\\
\Rma &:= &\{ k \in R \mid \mbox{there is no } k < s \le r \mbox{ with } i_s = i_k \},\\
\Rmi &:= &R \setminus \Rma.
\end{eqnarray*}
Let $T=T_1\oplus\cdots\oplus T_r$ be a basic cluster-tilting module in $\cC_w$,
where the numbering is chosen so that $T_k$ is $\cC_w$-projective-injective for $k\in \Rma$.
Assume that $(\vphi_{T_{1}},\ldots,\vphi_{T_{r}})$ is a cluster of $\C[N^w]$,
\ie that it can be obtained from $(\vphi_{V_{\bi,1}},\ldots,\vphi_{V_{\bi,r}})$
by a sequence of mutations. 
In this case, $T$ is called $V_\ii$-{\it reachable}.
(One conjectures that this is always the case.)
The endomorphism algebra $\cE_T := \End_\LL(T)^\op$ has  global dimension $3$, 
see \cite[Proposition~2.19]{GLSUni2}. 
Thus we may consider
$$
B^{(T)}:= (B_{k,l}^{(T)})_{k,l \in R} := \left((\dim\Hom_\LL(T_k,T_l))_{k,l \in R}\right)^{-t},
$$
the matrix of the Ringel bilinear form for $\cE_T$. 
(For a matrix $B$, we denote the inverse of its transpose by $B^{-t}$.)

For a general 2-Calabi-Yau Frobenius category $\cC$ with a cluster-tilting object, 
Fu and Keller \cite[Section~3]{FK} 
(extending previous work of Palu \cite{P}) have attached to every object of
$\cC$ a Laurent polynomial called its cluster character.
When applied to the category $\cC_w$ and the cluster-tilting object $T$, 
the formula for this cluster character 
can be written as
\begin{equation} \label{eq:FK2}
\theta^T_X := \vph_T^{(\dimv\Hom_\LL(T,X)) \cdot B^{(T)}}
\cdot
\sum_{\bd \in \N^{\Rmi}}
\chi(\Gr_\bd^{\scE_T}(\Ext_\LL^1(T,X)))\,\hvph_T^\bd\qquad (X\in\cC_w).
\end{equation}
Here we use the abbreviations
\[
\begin{array}{ccccl}
\vphi_T^\bg &:=& \prod_{k\in R_\pex} \vphi_{T_k}^{g_k} 
&& \mbox{for } \bg = (g_1,g_2,\ldots ,g_r)\in\Z^r,\\[2.5mm]
\hvph_{T,k} &:=& \prod_{l\in R_\pex} \vphi_{T_l}^{B_{l,k}^{(T)}}
&& \mbox{for } k \in \Rmi,\\[3.1mm]
\hvph_T^{\bd} &:= &\prod_{k\in\Rmi} \hvph_{T,k}^{d_k} 
&& \mbox{for } \bd = (d_k)_{k\in\Rmi}\in\N^{\Rmi}.
\end{array}
\]
By \cite[Theorem 4.3]{FK} and \cite[Theorem 3.3]{GLSUni2}, the cluster variables of $\C[N^w]$
are of the form $\theta^T_X$ for indecomposable rigid modules $X$
of $\cC_w$, and (\ref{eq:FK2}) gives therefore a 
representation-theoretic description of their cluster expansions with respect to
the cluster $(\vphi_{T_{1}},\ldots,\vphi_{T_{r}})$. 
However, for an arbitrary $X\in\cC_w$ not
much is known about the function~$\theta^T_X$.
For instance it is \emph{a priori} only a rational function on $N^w$. 
Using Theorem~\ref{THM1} and the Chamber Ansatz Theorem~\ref{THM5}, we
prove our next main result:

\begin{THM} \label{THM3}
For every $X \in \cC_w$ we have
$$
\theta^T_X = \vphi_X.
$$ 
In particular, $\theta^T_X$ is a regular function on $N^w$
for every $X \in \cC_w$, 
that is, the image of the cluster character $X \mapsto \theta^T_X$
is in the cluster algebra $\C[N^w]$.
\end{THM}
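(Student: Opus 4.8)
The plan is to reduce the identity $\theta^T_X = \vph_X$ to the Chamber Ansatz formula \eqref{Wclusterexpansion} by choosing a particularly convenient reduced word $\bi$ adapted to the cluster-tilting module $T$, and then to upgrade from that special case to an arbitrary $V_\ii$-reachable $T$ by a mutation argument. First I would treat the case $T = W_\ii$: here $\scE_T = \scE$, the quiver Grassmannians appearing in \eqref{eq:FK2} are literally those of Theorem~\ref{THM1}, and the bijection $d_{\bi,X}$ matches the summation indices $\ba \leftrightarrow \bd$. So the combinatorial sum in \eqref{eq:FK2} becomes term-for-term the sum $\sum_{\ba}\chi(\F_{\ii,\ba,X})(\cdots)$ in \eqref{Wclusterexpansion}. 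What then has to be checked is that the monomials match: the prefactor $\vph_T^{(\dimv\Hom(T,X))\cdot B^{(T)}}$ together with the substituted $\hvph_T^\bd$ must equal the substituted $C_{\ii,r}^{a_r}\cdots C_{\ii,1}^{a_1}$ under $d_{\bi,X}(\ba)=\bd$. By Theorem~\ref{THM5} both sides, evaluated at $\sx_\ii(\bt)$, are honest polynomials in $\bt$, so it suffices to compare them there; and by Theorem~\ref{THM2} the prefactor on the Fu--Keller side is exactly the monomial $\bt^{\ba_0}$ attached to the (unique) point-variety summand, matching the ``leading'' term. The remaining bookkeeping is an explicit comparison of the exponent matrices $B^{(W_\ii)}$, the $q(i_k,j)$ appearing in $C_{\ii,k}$, and the homological data $\dim\Hom_\LL(W_{\ii,k},-)$, using the exact sequences $0\to W_{\ii,k}\to P(V_{\ii,k})\to V_{\ii,k}\to 0$ and the global-dimension-$3$ property of $\cE_T$; this is where the definitions \eqref{ch5} of $C_{\ii,k}$ were designed to line up with the Ringel form.

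Next I would remove the restriction $T=W_\ii$. Since $T$ is assumed $V_\ii$-reachable, and $W_\ii=I_w\oplus\Omega_w(V_\ii)$ is itself obtained from $V_\ii$ by a canonical operation (it is again a cluster-tilting object whose associated cluster is a seed of $\C[N^w]$), any such $T$ is connected to $W_\ii$ by a finite sequence of mutations of cluster-tilting objects in $\cC_w$. The cluster character $X\mapsto\theta^T_X$ of Fu and Keller is compatible with mutation: if $T'$ is obtained from $T$ by a single mutation at a non-projective-injective summand, then $\theta^{T'}_X$ is obtained from $\theta^T_X$ by the corresponding cluster-algebra change of variables, because both are expressed through the cluster $(\vph_{T_1},\ldots,\vph_{T_r})$, which mutates according to the exchange relations of $\C[N^w]$ established in \cite{GLSUni2}. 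Meanwhile $\vph_X$ does not depend on $T$ at all. Hence, knowing $\theta^{W_\ii}_X=\vph_X$ and that $\vph_X$ is a well-defined element of $\C[N^w]$ invariant under these changes of variables, we propagate the equality along the mutation sequence to conclude $\theta^T_X=\vph_X$ for every $V_\ii$-reachable $T$.

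The main obstacle I anticipate is the exponent-matching step in the base case $T=W_\ii$: reconciling the three different pieces of combinatorial/homological data — the Chamber Ansatz monomials $C_{\ii,k}$ with their edge-multiplicities $q(i_k,j)$ and the index shift $k\mapsto k^-(j)$; the Ringel bilinear form $B^{(W_\ii)}$ entering both the prefactor and the $\hvph$-monomials of \eqref{eq:FK2}; and the bijection $d_{\bi,X}$ from Theorem~\ref{THM1}, which translates dimension vectors of $\scE$-submodules of $\Ext^1_\LL(W_\ii,X)$ into the flag-type data $\ba$. Making these three descriptions agree requires carefully unwinding how $\Omega_w$, the projective covers $P(V_{\ii,k})$, and the $\Hom$/$\Ext$ long exact sequences interact, and verifying that the change of variables $\vph_{W_{\ii,k}}\leftrightarrow t_k$ from Theorem~\ref{THM5} intertwines the Fu--Keller variables $\hvph_{T,k}$ with the Chamber-Ansatz rational functions. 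Once this single identity of Laurent monomials is in place, the rest of the argument is formal: plugging in Theorem~\ref{THM1} to identify the Grassmannian sums, invoking Theorem~\ref{THM5} for the resulting cluster expansion, and running the mutation compatibility of the Fu--Keller character to reach arbitrary $T$.
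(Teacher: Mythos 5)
Your proposal follows essentially the same two-step route as the paper: first prove $\theta^{W_\ii}_X=\vph_X$ by evaluating on $\sx_\ii(\bt)$, matching the Grassmannian sums via Theorem~\ref{THM1} and the monomial prefactors via the Chamber Ansatz, then propagate to an arbitrary $V_\ii$-reachable $T$ by mutation invariance of the cluster character. The exponent-matching step you correctly flag as the main obstacle is exactly what the paper packages into Proposition~\ref{prp:exp}, and the mutation compatibility you invoke is what the paper proves via Proposition~\ref{newprop} and Lemma~\ref{lem:transf} using the Derksen--Weyman--Zelevinsky machinery.
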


\subsection{}\label{section1.5}
In the last part of this paper, we deduce from 
Theorem~\ref{THM3} a new description of a generic basis for the coefficient-free
cluster algebra obtained from $\C[N^w]$ by specializing to 1 the functions
$\vphi_P$ for all $\cC_w$-projective-injectives $P$.
(This algebra can be seen as the coordinate ring 
of the subvariety $N \cap (N_-wN_-)$ of $N^w$, but we will 
not use it.) In \cite[Section~15.6]{GLSUni2} we have already described 
such a basis in terms of generic modules over the preprojective algebra
$\LL$. Here we want to express it in terms of generic modules
over the stable endomorphism algebra $\scE_T$ of the cluster-tilting
module $T$. 

The quiver $\sGG_T$ of $\scE_T$ has the set $\Rmi$ as vertices,
with $k \in \Rmi$
corresponding to $T_k$, 
and it has
$[B_{l,k}^{(T)}]_+$  arrows from $k$ to $l$, 
where we write for short $[z]_+ = \max(z,0)$.
We consider the cluster algebra $\cA(\sGG_T) \subset \C((x_k)_{k \in \Rmi})$ 
with initial seed $((x_k)_{k \in \Rmi},\,\sGG_T)$.
We have a unique ring homomorphism $\Pi_T\df \C[N^w] \to \C((x_k)_{k \in \Rmi})$
such that $\Pi_T(\vphi_{T_k})=x_k$ for $k\in \Rmi$,
and $\Pi_T(\vphi_{T_k})=1$ for $k\in \Rma$.
The homomorphism $\Pi_T$ restricts to an
epimorphism 
$\C[N^w]\to \cA(\sGG_T)$, which we also denote by $\Pi_T$.

Following Palu \cite{P}, for an $\scE_T$-module $Y$ we put
\begin{equation}
\psi_Y := x^{\bg_Y} \cdot \sum_{\bd \in \N^{\Rmi}}
\chi(\Gr^{\scE_T}_{\bd}(Y))\,\hx_T^{\bd},
\end{equation}
where
$$
\bg_Y := (g_k)_{k \in \Rmi} := \left(\dim \Ext^1_{\scE_T}(S_k,Y) - 
\dim \Hom_{\scE_T}(S_k,Y)\right)_{k \in \Rmi}
$$
and
$$
x^{\bg_Y} := \prod_{k \in \Rmi} x_k^{g_k}, 
\qquad
\hx_{T,k}  := \prod_{l \in \Rmi} x_l^{B^{(T)}_{l,k}}, 
\qquad
\hx_T^\bd := \prod_{k \in \Rmi} \hx_{T,k}^{d_k}.
$$ 
(Here $S_k$, $k \in \Rmi$ are the simple $\scE_T$-modules.)
In fact, if $Y = \Ext_\LL^1(T,X)$ for some $X\in\cC_w$, in view of Theorem~\ref{THM3} we have $\psi_Y=\Pi_T(\vphi_X)$.

For $\bd \in \N^{\Rmi}$ let $\md(\scE_T,\bd)$ be the affine variety of representations
of $\scE_T$ with dimension vector $\bd$. 
It will be convenient to consider
$\md(\scE_T,\bd)$ with the \emph{right} action of 
$$
\GL_\bd := \prod_{k \in \Rmi} \GL_{\bd(k)}(\C)
$$ 
by conjugation.
For each irreducible
component $\cZ$ of $\md(\scE_T,\bd)$ 
there is a dense open subset $\cU \subseteq \cZ$
such that for all $U,U' \in \cU$ we have 
$\psi_U = \psi_{U'}$.
Define $\psi_\cZ := \psi_U$,
where $U \in \cU$.
An irreducible component
$\cZ$ of $\md(\scE_T,\bd)$ is called \emph{strongly reduced} if there is a 
dense open subset
$\cU \subseteq \cZ$ such that
$$
\codim_\cZ(U.\GL_\bd) = \dim \Hom_{\scE_T}\left(\tau_{\scE_T}^{-1}(U), U\right)
$$
for all $U \in \cU$,
where $\tau_{\scE_T}$ denotes the Auslander-Reiten translation of $\md(\scE_T)$.
It follows from Voigt's Lemma 
\cite[Proposition~1.1]{G} that strongly reduced components are (scheme-theoretically) 
generically reduced, hence the name.
But contrary to what the terminology might suggest,
being strongly reduced is not a property of the scheme $\cZ$
equipped with its $\GL_\bd$-action, since the definition uses additionally
the representation theory of the algebra $\scE_T$.
Note also that $\scE_T$ is given by a quiver with 
potential~\cite{BIRSm}, and that
$$
\dim \Hom_{\scE_T}(\tau_{\scE_T}^{-1}(U),U) = E^{\mathrm{inj}}(U)
$$ 
is the E-invariant defined in \cite{DWZ2}.

Let $\irr(\md(\scE_T,\bd))$ be the set of irreducible components of $\md(\scE_T,\bd)$,
and set 
$$
\irr(\scE_T) := \bigcup_{\bd \in \N^{\Rmi}} \irr(\md(\scE_T,\bd)).
$$
Let $\srirr(\scE_T)$ denote the set of all strongly reduced irreducible 
components in $\irr(\scE_T)$.
For $\cZ \in \irr(\scE_T,\bd)$ define
$\Cpl(\cZ) := \{ \mm \in \N^\Rmi \mid \mm(k) = 0 \mbox{ if } \bd(k) \not = 0 \}$.

Finally, let us denote by $\widetilde{\cS}_w^*$ the dual semicanonical basis of
$\C[N^w]$ constructed in \cite{GLSUni2}.
We can now state

\begin{THM}\label{THM4}
The set
$$
\cG_w^T := 
\left\{ x^\mm \cdot \psi_\cZ \mid
\cZ \in \srirr(\scE_T),\ \mm \in \Cpl(\cZ) \right\}
$$
is a basis of the cluster algebra $\cA(\sGG_T)$.
It is equal to the image of the dual semicanonical basis $\widetilde{\cS}_w^*$
under $\Pi_T\df \C[N^w] \to \cA(\sGG_T)$.
\end{THM}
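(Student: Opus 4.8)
The plan is to prove Theorem~\ref{THM4} by transporting the known description of a generic basis of $\C[N^w]$ (in terms of the preprojective algebra $\LL$, established in \cite[Section~15.6]{GLSUni2}) across the epimorphism $\Pi_T$, using Theorem~\ref{THM3} to identify the relevant generating functions on the two sides. Concretely, the dual semicanonical basis $\widetilde{\cS}_w^*$ of $\C[N^w]$ is indexed by the strongly reduced irreducible components of the representation varieties $\md(\LL,\bd)$ lying in $\cC_w$, and its elements are of the form $\vphi_Z$ for a generic point $Z$ of such a component (together with monomial corrections by the $\vphi_P$ for projective-injectives $P$). The first step is to recall this indexing precisely and to note that $\Pi_T$ kills exactly the variables $\vphi_{T_k}$, $k\in\Rma$, i.e.\ the projective-injectives, so that the monomial part $x^\mm$ with $\mm\in\Cpl(\cZ)$ is precisely the image of the projective-injective correction factors.

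\smallskip

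The heart of the argument is a comparison of two stratifications. On the $\LL$-side one has $\srirr(\cC_w)$, the strongly reduced components of module varieties supported in $\cC_w$; on the $\scE_T$-side one has $\srirr(\scE_T)$. The functor $X\mapsto \Ext^1_\LL(T,X)$ induces, by the general machinery of Buan--Iyama--Reiten--Scott and the 2-Calabi-Yau structure, an equivalence between $\cC_w/\add(T)$ and $\md(\scE_T)$, and hence a bijection on isomorphism classes of indecomposables and, after a dimension-vector bookkeeping, a bijection between the relevant module varieties up to the $\GL$-actions. I would show that this bijection (i) matches irreducible components with irreducible components, (ii) matches the strongly reduced ones with the strongly reduced ones — here the key point is that the defining equality $\codim = \dim\Hom(\tau^{-1}U,U)$ is preserved, because both sides can be rewritten via the $E$-invariant of Derksen--Weyman--Zelevinsky and the AR-translations of $\cC_w$ and $\md(\scE_T)$ correspond under the equivalence — and (iii) matches the associated generic $\psi$- and $\vphi$-functions, which is exactly the content of Theorem~\ref{THM3} applied at a generic point. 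The surjectivity half of the $E$-invariant/strongly-reduced comparison (that every strongly reduced component of $\md(\scE_T,\bd)$ arises from one in $\cC_w$, so that the whole of $\cG_w^T$ is hit) is where I expect the technical details to concentrate; one likely needs a lifting argument for generic modules along the $\Ext^1_\LL(T,-)$-functor, controlling how dimension vectors and genericity behave.

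\smallskip

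Granting this, the proof concludes as follows. Since $\{\vphi_Z \cdot (\text{proj.-inj. monomial})\}$ is the basis $\widetilde{\cS}_w^*$ of $\C[N^w]$, and $\Pi_T$ is surjective with kernel generated by $\vphi_P - 1$ for $P$ projective-injective, the image $\Pi_T(\widetilde{\cS}_w^*)$ spans $\cA(\sGG_T)$. Applying $\Pi_T$ to a basis element and using $\psi_Y=\Pi_T(\vphi_X)$ for $Y=\Ext^1_\LL(T,X)$ (the identity recorded just before the statement of Theorem~\ref{THM4}, itself a consequence of Theorem~\ref{THM3}), together with the component bijection above, identifies $\Pi_T(\widetilde{\cS}_w^*)$ with $\cG_w^T$. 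For linear independence one argues that $\Pi_T$ is \emph{injective} on the span of $\widetilde{\cS}_w^*$ — equivalently, that distinct strongly reduced components of $\cC_w$ that are not merely differing by projective-injective direct summands map to distinct elements of $\cG_w^T$ — which follows from the fact that the $\bg$-vectors (the leading exponent vectors, given by the $E$-invariant data) separate the elements, exactly as in the cluster-expansion/$g$-vector uniqueness arguments already used for the dual semicanonical basis in \cite{GLSUni2}. Thus $\cG_w^T$ is a basis equal to $\Pi_T(\widetilde{\cS}_w^*)$, and the acyclic special case recovers Dupont's conjecture as stated in the introduction.
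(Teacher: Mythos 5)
Your high-level strategy is the one the paper uses: transport the dual semicanonical basis across $\Pi_T$, use Theorem~\ref{THM3} to identify $\Pi_T(\vph_X)$ with $x^\mm\cdot\psi_{\Ext^1_\Lam(T,X')}$, and match up strata by comparing $E$-invariants. But there are two places where your proposal does not actually contain a proof.

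First, the passage from the categorical equivalence $\cC_w/\add(T)\simeq\md(\scE_T)$ to ``a bijection between the relevant module varieties up to the $\GL$-actions'' is a non-step. An additive equivalence identifies isomorphism classes, but it gives no morphism of algebraic varieties and hence no way to speak of irreducible components corresponding, let alone of genericity being preserved. This is exactly the technical gap the paper closes with the Bongartz-type bundle construction of Section~7.4: for each $T$-sheet $\cY\subseteq\Lam^w_\bd$ it builds a $\GL_\bd$-$\GL_\be$-variety $\cB_\cY$ with a $\GL_\be$-principal-bundle projection $\pi_1\colon\cB_\cY\to\cY$ and a $\GL_\be$-equivariant, $\GL_\bd$-invariant morphism $\pi_2\colon\cB_\cY\to\md(\scE_T,\be)$ realizing $X\mapsto\Ext^1_\Lam(T,X)$. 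Proposition~\ref{prp:bas} then proves: $\cZ\in\irr(\md(\scE_T,\be))$ is strongly reduced if and only if $\cZ=\overline{\pi_2(\cB_\cY)}$ for a \emph{dense, generically $T$-free} $T$-sheet $\cY$. The surjectivity half, which you correctly flag as the hard part, is settled in the paper not by a lifting argument but by a Baire-category argument: $\cZ$ is a countable union of constructible sets $\pi_2(\cB_\cY)$ (density of the functor $E$), and over $\C$ one of them must be dense; then $\codim_\cZ(M.\GL_\be) = \codim_\cY(\GL_\bd.Y)$ for a principal-bundle reason, and Proposition~\ref{prp:EinvExt} converts $\codim_{\Lam^w_\bd}(\GL_\bd.Y)$ into $\dim\Hom_\scE(EX,\tau_\scE(EX))$, so strong reducedness of $\cZ$ is equivalent to $\cY$ being dense in an irreducible component of $\Lam^w_\bd$. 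Without the bundle, $\pi_2$, and the Baire step, your ``component matching'' has no content.

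Second, your linear-independence argument is incorrect as stated. You claim ``$\Pi_T$ is injective on the span of $\widetilde{\cS}_w^*$''; but the span of $\widetilde{\cS}_w^*$ is all of $\C[N^w]$, and $\Pi_T$ is an epimorphism with a large kernel (it sends $\vph_P\mapsto 1$ for all $\cC_w$-projective-injectives $P$), so it is certainly not injective there. What is needed is that the \emph{image set} $\Pi_T(\widetilde{\cS}_w^*)$, which collapses basis elements differing only by projective-injective direct summands, is linearly independent. The paper does not reprove this; it is cited from \cite[Section~15]{GLSUni2}. A $g$-vector separation argument might indeed recover it, but as written your claim is a false statement rather than a proof sketch. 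A smaller slip: $\widetilde{\cS}_w^*$ is indexed by \emph{all} irreducible components of $\Lam^w_\bd$ (for all $\bd$), not by ``strongly reduced'' ones; the strong reducedness is a condition on the $\scE_T$-side, detected via Proposition~\ref{prp:EinvExt}, and on the $\Lam$-side the role it plays is that the $T$-sheet be \emph{dense} in its component.
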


Each finite-dimensional path algebra
is isomorphic to $\scE_T$ for some appropriate $\LL$, $w$ and 
$T$, see~\cite[Section~16]{GLSUni2}. 
In this case, $\md(\scE_T,\bd)$ is an (irreducible) affine
space for all $\bd$, and it is easy to see that $\md(\scE_T,\bd)$ is strongly reduced. 
Thus Theorem~\ref{THM4} implies Dupont's conjecture \cite[Conjecture~6.1]{D}. 
On the other hand, even if $\scE_T$ is not hereditary
but mutation equivalent to an acyclic quiver, it is quite easy to find
examples of irreducible components of varieties $\md(\scE_T,\bd)$
which are not strongly reduced.

Since Theorem~\ref{THM4} gives a description of the generic basis $\cG_w^T$ 
of $\cA(\sGG_T)$ entirely in terms of the varieties of representations 
of the algebra $\scE_T$, it is natural
in view of \cite{DWZ2} to ask if the first statement of Theorem~\ref{THM4} 
generalizes to other classes of cluster algebras.

\subsection{}\label{sect_twist}
The paper closes with our categorical interpretation of the twist automorphisms
of the unipotent cells, introduced by Berenstein, Fomin and Zelevinsky in 
connection with the Chamber Ansatz.
For $x\in N^w$, the intersection $N \cap (B_-wx^T)$ consists of a unique
element, which, following \cite{BFZ,BZ,GLSUni2}, we denote by $\eta_w(x)$. 
(The anti-automorphism $g \mapsto g^T$ of the Kac-Moody
group is defined in \cite[Section~7.1]{GLSUni2}.
For more details on $\eta_w$ we refer to 
\cite[Section~8]{GLSUni2}.)
The map $\eta_w$ is in fact a regular automorphism of $N^w$, and we denote
by $(\eta^*_w)^{-1}$ the $\C$-algebra automorphism of $\C[N^w]$, defined by
\[
((\eta^*_w)^{-1}f)(x) = f(\eta_w^{-1}(x))\qquad (f\in \C[N^w]).
\]

\begin{THM}\label{THM6}
For every $X\in\cC_w$, we have
\[
(\eta^*_w)^{-1}(\vphi_X) = \frac{\vphi_{\Omega_w(X)}}{\vphi_{P(X)}}. 
\]
Moreover, $\eta^*_w$ preserves the dual semicanonical basis $\widetilde{\cS}_w^*$ of $\C[N^w]$
and permutes its elements.
\end{THM}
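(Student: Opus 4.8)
The plan is to prove the two assertions of Theorem~\ref{THM6} more or less in the order stated, reducing the first to a computation with the $\varphi$-functions on the one-parameter subgroup $\sx_\ii$ and then bootstrapping the second from it together with Theorem~\ref{THM5}.

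\smallskip

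\emph{Step 1: the twist formula on a dense subset.} Fix a reduced expression $\ii=(i_r,\ldots,i_1)$ of $w$. Since $\sx_\ii\df\C^r\dashrightarrow N^w$ is a birational isomorphism, it suffices to prove the identity $(\eta^*_w)^{-1}(\vphi_X)=\vphi_{\Omega_w(X)}/\vphi_{P(X)}$ after evaluation along $\sx_\ii(\bt)$, on a dense open subset of $\C^r$. By Theorem~\ref{THM5}, $\vphi_X(\sx_\ii(\bt))=\sum_\ba \chi(\F_{\ii,\ba,X})\,\bt^\ba$, so what I need is an explicit description of the birational map $\bt\mapsto\bt'$ induced by $\eta_w$ in the coordinates $\sx_\ii$, \ie of $\sx_\ii^{-1}\circ\eta_w\circ\sx_\ii$. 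This is exactly the content of the classical Chamber Ansatz of \cite{BFZ,BZ}: the composite is the monomial-inverting change of variables whose inverse records the twisted generalized minors. Concretely, $\eta_w$ sends the cluster $(\vphi_{V_{\ii,k}})_k$ of "generalized minors" to the Laurent monomials $(\vphi'_{V_{\ii,k}})_k=(\vphi_{W_{\ii,k}}/\vphi_{P(V_{\ii,k})})_k$, by the very definition of the $\vphi'_{V_{\ii,k}}$ adopted in Section~\ref{sect_twist} and the discussion preceding~(\ref{ch5}). So the first claim is essentially the statement that the functions $\vphi'_{V_{\ii,k}}$ are $(\eta^*_w)^{-1}$ applied to the $\vphi_{V_{\ii,k}}$, \emph{extended from the summands of $V_\ii$ to all of $\cC_w$}. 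To extend it I would argue as follows: both sides, as functions of $X$, are multiplicative on direct sums and behave correctly on short exact sequences (the $\vphi$-functions satisfy $\vphi_{X}\vphi_{Z}=\vphi_Y+(\text{lower terms})$ for $0\to X\to Y\to Z\to 0$, and $\Omega_w$ together with projective covers is additive on such sequences in the stable category), and the summands of $V_\ii$ generate $\cC_w$ under extensions and direct summands. More cleanly: by Theorem~\ref{THM3} (for the specific cluster-tilting object $T=W_\ii$, for which $\scE_{W_\ii}\cong\scE$) we have $\vphi_X=\theta^{W_\ii}_X$, and Theorem~\ref{THM1} identifies the relevant quiver Grassmannians; applying $(\eta^*_w)^{-1}$ amounts to the Chamber-Ansatz substitution $C_{\ii,k}\leftrightarrow t_k$, which transforms the cluster $(\vphi_{W_{\ii,k}})$ into $(\vphi'_{V_{\ii,k}})$ and hence $\theta^{W_\ii}_X$ into $\theta^{V_\ii}_{\Omega_w^{-1}?}$; matching $g$-vectors and the $\hvph$-monomials under the twist then yields $(\eta^*_w)^{-1}(\vphi_X)=\vphi_{\Omega_w(X)}/\vphi_{P(X)}$.

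\smallskip

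\emph{Step 2: $\eta^*_w$ preserves $\widetilde{\cS}_w^*$.} Recall from \cite{GLSUni2} that the dual semicanonical basis $\widetilde{\cS}_w^*$ consists of the functions $\vphi_X$ where $X$ runs over generic points of the irreducible components of the varieties $\Lambda_{\bd}\cap\cC_w$ of $\cC_w$-modules (a Lagrangian-type variety), up to the usual localization by the $\vphi_P$. The automorphism $\Omega_w=\tau_w^{-1}$ of the stable category $\underline{\cC}_w$ is an equivalence, hence induces a bijection on indecomposables and, crucially, maps irreducible components of the relevant module varieties to irreducible components, sending generic points to generic points. Combined with Step~1, $(\eta^*_w)^{-1}$ sends the basis element $\vphi_X$ to $\vphi_{\Omega_w(X)}/\vphi_{P(X)}$; modulo the localization at the $\vphi_P$ (which is built into the definition of $\widetilde{\cS}_w^*$), this is again a basis element, namely $\vphi_{\Omega_w(X)}$ up to the $\vphi_P$-factor. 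Thus $(\eta^*_w)^{-1}$, and hence $\eta^*_w$, permutes $\widetilde{\cS}_w^*$. I would need to check that the $\vphi_P$-denominator really is harmless, \ie that $\Omega_w$ permutes the $\cC_w$-projective-injectives among themselves (true, since the AR translation fixes the projective-injectives up to the shift bookkeeping in a Frobenius category) so that the correction factor is a monomial in the $\vphi_P$, which act as units after the relevant localization.

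\smallskip

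\emph{The main obstacle.} The genuinely delicate point is Step~1: one must pin down precisely the rational map $\sx_\ii^{-1}\circ\eta_w\circ\sx_\ii$ in the Kac-Moody setting and verify that it agrees with the monomial change of variables $t_k=C_{\ii,k}$, \ie that the $\vphi'_{V_{\ii,k}}$ defined representation-theoretically really are the twisted generalized minors $\eta_w^*$-dual to the $\vphi_{V_{\ii,k}}$. In the Dynkin case this is \cite{BFZ,BZ}, but the Kac-Moody generalization of the twist $\eta_w$ and of the Chamber Ansatz identities has to be established carefully (this is presumably the role of Section~\ref{sect_twist} and the cited Sections~7--8 of \cite{GLSUni2}); once that dictionary is in place, and once Theorems~\ref{THM1}, \ref{THM3} and~\ref{THM5} are invoked, the extension from $\add(V_\ii)$ to all of $\cC_w$ and the statement about $\widetilde{\cS}_w^*$ follow formally.
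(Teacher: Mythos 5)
There is a genuine circularity/gap at the heart of Step~1. You assert that ``$\eta_w$ sends the cluster $(\vphi_{V_{\ii,k}})_k$ to the Laurent monomials $(\vphi'_{V_{\ii,k}})_k$ by the very definition of the $\vphi'_{V_{\ii,k}}$,'' but that is not a definition: the paper defines $\vphi'_{V_{\ii,k}} := \vphi_{W_{\ii,k}}/\vphi_{P(V_{\ii,k})}$ purely representation-theoretically, and the identity $(\eta_w^{-1})^*(\vphi_{V_{\ii,k}}) = \vphi'_{V_{\ii,k}}$ is precisely the hard content of Theorem~\ref{THM6} for $X=V_{\ii,k}$. You acknowledge this in your ``main obstacle'' paragraph, but the proposal never actually produces the argument — it defers to ``presumably the role of Section~\ref{sect_twist} and the cited Sections 7--8 of \cite{GLSUni2}.'' In the paper this is where the real work happens (Section~\ref{section8}.3): one has to compute $D_{\varpi_{i_k},w_k^{-1}(\varpi_{i_k})}(y)$ for $y$ the element of $N(w)\cap O_w$ with $\widetilde{\eta}_w(y)=x$, using $N'(w_k)$-invariance of the generalized minor and the factorization $y=y^{(1)}\cdots y^{(r)}$ into root subgroups, in a Kac--Moody adaptation of \cite[Lemma~6.4, Props.~5.3--5.4]{BZ}. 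Without that computation the proof does not close.

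Two further remarks. First, the paper cleanly splits the proof in a way you blur: it defines the substitution $\kappa_\ii\df\vphi_{V_{\ii,k}}\mapsto\vphi'_{V_{\ii,k}}$ as a map of Laurent rings, proves the purely algebraic identity $\kappa_\ii(\vphi_X)=\vphi_{\Omega_w(X)}/\vphi_{P(X)}$ from Theorem~\ref{THM3} and Proposition~\ref{prp:exp} (no $\eta_w$ involved), and only afterwards identifies $\kappa_w$ with $(\eta_w^*)^{-1}$ on the generators. Your ``more cleanly'' paragraph gestures at the first half but then folds the unproven identification back in. Second, your Step~2 is correct in outline but glosses over the nontrivial fact that $\Omega_w$ maps irreducible components of $\LL^w_\bd$ to irreducible components and generic points to generic points: this is exactly Proposition~\ref{loopcomp}, whose proof is a dimension count via Lemma~\ref{lem:e2cdim} and a bundle construction, not a formal consequence of $\Omega_w$ being a stable equivalence. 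As written, Step~2 also needs that the variety $\LL^w_{\bd',\be'}$ hit by $\Omega_w$ is of full dimension (the $\Lam^{w,+}_{\bd',\be'}$ density argument), which is not automatic.
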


Thus, the regular functions $\vphi'_{V_{\bi,k}}$ occurring in Theorem~\ref{THM5}
are obtained by twisting the generalized minors $\vphi_{V_{\bi,k}}$
with $\eta_w^{-1}$, 
in agreement with \cite{BFZ,BZ} in the Dynkin case. 
We believe that Theorem~\ref{THM6} provides a conceptual explanation
of the existence of the automorphism $\eta_w$, and of its
compatibility with total positivity \cite[Proposition~5.3]{BZ}.

\subsection{}
The article is organized as follows:
In Section~\ref{reminder} we give a short reminder
on cluster algebras and some previous results.
In Section~\ref{isosection}
we construct
isomorphisms between flag varieties and quiver Grassmannians
in a very general setup.
The isomorphisms stated in Theorem~\ref{THM1} turn out to be
special cases.
Section~\ref{ssec:CombCp} contains the
proofs of Theorems~\ref{THM1} and~\ref{THM2} 
and of the Chamber Ansatz Theorem~\ref{THM5} 
together with some illustrating examples.
The proof of the cluster character identities stated in
Theorem~\ref{THM3} and a detailed example are in
Sections~\ref{sec:PfExp} and~\ref{section5}.
The proof of
Theorem~\ref{THM4} is in Sections~\ref{section6} and~\ref{section7}.
Finally, Section~\ref{section8} contains the proof of
Theorem~\ref{THM6}.

\subsection{Notation}\label{notation}
Throughout, we work over the field  $\C$ of complex numbers.
For a $\C$-algebra $A$ let $\md(A)$ be the category of
finite-dimensional left $A$-modules.
By an $A$-\emph{module} we always mean a module in $\md(A)$,
unless stated otherwise.
Often we do not distinguish between a module and its isomorphism 
class.
Let $D := \Hom_\C(-,\C)$ be the usual duality functor. 

For a quiver $Q$ let $\rep(Q)$ be the category of finite-dimensional
representations of $Q$ over $\C$.
It is well known that
we can identify $\rep(Q)$ and $\md(\C Q)$.

By a {\it subcategory} we always mean a full subcategory.
For an $A$-module $M$ let $\add(M)$ be the subcategory of all
$A$-modules
which are isomorphic to finite direct sums of direct summands of $M$.
A subcategory $\U$ of $\md(A)$ 
is an {\it additive subcategory} if any finite direct
sum of modules in $\U$ is again in $\U$.
By $\Gen(M)$ 
(resp. $\Cogen(M)$) 
we denote the subcategory of
all $A$-modules $X$ such that there exists some $t \ge 1$ and some
epimorphism $M^t \to X$ (resp. monomorphism $X \to M^t$).

For an $A$-module $M$ let $\Sigma(M)$ be the number of isomorphism classes
of indecomposable direct summands of $M$.
An $A$-module is called 
{\it basic} 
if it can be written as a direct sum
of pairwise non-isomorphic indecomposable modules.
An $A$-module $M$ is called {\it rigid} if $\Ext_A^1(M,M) = 0$.

For an $A$-module $M$ and a simple $A$-module $S$ let
$[M:S]$ be the Jordan-H\"older multiplicity of $S$
in a composition series of $M$.
Let $\dimv(M) := \dimv_A(M) := ([M:S])_S$ be the {\it dimension vector}
of $M$, where $S$ runs through all isomorphism classes of
simple $A$-modules.

For a set $U$ we denote its cardinality by $|U|$.
If $f\df X \to Y$ and $g\df Y \to Z$ are maps, then the composition
is denoted by $gf = g \circ f\df X \to Z$.

If $U$ is a subset of a $\C$-vector space $V$, then let
$\Span_\C\ebrace{U}$ be the subspace of $V$ generated by
$U$.

Let $\N = \{0,1,2,\ldots\}$
be the natural numbers, including $0$, and let $\Z$ be the ring of integers.
For a domain $R$ let
$R(X_1,\ldots,X_r)$, $R[X_1,\ldots,X_r]$ and $R[X_1^{\pm 1},\ldots,X_r^{\pm 1}]$
be  the field of rational functions, the polynomial ring, and the
ring of Laurent polynomials
in the variables $X_1,\ldots,X_r$ with coefficients in $R$, respectively.


\section{Reminder on cluster algebras}\label{reminder}


\subsection{}
Let $\cF := \Q(X_1,\ldots,X_r)$ be the field of rational
functions in $r$ variables.
We fix a subset $F \subseteq \{ 1,\ldots,r \}$.

A {\it seed} in $\cF$ is a pair $(x,\GG)$,
where $\GG = (\GG_0,\GG_1,s,t)$ is a finite quiver without loops and without 
2-cycles with set of vertices 
$\GG_0 = \{ 1,\ldots, r\}$, and $x = (x_1,\ldots,x_r)$ with $x_1,\ldots,x_r$
algebraically independent elements in $\cF$.
The vertices in $\{ 1,\ldots,r \} \setminus F$ are called 
\emph{mutable}, and the ones in $F$ are \emph{frozen}.

Given a seed $(x,\GG)$ in $\cF$ 
and a mutable vertex $k$ of $\GG$,
we define the {\it mutation} of $(x,\GG)$
at $k$ as
$$
\mu_k(x,\GG) := (x',\GG').
$$ 
The quiver
$\GG'$ is obtained from $\GG$ by applying the
Fomin-Zelevinsky quiver mutation at $k$, which is
defined as follows:
For $1 \le i,j \le r$ let 
$$
\gamma_{ij} := |\text{number of arrows $j \to i$ in $\GG$}| -
|\text{number of arrows $i \to j$ in $\GG$}|.
$$
(Recall that there are no 2-cycles in $\GG$. So at least one of
the numbers on the right-hand side is 0.)
By definition also $\GG'$ has no loops and no 2-cycles, and the corresponding numbers $\gamma_{ij}'$ for $\GG'$ are
$$
\gamma_{ij}' := 
\begin{cases}
-\gamma_{ij} & \text{if $i=k$ or $j=k$},\\
\gamma_{ij} + 
\dfrac{|\gamma_{ik}|\gamma_{kj} + \gamma_{ik}|\gamma_{kj}|}{2} & \text{otherwise}.
\end{cases}
$$
Finally, $x' = (x_1',\ldots,x_r')$ is defined by
$$
x_s' :=
\begin{cases}
x_k^{-1}\left(\prod_{k \to i}x_i + \prod_{j \to k}x_j\right) &
\text{if $s=k$},\\
x_s & \text{otherwise}
\end{cases}
$$
where the products are taken over all arrows of $\GG$
which start, respectively end, in $k$.
Set $\mu_{(x,\GG)}(x_k) := x_k'$.
It is easy to check that $(x',\GG')$ 
is again a seed in $\cF$ and that
$\mu_k^2(x,\GG) = (x,\GG)$.

Two seeds $(x,\GG)$ and $(y,\Sigma)$ are 
{\it mutation equivalent}
if there is a sequence $(k_1,\ldots,k_t)$
with $k_i \in \{ 1,\ldots, r\} \setminus F$ for all $i$ such that
$$
\mu_{k_t} \dots \mu_{k_2}\mu_{k_1}(x,\GG) = (y,\Sigma).
$$
In this case, we write
$(y,\Sigma) \sim (x,\GG)$.

For a seed
$(x,\GG)$ in $\cF$ 
let 
$$
\cX_{(x,\GG)}
:= \bigcup_{(y,\Sigma) \sim (x,\GG)} \{ y_1,\ldots,y_r \}
$$ 
where the union is over all seeds $(y,\Sigma)$ 
with
$(y,\Sigma) \sim (x,\GG)$.
By definition, the {\it cluster algebra} $\cA(x,\GG)$ 
associated to $(x,\GG)$ is the subalgebra of $\cF$ generated
by $\cX_{(x,\GG)}$.

We call $(y,\Sigma)$ a {\it seed in} $\cA(x,\GG)$
if $(y,\Sigma) \sim (x,\GG)$.
In this case, $y$ is a
{\it cluster} in $\cA(x,\GG)$, the elements
$y_1,\ldots,y_r$ are \emph{cluster variables} and 
$y_1^{m_1} \cdots y_r^{m_r}$ with $m_i \ge 0$ for
all $i$ are \emph{cluster monomials} in $\cA(x,\GG)$.

For any seed of the form $(y,\GG)$ in $\cF$ we obtain 
an isomorphism $\cA(x,\GG) \to \cA(y,\GG)$
given by $x_i \mapsto y_i$ for all $1 \le i \le r$.
So one sometimes writes just $\cA(\GG)$ instead of
$\cA(x,\GG)$.

Note that for any cluster $y$ in $\cA(x,\GG)$ we
have $y_i = x_i$ for all $i \in F$.
These cluster variables are also called \emph{coefficients} 
of $\cA(x,\GG)$.
Localizing $\cA(x,\GG)$ at $\prod_{i \in F} x_i$ 
yields an algebra $\cA(x,\GG,F^\pm)$, which we also
call a \emph{cluster algebra}.

There are algebra
epimorphisms
$$
\cA(x,\GG) \to \cA(\underline{x},\underline{\GG})
\text{\;\;\; and \;\;\;}
\cA(x,\GG,F^\pm) \to \cA(\underline{x},\underline{\GG})
$$
defined by 
$$
x_i \to
\begin{cases}
1 & \text{ if $i \in F$},\\
x_i & \text{otherwise},
\end{cases}
$$
where
$\cA(\underline{x},\underline{\GG}) \subseteq
\Q((x_i)_{i \in \{1,\ldots,r\} \setminus F})$
is again a cluster algebra with
$\underline{x} := (x_i)_{i \in \{1,\ldots,r\} \setminus F}$,
and the quiver
$\underline{\GG}$ is obtained from $\GG$ by deleting all
vertices in $F$ and all arrows starting or ending in one of
the vertices in $F$.
We say that the cluster algebra 
$\cA(\underline{x},\underline{\GG})$ is obtained from
$\cA(x,\GG)$ by \emph{specialization of coefficients} to 1,
and the two epimorphisms defined above are called
\emph{specialization morphisms}.
Clearly, the specialization morphisms induce a surjective
map
$\cX_{(x,\GG)} \setminus \{x_i \mid i \in F\} \to \cX_{(\underline{x},\underline{\GG})}$.

Using the identification $\C[N^w] \equiv \cA(\GG_T)$, the
epimorphism $\Pi_T$ defined in Section~\ref{section1.5},
can be seen as a specialization morphism.
Thus the cluster algebra $\cA(\sGG_T)$ is obtained from
$\C[N^w]$ by specialization of coefficients to 1.

\subsection{Cluster algebra structures for coordinate rings of unipotent cells}

In a series of papers \cite{GLSUni0,GLSRigid,GLSUni2} we constructed a map
$$
\vph\df \nil(\LL) \to \C[N]
$$
which maps a nilpotent $\LL$-module $X$
to a function $\vph_X \in \C[N]$. 
This map satisfies the following properties:
\begin{itemize}

\item[(i)]
For all $X,Y \in \nil(\LL)$ 
we have 
$$
\vph_X\vph_Y = \vph_{X \oplus Y}.
$$

\item[(ii)]
Let $X,Y \in \nil(\LL)$ with $\dim \Ext_\LL^1(X,Y) = 
\dim \Ext_\LL^1(Y,X) = 1$, and let
$$
0 \to X \to E' \to Y \to 0
\text{\;\;\; and \;\;\;}
0 \to Y \to E'' \to X \to 0
$$
be non-split short exact sequences.
Then we have
$$
\vph_X\vph_Y = \vph_{E'} + \vph_{E''}.
$$

\item[(iii)]
Restriction yields a map
$$
\vph\df \cC_w \to \C[N^w].
$$
(Again we identified $\C[N^w]$ with the localization of the
$\C$-span of $\{\vphi_X \mid X\in\cC_w\}$
at
$\{\vphi_P \mid P \mbox{ is $\cC_w$-projective-injective} \}$.)

\item[(iv)]
Let $\ii = (i_r,\ldots,i_1)$ be a reduced expression
of $w$,
and let $\GG := \GG_\ii$ and $F := \Rma$.
(The definitions of $\GG_\ii$ and $\Rma$ can be found
in
Sections~\ref{ssec:QuivEndV} and~\ref{section1.4}, respectively.)
Then there is an algebra isomorphism
$$
\eta_\ii\df \cA(x,\GG,F^\pm) \to \C[N^w]
$$
with
$\eta_\ii(x_k) = \vph_{V_{\ii,k}}$ for all
$1 \le k \le r$.

\end{itemize}
Using the isomorphism $\eta_\ii$ one can now speak
of \emph{cluster variables} and \emph{cluster monomials} 
in $\C[N^w]$.
For example, an $r$-tuple $(\vph_{T_1},\ldots,\vph_{T_r})$
is a cluster in $\C[N^w]$ if and only if there is a seed
$(y,\Sigma)$ in $\cA(x,\GG,F^\pm)$ with
$\eta_\ii(y_i) = \vph_{T_i}$ for all $i$.
In this case, let $T := T_1 \oplus \cdots \oplus T_r$.
The vertices of the quiver $\GG_T$ of the endomorphism
algebra $\End_\LL(T)^\op$ are naturally parametrized by $1,\ldots,r$ 
and the following hold:

\begin{itemize}

\item[(v)]
With the exception of arrows between 
coefficients
$c,d \in F$, the quivers
$\Sigma$ and $\GG_T$ coincide.
The seed $(y,\Sigma)$ in $\cA(x,\GG)$
is already determined by $y$.

\item[(vi)]
The module $T$ is a basic cluster-tilting module in $\cC_w$.
For any mutable vertex $k$
there is a unique indecomposable $T_k' \in \cC_w$ with
$T_k' \not\cong T_k$ such that
$$
\mu_k(T) := T_k' \oplus T/T_k
$$ 
is a basic cluster-tilting
module in $\cC_w$. 
For
$y_k' := \mu_{(y,\Sigma)}(y_k)$ we have
$$
\eta_\ii(y_k') = \vph_{T_k'}.
$$
We say that
$(\vph_{T_1},\ldots,\vph_{T_k'},\ldots,\vph_{T_r})$ is
obtained from
$(\vph_{T_1},\ldots,\vph_{T_k},\ldots,\vph_{T_r})$ by
\emph{mutation in direction} $k$.
We also say that $\mu_k(T)$ is obtained from $T$ by
\emph{mutation in direction} $k$.

\item[(vii)]
We have $\dim \Ext_\LL^1(T_k,T_k') = 
\dim \Ext_\LL^1(T_k',T_k) = 1$, and there are
short exact sequences
$$
0 \to T_k \to \bigoplus_{j \to k}T_j \to T_k' \to 0
\text{\;\;\; and \;\;\;}
0 \to T_k' \to \bigoplus_{k \to i}T_i \to T_k \to 0,
$$
where we sum over all arrows in $\GG_T$ ending
and starting in $k$, respectively.
Furthermore, the 
identity
$$
y_ky_k' = \prod_{k \to i}y_i + \prod_{j \to k}y_j
$$
in $\cA(x,\GG)$ corresponds to the identity 
$$
\vph_{T_k}\vph_{T_k'} = 
\prod_{k \to i}\vph_{T_i} + \prod_{j \to k}\vph_{T_j}
$$
in $\C[N^w]$.
For $i,j \in R$, the number of
arrows $k \to i$ in $\GG_T$ equals $[B_{i,k}^{(T)}]_+$
and the number of arrows $j \to k$  
is $[-B_{j,k}^{(T)}]_+$.

\item[(viii)]
The cluster monomials in $\C[N^w]$ are 
$$
\vph_{T_1}^{m_1} \cdots \vph_{T_r}^{m_r}
$$
where $m_i \ge 0$ for all $i$, and 
$T := T_1 \oplus \cdots \oplus T_r$ runs
through the set of $V_\ii$-reachable cluster-tilting
modules
in $\cC_w$.

\item[(ix)]
All cluster monomials in $\C[N^w]$ belong to the
dual semicanonical basis of $\C[N^w]$.

\end{itemize}


\section{Partial flag varieties and quiver Grassmannians}
\label{isosection}


\subsection{Basic algebras and nilpotent modules}\label{algebraclass}
Let $\GG = (\GG_0,\GG_1,s,t)$ 
be a finite quiver with set of vertices $\GG_0 = \{ 1,\ldots,n \}$,
and set of arrows $\GG_1$.
For an arrow $a\df i \to j$ in $\GG$ let $s(a) := i$ and $t(a) := j$
be its start vertex and terminal vertex, respectively. 

A \emph{path} of \emph{length} $m$ in $\GG$ is an 
$m$-tuple $p = (a_1,\ldots,a_m)$ of arrows in $\GG$
such that $s(a_i) = t(a_{i+1})$ for all $1 \le i \le m-1$.
We define $s(p) := s(a_m)$ and $t(p) := t(a_1)$.
Additionally, for each vertex $i \in \GG_0$ there is a path
$e_i$ of length 0 with $s(e_i) = t(e_i) = i$.
An arrow $a$ in $\GG$ is a \emph{loop} if $s(a) = t(a)$.
A path $p = (a_1,a_2)$ is a 2-\emph{cycle} if $s(p) = t(p)$.

The \emph{path algebra} $\C\GG$ of $\GG$ 
has the paths in $\GG$ as a $\C$-basis, and the multiplication
of two paths $p$ and $q$ 
is defined by
$$
pq :=
\begin{cases}
(a_1,\ldots,a_m,b_1,\ldots,b_l) & \text{if
$s(p) = t(q)$, $p = (a_1,\ldots,a_m)$ and
$q = (b_1,\ldots,b_l)$},\\
p & \text{if $q = e_{s(p)}$},\\
q & \text{if $p = e_{t(q)}$},\\
0 & \text{if $s(p) \not= t(q)$}.
\end{cases}
$$
Extending this rule linearly turns $\C \GG$ into an associative
$\C$-algebra with unit element.

For $m \ge 0$ let $\C\GG_{\ge m}$ be the ideal in $\C\GG$ generated by all
paths of length $m$.
An algebra $A$ is called \emph{basic} if 
$A = \C\GG/J$, where $J$ is an ideal in $\C\GG$ with $J \subseteq \C\GG_{\ge 2}$.
For the rest of this section, we assume that $A = \C\GG/J$ is a basic 
algebra.

Let $S_1,\ldots,S_n$ be the 1-dimensional $A$-modules associated to
the vertices of $\GG$.
(If $A$ is finite-dimensional, then $S_1,\ldots,S_n$ are all simple $A$-modules
up to isomorphism.)
We focus on $A$-modules having only $S_1,\ldots,S_n$ as composition factors.
These modules are called \emph{nilpotent}.
The category of all nilpotent $A$-modules is denoted by $\nil(A)$.
(If $A$ is finite-dimensional, then $\nil(\LL) = \md(A)$.)
Let $\widehat{I}_1,\ldots,\widehat{I}_n$ be the injective envelopes of $S_1,\ldots,S_n$,
respectively.
(The modules $\widehat{I}_j$ 
are in general infinite-dimensional $A$-modules.)

Let $J_i$ be the maximal ideal of $A$ 
spanned by all residue classes $\ov{p} := p+J$ of paths, 
where $p$ runs through all
paths except $e_i$.
Thus $A/J_i$ is 1-dimensional and
(as an $A$-module) isomorphic to $S_i$.
(In the following, we sometimes do not distinguish between a 
path $p$ in $\C\GG$ and 
its residue class $\ov{p}$.)

Each (not necessarily finite-dimensional) $A$-module $X$ 
can be interpreted as a representation
$X = (X(i),X(a))_{i \in \GG_0,a \in \GG_1}$ of the quiver $\GG$,
where
the vector space $X(i)$ is defined by $e_iX$, and the linear
map $X(a)\df X(s(a)) \to X(t(a))$ is defined by $x \mapsto ax$.
Recall that a \emph{subrepresentation} of $X$ is given by
$U = (U(i))_{i \in \GG_0}$, where $U(i)$ is a subspace of $X(i)$
for all $i$, and for all $a \in \GG_1$ we have 
$X(a)(U(s(a))) \subseteq U(t(a))$.
When passing from modules to representations, the submodules
obviously correspond to the subrepresentations.
The \emph{dimension vector} of a representation $X = (X(i),X(a))_{i \in \GG_0,a \in \GG_1}$
is by definition $\dimv_\GG(X) := (\dim X(i))_{i \in Q_0}$.

\begin{Def}
{\rm
For a dimension vector $\bd$, let $\Gr_\bd^A(X)$
be the projective variety of subrepresentations $Y$ of $X$ with
$\dimv_\GG(Y) = \bd$.
Such a variety is called a \emph{quiver Grassmannian}.
}
\end{Def}

If $X$ is nilpotent, then $\dim X(i) = [M:S_i]$ for all $i \in Q_0$.
We study Grassmannians $\Gr_\bd^A(X)$ only for nilpotent $A$-modules
$X$, so there is no danger of confusing the two types of dimension vectors 
$\dimv_\GG(-)$ and $\dimv_A(-)$ associated to $X$ and its submodules.

\subsection{Refined socle and top series}
For an arbitrary (not necessarily finite-dimensional)
$A$-module $X$ and a simple $A$-module $S$, let
$\soc_S(X)$ be the sum of all submodules $U$ of $X$ with
$U \cong S$.
(If there is no such $U$, then $\soc_S(X) = 0$.)
Similarly, let $\tp_S(X) = X/V$, where $V$ is the intersection of
all submodules $U$ of $X$ such that $X/U \cong S$.
(If there is no such $U$, then $V = X$ and $\tp_S(X) = 0$.)
Define $\rad_S(X) := V$.

Let us interpret $X$ as a representation 
$X = (X(i),X(a))_{i \in \GG_0,a \in \GG_1}$ of $\GG$, and 
let $1 \le j \le n$.
Then $\soc_{S_j}(X)$ can be seen as a subrepresentation
$(X'(i))_{i \in \GG_0}$ of $X$, where 
$$
X'(i) =
\begin{cases}
0 & \text{if $i \not= j$},\\
\bigcap_{a \in \GG_1,s(a)=j} \Ker(X(a)) & \text{if $i=j$}.
\end{cases}
$$
Similarly,
$\rad_{S_j}(X)$ can be seen as a subrepresentation
$(X'(i))_{i \in \GG_0}$ of $X$, where 
$$
X'(i) =
\begin{cases}
X(i) & \text{if $i \not= j$},\\
\sum_{a \in \GG_1,t(a)=j} \Ima(X(a)) & \text{if $i=j$}.
\end{cases}
$$
It follows that $\soc_{S_j}(X)$ and $\tp_{S_j}(X)$ are isomorphic
to (possibly infinite) direct sums of copies of $S_j$.

Now fix some sequence $\ii = (i_r,\ldots,i_1)$ with $1 \le i_k \le n$ 
for all $k$.
There exists a unique chain
$$
(0 = X_r \subseteq \cdots \subseteq X_1 \subseteq X_0 \subseteq X)
$$
of submodules $X_k$ of $X$ such that
$X_{k-1}/X_k = \soc_{S_{i_k}}(X/X_k)$ for all $1 \le k \le r$.
We define $\soc_\ii(X) := X_0$,
$$
X_k^+ := X_k^{\ii,+} := X_k
$$ 
for all $0 \le k \le r$, and
$X_\bullet^+ := X_\bullet^{\ii,+} :=
(X_r^+ \subseteq \cdots \subseteq X_1^+ \subseteq X_0^+)$.
If $\soc_\ii(X) = X$, then we call this chain the 
\emph{refined socle series of type} $\ii$ of $X$.
Similarly, 
there exists a unique chain
$$
(0 \subseteq X_r \subseteq \cdots \subseteq X_1 \subseteq X_0 = X)
$$
of submodules $X_k$ of $X$ 
such that $X_{k-1}/X_k = \tp_{S_{i_k}}(X_{k-1})$ for all
$1 \le k \le r$.
Set $\tp_\ii(X) := X/X_r$, $\rad_\ii(X) := X_r$, and 
$$
X_k^- := X_k^{\ii,-} := X_k
$$ 
for all $0 \le k \le r$.
Define
$X_\bullet^- := X_\bullet^{\ii,-} :=
(X_r^- \subseteq \cdots \subseteq X_1^- \subseteq X_0^-)$.
If $\rad_\ii(X) = 0$, then 
$X_\bullet^-$ is called the \emph{refined top series of type} $\ii$ of $X$.

The following lemma is straightforward:

\begin{Lem}\label{homsoc}
For arbitrary (not necessarily finite-dimensional) $A$-modules
$X$ and $Y$ and every $A$-module homomorphism 
$f\df X \to Y$ the following
hold:
\begin{itemize}

\item[(i)]
$f(\soc_\ii(X)) \subseteq \soc_\ii(Y)$ and
$f(\rad_\ii(X)) \subseteq \rad_\ii(Y)$.

\item[(ii)]
If $f$ is a monomorphism (resp. epimorphism), then
the induced maps
$$
X/\soc_\ii(X) \to Y/\soc_\ii(Y)
\text{\;\;\; and \;\;\;}
\rad_\ii(X) \to \rad_\ii(Y)
$$
are both monomorphisms (resp. epimorphisms).

\item[(iii)]
If $\soc_\ii(Y) = Y$, then $f(\rad_\ii(X)) = 0$.

\end{itemize}
\end{Lem}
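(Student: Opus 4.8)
The plan is to reduce the whole statement to a short list of single‑vertex facts about the operators $\soc_{S_j}(-)$ and $\rad_{S_j}(-)$, and then to propagate these along the defining chains of $\soc_\ii(-)$ and $\rad_\ii(-)$ by an induction on the chain index. The two things to keep in mind throughout are the descriptions $\soc_{S_j}(M)=\sum_{\psi\in\Hom_A(S_j,M)}\Ima(\psi)$ and $\rad_{S_j}(M)=\bigcap_{\phi\in\Hom_A(M,S_j)}\Ker(\phi)$, valid for any (possibly infinite‑dimensional) $M$, together with the observation recalled just above the lemma that $\soc_{S_j}(M)$ and $\tp_{S_j}(M)=M/\rad_{S_j}(M)$ are (possibly infinite) direct sums of copies of $S_j$.

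First I would record the single‑vertex lemmas. (a) Any $f\df M\to N$ satisfies $f(\soc_{S_j}(M))\subseteq\soc_{S_j}(N)$ and $f(\rad_{S_j}(M))\subseteq\rad_{S_j}(N)$: post‑compose the maps $S_j\to M$ with $f$ for the first, pre‑compose the maps $N\to S_j$ with $f$ for the second. (b) If $f$ is a monomorphism, then, identifying $M$ with a submodule of $N$, one has $\soc_{S_j}(N)\cap M=\soc_{S_j}(M)$; one inclusion is (a) applied to the inclusion $M\to N$, the other holds since any submodule of the semisimple module $\soc_{S_j}(N)$ that lies in $M$ is a sum of copies of $S_j$ inside $M$. (c) If $f$ is an epimorphism, then $f(\rad_{S_j}(M))=\rad_{S_j}(N)$; here I would first prove $f^{-1}(\rad_{S_j}(N))=\rad_{S_j}(M)+\Ker(f)$ by matching $\Hom_A(N,S_j)$ with the homomorphisms $M\to S_j$ that vanish on $\Ker(f)$, using that $M/(\rad_{S_j}(M)+\Ker(f))$ is a quotient of the semisimple module $\tp_{S_j}(M)$ and hence has vanishing $\rad_{S_j}$, and then apply $f$. (d) If $N$ is a direct sum of copies of $S_j$, then $\rad_{S_j}(N)=0$ (the maps $N\to S_j$ separate the points of $N$), so every $f\df M\to N$ with such an $N$ kills $\rad_{S_j}(M)$ by (a).

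Now I would assemble the three parts. For (i): write $(X_k)$ and $(Y_k)$ for the refined socle series chains of $X$ and $Y$, and show $f(X_k)\subseteq Y_k$ by descending induction from $k=r$ (where both terms are $0$); the step is that $f$ induces $\bar f\df X/X_k\to Y/Y_k$ and, by (a), $\bar f$ carries $X_{k-1}/X_k=\soc_{S_{i_k}}(X/X_k)$ into $\soc_{S_{i_k}}(Y/Y_k)=Y_{k-1}/Y_k$; for $k=0$ this gives $f(\soc_\ii(X))\subseteq\soc_\ii(Y)$, and the symmetric ascending induction along the refined top series chains (using (a) for $\rad_{S_{i_k}}$) gives $f(\rad_\ii(X))\subseteq\rad_\ii(Y)$. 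For (ii), two of the four cases are formal: if $f$ is a monomorphism, then by (i) the map $\rad_\ii(X)\to\rad_\ii(Y)$ is merely the restriction of $f$, hence a monomorphism; if $f$ is an epimorphism, then $X/\soc_\ii(X)\to Y/\soc_\ii(Y)$ is obtained by factoring the epimorphism $X\to Y\to Y/\soc_\ii(Y)$ through $X/\soc_\ii(X)$ and is therefore an epimorphism. For the other two I would re‑run the inductions of (i) quantitatively: if $f$ is a monomorphism, one shows $f^{-1}(Y_k)=X_k$ along the socle chains by descending induction (the step uses (b) for the injective induced map $\bar f$), whence $f^{-1}(\soc_\ii(Y))=\soc_\ii(X)$ and the induced map on socle quotients has trivial kernel; if $f$ is an epimorphism, one shows $f(X_k)=Y_k$ along the top chains by ascending induction (the step uses (c)), whence $f(\rad_\ii(X))=\rad_\ii(Y)$ and the induced map on radicals is onto. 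For (iii): let $(X_k)$ be the refined top series chain of $X$ and $(Y_k)$ the refined socle series chain of $Y$, so that $Y_r=0$ because $\soc_\ii(Y)=Y$; prove $f(X_k)\subseteq Y_k$ by ascending induction, the step being that the composite $X_{k-1}\to Y_{k-1}\to Y_{k-1}/Y_k=\soc_{S_{i_k}}(Y/Y_k)$ maps into a direct sum of copies of $S_{i_k}$ and hence annihilates $X_k=\rad_{S_{i_k}}(X_{k-1})$ by (d); for $k=r$ this reads $f(\rad_\ii(X))\subseteq Y_r=0$.

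The argument is routine throughout — which is exactly why the lemma is advertised as straightforward — so there is no genuine obstacle. If any step deserves attention it is (c), where the identity $f^{-1}(\rad_{S_j}(N))=\rad_{S_j}(M)+\Ker(f)$ is a little more than pure diagram‑chasing, and the standing hypothesis that $M$ and $N$ may be infinite‑dimensional, which forbids shortcutting the $\soc$--$\rad$ duality through the functor $\Hom_\C(-,\C)$. One should also not forget to note, as (i) guarantees, that the ``induced maps'' appearing in part (ii) are precisely the restriction and the corestriction of $f$.
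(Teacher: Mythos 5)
Your proof is correct. The paper gives no argument at all for this lemma, simply labelling it ``straightforward'' and moving on, so there is no ``paper approach'' to compare against; your reduction to the four single-vertex facts about $\soc_{S_j}$ and $\rad_{S_j}$, propagated along the defining chains of $\soc_\ii$ and $\rad_\ii$ by induction on the chain index, is the natural way to fill in the omitted details. The two points you singled out as deserving care — the identity $f^{-1}(\rad_{S_j}(N))=\rad_{S_j}(M)+\Ker(f)$ in (c), whose proof correctly passes through the observation that $M/(\rad_{S_j}(M)+\Ker f)$ is a quotient of the $S_j$-isotypic semisimple module $\tp_{S_j}(M)$, and the use of the $\GG_0$-graded description of $\soc_{S_j}$, $\tp_{S_j}$ as (possibly infinite) direct sums of copies of $S_j$ rather than any duality shortcut — are indeed the only places where something nontrivial happens, and both are handled correctly.
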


For $1 \le k,s \le r$ define
$$
J_{k,s} := 
\begin{cases}
J_{i_k}J_{i_{k-1}} \cdots J_{i_s} & \text{if $k \ge s$},\\
A & \text{otherwise}.
\end{cases}
$$
Also the next lemma is easy to show:

\begin{Lem}\label{Jrad}
For an arbitrary (not necessarily finite-dimensional) $A$-module
$X$ and $1 \le k \le r$ we have
$J_{k,1}X = X_k^- = \rad_{(i_k,\ldots,i_1)}(X)$.
\end{Lem}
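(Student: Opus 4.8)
The strategy is to prove both equalities $J_{k,1}X = X_k^-$ and $X_k^- = \rad_{(i_k,\ldots,i_1)}(X)$ at once by induction on $k$, using the recursive description of the refined top series. The second equality is almost definitional: by construction $X_k^- = X_k^{\ii,-}$ is precisely the $k$-th term of the chain defining $\rad_\ii(-)$, and replacing the sequence $\ii = (i_r,\ldots,i_1)$ by its truncation $(i_k,\ldots,i_1)$ does not change the first $k$ steps of the construction; hence $X_k^{(i_k,\ldots,i_1),-} = X_k^{\ii,-}$, and for the truncated sequence the $k$-th term is by definition $\rad_{(i_k,\ldots,i_1)}(X)$. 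So the real content is $J_{k,1}X = X_k^-$.

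For this, first recall from Section~\ref{reminder}'s discussion of the refined top series that $X_{k-1}/X_k^- = \tp_{S_{i_k}}(X_{k-1}^-)$, which translates, via the representation-theoretic description of $\rad_{S_j}$ given just before Lemma~\ref{homsoc}, into: $X_k^-$ is the subrepresentation of $X_{k-1}^-$ obtained by replacing the component at vertex $i_k$ by $\sum_{a\in\GG_1,\,t(a)=i_k}\Ima(X_{k-1}^-(a))$ and leaving all other components unchanged. In module-theoretic terms this says exactly $X_k^- = J_{i_k} X_{k-1}^-$: indeed, $J_{i_k}$ is spanned by all paths not equal to $e_{i_k}$, so $J_{i_k}M$ is the submodule of $M$ whose vertex-$i_k$ component is spanned by the images $a\cdot M$ for arrows $a$ ending at $i_k$ (every longer path ending at $i_k$ factors through such an $a$), and whose component at any vertex $j\ne i_k$ is all of $e_j M$ (take $a = e_j$ — note $e_j \in J_{i_k}$). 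The base case $k=0$ is $X_0^- = X = A X$, which holds since $J_{0,1} = A$ by the convention $J_{k,s}=A$ for $k<s$.

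Now induct: assuming $X_{k-1}^- = J_{k-1,1}X$, we get $X_k^- = J_{i_k}X_{k-1}^- = J_{i_k}J_{k-1,1}X = (J_{i_k}J_{i_{k-1}}\cdots J_{i_1})X = J_{k,1}X$, using associativity of multiplication in $A$ and the definition $J_{k,1} = J_{i_k}J_{i_{k-1}}\cdots J_{i_1}$. This completes the induction and hence the proof. The only step that needs a little care — and the place where I would write out the argument rather than assert it — is the identification $J_{i_k}M = (\text{the subrepresentation described above})$, i.e.\ checking that $J_{i_k}M$ has full component at every vertex $\ne i_k$ and the claimed component at $i_k$; this follows from the explicit spanning set of $J_{i_k}$ by residue classes of paths together with the fact that $J\subseteq\C\GG_{\ge 2}$, so that in the nilpotent setting these products behave as expected. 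Everything else is bookkeeping.
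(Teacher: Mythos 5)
Your argument is correct, and since the paper omits the proof entirely (``Also the next lemma is easy to show''), it is exactly the argument the authors have in mind: reduce to the single-step identity $\rad_{S_j}(M)=J_j M$, verify it vertex-by-vertex from the explicit description of $\rad_{S_j}$ given before Lemma~\ref{homsoc} and the spanning set of $J_j$, and then induct using $X_k^-=\rad_{S_{i_k}}(X_{k-1}^-)$ and $J_{k,1}=J_{i_k}J_{k-1,1}$. One small remark: your closing hedge about ``the nilpotent setting'' is unnecessary — the identity $\rad_{S_j}(M)=J_j M$ holds for arbitrary $A$-modules $M$ exactly as you computed ($e_iJ_j=e_iA$ for $i\neq j$, and $e_jJ_j=\sum_{a:\,t(a)=j}\overline a\,e_{s(a)}A$), and the lemma is stated for arbitrary $X$, so no nilpotency is used anywhere.
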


\begin{Cor}\label{Jrad2}
The algebra $A/J_{k,1}$ is finite-dimensional for all $1 \le k \le r$.
\end{Cor}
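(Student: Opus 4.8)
The plan is to exhibit a finite-dimensional quotient of $A$ through which $A/J_{k,1}$ factors; concretely, I would show that $J_{k,1}$ contains the image in $A$ of the ideal $\C\GG_{\ge k}$ of all paths of length $\ge k$.

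First I would record the following elementary facts about the ideals $J_i$: since $J_i$ is spanned by the residue classes $\ov{p}$ of all paths $p\ne e_i$, it contains $\ov{a}$ for every arrow $a$ of $\GG$ (an arrow being a path of length $1$, hence $\ne e_i$), and in fact it contains the residue class of every path of positive length; in particular $J_i$ contains the image of $\C\GG_{\ge 1}$ in $A$.

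The key step is then the containment: the image of $\C\GG_{\ge k}$ in $A$ lies in $J_{k,1}=J_{i_k}J_{i_{k-1}}\cdots J_{i_1}$. To see this, note that $\C\GG_{\ge k}$ is spanned by the paths of length $\ge k$, and any such path $p$ can be written in $\C\GG$ as a product $p=q_k q_{k-1}\cdots q_1$ in which $q_1,\dots,q_{k-1}$ are arrows and $q_k$ is a path of positive length (simply regroup the arrows composing $p$, absorbing the surplus arrows into $q_k$). By the previous paragraph $\ov{q_s}\in J_{i_s}$ for each $s$, so $\ov{p}=\ov{q_k}\,\ov{q_{k-1}}\cdots\ov{q_1}\in J_{i_k}J_{i_{k-1}}\cdots J_{i_1}=J_{k,1}$. (If $\GG$ admits no path of length $\ge k$, then $\C\GG_{\ge k}=0$, the containment is trivial, and $A$ is already finite-dimensional.)

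Granting this, if $I_k$ denotes the image of $\C\GG_{\ge k}$ in $A$, then the canonical surjection $A\to A/J_{k,1}$ factors through $A/I_k$; and $A/I_k$ is itself a quotient of $\C\GG/\C\GG_{\ge k}$, which has the (finitely many, since $\GG$ is a finite quiver) paths of length $<k$ as a basis and is therefore finite-dimensional. Hence $A/J_{k,1}$ is finite-dimensional. I do not anticipate any genuine difficulty here — the only point requiring a little care is the degenerate situation where $k$ exceeds the length of every path in $\GG$, already dealt with above. One could alternatively phrase the argument through Lemma~\ref{Jrad}, which identifies $J_{k,1}$ with $\rad_{(i_k,\dots,i_1)}(A)$, but that identification alone does not yield finite-dimensionality, so the direct path-factorization argument seems cleanest.
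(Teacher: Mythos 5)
Your proof is correct, and it is a complete, self-contained argument. The key inclusion, that the image of $\C\GG_{\ge k}$ in $A$ lies in $J_{k,1}=J_{i_k}\cdots J_{i_1}$, follows exactly as you say from factoring any path of length $\ge k$ as a product of $k$ positive-length subpaths and observing that each $J_i$ contains the residue class of every positive-length path; combined with the finiteness of $\GG$ (so $\C\GG/\C\GG_{\ge k}$, and hence its quotient $A/J_{k,1}$, is finite-dimensional), this is airtight, and your handling of the degenerate case $\C\GG_{\ge k}=0$ is fine though not strictly necessary. The paper's own proof is a one-line pointer to Lemma~\ref{Jrad} together with the finiteness of $\GG$; as you rightly note, Lemma~\ref{Jrad} identifies $J_{k,1}$ with $\rad_{(i_k,\ldots,i_1)}(A)$ but by itself does not exhibit the finite codimension, so the reader of the paper is still expected to supply essentially the same path-length bookkeeping you carry out explicitly. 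Thus your route is not deeply different from the intended one, but it is more explicit and avoids the detour through the module-theoretic description of the refined top series, which is a reasonable choice here since the inclusion $\ov{\C\GG_{\ge k}}\subseteq J_{k,1}$ is what actually does the work in either phrasing.
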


\begin{proof}
Use Lemma~\ref{Jrad} and the fact that the quiver $\GG$ of $A$ is
finite.
\end{proof}

Let $\cD_\ii$ be the category of all $A$-modules
$X$ in $\md(A)$ such that $\soc_\ii(X) = X$.

\begin{Lem}
For an $A$-module $X$ the following are equivalent:
\begin{itemize}

\item[(i)]
$X \in \cD_\ii$.

\item[(ii)]
$\soc_\ii(X) = X$.

\item[(iii)]
$\rad_\ii(X) = 0$.

\end{itemize}
\end{Lem}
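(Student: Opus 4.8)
The plan is to prove the equivalence of the three conditions by showing $(i)\Leftrightarrow(ii)$ is immediate from the definition of $\cD_\ii$, and then establishing $(ii)\Leftrightarrow(iii)$ as the substantive content. Since $\soc_\ii(X) = X_0$ and $\rad_\ii(X) = X_r$ in the notation set up above, the claim is that for a finite-dimensional $X$ we have $X_0 = X$ if and only if $X_r = 0$. First I would recall that both chains $X_\bullet^+$ and $X_\bullet^-$ are built by the same recipe of successively removing (or reaching) the $S_{i_k}$-isotypic part of the socle, respectively the $S_{i_k}$-isotypic part of the top; the point is to relate "the socle series of type $\ii$ exhausts $X$" with "the top series of type $\ii$ reaches $0$".

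The key observation is that for any $A$-module $X$ and any index $j$, the condition $\soc_{S_j}(X) = 0$ is equivalent to saying $J_j X' \neq 0$ for every nonzero submodule... more usefully, I would argue via Lemma~\ref{Jrad}: it gives $X_k^- = J_{k,1}X$, so $\rad_\ii(X) = X_r^- = J_{r,1}X = J_{i_r}J_{i_{r-1}}\cdots J_{i_1} X$. Thus $(iii)$ says exactly that the product of ideals $J_{i_r}\cdots J_{i_1}$ annihilates $X$. On the other side, I claim $\soc_\ii(X) = X$ is equivalent to the same statement. For this, I would dualize: applying the duality $D = \Hom_\C(-,\C)$ interchanges socle and top constructions, and interchanges the sequence $\ii$ with its reverse acting on the opposite algebra; so $\soc_\ii(X) = X$ for $X$ over $A$ corresponds to $\rad$ of the reversed sequence vanishing on $DX$ over $A^{\op}$, which by the already-proven half (or by Lemma~\ref{Jrad} applied over $A^{\op}$) is again the statement that a certain product of maximal ideals kills $DX$, equivalently kills $X$.

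Concretely the steps in order: (1) note $(i)\Leftrightarrow(ii)$ by definition of $\cD_\ii$. (2) Use Lemma~\ref{Jrad} to rewrite $(iii)$ as $J_{i_r}J_{i_{r-1}}\cdots J_{i_1}\cdot X = 0$. (3) Establish the dual statement: $\soc_\ii(X) = X$ if and only if $X = \{x \in X \mid J_{i_r}\cdots J_{i_1}\, x = 0\}$, i.e.\ the same annihilation condition, by an induction on $r$ using that $\soc_{S_j}(X) = \{x \mid J_j x = 0\}$ together with the computation $\soc_\ii(X)$ being the preimage in $X$ of $\soc_{(i_{r-1},\ldots,i_1)}(X/\soc_{S_{i_r}}(X))$ — unwinding this gives exactly $\{x \mid J_{i_1}\cdots$ no wait, one must be careful about the order: the recursion $X_{k-1}/X_k = \soc_{S_{i_k}}(X/X_k)$ processed from $k=r$ down shows $X_0 = \{x \mid J_{i_1}J_{i_2}\cdots J_{i_r}\cdot$... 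I would verify the precise ordering of the $J$'s by a small induction and reconcile it with step (2); the orders match up to the reversal induced by passing between left-module annihilators and the ideal-product in Lemma~\ref{Jrad}. (4) Conclude $(ii)\Leftrightarrow(iii)$.

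The main obstacle I anticipate is bookkeeping the order of composition of the maximal ideals $J_{i_k}$ in the two filtrations and making sure the "socle side" and the "radical side" genuinely produce the same annihilator ideal rather than its reverse — this is where one needs either a clean duality argument ($D$ sends $A$ to $A^{\op}$, reverses $\ii$, and swaps $\soc$ with $\tp$, so the two conditions become literally the same condition transported across $D$) or a careful direct induction. Given the lemma is asserted to follow from the preceding material, I expect the duality route plus Lemma~\ref{Jrad} to close it in a few lines; the honest work is just confirming that $D(\soc_{S_j} X) = \tp_{S_j}(DX)$ and that $D$ intertwines $\soc_\ii$ on $\md(A)$ with $\rad_{\ii^{\mathrm{rev}}}$ on $\md(A^{\op})$, after which everything is formal.
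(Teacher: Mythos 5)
Your proof is correct and takes essentially the same route as the paper, whose proof is simply the one-liner ``by definition, (i) and (ii) are equivalent; the equivalence of (ii) and (iii) follows by an obvious induction on the length $r$.'' To resolve the ordering worry you flag: the descending induction $X_{r-1} = \{x \mid J_{i_r}x = 0\}$, $X_{r-2} = \{x \mid J_{i_r}J_{i_{r-1}}x = 0\}$, \ldots, $X_0 = \{x \mid J_{i_r}\cdots J_{i_1}x = 0\}$ shows $\soc_\ii(X) = X$ is precisely the condition $J_{r,1}X = 0$, which by Lemma~\ref{Jrad} is $\rad_\ii(X) = 0$ --- the orders agree with no reversal, so the duality detour is unnecessary.
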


\begin{proof}
By definition, (i) and (ii) are equivalent.
The equivalence of (ii) and (iii) follows
by an obvious induction on the length $r$ of the sequence $\ii$.
\end{proof}

Let $A_\ii := A/J_{r,1}$.
We identify the category $\md(A_\ii)$ of finite-dimensional $A_\ii$-modules
with the category of all $X$ in $\nil(A)$ such that $J_{r,1}X = 0$.
Under this identification we obviously get the following:

\begin{Lem}\label{D1}
We have
$\cD_\ii = \md(A_\ii)$.
\end{Lem}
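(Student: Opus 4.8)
The plan is to read off the equality of categories directly from the characterizations already established, so the proof will be very short. First I would unwind the definition of $\cD_\ii$: by definition an object of $\cD_\ii$ is an $X\in\md(A)$ with $\soc_\ii(X)=X$, and by the equivalence of conditions (ii) and (iii) in the lemma immediately preceding Lemma~\ref{D1} this holds if and only if $\rad_\ii(X)=0$.

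Next I would apply Lemma~\ref{Jrad} with $k=r$. Since $\ii=(i_r,\ldots,i_1)$, that lemma yields $\rad_\ii(X)=\rad_{(i_r,\ldots,i_1)}(X)=J_{r,1}X$ for every $A$-module $X$. Combining this with the previous step, $X\in\cD_\ii$ if and only if $X\in\md(A)$ and $J_{r,1}X=0$, \ie if and only if $X$ is a finite-dimensional module over $A_\ii=A/J_{r,1}$. To see that this is exactly the description of $\md(A_\ii)$ used in the text, I would note that a module annihilated by $J_{r,1}$ is automatically nilpotent: indeed $X/J_{r,1}X = X/X_r^- = \tp_\ii(X)$ carries the finite filtration $X=X_0\supseteq X_1\supseteq\cdots\supseteq X_r$ with $X_{k-1}/X_k=\tp_{S_{i_k}}(X_{k-1})$ a direct sum of copies of $S_{i_k}$, hence $\tp_\ii(X)\in\nil(A)$ and therefore $X\in\nil(A)$ whenever $J_{r,1}X=0$. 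Conversely, any finite-dimensional $X\in\nil(A)$ with $J_{r,1}X=0$ is a finite-dimensional $A_\ii$-module; the finite-dimensionality constraint is built into the definition of $\cD_\ii$ on one side, and is harmless on the $A_\ii$-side because $A_\ii$ is finite-dimensional by Corollary~\ref{Jrad2}.

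Finally I would observe that $\cD_\ii$ and $\md(A_\ii)$ are both full subcategories, with morphisms on either side given by all $A$-linear (equivalently $A_\ii$-linear) maps between the relevant modules, so the coincidence of their object classes upgrades to an equality of categories $\cD_\ii=\md(A_\ii)$. There is essentially no obstacle in this argument; the only point that deserves a sentence is the bookkeeping of the nilpotency and finite-dimensionality conventions entering the identification of $\md(A_\ii)$ with a subcategory of $\nil(A)$, and this is dispatched by the filtration remark on $\tp_\ii(X)$ together with Corollary~\ref{Jrad2}.
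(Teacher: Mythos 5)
Your proof is correct and takes essentially the same route the paper intends: the paper dispatches the lemma as an immediate consequence of the identification of $\md(A_\ii)$ with the nilpotent $A$-modules annihilated by $J_{r,1}$, and your argument just makes that "obviously" explicit via the equivalence $\soc_\ii(X)=X \Leftrightarrow \rad_\ii(X)=0$ and Lemma~\ref{Jrad}. The only minor stylistic wrinkle is the sentence about $X/J_{r,1}X=\tp_\ii(X)$ carrying the filtration: once $J_{r,1}X=0$ you have $X=\tp_\ii(X)$ outright, so you could simply say the refined top series of $X$ terminates at $X_r^-=0$ and has factors that are finite direct sums of the $S_{i_k}$, forcing $X\in\nil(A)$; but the content is the same.
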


\subsection{Partial composition series}

\begin{Def}
{\rm 
For $X \in \cD_\ii$ and $\ba = (a_r,\ldots,a_1)$ with $a_j \ge 0$ let
$\F_{\ii,\ba,X}$ be the (possibly empty) set of chains
$
X_\bullet = 
(0 = X_r \subseteq \cdots \subseteq X_1 \subseteq X_0 = X)
$
of submodules $X_k$ of $X$ such that
$X_{k-1}/X_k \cong S_{i_k}^{a_k}$ for all $1 \le k \le r$.
We call $(0 = X_r \subseteq \cdots \subseteq X_1 \subseteq X_0 = X)$
a \emph{partial composition series} of \emph{type} $\ii$ of $X$.
}
\end{Def}

Clearly, $\F_{\ii,\ba,X}$ is a projective variety.
The \emph{weight} of $X_\bullet \in \F_{\ii,\ba,X}$ is defined by
$$
\wt(X_\bullet) := (a_r,\ldots,a_2,a_1).
$$
If $X_\bullet = X_\bullet^-$ (resp. $X = X_\bullet^+$), we define
$\ba^-(X) := \wt(X_\bullet^-)$ and
$\ba_k^-(X) := a_k$ (resp. $\ba^+(X) := \wt(X_\bullet^+)$ and  
$\ba_k^+(X) := a_k$) for all $1 \le k \le r$.

\begin{Lem}\label{lemma3}
For $X \in \cD_\ii$ and
$(X_r \subseteq \cdots \subseteq X_1 \subseteq X_0) \in \F_{\ii,\ba,X}$
we have 
$$
X_k^- \subseteq X_k \subseteq X_k^+
$$
for all $1 \le k \le r$.
\end{Lem}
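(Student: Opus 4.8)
The plan is to prove the two inclusions $X_k^-\subseteq X_k$ and $X_k\subseteq X_k^+$ separately, by induction on $k$, exploiting the fact that the refined top series $X_\bullet^-$ is built from iterated $\tp_{S_{i_k}}$ quotients and the refined socle series $X_\bullet^+$ from iterated $\soc_{S_{i_k}}$ subobjects, whereas an arbitrary chain $X_\bullet\in\F_{\ii,\ba,X}$ only has $S_{i_k}$-isotypic quotients $X_{k-1}/X_k$. For the first inclusion, note that $X_0^-=X=X_0$, and suppose inductively that $X_{k-1}^-\subseteq X_{k-1}$. Since $X_{k-1}/X_k\cong S_{i_k}^{a_k}$, the quotient $X_{k-1}/X_k$ is a (finite) direct sum of copies of $S_{i_k}$; hence the canonical surjection $X_{k-1}\to X_{k-1}/X_k$ factors through $X_{k-1}\to\tp_{S_{i_k}}(X_{k-1})=X_{k-1}/X_k^-$, because by definition $X_k^-$ (relative to $X_{k-1}$, \ie $\rad_{S_{i_k}}$ of $X_{k-1}$) is the smallest submodule with $S_{i_k}$-isotypic quotient. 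Concretely, $X_k$ contains $\rad_{S_{i_k}}(X_{k-1})$; and by Lemma~\ref{Jrad} applied to the module $X_{k-1}$ with the length-one sequence $(i_k)$, we have $\rad_{S_{i_k}}(X_{k-1})=J_{i_k}X_{k-1}$. Since $X_k^- = J_{i_k}X_{k-1}^-\subseteq J_{i_k}X_{k-1}=\rad_{S_{i_k}}(X_{k-1})\subseteq X_k$, the induction step goes through.

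For the second inclusion $X_k\subseteq X_k^+$, I would likewise induct on $k$, now going \emph{up} the chains. Here $X_r^+=0\subseteq\cdots$ is not directly comparable at the bottom, so instead I would argue from the top down, or equivalently reformulate using the dual characterization: $X_k^+$ is characterized by $X_{k-1}^+/X_k^+=\soc_{S_{i_k}}(X/X_k^+)$, \ie $X_{k-1}^+$ is the preimage in $X$ of the $S_{i_k}$-socle of $X/X_k^+$. I would show by downward induction on $k$ that $X_k\subseteq X_k^+$. For $k=0$ this is equality. Assume $X_{k-1}\subseteq X_{k-1}^+$; I must deduce $X_k\subseteq X_k^+$. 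The module $X_{k-1}/X_k\cong S_{i_k}^{a_k}$ is $S_{i_k}$-isotypic, so it embeds into $\soc_{S_{i_k}}(X_{k-1}/X_k)$, and hence $X_{k-1}/X_k\hookrightarrow\soc_{S_{i_k}}(X/X_k)$; thus the image of $X_{k-1}$ in $X/X_k$ lies in $\soc_{S_{i_k}}(X/X_k)$. I want to compare this with $X_{k-1}^+/X_k^+=\soc_{S_{i_k}}(X/X_k^+)$. The cleanest route is: since $X_k$ is a submodule of $X$ with $X_{k-1}/X_k$ being $S_{i_k}$-isotypic and $X_{k-1}\subseteq X_{k-1}^+$, one checks directly that $X_k\supseteq$ (nothing forces this) — rather, one uses that $\soc_{S_{i_k}}$ is the \emph{largest} $S_{i_k}$-isotypic submodule, so any submodule $Y$ with $X_{k-1}\subseteq Y\subseteq X$ and $X_{k-1}/Y$... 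This needs care; see below.

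The cleanest uniform argument, which I would actually carry out, is to reduce both inclusions to the extremal (maximal/minimal) defining properties of $\soc$ and $\rad$ via Lemma~\ref{homsoc}. Consider the composite chain maps: for each $k$, apply $\rad_{(i_k,\dots,i_1)}(-)$ and $\soc_{(i_1,\dots,i_k)}(-)$-type operators to $X$ and use Lemma~\ref{homsoc}(i) together with the observation that for the chain $X_\bullet$ one has $J_{k,1}X\subseteq X_k$ (since each successive quotient $X_{s-1}/X_s$ is killed by $J_{i_s}$, so $J_{i_k}\cdots J_{i_1}X\subseteq X_k$ by a straightforward iteration), giving $X_k^- = J_{k,1}X\subseteq X_k$ by Lemma~\ref{Jrad}; and dually, $X_k^+$ is the preimage of $\soc$-series data and $X/X_k$ has its bottom $k$ layers annihilated suitably so that $X_k\subseteq X_k^+$ follows by the maximality in the definition of $\soc_{S_{i_k}}$ applied successively. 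The main obstacle I anticipate is the $X_k\subseteq X_k^+$ half: unlike the $\rad$ side, where the single clean identity $X_k^-=J_{k,1}X$ (Lemma~\ref{Jrad}) does all the work at once, there is no equally slick closed form for $X_k^+$, so one must carefully track the socle series of the successive quotients $X/X_k$ and verify that the $S_{i_k}$-isotypic layer $X_{k-1}/X_k$ is \emph{contained in} the corresponding socle layer — this requires knowing $X_{k-1}\subseteq X_{k-1}^+$ and then invoking that $\soc_{S_{i_k}}(X/X_k^+)$ is maximal among $S_{i_k}$-isotypic submodules of $X/X_k^+$, pulling back along $X/X_k^+\twoheadleftarrow X$. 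I expect this to go through with a short diagram chase but it is the step deserving the most attention.
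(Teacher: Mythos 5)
Your argument for $X_k^-\subseteq X_k$ is correct and essentially optimal: the identity $X_k^-=J_{k,1}X$ from Lemma~\ref{Jrad}, combined with the observation that $J_{i_s}(X_{s-1}/X_s)=0$ for each $s$ (so that $J_{k,1}X=J_{i_k}\cdots J_{i_1}X\subseteq X_k$ by a one-line iteration), gives the inclusion at once. That half is fine.

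The half $X_k\subseteq X_k^+$, however, has a genuine gap, and you have in fact flagged it yourself without resolving it. The trouble is the direction of your induction. You propose to assume $X_{k-1}\subseteq X_{k-1}^+$ and deduce $X_k\subseteq X_k^+$, i.e.\ to pass from a statement about the larger submodule to one about the smaller. This cannot work as stated: the step you describe requires a map that would push $\soc_{S_{i_k}}(X/X_k)$ into $\soc_{S_{i_k}}(X/X_k^+)$, and such a map (an epimorphism $X/X_k\to X/X_k^+$) only exists once you already know $X_k\subseteq X_k^+$ -- which is precisely what you are trying to prove. Your sentence ``pulling back along $X/X_k^+\twoheadleftarrow X$'' is not the relevant map, and the paragraph trails off without closing the argument.

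The correct move is to run the induction in the opposite direction, decreasing in $k$ starting from $k=r$. The base case is not problematic at all (contrary to your remark that ``$X_r^+=0\subseteq\cdots$ is not directly comparable at the bottom''): by definition $X_r=X_r^+=0$, so the base case $X_r\subseteq X_r^+$ is trivial. For the induction step, assume $X_s\subseteq X_s^+$ for some $1\le s\le r$. Then there is an epimorphism $\pi\colon X/X_s\to X/X_s^+$. Since $X_{s-1}/X_s\cong S_{i_s}^{a_s}$ is a direct sum of copies of $S_{i_s}$, it lies inside $\soc_{S_{i_s}}(X/X_s)$, and Lemma~\ref{homsoc}(i) applied to $\pi$ gives
$\pi(\soc_{S_{i_s}}(X/X_s))\subseteq\soc_{S_{i_s}}(X/X_s^+)=X_{s-1}^+/X_s^+$.
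Hence for every $x\in X_{s-1}$ one has $x+X_s^+\in X_{s-1}^+/X_s^+$, so $x\in X_{s-1}^+$ (using $X_s^+\subseteq X_{s-1}^+$). This completes the step from $s$ to $s-1$, and with it the proof. Note the asymmetry with the radical half: there the convenient closed form $X_k^-=J_{k,1}X$ lets you avoid induction entirely, whereas for the socle half the decreasing induction is forced by the way $X_k^+$ is built up from $X_r^+=0$.
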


\begin{proof}
For $1 \le k \le r$ we show that
$X_k \subseteq X_k^+$ by decreasing induction
on $k$.
Clearly, we have $X_r \subseteq X_r^+$. 
(By definition, $X_r = X_r^+ = 0$.)
Next, assume that $X_s \subseteq X_s^+$ for some $1 \le s \le r$.
Thus, there is an epimorphism $\pi\df X/X_s \to X/X_s^+$.
We have $X_{s-1}/X_s \subseteq \soc_{S_{i_s}}(X/X_s)$, and
by definition $X_{s-1}^+/X_s^+ = \soc_{S_{i_s}}(X/X_s^+)$.
This implies that $\pi(X_{s-1}/X_s) \subseteq X_{s-1}^+/X_s^+$.
In other words, $x + X_s^+ \in X_{s-1}^+/X_s^+$ for all
$x \in X_{s-1}$.
For
each such $x$ there exists some $y \in X_{s-1}^+$ with
$x + X_s^+ = y + X_s^+$.
This implies that $x-y$ is in $X_s^+$.
Since $X_s^+ \subseteq X_{s-1}^+$ we get $x \in X_{s-1}^+$.
Thus we have proved that $X_k \subseteq X_k^+$ for all $1 \le k \le r$.
Similarly, one shows by induction on $k$ that 
$X_k^- \subseteq X_k$ for all $1 \le k \le r$.
\end{proof}

The next lemma follows from the uniqueness of refined
socle and top series.

\begin{Lem}\label{lemma3.1}
Let $X \in \cD_\ii$.
If $\ba$ is equal to $\wt(X_\bullet^-)$ or
$\wt(X_\bullet^+)$, then
$\chi\left(\F_{\ii,\ba,X}\right) = 1$.
\end{Lem}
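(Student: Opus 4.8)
Let $X \in \cD_\ii$. If $\ba$ is equal to $\wt(X_\bullet^-)$ or $\wt(X_\bullet^+)$, then $\chi(\F_{\ii,\ba,X}) = 1$.

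The plan is to prove that $\F_{\ii,\ba,X}$ reduces to a single point when $\ba = \wt(X_\bullet^+)$ or $\ba = \wt(X_\bullet^-)$; since the topological Euler characteristic of a point is $1$, this gives the claim.

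First I would check that the refined socle series $X_\bullet^+$ is itself a partial composition series of type $\ii$ of $X$, i.e. that $X_\bullet^+ \in \F_{\ii,\ba^+(X),X}$. Indeed $X_0^+ = \soc_\ii(X) = X$ since $X \in \cD_\ii$, we have $X_r^+ = 0$ by the construction of the refined socle series, and each quotient $X_{k-1}^+/X_k^+ = \soc_{S_{i_k}}(X/X_k^+)$ is a direct sum of copies of $S_{i_k}$, hence isomorphic to $S_{i_k}^{a_k^+(X)}$. Dually, $X_\bullet^- \in \F_{\ii,\ba^-(X),X}$: here $X_0^- = X$, $X_r^- = \rad_\ii(X) = 0$ (again because $X \in \cD_\ii$), and each $X_{k-1}^-/X_k^- = \tp_{S_{i_k}}(X_{k-1}^-)$ is a direct sum of copies of $S_{i_k}$. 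In particular the relevant varieties are nonempty.

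Next, given an arbitrary flag $Y_\bullet = (0 = Y_r \subseteq \cdots \subseteq Y_0 = X)$ in $\F_{\ii,\ba,X}$, Lemma~\ref{lemma3} yields $X_k^- \subseteq Y_k \subseteq X_k^+$ for all $k$. On the other hand the dimensions are rigidly determined by $\ba$: from $Y_{k-1}/Y_k \cong S_{i_k}^{a_k}$ we get $\dim Y_k = \dim X - \sum_{s=1}^k a_s$, and similarly $\dim X_k^\pm = \dim X - \sum_{s=1}^k a_s^\pm(X)$. So if $\ba = \wt(X_\bullet^+)$ then $\dim Y_k = \dim X_k^+$, which together with $Y_k \subseteq X_k^+$ forces $Y_k = X_k^+$ for every $k$; hence $Y_\bullet = X_\bullet^+$. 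In the same way, if $\ba = \wt(X_\bullet^-)$ then $\dim Y_k = \dim X_k^-$ and $X_k^- \subseteq Y_k$ force $Y_\bullet = X_\bullet^-$. Either way $\F_{\ii,\ba,X}$ is a single point and $\chi(\F_{\ii,\ba,X}) = 1$.

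The argument presents no real difficulty; the only thing requiring a little attention is the verification in the first step that $X_\bullet^+$ and $X_\bullet^-$ actually belong to the corresponding $\F_{\ii,\ba,X}$ — in particular that $X_r^+ = 0$ and $X_r^- = \rad_\ii(X) = 0$, which is precisely where the hypothesis $X \in \cD_\ii$ is used (recall that $X \in \cD_\ii$ is equivalent to $\soc_\ii(X) = X$ and to $\rad_\ii(X) = 0$). The rest is the dimension bookkeeping above combined with the containments of Lemma~\ref{lemma3} and the uniqueness of the refined socle and top series.
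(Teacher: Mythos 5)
Your proof is correct. The paper itself only offers the one-line remark that the lemma ``follows from the uniqueness of refined socle and top series,'' and your argument is exactly the right elaboration of that remark: you verify that $X_\bullet^+$ and $X_\bullet^-$ are genuine elements of the corresponding varieties $\F_{\ii,\ba,X}$ (using $X\in\cD_\ii$ to get $X_0^+=X$ and $X_r^-=0$), and then combine the containments $X_k^-\subseteq Y_k\subseteq X_k^+$ from Lemma~\ref{lemma3} with the fact that $\dim Y_k$ is determined by $\ba$ to conclude that $Y_\bullet$ must coincide with $X_\bullet^+$ (resp. $X_\bullet^-$). Nothing to add.
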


\begin{Cor}
For every $X \in \cD_\ii$ there exists some $\ba$ such that
$\chi\left(\F_{\ii,\ba,X}\right) \not= 0$.
\end{Cor}

\subsection{The modules $V_\ii$} \label{Vii}
Let $A = \C\GG/J$ be a basic algebra,
and let $\ii = (i_r,\ldots,i_1)$ with $1 \le i_k \le n$ for all $k$.
Without loss of generality we assume that for each $1 \le j \le n$
there exists some $k$ with $i_k = j$.
For $1 \le k \le r$ and $1 \le j \le n$ let
\begin{align*}
k^- &:= \max\{ 0,1 \le s \le k-1 \mid i_s = i_k \},\\
k^+ &:= \min\{ k+1 \le s \le r,r+1 \mid i_s = i_k \},\\
k_{\rm max} &:= \max\{ 1 \le s \le r \mid i_s = i_k \},\\
k_{\rm min} &:= \min\{ 1 \le s \le r \mid i_s = i_k \},\\
k_j &:= \max\{ 1 \le s \le r \mid i_s = j \}.
\end{align*}
For $1 \le k \le r$ define
$$
V_k := V_{\ii,k} := \soc_{(i_k,\ldots,i_1)}(\widehat{I}_{i_k})
$$ 
and
$V_\ii := V_1 \oplus \cdots \oplus V_r$.
We also set $V_0 := 0$.
For every $1 \le j \le n$ let 
$I_{\ii,j} := V_{k_j}$ and
$I_\ii := I_{\ii,1} \oplus \cdots \oplus I_{\ii,n}$.
The modules in $\add(I_\ii)$ are called $\ii$-\emph{injective}.

\begin{Lem}\label{lemma4}
For $1 \le k \le r$ we have
$V_k \cong D(e_{i_k}(A/J_{k,1}))$.
In particular, $V_k$ is an indecomposable
injective $A/J_{k,1}$-module. 
\end{Lem}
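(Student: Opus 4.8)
\textbf{Proof proposal for Lemma~\ref{lemma4}.}
The plan is to identify $V_k = \soc_{(i_k,\ldots,i_1)}(\widehat I_{i_k})$ inside $\widehat I_{i_k}$ explicitly, using the duality $D=\Hom_\C(-,\C)$ and the description of refined socle series via kernels of the structure maps. First I would recall that $\widehat I_{i_k} = D(Ae_{i_k})$ as a left $A$-module (the injective envelope of $S_{i_k}$ in $\nil(A)$ is the dual of the indecomposable projective right module $e_{i_k}A$ read the other way; more precisely $\widehat I_j \cong D(e_j A)$ with the obvious left action). Then a submodule of $\widehat I_j$ corresponds under $D$ to a quotient of $e_{i_k}A$, i.e. to a right ideal of $A$ contained in $e_{i_k}A$, and the inclusion order gets reversed. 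So the task becomes: show that the submodule $\soc_{(i_k,\ldots,i_1)}(\widehat I_{i_k})$ corresponds under this duality to the quotient $e_{i_k}(A/J_{k,1})$ of $e_{i_k}A$, equivalently that its annihilator (the right ideal it comes from) is $e_{i_k}J_{k,1}$.

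The key step is a translation between "taking $\soc_{S}$ repeatedly" on the module side and "factoring out $J$-words" on the dual side. By Lemma~\ref{Jrad}, for any $A$-module $Y$ we have $J_{k,1}Y = \rad_{(i_k,\ldots,i_1)}(Y)$, the $k$-th term of the refined \emph{top} series of $Y$ of type $(i_k,\ldots,i_1)$. I would dualize this: for $Y = \widehat I_{i_k} = D(e_{i_k}A)$, applying $D$ turns the refined top series of the right module $e_{i_k}A$ (of type $(i_1,\ldots,i_k)$, with the order reversed because $D$ is contravariant and swaps $\soc$ with $\tp$) into the refined socle series of $\widehat I_{i_k}$ of type $(i_k,\ldots,i_1)$. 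Concretely, $\tp_{S_j}$ on a right module becomes $\soc_{S_j}$ on its dual, and the submodule $e_{i_k}J_{k,1} = \rad_{(i_k,\ldots,i_1)}(e_{i_k}A)$ dualizes to the quotient $D(e_{i_k}A)/\big(e_{i_k}J_{k,1}\big)^{\perp} \cong \soc_{(i_k,\ldots,i_1)}(D(e_{i_k}A))$. Unwinding, $\soc_{(i_k,\ldots,i_1)}(\widehat I_{i_k}) \cong D(e_{i_k}A/e_{i_k}J_{k,1}) = D(e_{i_k}(A/J_{k,1}))$, which is the first assertion.

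For the "in particular" clause: once $V_k \cong D(e_{i_k}(A/J_{k,1}))$ is established, note that $e_{i_k}(A/J_{k,1})$ is an indecomposable projective right $A/J_{k,1}$-module (it is the projective cover of the simple right module at vertex $i_k$, and $A/J_{k,1}$ is finite-dimensional by Corollary~\ref{Jrad2}), so its $\C$-dual is an indecomposable injective left $A/J_{k,1}$-module. The socle of $D(e_{i_k}(A/J_{k,1}))$ is $D(e_{i_k}(A/J_{k,1})/e_{i_k}\rad)\cong S_{i_k}$, confirming it is the injective envelope of $S_{i_k}$ over $A/J_{k,1}$, hence indecomposable injective over that algebra. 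The main obstacle I anticipate is bookkeeping the order-reversal carefully: $D$ is contravariant, it exchanges left and right modules, it exchanges $\soc$ and $\tp$, and it reverses inclusions, so the sequence $\ii=(i_r,\ldots,i_1)$ appears in one order on the module side and the reversed order on the dual side; I would set up the dictionary once at the start and then the computation is routine. A secondary point worth a line is that $\widehat I_{i_k}$ may be infinite-dimensional, but $\soc_{(i_k,\ldots,i_1)}(\widehat I_{i_k})$ is killed by $J_{k,1}$ (it lies in $\cD_\ii$-like finite layers), so it is a finite-dimensional $A/J_{k,1}$-module and everything above takes place in the finite-dimensional world where the projective-injective duality is standard.
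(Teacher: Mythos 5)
Your proposal takes a genuinely different route from the paper. The paper never identifies $\widehat I_{i_k}$ with $D(e_{i_k}A)$: instead it uses a short sandwich argument. Since $J_{k,1}V_k=0$ and $\soc(V_k)\cong S_{i_k}$, the module $V_k$ embeds into the finite-dimensional injective envelope $D(e_{i_k}(A/J_{k,1}))$ of $S_{i_k}$ over $A/J_{k,1}$; and since $D(e_{i_k}(A/J_{k,1}))$ has socle $S_{i_k}$ and satisfies $\soc_{(i_k,\ldots,i_1)}$ equal to itself, Lemma~\ref{homsoc}(i) applied to the embedding $D(e_{i_k}(A/J_{k,1}))\hookrightarrow\widehat I_{i_k}$ shows that its image lands inside $\soc_{(i_k,\ldots,i_1)}(\widehat I_{i_k})=V_k$. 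Two mutual monomorphisms in finite dimensions give the isomorphism, all with no appeal to an explicit description of $\widehat I_{i_k}$ — only to its universal property. Your approach instead starts from the global identification $\widehat I_{i_k}\cong D(e_{i_k}A)$ and dualizes the refined top series of the right module $e_{i_k}A$; your order-bookkeeping does come out correctly (the right module $e_{i_k}A$ has its relevant top series of type $(i_1,\ldots,i_k)$, whose dual is the refined socle series of type $(i_k,\ldots,i_1)$ on $D(e_{i_k}A)$, and $e_{i_k}J_{k,1}=\rad_{(i_1,\ldots,i_k)}(e_{i_k}A)$ dualizes to $D(e_{i_k}(A/J_{k,1}))$).

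The one point you should not leave as ``a secondary point worth a line'' is the claim $\widehat I_{i_k}\cong D(e_{i_k}A)$. In the paper's setting $A$ may be infinite-dimensional (indeed it is, for $\LL$ outside Dynkin type), and then $D(e_{i_k}A)$ is the full linear dual, which is injective in $\md(A)$ but is generally \emph{not} a nilpotent module; it properly contains the injective envelope of $S_{i_k}$ in $\nil(A)$. The paper never makes this identification, precisely to avoid this. Your remark that the computation takes place in a finite-dimensional part is morally right, but as written the argument begins with the global dual: to close the gap you would instead need to show directly that $V_k = \{x\in\widehat I_{i_k}\mid J_{k,1}x=0\}$ (which follows from Lemma~\ref{homsoc}(i) and Lemma~\ref{Jrad}) and that this annihilator agrees with $\{f\in D(e_{i_k}A)\mid J_{k,1}f=0\}=(e_{i_k}J_{k,1})^{\perp}=D(e_{i_k}(A/J_{k,1}))$ — i.e.\ reformulate so you dualize over the finite-dimensional algebra $A/J_{k,1}$ only, never over $A$ itself. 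With that repair the argument is correct; without it, there is a real gap in the infinite-dimensional case.
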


\begin{proof}
Clearly, $J_{k,1}V_k = 0$.
Thus $V_k$ is an $A/J_{k,1}$-module.
We have $\soc_{S_{i_k}}(V_k) \cong S_{i_k}$.
Thus $V_k$ can be embedded into the indecomposable injective 
$A/J_{k,1}$-module $D(e_{i_k}(A/J_{k,1}))$.
We have
$\soc_{S_{i_k}}(D(e_{i_k}(A/J_{k,1}))) \cong S_{i_k}$.
Therefore $D(e_{i_k}(A/J_{k,1}))$ can be embedded into $\widehat{I}_{i_k}$.
Thus we get two monomorphisms
$$
V_k \xrightarrow{\iota_1} D(e_{i_k}(A/J_{k,1})) 
\xrightarrow{\iota_2} \widehat{I}_{i_k}.
$$
Since 
$\soc_{(i_k,\ldots,i_1)}(D(e_{i_k}(A/J_{k,1}))) = D(e_{i_k}(A/J_{k,1}))$,
we can apply Lemma~\ref{homsoc}(i) and get
$
\iota_2(D(e_{i_k}(A/J_{k,1}))) \subseteq 
\soc_{(i_k,\ldots,i_1)}(\widehat{I}_{i_k})$.
Since  $D(e_{i_k}(A/J_{k,1}))$ is finite-dimensional by 
Corollary~\ref{Jrad2}, this implies that
$V_k \cong D(e_{i_k}(A/J_{k,1}))$.
\end{proof}

\begin{Cor}
$V_\ii \in \cD_\ii$.
\end{Cor}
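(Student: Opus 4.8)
The statement to prove is the Corollary that $V_\ii \in \cD_\ii$, where $V_\ii = V_1 \oplus \cdots \oplus V_r$ and $\cD_\ii$ is the category of finite-dimensional $A$-modules $X$ with $\soc_\ii(X) = X$. Since $\cD_\ii$ is closed under finite direct sums (it equals $\md(A_\ii)$ by Lemma~\ref{D1}, and the class of modules annihilated by the fixed ideal $J_{r,1}$ is obviously closed under direct sums), it suffices to show $V_k \in \cD_\ii$ for each $1 \le k \le r$; that is, $V_k$ is finite-dimensional and $\soc_\ii(V_k) = V_k$.

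\textbf{Key steps.} First, finite-dimensionality of $V_k$ is already in hand: Lemma~\ref{lemma4} identifies $V_k \cong D(e_{i_k}(A/J_{k,1}))$, and $A/J_{k,1}$ is finite-dimensional by Corollary~\ref{Jrad2}. Second, I need $\soc_\ii(V_k) = V_k$, equivalently (by the Lemma characterizing $\cD_\ii$) $\rad_\ii(V_k) = 0$, equivalently $J_{r,1} V_k = 0$ by Lemma~\ref{Jrad}. Now $V_k = \soc_{(i_k,\ldots,i_1)}(\widehat{I}_{i_k})$ by definition, so $V_k$ lies in $\cD_{(i_k,\ldots,i_1)}$, i.e.\ $J_{k,1} V_k = 0$. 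Since $J_{r,1} = J_{r,k+1} J_{k,1} \subseteq A \cdot J_{k,1} = J_{k,1}$ (the ideals $J_{m,1}$ form a descending chain as $m$ grows, because $J_{r,1} = J_{i_r}\cdots J_{i_{k+1}} \cdot J_{i_k}\cdots J_{i_1}$ is contained in the right ideal generated by $J_{k,1}$, which is just $J_{k,1}$ as $J_{k,1}$ is a two-sided ideal), we get $J_{r,1} V_k \subseteq J_{k,1} V_k = 0$. Hence $\rad_\ii(V_k) = 0$ and $V_k \in \cD_\ii$.

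\textbf{Main obstacle.} There is essentially no obstacle — this is a bookkeeping corollary. The only point that requires a moment's care is the containment $J_{r,1} \subseteq J_{k,1}$ for $k \le r$, which follows directly from the definition $J_{k,s} = J_{i_k} J_{i_{k-1}} \cdots J_{i_s}$ (for $k \ge s$) by factoring $J_{r,1} = J_{r,k+1}\,J_{k,1}$ and using that $J_{k,1}$ is a two-sided ideal. Everything else is an immediate invocation of Lemmas~\ref{homsoc}, \ref{Jrad}, \ref{lemma4}, Corollary~\ref{Jrad2}, and the identification $\cD_\ii = \md(A/J_{r,1})$ from Lemma~\ref{D1}. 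The proof is therefore two lines: cite Lemma~\ref{lemma4} and Corollary~\ref{Jrad2} for finite-dimensionality and for $J_{k,1}V_k=0$, then observe $J_{r,1}V_k \subseteq J_{k,1}V_k = 0$, so $V_k \in \md(A_\ii) = \cD_\ii$, and conclude by additivity of $\cD_\ii$.
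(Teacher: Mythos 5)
Your proof is correct and takes essentially the same approach as the paper, which simply asserts $\soc_\ii(V_\ii)=V_\ii$ and cites Lemma~\ref{lemma4} and Corollary~\ref{Jrad2} for finite-dimensionality; you have usefully spelled out the one step the paper leaves implicit, namely that $J_{k,1}V_k=0$ together with $J_{r,1}=J_{r,k+1}J_{k,1}\subseteq J_{k,1}$ gives $J_{r,1}V_k=0$.
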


\begin{proof}
We have $\soc_\ii(V_\ii) = V_\ii$, and $V_\ii$ is finite-dimensional
by Corollary~\ref{Jrad2} and Lemma~\ref{lemma4}.
\end{proof}

\begin{Lem}\label{D2}
An $A_\ii$-module $X$ is injective if and only if
$X \in \add(I_\ii)$.
\end{Lem}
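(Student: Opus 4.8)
The claim is that for the finite-dimensional algebra $A_\ii = A/J_{r,1}$, the injective objects of $\md(A_\ii)$ are precisely the modules in $\add(I_\ii)$, where $I_\ii = I_{\ii,1}\oplus\cdots\oplus I_{\ii,n}$ with $I_{\ii,j} = V_{k_j}$. The plan is to prove this by exhibiting, for each vertex $j$, an isomorphism $I_{\ii,j}\cong D(e_j A_\ii)$, since the $D(e_j A_\ii)$ are exactly a complete set of indecomposable injective $A_\ii$-modules (as $A_\ii$ is a basic finite-dimensional algebra).

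\begin{proof}
We first check that $I_{\ii,j} = V_{k_j}$ is an $A_\ii$-module, i.e.\ that $J_{r,1}V_{k_j} = 0$. By definition $k_j = \max\{1\le s\le r\mid i_s = j\}$, so $k_j^+ \ge r+1$ and hence $J_{r,1} \subseteq J_{k_j,1}$ (both are products of the maximal ideals $J_{i_s}$, and $J_{r,1}$ is a product over the longer initial segment). By Lemma~\ref{lemma4} we have $V_{k_j}\cong D(e_{i_{k_j}}(A/J_{k_j,1})) = D(e_j(A/J_{k_j,1}))$, so $J_{k_j,1}V_{k_j} = 0$ and a fortiori $J_{r,1}V_{k_j} = 0$. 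Thus $V_{k_j}\in\md(A_\ii)$.

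Next I want to upgrade the isomorphism $V_{k_j}\cong D(e_j(A/J_{k_j,1}))$ to $V_{k_j}\cong D(e_j A_\ii) = D(e_j(A/J_{r,1}))$. Since $J_{r,1}\subseteq J_{k_j,1}$, the surjection $A\to A/J_{r,1}\to A/J_{k_j,1}$ induces an inclusion $e_j(A/J_{k_j,1})\hookrightarrow$ -- no, it induces a surjection $e_j(A/J_{r,1})\twoheadrightarrow e_j(A/J_{k_j,1})$ of right $A$-modules, hence by $D$ an inclusion $D(e_j(A/J_{k_j,1}))\hookrightarrow D(e_j A_\ii)$ of left $A_\ii$-modules. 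The key point is that this inclusion is an equality. To see this, note that $D(e_j A_\ii)$ is the injective envelope of $S_j$ in $\md(A_\ii)$, so $\soc_{S_j}(D(e_j A_\ii))\cong S_j$; applying Lemma~\ref{homsoc}(i) to the embedding $D(e_j A_\ii)\hookrightarrow \widehat I_j$ (which exists since $\widehat I_j$ is the injective envelope of $S_j$ over all of $A$) gives $D(e_j A_\ii)\subseteq \soc_{(i_{k_j},\ldots,i_1)}(\widehat I_j)$, because $J_{k_j,1}$ annihilates $D(e_j A_\ii)$ (as $J_{r,1}\subseteq J_{k_j,1}$... wait, that inclusion goes the wrong way for annihilation). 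Let me instead argue: $J_{r,1}$ annihilates $D(e_j A_\ii)$, and by Lemma~\ref{Jrad} applied to the sequence $(i_r,\ldots,i_1)$, a module $M$ with $J_{r,1}M = 0$ satisfies $\rad_\ii(M) = 0$, i.e.\ $M\in\cD_\ii$, i.e.\ $\soc_\ii(M) = M$. So $\soc_{(i_{k_j},\ldots,i_1)}(D(e_j A_\ii))$ -- hmm, that is the socle series only up through step $k_j$; but one checks directly from the definition of $\soc_\ii$ that $\soc_\ii(\widehat I_j) = V_\ii$-component at $j$ equals $\soc_{(i_{k_j},\ldots,i_1)}(\widehat I_j) = V_{k_j}$ once $i_{k_j} = j$ and no later index equals $j$, since the socle-at-$S_j$ steps after $k_j$ add nothing new to the $j$-component beyond what the earlier steps captured and the other steps don't involve $j$... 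The cleanest route: $D(e_j A_\ii)$ embeds in $\widehat I_j$, is annihilated by $J_{r,1}$, hence lies in $\soc_\ii(\widehat I_j)\cap D(e_j A_\ii)$-piece; and $V_{k_j}$ also embeds in $\widehat I_j$, is annihilated by $J_{r,1}$, and is the injective envelope of $S_j$ in the category of $A/J_{k_j,1}$-modules which contains $\md(A_\ii)$. Comparing the two embeddings in $\widehat I_j$ and using that both have simple socle $S_j$, by maximality of injective envelope $D(e_j A_\ii)\subseteq V_{k_j}$ inside $\widehat I_j$; but also $V_{k_j}\in\md(A_\ii)$ embeds into its injective envelope $D(e_j A_\ii)$ in $\md(A_\ii)$, giving the reverse inclusion of dimensions. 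Hence $V_{k_j}\cong D(e_j A_\ii)$.

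Having this, the proof concludes: $I_\ii = \bigoplus_{j=1}^n V_{k_j} \cong \bigoplus_{j=1}^n D(e_j A_\ii)$, which is an injective cogenerator of $\md(A_\ii)$, and its indecomposable summands $D(e_j A_\ii)$ are pairwise non-isomorphic (since $A_\ii$ is basic) and exhaust the indecomposable injectives. Therefore an $A_\ii$-module $X$ is injective if and only if it is a direct sum of copies of the $D(e_j A_\ii)$, i.e.\ if and only if $X\in\add(I_\ii)$.
\end{proof}

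\textbf{Main obstacle.} The routine part is recognizing $D(e_j A_\ii)$ as the generic indecomposable injective; the delicate part is the identification $V_{k_j}\cong D(e_j A_\ii)$ rather than merely $V_{k_j}\cong D(e_j(A/J_{k_j,1}))$ from Lemma~\ref{lemma4}. One must verify that passing from $J_{k_j,1}$ to the smaller-index-segment ideal $J_{r,1}$ does not enlarge the relevant injective hull at vertex $j$ -- equivalently, that the refined socle series of type $\ii$ for $\widehat I_j$ stabilizes (in its $j$-component, and in fact entirely) by step $k_j$. This is exactly where the hypothesis $k_j = \max\{s\mid i_s=j\}$ and the uniqueness of refined socle series (Lemma~\ref{homsoc} together with the construction preceding it) must be combined carefully; I expect this to be the only step requiring genuine argument, and in the author's text it is likely dispatched by a one-line appeal to Lemma~\ref{lemma4} and the definition of $k_j$.
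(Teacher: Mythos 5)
Your argument is correct, but it takes a noticeably longer, module-theoretic detour where the paper uses a one-line piece of ideal arithmetic. The paper simply observes that $e_j J_{r,1} = e_j J_{k_j,1}$ (because $J_m$ contains $e_j$ for every $m\neq j$, so left-multiplication by $e_j$ kills each extra factor $J_{i_s}$ with $s>k_j$), which immediately gives $D(e_j A_\ii) = D(e_j(A/J_{k_j,1})) = I_{\ii,j}$ by Lemma~\ref{lemma4}, and the conclusion follows. Your route instead compares the two candidate injectives inside $\widehat I_j$: you first use Lemmas~\ref{Jrad} and~\ref{homsoc}(i) to show the image of $D(e_j A_\ii)\hookrightarrow \widehat I_j$ lands in $\soc_\ii(\widehat I_j)$, then (the one genuinely non-routine step) you need $\soc_\ii(\widehat I_j) = \soc_{(i_{k_j},\ldots,i_1)}(\widehat I_j) = V_{k_j}$, and finally you run the reverse embedding $V_{k_j}\hookrightarrow D(e_j A_\ii)$ (injective envelope over $A_\ii$) and a dimension count. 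That middle step, which you assert but don't quite spell out, does hold for a clean reason: $\widehat I_j$ has simple socle $S_j$, and $i_r,\ldots,i_{k_j+1}\neq j$, so the first $r-k_j$ steps of the $\ii$-refined socle chain of $\widehat I_j$ produce $0$; thus $X_{k_j}=0$ and uniqueness of the refined socle chain forces $X_0 = V_{k_j}$. You should make that explicit, and you should also delete the parenthetical claim that $\md(A/J_{k_j,1})$ ``contains $\md(A_\ii)$'' --- it is the other way around, since $A/J_{k_j,1}$ is a quotient of $A_\ii$; fortunately nothing in your final argument depends on that sentence. The paper's proof is sharper, but your approach has the pedagogical merit of making visible that the stabilization of the socle chain at step $k_j$ is what is really going on.
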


\begin{proof}
One easily checks that $e_jJ_{r,1} = e_jJ_{k_j,1}$.
This implies
$D(e_j(A/J_{r,1})) = D(e_j(A/J_{k_j,1}))$.
But $D(e_j(A/J_{k_j,1})) = I_{\ii,j}$ by Lemma~\ref{lemma4}.
Thus the modules in $\add(I_\ii)$ are the injective
$A_\ii$-modules.
\end{proof}

\begin{Lem}\label{lemma5}
For every $1 \le k \le r$ there is a 
monomorphism $V_{k^-} \to V_k$.
\end{Lem}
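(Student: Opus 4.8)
The plan is to exhibit the required monomorphism $V_{k^-} \to V_k$ by realizing both modules as injective envelopes of $S_{i_k}$ inside suitable quotient algebras of $A$ and then invoking functoriality. Recall $k^- = \max\{0, 1 \le s \le k-1 \mid i_s = i_k\}$, so $i_{k^-} = i_k$ whenever $k^- \ge 1$; write $j := i_k = i_{k^-}$. By Lemma~\ref{lemma4} we have $V_k \cong D(e_j(A/J_{k,1}))$ and $V_{k^-} \cong D(e_j(A/J_{k^-,1}))$, with $V_0 = 0$ handled trivially. First I would observe that $J_{k,1} = J_{i_k} J_{i_{k-1}} \cdots J_{i_1} \subseteq J_{i_{k^-}} J_{i_{k^--1}} \cdots J_{i_1} = J_{k^-,1}$, since $J_{k,1}$ is obtained from $J_{k^-,1}$ by further left-multiplication by the ideals $J_{i_{k^-+1}}, \ldots, J_{i_k}$; hence there is a canonical algebra surjection $A/J_{k,1} \twoheadrightarrow A/J_{k^-,1}$.

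The key step is then to apply the exact contravariant duality $D = \Hom_\C(-,\C)$ together with left-multiplication by $e_j$. The surjection above restricts to a surjection $e_j(A/J_{k,1}) \twoheadrightarrow e_j(A/J_{k^-,1})$ of right $A$-modules (equivalently, left modules over the opposite algebra), and applying $D$ turns this into an injection $D(e_j(A/J_{k^-,1})) \hookrightarrow D(e_j(A/J_{k,1}))$ of left $A$-modules. Combined with the isomorphisms from Lemma~\ref{lemma4}, this is precisely the desired monomorphism $V_{k^-} \to V_k$. (Alternatively, and perhaps more in the spirit of the preceding lemmas, one can argue directly on the socle side: $V_{k^-} = \soc_{(i_{k^-},\ldots,i_1)}(\hI_{i_k})$ and $V_k = \soc_{(i_k,\ldots,i_1)}(\hI_{i_k})$ are both submodules of the same injective envelope $\hI_{i_k} = \hI_j$, and one checks that the refined socle filtration of type $(i_k,\ldots,i_1)$ stabilizes above that of type $(i_{k^-},\ldots,i_1)$, so that $V_{k^-} \subseteq V_k$ as submodules of $\hI_j$ — the inclusion being the sought monomorphism. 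This uses Lemma~\ref{homsoc}(i) applied to the identity map, after noting $(i_k,\ldots,i_1)$ contains $(i_{k^-},\ldots,i_1)$ as an initial segment.)

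I expect the main (minor) obstacle to be the bookkeeping that makes the inclusion $J_{k,1} \subseteq J_{k^-,1}$ transparent and that identifies $e_j J_{k,1}$ correctly — essentially the same type of computation already used in the proof of Lemma~\ref{D2}, where one checks $e_j J_{r,1} = e_j J_{k_j,1}$. Once that containment is in place, everything else is formal: exactness of $D$, and the fact that a surjection of modules dualizes to an injection. There is nothing deep here; the lemma is a structural stepping stone (it will presumably be used to build the chain of maps $V_{k^-} \to V_k$ underlying the modules $W_\ii$ and the Chamber Ansatz), so a clean two-line argument via $D$ applied to $A/J_{k,1} \twoheadrightarrow A/J_{k^-,1}$ is the route I would write up.
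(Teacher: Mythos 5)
Your main argument is precisely the paper's proof: it uses $J_{k,1} \subseteq J_{k^-,1}$, applies $e_{i_k}\cdot$ and then $D$ to the resulting surjection (the paper phrases this as dualizing a short exact sequence, which is the same thing), and finishes with Lemma~\ref{lemma4}. The alternative socle-filtration argument you sketch in the parenthesis is also valid and arguably cleaner, since $V_{k^-} = \soc_{(i_{k^-},\ldots,i_1)}(\hI_j) \subseteq \soc_{(i_k,\ldots,i_1)}(\hI_j) = V_k$ is immediate once one notes $i_{k^-} = i_k = j$, but the route you lead with is the one the paper takes.
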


\begin{proof}
We have $J_{k,1} \subseteq J_{k^-,1}$.
Thus there is a short exact sequence
$$
0 \to J_{k^-,1}/J_{k,1} \to A/J_{k,1} \to A/J_{k^-,1} \to 0. 
$$
Applying $e_{i_k} \cdot$ and then the duality $D$ yields
a short exact sequence
$$
0 \to D(e_{i_k}(A/J_{k^-,1})) \to D(e_{i_k}(A/J_{k,1})) \to 
D(e_{i_k}(J_{k^-,1}/J_{k,1})) \to 0.
$$
Now the result follows from Lemma~\ref{lemma4}.
\end{proof}

The following lemma is well known and easy to prove:

\begin{Lem}\label{lemma8}
For any $A$-module $X$ and any idempotent $e$ in $A$ the following hold:
\begin{itemize}

\item[(i)]
There is an isomorphism
of $(eAe)^\op$-modules
$$
D(eX) \cong \Hom_A(X,D(eA))
$$
defined by $\eta \mapsto f_\eta := 
\left[ x \mapsto (ea \mapsto \eta(eax)) \right]$.

\item[(ii)]
Assume that $X$ is finite-dimensional.
Then
there is an isomorphism
of $eAe$-modules
$$
eX \cong D{\Hom}_A(X,D(eA))
$$
defined by $ex \mapsto 
\left[ f \mapsto f(x)(e) \right]$.

\end{itemize}
\end{Lem}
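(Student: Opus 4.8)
The plan is to derive both isomorphisms from the tensor--hom adjunction applied to $eA$, viewed as an $(eAe,A)$-bimodule which is a direct summand of $A$ as a right $A$-module.

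For part~(i), I would first record that $A = eA \oplus (1-e)A$ as right $A$-modules, so $eA$ is right-projective and there is a natural isomorphism of left $eAe$-modules $eA \otimes_A X \cong eX$, $ea \otimes x \mapsto eax$, with inverse $ex \mapsto e \otimes x$. Here $eA$ carries commuting left $eAe$- and right $A$-actions, so $eA \otimes_A X$ is a left $eAe$-module, and correspondingly $D(eX)$ is a right $eAe$-module, \ie a left $(eAe)^\op$-module. The standard adjunction isomorphism
\[
\Hom_A\bigl(X, \Hom_\C(eA,\C)\bigr) \;\xrightarrow{\ \sim\ }\; \Hom_\C(eA \otimes_A X, \C),
\qquad
f \longmapsto \bigl[\, ea \otimes x \mapsto f(x)(ea)\,\bigr],
\]
is then an isomorphism of right $eAe$-modules, because $D(eA)$ is an $(A,eAe)$-bimodule whose left $A$-structure is the one used to form the left-hand $\Hom$. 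Composing with the $\C$-dual of $eA \otimes_A X \cong eX$ gives $\Hom_A(X, D(eA)) \cong D(eX)$, and chasing $\eta \in D(eX)$ through the two identifications produces exactly the map $f_\eta$ with $f_\eta(x)(ea) = \eta(eax)$. Alternatively --- probably the cleaner write-up --- one writes down $\eta \mapsto f_\eta$ together with its candidate inverse $f \mapsto \bigl[\, ex \mapsto f(x)(e)\,\bigr]$ and verifies directly that $f_\eta$ is $A$-linear, that the inverse is well defined (the point being $f(ex)(e) = (e \cdot f(x))(e) = f(x)(e)$ for every $A$-linear $f$, so that $f(x)(e)$ depends only on $ex \in eX$), that the two assignments are mutually inverse, and that both intertwine the right $eAe$-actions.

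For part~(ii), assuming $X$ finite-dimensional, all the modules in sight and their $\C$-duals are finite-dimensional, so the evaluation map $eX \to D(D(eX))$ is an isomorphism. Applying the contravariant exact functor $D$ to the isomorphism of~(i) gives $D\Hom_A(X, D(eA)) \cong D(D(eX))$; composing with the inverse of evaluation yields the asserted isomorphism of left $eAe$-modules $D\Hom_A(X, D(eA)) \cong eX$. Tracing $ex$ through evaluation and then through $D$ of the inverse $f \mapsto \bigl[\, ex \mapsto f(x)(e)\,\bigr]$ of the map in~(i) recovers the stated formula $ex \mapsto \bigl[\, f \mapsto f(x)(e)\,\bigr]$, with $eAe$-linearity inherited from~(i). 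There is no genuine obstacle here --- this is a bookkeeping exercise, as the phrase ``well known and easy to prove'' indicates; the only points deserving a moment's care are keeping track of which side each module structure lives on, so that the adjunction is visibly $eAe$-equivariant, and the well-definedness of the inverse in~(i), which reduces to the identity $f(ex)(e) = f(x)(e)$ noted above.
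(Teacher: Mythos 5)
Your argument is correct and complete. The paper itself gives no proof of this lemma (it is dismissed as ``well known and easy to prove''), so there is no official proof to compare against; both of the routes you sketch are standard and sound. The direct verification is indeed the cleaner write-up, and you correctly isolate its one nontrivial point: the well-definedness of the inverse $f \mapsto [\,ex \mapsto f(x)(e)\,]$, which rests on the identity $f(ex)(e) = (e\cdot f(x))(e) = f(x)(e\cdot e) = f(x)(e)$, using $A$-linearity of $f$, the left $A$-action $(a\cdot\xi)(eb)=\xi(eba)$ on $D(eA)$, and idempotence of $e$. Your derivation of (ii) from (i) by applying $D$ and invoking the double-dual isomorphism under the finite-dimensionality hypothesis is also correct, including the tracing of the explicit formula and the $eAe$-equivariance inherited from (i). Nothing is missing.
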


The vector space $D{\Hom}_A(X,D(eA))$ is an $\End_A(D(eA))^\op$-module
in an obvious way, and we have
$eAe \cong \End_A(D(eA))^\op$.
Under the isomorphisms 
$eX \cong D{\Hom}_A(X,D(eA))$ and $eAe \cong \End_A(D(eA))^\op$,
the action of $\End_A(D(eA))^\op$ on $D{\Hom}_A(X,D(eA))$
turns into the action $eae \cdot ex := eaex$ of 
$eAe$ on $eX$.

\begin{Lem}\label{lemma7}
For any $A$-module $X$ we have
$$
\Hom_A(X,V_k) = \Hom_A(X/X_k^-,V_k) = 
\Hom_{A/J_{k,1}}(X/X_k^-,V_k).
$$
\end{Lem}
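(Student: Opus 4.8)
The plan is to prove the two equalities separately, starting from the easy observation that $V_k$ is a module over $A/J_{k,1}$ by Lemma~\ref{lemma4}. For the second equality $\Hom_A(X/X_k^-,V_k) = \Hom_{A/J_{k,1}}(X/X_k^-,V_k)$, I would argue that by Lemma~\ref{Jrad} we have $J_{k,1}X = X_k^-$, so $X/X_k^-$ is annihilated by $J_{k,1}$ and hence is an $A/J_{k,1}$-module; since $V_k$ is also an $A/J_{k,1}$-module, every $A$-linear map between them is automatically $A/J_{k,1}$-linear, and conversely. This is essentially formal once Lemma~\ref{Jrad} is invoked.

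The first equality $\Hom_A(X,V_k) = \Hom_A(X/X_k^-,V_k)$ is the substantive part. The containment $\supseteq$ (after identifying a map on the quotient with its composition with the projection $X \to X/X_k^-$) is clear. For $\subseteq$, I would take any $f \in \Hom_A(X,V_k)$ and show that $f(X_k^-) = 0$, so that $f$ factors through $X/X_k^-$. Since $X_k^- = J_{k,1}X$ by Lemma~\ref{Jrad}, it suffices to see $J_{k,1}V_k = 0$, which holds because $V_k$ is an $A/J_{k,1}$-module (Lemma~\ref{lemma4}); then $f(X_k^-) = f(J_{k,1}X) = J_{k,1}f(X) \subseteq J_{k,1}V_k = 0$. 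Alternatively, one may phrase this via Lemma~\ref{homsoc}: since $V_k = \soc_{(i_k,\ldots,i_1)}(\widehat I_{i_k})$ satisfies $\soc_{(i_k,\ldots,i_1)}(V_k) = V_k$, part~(iii) of Lemma~\ref{homsoc} gives $f(\rad_{(i_k,\ldots,i_1)}(X)) = 0$, and $\rad_{(i_k,\ldots,i_1)}(X) = X_k^-$ again by Lemma~\ref{Jrad}.

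I do not expect a serious obstacle here; the lemma is a bookkeeping statement whose content is entirely carried by Lemma~\ref{Jrad} (identifying $X_k^- = J_{k,1}X$) and Lemma~\ref{lemma4} (the fact that $V_k$ lives over the truncation $A/J_{k,1}$). The only point requiring a moment's care is making sure the two ``$\Hom$'' spaces are literally identified as subsets of one another rather than merely isomorphic — but since $X/X_k^-$ and $V_k$ are the same sets whether viewed over $A$ or over $A/J_{k,1}$, and the projection $X \to X/X_k^-$ is fixed, all three $\Hom$ spaces sit inside $\Hom_\C(X,V_k)$ and the equalities are genuine equalities of subspaces.
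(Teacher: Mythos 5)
Your proof is correct and is essentially the paper's argument. The paper phrases the key step via Lemma~\ref{homsoc}(iii) (using $\soc_{(i_k,\ldots,i_1)}(V_k)=V_k$ and $\rad_{(i_k,\ldots,i_1)}(X)=X_k^-$), which is exactly the ideal computation $f(J_{k,1}X)\subseteq J_{k,1}V_k=0$ translated through Lemma~\ref{Jrad} — and you explicitly note this alternative; the second equality is handled identically.
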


\begin{proof}
We have $\soc_{(i_k,\ldots,i_1)}(V_k) = V_k$, and
$\rad_{(i_k,\ldots,i_1)}(X) = X_k^-$.
By Lemma~\ref{homsoc}(iii) this implies 
$f(X_k^-) = 0$ for every $f \in \Hom_A(X,V_k)$.
This yields the identification
$\Hom_A(X,V_k) = \Hom_A(X/X_k^-,V_k)$.
Now $X/X_k^-$ and $V_k$ are annihilated by $J_{k,1}$.
Thus $X/X_k^-$ and $V_k$ are $A/J_{k,1}$-modules.
This implies
$\Hom_A(X/X_k^-,V_k) = \Hom_{A/J_{k,1}}(X/X_k^-,V_k)$.
\end{proof}

\begin{Cor}\label{cor8.2}
For any finite-dimensional $A$-module $X$ we have
$$
D{\Hom}_A(X,V_k) \cong e_{i_k}(X/X_k^-).
$$
\end{Cor}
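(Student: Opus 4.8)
The plan is to obtain the isomorphism $D\Hom_A(X,V_k) \cong e_{i_k}(X/X_k^-)$ by combining Lemma~\ref{lemma7} with the second part of Lemma~\ref{lemma8}, applied inside the finite-dimensional quotient algebra $A/J_{k,1}$ rather than inside $A$ itself. First I would invoke Lemma~\ref{lemma7} to replace $\Hom_A(X,V_k)$ by $\Hom_{A/J_{k,1}}(X/X_k^-,V_k)$; this reduces everything to a statement about the finite-dimensional algebra $B := A/J_{k,1}$ and the $B$-module $\overline{X} := X/X_k^-$, which is finite-dimensional because $B$ itself is finite-dimensional by Corollary~\ref{Jrad2}.

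Next I would rewrite $V_k$ in the form required by Lemma~\ref{lemma8}. By Lemma~\ref{lemma4} we have $V_k \cong D(e_{i_k} B)$, so $V_k$ is precisely the module $D(eA)$ of Lemma~\ref{lemma8} with $A$ replaced by $B$ and $e$ replaced by the idempotent $e_{i_k} \in B$. Now I would apply Lemma~\ref{lemma8}(ii) over the algebra $B$ to the finite-dimensional $B$-module $\overline{X}$: this gives an isomorphism of $e_{i_k}Be_{i_k}$-modules
\[
e_{i_k}\overline{X} \cong D\Hom_B(\overline{X}, D(e_{i_k}B)) = D\Hom_{A/J_{k,1}}(X/X_k^-, V_k).
\]
Combining this with the identification from Lemma~\ref{lemma7} yields $D\Hom_A(X,V_k) \cong e_{i_k}\overline{X} = e_{i_k}(X/X_k^-)$, which is exactly the claim.

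The only point requiring a little care — and the one I would flag as the main (though minor) obstacle — is the bookkeeping of module structures: one must check that the idempotent $e_{i_k}$ retains its meaning in the quotient $B = A/J_{k,1}$ (it does, since $J_{k,1} \subseteq \C\GG_{\ge 2}$ contains no nonzero scalar multiple of any $e_j$, so the primitive idempotents survive), and that the $e_{i_k}Be_{i_k}$-action appearing on the right of Lemma~\ref{lemma8}(ii) is compatible with whatever structure one wishes to carry on $e_{i_k}(X/X_k^-)$. Since the statement of Corollary~\ref{cor8.2} only asserts an isomorphism of vector spaces (no algebra is named on either side), even this is not strictly needed; the vector-space isomorphism follows formally from the two cited lemmas, and the proof is essentially a two-line application. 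I would therefore present it as: apply Lemma~\ref{lemma7}, then Lemma~\ref{lemma8}(ii) to the finite-dimensional $A/J_{k,1}$-module $X/X_k^-$ using $V_k \cong D(e_{i_k}(A/J_{k,1}))$ from Lemma~\ref{lemma4}.
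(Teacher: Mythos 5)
Your argument is correct and follows essentially the same route as the paper's proof: both reduce to the quotient algebra $A/J_{k,1}$, use Lemma~\ref{lemma4} to identify $V_k$ with the injective $D(e_{i_k}(A/J_{k,1}))$, and apply Lemma~\ref{lemma8}(ii); you simply make explicit the invocation of Lemma~\ref{lemma7} that the paper leaves implicit. One small slip: finite-dimensionality of $X/X_k^-$ follows from the hypothesis that $X$ is finite-dimensional, not from $A/J_{k,1}$ being finite-dimensional (a module over a finite-dimensional algebra need not be finite-dimensional).
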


\begin{proof}
The $A$-modules $X/X_k^-$ and $V_k$ can be regarded as an $A/J_{k,1}$-module,
since both are annihilated by $J_{k,1}$, and
$V_k$ is injective as an $A/J_{k,1}$-module.
Now we apply Lemma~\ref{lemma8}.
\end{proof}

\subsection{Balanced modules}
An $A$-module $X$ is called $\ii$-\emph{balanced} 
if $X \in \cD_\ii$ and $X_\bullet^- = X_\bullet^+$.
Thus, $X$ is $\ii$-balanced if and only if $X_k^- = X_k^+$ for all
$0 \le k \le r$.

\begin{Prop}
Let $X \in \cD_\ii$.
Then the following are equivalent:
\begin{itemize}

\item[(i)]
$X$ is $\ii$-balanced.

\item[(ii)]
There is a unique $\bb$ such that
$\F_{\ii,\bb,X} \not= \varnothing$.

\item[(iii)]
There is a unique $\bb$ such that
$\chi\left(\F_{\ii,\bb,X}\right) \not= 0$.

\end{itemize}
\end{Prop}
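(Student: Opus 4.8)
The plan is to prove the chain of implications (i)$\Rightarrow$(ii)$\Rightarrow$(iii)$\Rightarrow$(i), the last being the only one requiring real work.

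First, (i)$\Rightarrow$(ii). Suppose $X$ is $\ii$-balanced, so $X_k^- = X_k^+$ for all $0 \le k \le r$. By Lemma~\ref{lemma3}, any flag $(X_r \subseteq \cdots \subseteq X_0) \in \F_{\ii,\ba,X}$ satisfies $X_k^- \subseteq X_k \subseteq X_k^+$; since $X_k^- = X_k^+$, we are forced to have $X_k = X_k^- = X_k^+$ for every $k$. Hence $\F_{\ii,\ba,X}$ is empty unless $\ba = \wt(X_\bullet^-) = \wt(X_\bullet^+)$, in which case it is the single point $X_\bullet^-$. Set $\bb := \wt(X_\bullet^-)$; by Lemma~\ref{lemma3.1} (or the preceding Corollary) this $\bb$ indeed gives a nonempty variety, so $\bb$ is the unique such index vector, proving (ii). The implication (ii)$\Rightarrow$(iii) is immediate: if $\F_{\ii,\bb,X}$ is the only nonempty variety among the $\F_{\ii,\ba,X}$, then it is the only one with nonzero Euler characteristic, and by the Corollary after Lemma~\ref{lemma3.1} at least one such $\bb$ exists, so $\bb$ is unique.

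The substantial step is (iii)$\Rightarrow$(i). Assume $X$ is \emph{not} $\ii$-balanced; I would produce at least two distinct weights $\ba$ with $\chi(\F_{\ii,\ba,X}) \ne 0$, contradicting (iii). By Lemma~\ref{lemma3.1}, both $\ba^-(X) := \wt(X_\bullet^-)$ and $\ba^+(X) := \wt(X_\bullet^+)$ give Euler characteristic $1$, hence are nonzero. So it suffices to show that if $X$ is not $\ii$-balanced then $\ba^-(X) \ne \ba^+(X)$. Suppose for contradiction that $\ba^-(X) = \ba^+(X)$ but $X_k^- \ne X_k^+$ for some $k$; I would take the largest such $k$ (so $X_{k'}^- = X_{k'}^+$ for all $k' > k$, and in particular $X_{r}^-=X_r^+=0$ forces $k < r$, while $X_0^-=X_0^+=X$ forces $k>0$ unless... one checks the endpoints separately). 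From $X_k^- \subsetneq X_k^+$ (the inclusion holds by the general fact $\rad \subseteq \soc$ chain, or directly from Lemma~\ref{lemma3} applied to the refined socle series viewed as an element of some $\F_{\ii,\ba^+,X}$), and from the agreement of the two series above level $k$, one compares the $i_k$-dimensions: $a_k^+ - a_k^-$ equals $\dim_\C e_{i_k}(X_{k-1}^+/X_k^+) - \dim_\C e_{i_k}(X_{k-1}^-/X_k^-)$, and since $X_k^- \subsetneq X_k^+ = X_k^-$...

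Here is where I expect the real obstacle: one must show the discrepancy $X_k^- \subsetneq X_k^+$ cannot be compensated so as to leave all the weights $a_1,\dots,a_r$ unchanged. The clean way is to argue that $\wt(X_\bullet^-)$ and $\wt(X_\bullet^+)$ are the two \emph{extreme} weights: for \emph{every} $\ba$ with $\F_{\ii,\ba,X} \ne \varnothing$ and every $j$, the partial sums $\sum_{k \le s,\ i_k = i_s} a_k$ are squeezed between the corresponding partial sums for $\ba^-$ and $\ba^+$ (this follows from Lemma~\ref{lemma3} by intersecting/summing with the socle and radical filtrations coordinatewise). Then $\ba^- = \ba^+$ would force \emph{every} nonempty $\F_{\ii,\ba,X}$ to have $\ba = \ba^- = \ba^+$, and then Lemma~\ref{lemma3} again pins down $X_k = X_k^- = X_k^+$ for all $k$ — contradicting $X_k^- \ne X_k^+$. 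So the core lemma to establish first is the "extremality" of the refined socle and top series weights among all partial composition series, after which (iii)$\Rightarrow$(i) is a short deduction. I would prove that extremality by induction on $r$, peeling off the top (or socle) step and using Lemma~\ref{homsoc}(ii) to control how an arbitrary flag interacts with $\soc_{S_{i_1}}$ and $\tp_{S_{i_r}}$.
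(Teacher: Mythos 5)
Your implications (i)$\Rightarrow$(ii) and (ii)$\Rightarrow$(iii) are fine and essentially match the paper. For (iii)$\Rightarrow$(i) you correctly reduce to showing that $\ba^-(X)=\ba^+(X)$ forces $X$ to be $\ii$-balanced, but then you explicitly get stuck ("Here is where I expect the real obstacle") and outline, without completing, a much heavier ``extremality of weights'' argument. This detour is unnecessary, and as stated it does not close the gap: even if you knew that every nonempty $\F_{\ii,\ba,X}$ has $\ba=\ba^-=\ba^+$ (which is condition (ii), not (i)), the invocation ``Lemma~\ref{lemma3} again pins down $X_k=X_k^-=X_k^+$'' is not justified, since Lemma~\ref{lemma3} only gives containments $X_k^-\subseteq X_k\subseteq X_k^+$ and does not by itself produce equality of the endpoints.

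What you are missing is a one-line dimension count. You already have the containment $X_k^-\subseteq X_k^+$ for all $k$ (apply Lemma~\ref{lemma3} to $X_\bullet^-\in\F_{\ii,\ba^-,X}$). Each simple $S_{i_s}$ is one-dimensional, so $\dim X_k^-=\dim X-\sum_{s\le k}a_s^-$ and $\dim X_k^+=\dim X-\sum_{s\le k}a_s^+$. Thus $\ba^-=\ba^+$ gives $\dim X_k^-=\dim X_k^+$, and containment of equal-dimensional subspaces yields $X_k^-=X_k^+$ for all $k$, i.e.\ $X$ is $\ii$-balanced. This is exactly the step the paper leaves to the reader after concluding $\wt(X_\bullet^-)=\wt(X_\bullet^+)$ from Lemma~\ref{lemma3.1} and the uniqueness hypothesis; once you have it, your proof is complete and matches the paper's. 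Everything after ``the real obstacle'' in your write-up should simply be replaced by this observation.
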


\begin{proof}
\parindent0mm
${\rm (i)} \implies {\rm (ii)}$:
Since $X \in \cD_\ii$, we know that $\soc_\ii(X) = X$ and
$\rad_\ii(X) = 0$.
This implies that
$\F_{\ii,\wt(X_\bullet^-),X}$ and $\F_{\ii,\wt(X_\bullet^+),X}$
are both non-empty.
Set $\bb := \wt(X_\bullet^+)$.
Since $X$ is $\ii$-balanced, we have $X_k^- = X_k^+$ for all $k$.
In other words, $\bb = \wt(X_\bullet^-) = \wt(X_\bullet^+)$.
The uniqueness of $\bb$ follows now from Lemma~\ref{lemma3}.

\parindent0mm
${\rm (ii)} \implies {\rm (iii)}$:
This follows directly from Lemma~\ref{lemma3.1}.

\parindent0mm
${\rm (iii)} \implies {\rm (i)}$:
Since $X \in \cD_\ii$, Lemma~\ref{lemma3.1} implies 
$\chi(\F_{\ii,\wt(X_\bullet^-),X}) =
\chi(\F_{\ii,\wt(X_\bullet^+),X}) = 1$.
Since we assume $\bb$ to be unique, we get 
$\wt(X_\bullet^-) = \wt(X_\bullet^+)$.
Now (i) follows from Lemma~\ref{lemma3}.
\end{proof}

\begin{Lem}\label{st1}
Let $X$ and $Y$ be $A$-modules.
Then the following hold:
\begin{itemize}

\item[(i)]
If $X$ and $Y$ are $\ii$-balanced, then $X \oplus Y$ is $\ii$-balanced. 

\item[(ii)]
If $X$ is $\ii$-balanced, then each direct summand of $X$ is $\ii$-balanced.

\end{itemize}
\end{Lem}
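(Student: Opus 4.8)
The statement claims that being $\ii$-balanced is compatible with direct sums and direct summands. The plan is to reduce everything to the behaviour of the refined socle and top series under direct sum decompositions, using the fact (already observed via Lemma~\ref{homsoc}) that $\soc_S(-)$ and $\rad_S(-)$, and hence $\soc_\ii(-)$ and $\rad_\ii(-)$, are additive functors in an appropriate sense. First I would record the elementary observation that for any $A$-modules $X$ and $Y$ and any simple $S$ one has $\soc_S(X\oplus Y)=\soc_S(X)\oplus\soc_S(Y)$ and $\rad_S(X\oplus Y)=\rad_S(X)\oplus\rad_S(Y)$; this is immediate from the representation-theoretic description of $\soc_{S_j}$ and $\rad_{S_j}$ given just before Lemma~\ref{homsoc} (kernels and images of the structure maps are taken componentwise under a direct sum). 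Consequently, by the inductive definitions of the refined socle and top series, one gets $(X\oplus Y)_k^+ = X_k^+\oplus Y_k^+$ and $(X\oplus Y)_k^- = X_k^-\oplus Y_k^-$ for all $0\le k\le r$. In particular $X\oplus Y\in\cD_\ii$ if and only if both $X,Y\in\cD_\ii$, since $\soc_\ii(X\oplus Y)=\soc_\ii(X)\oplus\soc_\ii(Y)$.

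For part (i), assume $X$ and $Y$ are $\ii$-balanced. Then $X,Y\in\cD_\ii$, so $X\oplus Y\in\cD_\ii$, and for each $k$ we have $(X\oplus Y)_k^+ = X_k^+\oplus Y_k^+ = X_k^-\oplus Y_k^- = (X\oplus Y)_k^-$, where the middle equality uses that $X_k^+=X_k^-$ and $Y_k^+=Y_k^-$. Hence $(X\oplus Y)_\bullet^+=(X\oplus Y)_\bullet^-$, i.e.\ $X\oplus Y$ is $\ii$-balanced.

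For part (ii), let $X$ be $\ii$-balanced and write $X=X'\oplus X''$. From $X\in\cD_\ii=\md(A_\ii)$ (Lemma~\ref{D1}) and the fact that $\md(A_\ii)$ is closed under direct summands, we get $X',X''\in\cD_\ii$. Using the additivity just established, $X_k^+ = {X'_k}^+\oplus {X''_k}^+$ and $X_k^- = {X'_k}^-\oplus {X''_k}^-$ as submodules of $X=X'\oplus X''$, and these decompositions are compatible with the inclusion ${X'_k}^-\subseteq {X'_k}^+$ coming from Lemma~\ref{lemma3} (applied with $X'\in\cD_\ii$), and likewise for $X''$. Since $X_k^-=X_k^+$, comparing the $X'$-components forces ${X'_k}^-={X'_k}^+$ for all $k$, so $X'$ is $\ii$-balanced; the same argument applies to $X''$. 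The one point that requires a little care — and is the only place where anything beyond bookkeeping is needed — is the claim that the canonical decomposition $\soc_\ii(X'\oplus X'')=\soc_\ii(X')\oplus\soc_\ii(X'')$ is honestly realised by the subrepresentation description of $\soc_{S_j}$; once that is in hand, the inductive passage up the refined series is purely formal, and the analogous statement for $\rad_\ii$ is dual. I do not expect any genuine obstacle here: the lemma is "straightforward" precisely because additivity of socle/top with respect to a fixed simple propagates through the finite inductive construction of the refined series.
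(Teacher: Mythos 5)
Your proposal is correct and follows essentially the same approach as the paper's (very terse) proof: both rest on the additivity $\soc_\jj(M_1\oplus M_2)=\soc_\jj(M_1)\oplus\soc_\jj(M_2)$ and $\rad_\jj(M_1\oplus M_2)=\rad_\jj(M_1)\oplus\rad_\jj(M_2)$, from which $(X\oplus Y)^\pm_k = X^\pm_k\oplus Y^\pm_k$ and both parts follow by componentwise comparison. The paper states this in two sentences and calls it easy; your write-up correctly fills in the routine induction up the refined series and the componentwise argument for part (ii).
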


\begin{proof}
One easily checks that
for every direct sum decomposition $M = M_1 \oplus M_2$ of an $A$-module
$M$ and every sequence $\jj = (j_t,\ldots,j_1)$ with $1 \le j_s \le n$
for all $s$, we have
$\soc_\jj(M) = \soc_\jj(M_1) \oplus \soc_\jj(M_2)$ and
$\rad_\jj(M) = \rad_\jj(M_1) \oplus \rad_\jj(M_2)$.
This implies both (i) and (ii).
\end{proof}

We say that the pair $(A,\ii)$ is \emph{balanced}, if for each
$1 \le k \le r$ the $A$-module 
$V_k = V_{\ii,k}$ 
is $(i_k,\ldots,i_1)$-balanced.
The following lemma follows directly from the definitions:

\begin{Lem}\label{st2}
Assume that $(A,\ii)$ is balanced.
For $1 \le k \le r$ and $0 \le s < k$ we have
$$
\rad_{(i_s,\ldots,i_1)}(V_{\ii,k}) =
(V_{\ii,k})_s^{\ii,-} =
(V_{\ii,k})_s^{(i_k,\ldots,i_1),-} =
(V_{\ii,k})_s^{(i_k,\ldots,i_1),+}
= \soc_{(i_k,\ldots,i_{s+1})}(V_{\ii,k}).
$$
\end{Lem}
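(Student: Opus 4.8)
\textbf{Proof plan for Lemma~\ref{st2}.}
The plan is to unwind the definitions and show that the four middle terms in the displayed chain all compute the same submodule of $V_{\ii,k}$, and then identify this common submodule with both the left-hand radical and the right-hand socle. First I would observe that the equality
$\rad_{(i_s,\ldots,i_1)}(V_{\ii,k}) = (V_{\ii,k})_s^{\ii,-}$
is immediate from the definition of the refined top series $X_\bullet^-$: by construction $X_s^- = \rad_{(i_s,\ldots,i_1)}(X)$ for any $A$-module $X$ and any $0\le s\le r$ (this is built into the very definition of the chain $X_\bullet^-$, cf.\ also Lemma~\ref{Jrad}). Likewise the equality
$(V_{\ii,k})_s^{(i_k,\ldots,i_1),-} = (V_{\ii,k})_s^{(i_k,\ldots,i_1),+}$
is exactly the hypothesis that $(A,\ii)$ is balanced, which by definition says that $V_k = V_{\ii,k}$ is $(i_k,\ldots,i_1)$-balanced, i.e.\ its refined top series of type $(i_k,\ldots,i_1)$ coincides with its refined socle series of the same type; so there is in fact nothing to prove for that step beyond citing the definition.

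The two remaining identifications are the substantive ones. For
$(V_{\ii,k})_s^{\ii,-} = (V_{\ii,k})_s^{(i_k,\ldots,i_1),-}$
I would note that forming $X_s^-$ only uses the first $s$ entries of the sequence, namely $(i_s,\ldots,i_1)$; since $s < k$, these first $s$ entries agree for the sequences $\ii = (i_r,\ldots,i_1)$ and $(i_k,\ldots,i_1)$, so the two truncated top-series submodules literally coincide. (Concretely, by Lemma~\ref{Jrad} both are equal to $J_{s,1}V_{\ii,k}$, which manifestly does not depend on $i_{s+1},\ldots,i_r$ or on $i_{s+1},\ldots,i_k$.) Finally, for
$(V_{\ii,k})_s^{(i_k,\ldots,i_1),+} = \soc_{(i_k,\ldots,i_{s+1})}(V_{\ii,k})$
I would use that, since $V_{\ii,k}\in\cD_{(i_k,\ldots,i_1)}$ (it satisfies $\soc_{(i_k,\ldots,i_1)}(V_{\ii,k}) = V_{\ii,k}$ by Lemma~\ref{lemma4} and the corollary following it), the refined socle series of type $(i_k,\ldots,i_1)$ is a genuine filtration of $V_{\ii,k}$, and the submodule $X_s^+$ appearing at step $s$ is by construction the ``lower part'' $\soc_{(i_k,\ldots,i_{s+1})}(V_{\ii,k})$: unravelling the recursive definition $X_{t-1}^+/X_t^+ = \soc_{S_{i_t}}(V_{\ii,k}/X_t^+)$ from $t = k$ down to $t = s+1$ shows precisely that $X_s^+$ is the submodule obtained by applying the socle-series construction for the shortened sequence $(i_k,\ldots,i_{s+1})$, with $X_s^+$ playing the role of the bottom term.

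Assembling these four equalities in the order written gives the chain claimed in the lemma. I expect the only mild obstacle to be bookkeeping with the two indexing conventions in the refined socle and top series (the ``$X_k$'' in $X_\bullet^+$ versus in $X_\bullet^-$ run in opposite directions of inclusion), so care is needed to match ``step $s$'' of one construction with ``step $s$'' of the other; but once the definitions are stated carefully each equality is a direct consequence of uniqueness (Lemma~\ref{lemma3.1}) together with the observation that truncating a sequence does not change the first few terms of the corresponding filtration. No new ideas beyond the already-established Lemmas~\ref{homsoc}, \ref{Jrad}, and~\ref{lemma4} are required.
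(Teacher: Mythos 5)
Your argument is correct and is in fact exactly the elaboration of the paper's one-line justification (``follows directly from the definitions''): the first two equalities are Lemma~\ref{Jrad} with $J_{s,1}$ depending only on $(i_s,\ldots,i_1)$, the third is the balanced hypothesis, and the fourth is the unwinding of the recursive socle-series definition as you describe. The only small slip is the parenthetical appeal to Lemma~\ref{lemma3.1} at the end — the uniqueness you actually need is the uniqueness of the refined socle and top series built into their definitions, not the Euler-characteristic statement of that lemma — but nothing in your argument relies on it.
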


\begin{Lem}\label{st3}
Assume that $(A,\ii)$ is balanced.
Then the modules $V_{\ii,1},\ldots,V_{\ii,r}$ are pairwise non-isomorphic.
\end{Lem}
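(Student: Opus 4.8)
The plan is to prove a slightly stronger statement: along each ``strand'' of repetitions of a fixed letter the dimensions of the modules $V_{\ii,k}$ strictly increase, and modules attached to different letters are separated already by their socles.

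First I would record that $V_{\ii,k}=\soc_{(i_k,\ldots,i_1)}(\widehat{I}_{i_k})$ is a \emph{nonzero} submodule of $\widehat{I}_{i_k}$ --- it contains the socle $S_{i_k}$ of $\widehat{I}_{i_k}$, since $i_k$ occurs in the sequence $(i_k,\ldots,i_1)$ --- so that $\soc(V_{\ii,k})\cong S_{i_k}$. Hence $V_{\ii,k}\cong V_{\ii,l}$ already forces $i_k=i_l$, and it remains to fix a letter $j$, enumerate the positions $k_1<k_2<\cdots<k_m$ with $i_{k_t}=j$ (so that $k_t^-=k_{t-1}$ for $t\ge 2$ and $k_1^-=0$), and to show that the monomorphisms $V_{\ii,k_{t-1}}=V_{\ii,k_t^-}\hookrightarrow V_{\ii,k_t}$ furnished by Lemma~\ref{lemma5} are never isomorphisms.

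The crucial point will be the identity, valid for $1\le k\le r$ with $k^-\ge 1$,
\[
J_{k^-,1}\,V_{\ii,k}\;=\;\rad_{(i_{k^-},\ldots,i_1)}(V_{\ii,k})\;=\;\soc_{(i_k,\ldots,i_{k^-+1})}(V_{\ii,k}),
\]
where the first equality is Lemma~\ref{Jrad} and the second is Lemma~\ref{st2} applied with $s=k^-$ (here one uses $0\le k^-<k$ and the assumption that $(A,\ii)$ is balanced). Since the sequence $(i_k,\ldots,i_{k^-+1})$ begins with $i_k$ and $\soc_{S_{i_k}}(V_{\ii,k})=\soc(V_{\ii,k})\cong S_{i_k}\neq 0$, the right-hand side is nonzero, hence $J_{k^-,1}V_{\ii,k}\neq 0$. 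On the other hand $V_{\ii,k^-}$ is annihilated by $J_{k^-,1}$ (by Lemma~\ref{D1} it lies in $\cD_{(i_{k^-},\ldots,i_1)}=\md(A/J_{k^-,1})$), so the image of $V_{\ii,k^-}$ in $V_{\ii,k}$ is contained in $\{v\in V_{\ii,k}\mid J_{k^-,1}v=0\}$, which is a \emph{proper} submodule of $V_{\ii,k}$ precisely because $J_{k^-,1}V_{\ii,k}\neq 0$. Therefore $\dim V_{\ii,k^-}<\dim V_{\ii,k}$, and applying this along the strand of the letter $j$ gives $\dim V_{\ii,k_1}<\cdots<\dim V_{\ii,k_m}$, which together with the socle observation above finishes the argument.

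I expect the only real obstacle to be the displayed identity, in particular its second equality, which is exactly where balancedness enters (through Lemma~\ref{st2}); without this hypothesis one cannot exclude that some $V_{\ii,k}$ is already killed by $J_{k^-,1}$, in which case $V_{\ii,k^-}\to V_{\ii,k}$ would be surjective and the conclusion would fail. Everything else is bookkeeping with the refined socle series and the elementary remark that a nonzero submodule of $\widehat{I}_{i_k}$ has socle $S_{i_k}$.
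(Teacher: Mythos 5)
Your proof is correct, and it uses the same two essential inputs as the paper's proof — Lemma~\ref{st2} (this is where balancedness enters) and the fact that $\soc_{S_{i_k}}(V_{\ii,k})\cong S_{i_k}\neq 0$ — but it packages the argument differently. The paper argues directly by contradiction: if $V_{\ii,k}\cong V_{\ii,s}$ with $k>s$, then $\rad_{(i_s,\ldots,i_1)}(V_{\ii,k})=0$, which by Lemma~\ref{st2} equals $\soc_{(i_k,\ldots,i_{s+1})}(V_{\ii,k})$, contradicting the nonvanishing of $\soc_{S_{i_k}}(V_{\ii,k})$. This needs neither the observation that isomorphic $V$'s share a letter nor the monomorphisms of Lemma~\ref{lemma5}, and it applies Lemma~\ref{st2} with $s$ the smaller of the two indices. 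You instead first reduce to the case $i_k=i_l$ by comparing socles, and then show $\dim V_{\ii,k^-}<\dim V_{\ii,k}$ for every $k$ with $k^-\ge 1$: applying Lemma~\ref{st2} at $s=k^-$ and Lemma~\ref{Jrad}, you get $J_{k^-,1}V_{\ii,k}=\soc_{(i_k,\ldots,i_{k^-+1})}(V_{\ii,k})\neq 0$, so the annihilator of $J_{k^-,1}$ in $V_{\ii,k}$ is a proper submodule; since $V_{\ii,k^-}$ is annihilated by $J_{k^-,1}$, any homomorphism $V_{\ii,k^-}\to V_{\ii,k}$ (in particular the monomorphism of Lemma~\ref{lemma5}) lands inside it, giving the strict dimension inequality. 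Your route is a bit longer but yields the extra information that the $V_{\ii,k}$'s along each letter-strand have strictly increasing dimension, and your localisation of the use of balancedness (through Lemma~\ref{st2}) is the same as the paper's.
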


\begin{proof}
Assume $V_{\ii,k} \cong V_{\ii,s}$ with $k > s$.
By definition $V_{\ii,k} = \soc_{(i_k,\ldots,i_s,\ldots,i_1)}(\widehat{I}_{i_k})$
and
$V_{\ii,s} = \soc_{(i_s,\ldots,i_1)}(\widehat{I}_{i_s})$.
Clearly, $\rad_{(i_s,\ldots,i_1)}(V_{\ii,s}) = 0$.
Since $V_{\ii,k} \cong V_{\ii,s}$ we also get
$\rad_{(i_s,\ldots,i_1)}(V_{\ii,k}) = 0$.
But $V_{\ii,k}$ is $(i_k,\ldots,i_1)$-balanced.
By Lemma~\ref{st2} this implies
$\soc_{(i_k,\ldots,i_{s+1})}(V_{\ii,k}) = 
\rad_{(i_s,\ldots,i_1)}(V_{\ii,k}) = 0$.
But we have $\soc_{S_{i_k}}(V_{\ii,k}) \cong S_{i_k}$.
This implies
$\soc_{(i_k,\ldots,i_{s+1})}(V_{\ii,k}) \not= 0$, a contradiction.
\end{proof}

\begin{Prop}\label{st4}
Assume that $(A,\ii)$ is balanced.
For $1 \le k,s \le r$ we have
$$
\Hom_A(V_k,V_s) \cong e_{i_k}\left(J_{k,s+1}/J_{k,1}\right)e_{i_s}.
$$
\end{Prop}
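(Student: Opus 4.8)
The plan is to first reduce, \emph{without} using that $(A,\ii)$ is balanced, to a statement inside the finite-dimensional quotient $A/J_{k,1}$, and then to unwind that statement by duality. I would apply Corollary~\ref{cor8.2} with $k$ replaced by $s$ and $X=V_k$ (legitimate, since $V_k$ is finite-dimensional by Corollary~\ref{Jrad2} and Lemma~\ref{lemma4}); combined with Lemma~\ref{Jrad} this gives $D\Hom_A(V_k,V_s)\cong e_{i_s}(V_k/J_{s,1}V_k)$, and applying $D$ once more (using that $\Hom_A(V_k,V_s)$ is finite-dimensional) yields $\Hom_A(V_k,V_s)\cong D\bigl(e_{i_s}(V_k/J_{s,1}V_k)\bigr)$.

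Next I would identify $V_k\cong D(P)$ with $P:=e_{i_k}(A/J_{k,1})$ regarded as a right $A/J_{k,1}$-module (Lemma~\ref{lemma4}, together with $DD=\id$ on finite-dimensional modules). Dualizing the short exact sequence $0\to J_{s,1}V_k\to V_k\to V_k/J_{s,1}V_k\to 0$ realizes $D(V_k/J_{s,1}V_k)$ as the right submodule $R_s:=\{p\in P\mid pJ_{s,1}=0\}$ of $P$ (under the identification $D(V_k)\cong P$ the annihilator of the left submodule $J_{s,1}V_k$ becomes the right annihilator of $J_{s,1}$, a routine check). Using the natural isomorphism $e_{i_s}D(M)\cong D(Me_{i_s})$ for a right module $M$, one gets $D\bigl(e_{i_s}(V_k/J_{s,1}V_k)\bigr)\cong R_se_{i_s}$, and all of these identifications respect the evident actions of $e_{i_k}Ae_{i_k}$ and $e_{i_s}Ae_{i_s}$. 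Thus it remains to prove $R_s=e_{i_k}(J_{k,s+1}/J_{k,1})$ inside $P$; then $R_se_{i_s}=e_{i_k}(J_{k,s+1}/J_{k,1})e_{i_s}$, which is the assertion.

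For $s\ge k$ this is immediate, since then $J_{s,1}\subseteq J_{k,1}$ and $J_{k,s+1}=A$, so both sides equal $P$. Suppose $s<k$. The inclusion $e_{i_k}(J_{k,s+1}/J_{k,1})\subseteq R_s$ is a one-line computation from $J_{k,s+1}J_{s,1}=J_{k,1}$ and uses nothing about $\ii$. For the reverse inclusion I would argue by dimensions: on one hand $\dim R_s=\dim V_k-\dim J_{s,1}V_k$, while applying the exact functor $e_{i_k}\cdot$ to $0\to J_{k,s+1}/J_{k,1}\to A/J_{k,1}\to A/J_{k,s+1}\to 0$ and using $\dim e_{i_k}(A/J_{k,1})=\dim V_k$ gives $\dim e_{i_k}(J_{k,s+1}/J_{k,1})=\dim V_k-\dim e_{i_k}(A/J_{k,s+1})$. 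Hence everything reduces to the equality $\dim J_{s,1}V_k=\dim e_{i_k}(A/J_{k,s+1})$, and this is where the hypothesis enters: by Lemma~\ref{Jrad} and Lemma~\ref{st2}, $J_{s,1}V_k=\rad_{(i_s,\ldots,i_1)}(V_k)=\soc_{(i_k,\ldots,i_{s+1})}(V_k)$; a short argument with refined socle series (Lemma~\ref{homsoc}(i) together with the uniqueness built into their definition) identifies $\soc_{(i_k,\ldots,i_{s+1})}(V_k)$ with $\soc_{(i_k,\ldots,i_{s+1})}(\hI_{i_k})$; and Lemma~\ref{lemma4}, applied to the truncated sequence $(i_k,\ldots,i_{s+1})$, identifies the latter with $D\bigl(e_{i_k}(A/J_{k,s+1})\bigr)$, which has the required dimension. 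The main obstacle is precisely this last dimension equality --- equivalently, the coincidence of a ``top/radical'' term of $V_k$ with a ``socle'' term --- for which balancedness, packaged in Lemma~\ref{st2}, is exactly what is needed; the remaining work is bookkeeping with duality and idempotents.
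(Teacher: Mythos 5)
Your proof is correct, and it rests on the same ingredients as the paper's (Lemma~\ref{lemma8} / Corollary~\ref{cor8.2}, Lemma~\ref{Jrad}, Lemma~\ref{st2}, Lemma~\ref{lemma4}, and the identification $V_k\cong D(e_{i_k}(A/J_{k,1}))$), but it organizes the key step differently. The paper's proof takes the equality $(V_k)_s^-=J_{s,1}V_k=D(e_{i_k}(A/J_{k,s+1}))$ (as submodules of $V_k$) as the point where balancedness enters, then dualizes the short exact sequence $0\to J_{k,s+1}/J_{k,1}\to A/J_{k,1}\to A/J_{k,s+1}\to 0$, applied with $e_{i_k}\cdot$, to get $V_k/(V_k)_s^-\cong D(e_{i_k}(J_{k,s+1}/J_{k,1}))$ and concludes with one more application of Lemma~\ref{lemma8}. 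You instead pass to the dual right module $P=DV_k=e_{i_k}(A/J_{k,1})$, recognize $D(V_k/J_{s,1}V_k)$ as the right annihilator $R_s$ of $J_{s,1}$ in $P$, and prove $R_s=e_{i_k}(J_{k,s+1}/J_{k,1})$ by combining the containment $e_{i_k}(J_{k,s+1}/J_{k,1})\subseteq R_s$ (valid for any $\ii$ since $J_{k,s+1}J_{s,1}=J_{k,1}$) with a dimension count, and balancedness is used only to pin down $\dim J_{s,1}V_k=\dim e_{i_k}(A/J_{k,s+1})$ via $J_{s,1}V_k=\soc_{(i_k,\ldots,i_{s+1})}(V_k)=\soc_{(i_k,\ldots,i_{s+1})}(\hI_{i_k})\cong D(e_{i_k}(A/J_{k,s+1}))$. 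The two are essentially dual reformulations of the same identification, but your version has the advantage of isolating exactly where the balanced hypothesis enters and of replacing a compatibility-of-identifications check (that the natural inclusion $D(e_{i_k}(A/J_{k,s+1}))\hookrightarrow D(e_{i_k}(A/J_{k,1}))$ lands on $J_{s,1}V_k$, a point the paper leaves implicit) by a concrete containment plus dimension argument; the small price is a little more bookkeeping with annihilators and right-module structures.
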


\begin{proof}
Recall that
$V_k = D(e_{i_k}(A/J_{k,1}))$ and $V_s = D(e_{i_s}(A/J_{s,1}))$.
By Lemma~\ref{lemma7} we have
$\Hom_A(V_k,V_s) = \Hom_A(V_k/(V_k)_s^-,V_s)$.
We have
$$
(V_k)_s^- = J_{s,1}V_k = D(e_{i_k}(A/J_{k,s+1})).
$$
For the second equality we used that $V_k$ is $(i_k,\ldots,i_1)$-balanced.
Note that $(V_k)_s^- = 0$ if $k \le s$.
We get
$$
\Hom_A(V_k/(V_k)_s^-,V_s) \cong D(e_{i_s}(V_k/(V_k)_s^-)) =
D\left(e_{i_s}(D(e_{i_k}(A/J_{k,1}))/D(e_{i_k}(A/J_{k,s+1})))\right).
$$
For the first isomorphism we used Lemma~\ref{lemma8}.
Now we first apply $e_{i_k} \cdot$ and then the duality $D$ to
the short exact sequence
$$
0 \to J_{k,s+1}/J_{k,1} \to A/J_{k,1} \to A/J_{k,s+1} \to 0,
$$
and we obtain
$$
D(e_{i_k}(A/J_{k,1}))/D(e_{i_k}(A/J_{k,s+1})) \cong 
D(e_{i_k}(J_{k,s+1}/J_{k,1})).
$$
Now 
$D(e_{i_s}D(e_{i_k}(J_{k,s+1}/J_{k,1}))) =
D(D(e_{i_k}(J_{k,s+1}/J_{k,1})e_{i_s})) \cong
e_{i_k}(J_{k,s+1}/J_{k,1})e_{i_s}$ 
implies
$\Hom_A(V_k,V_s) \cong e_{i_k}(J_{k,s+1}/J_{k,1})e_{i_s}$.
\end{proof}

Using Lemma~\ref{lemma8},
the isomorphism 
$e_{i_k}J_{k,s+1}/J_{k,1}e_{i_s} \to
\Hom_A(V_k,V_s)$
can be described more precisely:
Let $e_{i_k}\ov{b}e_{i_s} \in e_{i_k}(J_{k,s+1}/J_{k,1})e_{i_s}$.
Then $e_{i_k}\ov{b}e_{i_s}$ is mapped to the homomorphism
$V_k \to V_s$,
which maps a linear form 
$\eta\df e_{i_k}(A/J_{k,1}) \to \C$ in $D(e_{i_k}(A/J_{k,1}))$
to the linear form
$\psi \in D(e_{i_s}(A/J_{s,1}))$ defined by
$$
\psi(e_{i_s}\ov{a}) := \eta(e_{i_k}\ov{b}e_{i_s}\ov{a}).
$$

For $A$-modules $X$ and $Y$
let $\cI_\ii(X,Y)$ be the subspace of $\Hom_A(X,Y)$ consisting
of the morphisms factoring through a module in $\add(I_\ii)$.
Define 
$$
\ov{\Hom}_A(X,Y) := \Hom_A(X,Y)/\cI_\ii(X,Y).
$$

\begin{Lem}\label{st6}
Assume that $(A,\ii)$ is balanced.
Then for each $X \in \cD_\ii$ and $1 \le k \le r$ 
we have 
$$
\cI_\ii(X,V_k) = \Hom_A(X/X_k^+,V_k).
$$
\end{Lem}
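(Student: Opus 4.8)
The statement claims that for $X \in \cD_\ii$ a homomorphism $f\colon X \to V_k$ factors through an $\ii$-injective module if and only if it factors through the quotient $X/X_k^+$. The plan is to prove the two inclusions separately, relying on the structure of $V_k$ established in Lemma~\ref{lemma4} together with the identification of the refined socle and top series of $V_k$ provided by Lemma~\ref{st2} (here we use that $(A,\ii)$ is balanced).

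\emph{The inclusion $\Hom_A(X/X_k^+,V_k) \subseteq \cI_\ii(X,V_k)$.} Suppose $f\colon X \to V_k$ factors through the canonical epimorphism $\pi_k\colon X \to X/X_k^+$, say $f = g\pi_k$. The key point is that $X/X_k^+$ lies in a category on which every map to $V_k$ factors through an $\ii$-injective. Concretely, $X_k^+ = \soc_{(i_k,\ldots,i_1)}(X)$, so $X/X_k^+$ has trivial $\soc_{S_{i_j}}$-part in the appropriate sense for the first $k$ indices, which forces any homomorphism to $V_k$ to land in the $\ii$-injective direct summand $I_{\ii,i_k} = V_{k_{i_k}}$ containing $V_k$ as a submodule. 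More precisely, I would factor the inclusion $V_k \hookrightarrow I_{\ii,i_k}$ (using Lemma~\ref{lemma5} repeatedly, or directly the monomorphism $V_k \to V_{k_{i_k}}$) and observe that the composite $V_k \hookrightarrow I_{\ii,i_k} \twoheadrightarrow V_k$ need not be the identity, so instead one shows directly that $g$ itself factors through an $\ii$-injective envelope of $X/X_k^+$ inside $\md(A_\ii)$ — namely, $X/X_k^+ \in \cD_\ii = \md(A_\ii)$ by Lemma~\ref{D1}, its injective envelope in $\md(A_\ii)$ is $\ii$-injective by Lemma~\ref{D2}, and $g$ extends along this envelope. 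Composing, $f$ factors through an $\ii$-injective, so $f \in \cI_\ii(X,V_k)$.

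\emph{The inclusion $\cI_\ii(X,V_k) \subseteq \Hom_A(X/X_k^+,V_k)$.} Suppose $f\colon X \to V_k$ factors as $X \xrightarrow{u} I \xrightarrow{v} V_k$ with $I \in \add(I_\ii)$. It suffices to show $f(X_k^+) = 0$, \ie that $v u$ kills $X_k^+ = \soc_{(i_k,\ldots,i_1)}(X)$. By Lemma~\ref{homsoc}(i), $u(\soc_{(i_k,\ldots,i_1)}(X)) \subseteq \soc_{(i_k,\ldots,i_1)}(I)$, and since $I$ is a direct sum of modules $I_{\ii,j} = V_{k_j}$, Lemma~\ref{st2} identifies $\soc_{(i_k,\ldots,i_1)}(I_{\ii,j})$ with $\rad_{(i_k,\ldots,i_1)}$ applied inside the relevant balanced module — the upshot being that $\soc_{(i_k,\ldots,i_1)}(I)$ is annihilated by $v$ because $v$ lands in $V_k$, which satisfies $\rad_{(i_k,\ldots,i_1)}(V_k) = 0$ (as $V_k \in \cD_{(i_k,\ldots,i_1)}$). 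Invoking Lemma~\ref{homsoc}(iii) with the source module $\soc_{(i_k,\ldots,i_1)}(I)$ — which equals its own $\soc_{(i_k,\ldots,i_1)}$ — and target $V_k$ gives $v|_{\soc_{(i_k,\ldots,i_1)}(I)} = 0$. Hence $f(X_k^+) = v(u(X_k^+)) = 0$, so $f$ descends to $X/X_k^+$.

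\emph{Main obstacle.} The delicate step is the first inclusion: showing that a map out of $X/X_k^+$ necessarily factors through an $\ii$-injective. The cleanest route is to work entirely inside $\md(A_\ii)$, using Lemmas~\ref{D1} and~\ref{D2} to identify $\cD_\ii$ with $\md(A_\ii)$ and the $\ii$-injectives with the genuine injectives of $A_\ii$; then the factorization is just injectivity of an injective envelope. One must be careful that $X$ itself need not lie in $\cD_\ii$ in general — but in the lemma we are given $X \in \cD_\ii$, so this is not an issue here, and the argument goes through once the translation to $A_\ii$-modules is made. The second inclusion is more mechanical, the only subtlety being the correct bookkeeping of which refined socle series of which balanced module appears, which is exactly what Lemma~\ref{st2} is designed to supply.
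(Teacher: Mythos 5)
The overall structure (proving both inclusions, using the balanced hypothesis to get $I_k^-=I_k^+$ for $\ii$-injectives, and appealing to some injectivity of $V_k$) is the same as the paper's, and your argument for $\cI_\ii(X,V_k)\subseteq\Hom_A(X/X_k^+,V_k)$ is essentially the paper's modulo some notational confusion: $X_k^+$ is $\soc_{(i_r,\ldots,i_{k+1})}(X)$, the $k$-th term of the refined socle chain built from the \emph{top} of $\ii$, not $\soc_{(i_k,\ldots,i_1)}(X)$. What you actually need and use is $u(X_k^+)\subseteq I_k^+$ from Lemma~\ref{homsoc}(i), $v(I_k^-)=0$ from Lemma~\ref{homsoc}(iii) applied with $\soc_{(i_k,\ldots,i_1)}(V_k)=V_k$, and then $I_k^-=I_k^+$ because $I\in\add(I_\ii)$ is $\ii$-balanced; your sketch is consistent with this once the notation is untangled.

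However, the converse inclusion $\Hom_A(X/X_k^+,V_k)\subseteq\cI_\ii(X,V_k)$ has a genuine gap. You propose to embed $X/X_k^+$ into its injective envelope $J$ inside $\md(A_\ii)$ (so $J\in\add(I_\ii)$ by Lemma~\ref{D2}) and then extend $g\colon X/X_k^+\to V_k$ to $J\to V_k$. But that extension requires $V_k$ to be injective as an $A_\ii$-module, which fails unless $k=k_{i_k}$: $V_k=D(e_{i_k}(A/J_{k,1}))$ is injective only over $A/J_{k,1}$, a proper quotient of $A_\ii$. So the map $g$ need not extend along $X/X_k^+\hookrightarrow J$, and the factorization you produce is not available. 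The paper's fix is to embed the \emph{original} $X$ into an $\ii$-injective $I$ (legitimate since $X\in\cD_\ii=\md(A_\ii)$), pass to the quotient by the $(+)$-filtration to get an embedding $X/X_k^+\hookrightarrow I/I_k^+$ (Lemma~\ref{homsoc}(ii)), observe that \emph{both} of these are $A/J_{k,1}$-modules, and only then use that $V_k$ is injective over $A/J_{k,1}$ to extend $h_1$ to $u_3\colon I/I_k^+\to V_k$; then $f=u_3u_2u_1$ visibly factors through the $\ii$-injective $I$. The quotient step is what places the extension problem inside a category where $V_k$ really is injective, and it is exactly what is missing from your plan.
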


\begin{proof}
There is a short exact sequence
$$
0 \to X_k^+ \to X \to X/X_k^+ \to 0.
$$
Applying the functor $\Hom_A(-,V_k)$ we
can identify $\Hom_A(X/X_k^+,V_k)$ with a subspace of $\Hom_A(X,V_k)$.
Suppose that $f\df X \to V_k$ is a homomorphism.

Assume first that $f = h \circ g$ with $g\df X \to I$ and 
$I \in \add(I_\ii)$.
It follows from Lemma~\ref{D1} and Lemma~\ref{D2}
that we can assume without loss of generality that
$g$ is a monomorphism.
By Lemma~\ref{homsoc}(i) we know that $g(X_k^+) \subseteq I_k^+$.
By definition $I_k^- = \rad_{(i_k,\ldots,i_1)}(I)$ and 
$\soc_{(i_k,\ldots,i_1)}(V_k) = V_k$.
Thus Lemma~\ref{homsoc}(iii) implies $h(I_k^-) = 0$.
Since $(A,\ii)$ is balanced, we get $I_k^- = I_k^+$.
This shows that $f(X_k^+) = 0$.
In other words, $f \in \Hom_A(X/X_k^+,V_k)$.
So we proved that $\cI_\ii(X,V_k) \subseteq  \Hom_A(X/X_k^+,V_k)$.

To show the other inclusion, let $f\df X \to V_k$ be a homomorphism
with $f(X_k^+) = 0$.
Thus there is a factorization $f = h_1 \circ g_1$, where
$g_1\df X \to X/X_k^+$ is the projection.
Let $u_1\df X \to I$ be a monomorphism with $I \in \add(I_\ii)$,
and let $u_2\df I \to I/I_k^+$ be the projection.
By Lemma~\ref{homsoc}(ii) we get a monomorphism
$g_2\df X/X_k^+ \to I/I_k^+$ such that $u_2 \circ u_1 = g_2 \circ g_1$.
Now $X/X_k^+$ and $I/I_k^+$ are $A/J_{k,1}$-modules, $V_k$
is an injective $A/J_{k,1}$-module, and $g_2$ is a monomorphism.
Thus there exists a homomorphism $u_3\df I/I_k^+ \to V_k$ such that
$u_3 \circ g_2 = h_1$.
The following commutative diagram illustrates the situation:
$$
\xymatrix{
& X \ar[dl]_{u_1}\ar[d]^{g_1}\ar[r]^f & V_k \\
I \ar[d]_{u_2} & X/X_k^+ \ar[dl]^{g_2}\ar[ur]_{h_1}\\
I/I_k^+ \ar@/_3pc/[uurr]_{u_3}
}
$$
It follows that 
$$
f = h_1 \circ g_1 = u_3 \circ g_2 \circ g_1
= u_3 \circ u_2 \circ u_1.
$$
Thus we have proved that $\Hom_A(X/X_k^+,V_k) \subseteq \cI_\ii(X,V_k)$.
Note that for the proof of this inclusion we did not use the assumption
that $(A,\ii)$ is balanced.
\end{proof}

\begin{Prop}\label{st7}
Assume $(A,\ii)$ is balanced, and let $X \in \cD_\ii$.
For $1 \le k \le r$ we have
$$
D\ov{\Hom}_A(X,V_k) \cong e_{i_k}(X_k^+/X_k^-).
$$
\end{Prop}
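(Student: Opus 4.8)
The statement to prove is
\[
D\ov{\Hom}_A(X,V_k) \cong e_{i_k}(X_k^+/X_k^-)
\]
for $(A,\ii)$ balanced and $X \in \cD_\ii$. The plan is to combine the two preceding results that already compute the relevant $\Hom$-spaces \emph{with} and \emph{without} the ideal of maps factoring through $\ii$-injectives. By Corollary~\ref{cor8.2}, applied to the finite-dimensional module $X$ (recall $\cD_\ii = \md(A_\ii)$ by Lemma~\ref{D1}, so $X$ is finite-dimensional), we have a natural isomorphism $D\Hom_A(X,V_k) \cong e_{i_k}(X/X_k^-)$. By Lemma~\ref{st6}, the subspace $\cI_\ii(X,V_k) \subseteq \Hom_A(X,V_k)$ of morphisms factoring through $\add(I_\ii)$ equals $\Hom_A(X/X_k^+, V_k)$, which via the exact sequence $0 \to X_k^+ \to X \to X/X_k^+ \to 0$ and the functor $\Hom_A(-,V_k)$ is a subspace of $\Hom_A(X,V_k)$. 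So the quotient $\ov{\Hom}_A(X,V_k) = \Hom_A(X,V_k)/\cI_\ii(X,V_k)$ is the cokernel of the inclusion $\Hom_A(X/X_k^+,V_k) \hookrightarrow \Hom_A(X,V_k)$.

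First I would apply the exact duality $D = \Hom_\C(-,\C)$, which is exact, so $D\ov{\Hom}_A(X,V_k)$ is the kernel of the dual surjection $D\Hom_A(X,V_k) \twoheadrightarrow D\Hom_A(X/X_k^+,V_k)$. Next I would identify both terms via Corollary~\ref{cor8.2}: for $X$ we get $e_{i_k}(X/X_k^-)$, and for $X/X_k^+$ — which also lies in $\cD_\ii$ and satisfies $(X/X_k^+)_k^- = X_k^-/X_k^+$ when $k \le$ the relevant index, or more precisely $(X/X_k^+)_k^-$ is the image of $X_k^- $ — we get $e_{i_k}\bigl((X/X_k^+)/(X/X_k^+)_k^-\bigr)$. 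The key compatibility to check is that under these identifications the dual of the restriction map $\Hom_A(X/X_k^+,V_k) \hookrightarrow \Hom_A(X,V_k)$ becomes the natural projection $e_{i_k}(X/X_k^-) \twoheadrightarrow e_{i_k}\bigl(X/(X_k^- + X_k^+)\bigr) = e_{i_k}(X/X_k^+)$, where the last equality uses $X_k^- \subseteq X_k^+$ (Lemma~\ref{lemma3}, or directly since $X \in \cD_\ii$). The functoriality in $X$ of the isomorphism in Corollary~\ref{cor8.2} — which follows from the explicit formula $\eta \mapsto f_\eta$ in Lemma~\ref{lemma8} — is what makes this square commute. Granting that, the kernel of $e_{i_k}(X/X_k^-) \twoheadrightarrow e_{i_k}(X/X_k^+)$ is exactly $e_{i_k}(X_k^+/X_k^-)$, since $e_{i_k}\cdot$ is exact on vector spaces, and that is the desired right-hand side.

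The main obstacle I expect is the bookkeeping needed to verify that the dual of the inclusion $\Hom_A(X/X_k^+,V_k) \hookrightarrow \Hom_A(X,V_k)$ really corresponds to the obvious projection on the $e_{i_k}$-weight spaces — i.e., the naturality/compatibility of Lemma~\ref{lemma8}(ii) and Corollary~\ref{cor8.2} with respect to the quotient map $X \to X/X_k^+$. One must also be slightly careful that $\ov{\Hom}$ here is defined with respect to $\cI_\ii$ (the ideal of maps through $\add(I_\ii)$), not the ordinary injectives of $A$, so that Lemma~\ref{st6} — whose proof crucially uses that $(A,\ii)$ is balanced — is the correct input. Once the commuting square is in place, the rest is the exactness of $D$ and of $e_{i_k}\cdot$, which is routine.
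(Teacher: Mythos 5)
Your proof is correct, and it establishes the same isomorphism, but the route is noticeably different from the paper's. The paper applies $\Hom_A(-,V_k)$ directly to the short exact sequence
$$
0 \to X_k^+/X_k^- \to X/X_k^- \to X/X_k^+ \to 0
$$
of $A/J_{k,1}$-modules; since $V_k$ is injective over $A/J_{k,1}$ this functor is exact, and combined with Lemmas~\ref{lemma7} and~\ref{st6} it yields $\ov{\Hom}_A(X,V_k)\cong\Hom_A(X_k^+/X_k^-,V_k)$ at once, after which a single application of Lemma~\ref{lemma8} gives $D(e_{i_k}(X_k^+/X_k^-))$. You instead dualize the inclusion $\cI_\ii(X,V_k)=\Hom_A(X/X_k^+,V_k)\hookrightarrow\Hom_A(X,V_k)$, apply Corollary~\ref{cor8.2} to both $X$ and $X/X_k^+$, and compute the kernel of the induced surjection. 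This works, but pushes the use of injectivity of $V_k$ into Corollary~\ref{cor8.2} rather than displaying it, and it requires the extra naturality check you correctly flag (that the dual of the restriction map is the obvious projection $e_{i_k}(X/X_k^-)\twoheadrightarrow e_{i_k}(X/X_k^+)$) — a check the paper's argument sidesteps entirely by keeping $X_k^+/X_k^-$ in play from the start. One small exposition wrinkle: the phrase ``$(X/X_k^+)_k^-=X_k^-/X_k^+$'' is ill-formed since $X_k^-\subseteq X_k^+$; the correct statement, which you do use later, is that $(X/X_k^+)_k^-$ is the image of $X_k^-$ in $X/X_k^+$, namely $0$, so the target really is $e_{i_k}(X/X_k^+)$.
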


\begin{proof}
There is a short exact sequence
$$
\eta\df \;\;\; 
0 \to X_k^+/X_k^- \to X/X_k^- \to X/X_k^+ \to 0.
$$
As noted in Lemma~\ref{lemma7} 
we have $\Hom_A(X,V_k) = \Hom_A(X/X_k^-,V_k)$,
and by Lemma~\ref{st6} we know that $\cI_\ii(X,V_k) = \Hom_A(X/X_k^+,V_k)$.
Note that $X/X_k^+$ and $X_k^+/X_k^-$ are both annihilated by $J_{k,1}$.
Thus they are $A/J_{k,1}$-modules, and $V_k = D(e_{i_k}(A/J_{k,1}))$
is an injective $A/J_{k,1}$-module.
Now we apply $\Hom_A(-,V_k)$ to
$\eta$ and obtain 
$\ov{\Hom}_A(X,V_k) \cong \Hom_A(X_k^+/X_k^-,V_k)$.
By Lemma~\ref{lemma8} we get
$$
\Hom_A(X_k^+/X_k^-,V_k) = \Hom_A(X_k^+/X_k^-, D(e_{i_k}(A/J_{k,1})))
\cong D(e_{i_k}(X_k^+/X_k^-)).
$$
Thus we have proved that
$D\ov{\Hom}_A(X,V_k) \cong e_{i_k}(X_k^+/X_k^-)$.
\end{proof}

\subsection{The quiver of $\cE_\ii$} \label{ssec:QuivEndV}
Again, let $A = \C\GG/J$ be a basic algebra, and let us fix
some sequence $\ii = (i_r,\ldots,i_1)$.
Define $\cE_\ii := \End_A(V_\ii)^\op$.
Since we work over an algebraically closed field, Lemma~\ref{st3} 
and a result by Gabriel 
(see for example \cite[Theorem~3.5.4 combined with Theorem~3.6.6]{DK}) 
imply
that $\cE_\ii$ is a finite-dimensional basic algebra.
We want to determine the quiver $\GG_{\cE_\ii}$ of
$\cE_\ii$.
The vertices of $\GG_{\cE_\ii}$ correspond to the
indecomposable direct summands $V_1,\ldots,V_r$ of $V_\ii$.

Define a quiver $\GG_\ii$ as follows:
The set of vertices of $\GG_\ii$ is just $\{ 1,2,\ldots,r \}$.
For each pair $(k,s)$ with
$1 \le s,k \le r$ and $k^+ \ge s^+ \ge k > s$ and
each arrow $a\df i_s \to i_k$ in the quiver $\GG$ of $A$,
there is an arrow $\gamma_a^{k,s}\df s \to k$ in $\GG_\ii$.
These are called the \emph{ordinary arrows} of $\GG_\ii$.
Furthermore, for each $1 \le k \le r$ there is an arrow
$\gamma_k\df k \to k^-$ provided $k^- > 0$.
These are the \emph{horizontal arrows} of $\GG_\ii$.

\begin{Prop}\label{endoquiver}
Assume that $(A,\ii)$ is balanced.
Then there is a quiver isomorphism $\GG_\ii \to \GG_{\cE_\ii}$
with $k \mapsto V_k$ for all $1 \le k \le r$.
\end{Prop}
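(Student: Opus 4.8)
The statement asserts that the combinatorially defined quiver $\GG_\ii$ coincides with the Gabriel quiver of $\cE_\ii = \End_A(V_\ii)^\op$ when $(A,\ii)$ is balanced. The strategy is the standard one for computing a Gabriel quiver: identify the radical and radical-square of $\cE_\ii$, and count $\dim_\C e_k(\rad \cE_\ii/\rad^2 \cE_\ii)e_s$ for each pair $(k,s)$, matching this with the number of arrows $s\to k$ in $\GG_\ii$. By Proposition~\ref{st4}, for balanced $(A,\ii)$ we have an explicit isomorphism $\Hom_A(V_k,V_s)\cong e_{i_k}(J_{k,s+1}/J_{k,1})e_{i_s}$, so the whole computation can be transported to the combinatorics of paths in $\GG$ modulo the relations $J$, filtered by the layers $J_{k,s+1}/J_{k,1}$.

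\textbf{Key steps.} First I would fix the indecomposable projective $\cE_\ii$-modules $e_k\cE_\ii \cong \Hom_A(V_\ii, V_k)$ and the simples $S_k^{\cE}$ at each vertex $k$, and recall that arrows $s\to k$ in $\GG_{\cE_\ii}$ are counted by $\dim \Ext^1_{\cE_\ii}(S_k^\cE, S_s^\cE) = \dim e_s(\rad\cE_\ii/\rad^2\cE_\ii)e_k$ — equivalently, by the multiplicity of $S_s^\cE$ in $\rad(e_k\cE_\ii)/\rad^2(e_k\cE_\ii)$. Second, using Proposition~\ref{st4}, the space $\Hom_A(V_k,V_s)$ is identified with $e_{i_k}(J_{k,s+1}/J_{k,1})e_{i_s}$; under this identification, composition of homomorphisms $V_k\to V_m\to V_s$ corresponds to multiplication of residues of paths (carefully tracking the filtration indices, using $J_{k,m+1}\cdot J_{m,s+1}\subseteq J_{k,s+1}$ and the fact that $V_m$ is $(i_m,\ldots,i_1)$-balanced so that the layer $J_{m,s+1}/J_{m,1}$ is genuinely the ambient one). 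Third, I would therefore exhibit a generating set for $\rad\cE_\ii$: the ordinary generators coming from length-one paths $a\colon i_s\to i_k$ in $\GG$ which survive into $J_{k,s+1}/J_{k,1}$, subject to the range condition that forces $k^+\ge s^+\ge k>s$ (this condition is exactly what guarantees $e_{i_k}\ov a e_{i_s}$ is a nonzero class in the correct layer and is not absorbed by an earlier occurrence of $i_k$); and the horizontal generators $\gamma_k\colon V_k\to V_{k^-}$, which correspond to the canonical inclusion $V_{k^-}\hookrightarrow V_k$ of Lemma~\ref{lemma5} (dualized), living in the layer $J_{k,k^-+1}/J_{k,1}$ and realized by $e_{i_k}\ov{e_{i_k}}e_{i_{k^-}}$ — i.e. the idempotent path. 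Fourth, I would check that these generators are \emph{irredundant modulo $\rad^2$}: any homomorphism $V_k\to V_s$ given by the residue of a path of length $\ge 2$ in $\GG$, or by a product involving a horizontal step followed by another step, lies in $\rad^2\cE_\ii$ — this is where one factors a path $p=(a_1,\ldots,a_m)$ through an intermediate $V_m$ at the appropriate index and checks the range conditions propagate. Finally, counting: for each pair $(k,s)$ the classes $e_{i_k}\ov a e_{i_s}$ for arrows $a\colon i_s\to i_k$ in $\GG$ satisfying $k^+\ge s^+\ge k>s$ give exactly the ordinary arrows $\gamma_a^{k,s}\colon s\to k$ of $\GG_\ii$, and the idempotent class gives the single horizontal arrow $\gamma_k\colon k\to k^-$ when $k^->0$; this matches the definition of $\GG_\ii$ vertex-by-vertex and arrow-by-arrow, and the assignment $k\mapsto V_k$ is the required quiver isomorphism.

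\textbf{Main obstacle.} The delicate point is \emph{not} identifying $\Hom$-spaces — Proposition~\ref{st4} hands that over — but controlling $\rad^2$, i.e. showing precisely which compositions vanish modulo $\rad^2 \cE_\ii$ and that no further relations collapse the proposed arrows. Concretely: I must verify that a path-residue $e_{i_k}\ov a e_{i_s}$ with the stated range condition is genuinely \emph{not} expressible as a sum of composites through other $V_m$'s, and conversely that every homomorphism not of the listed form \emph{is} such a composite. This requires a careful bookkeeping of the indices $k^\pm$, $s^\pm$, $m^\pm$ under composition, and it is exactly here that the balancedness hypothesis is essential — without $V_k = X_k^+ = X_k^-$-type coincidences (Lemma~\ref{st2}), the layers $J_{k,s+1}/J_{k,1}$ would not compose cleanly and the isomorphism of Proposition~\ref{st4} itself (and its compatibility with composition) would fail. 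A secondary, more routine obstacle is pinning down the correct direction of all arrows (note $\cE_\ii = \End_A(V_\ii)^\op$, so a nonzero map $V_s\to V_k$ contributes an arrow $s\to k$, matching $\GG_\ii$'s convention), which is just a matter of being consistent with the opposite-algebra convention throughout.
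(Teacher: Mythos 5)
The paper's proof of this proposition is a one-line pointer to \cite[Theorem~III.4.1]{BIRS}, carried over by replacing the ideals $I_j$ by $J_j$ and dualizing; so you are in effect reconstructing the underlying argument from scratch rather than matching the paper's text. Your overall strategy — present the $\Hom$-spaces via Proposition~\ref{st4} as layers $e_{i_k}(J_{k,s+1}/J_{k,1})e_{i_s}$, identify a generating set of $\rad\cE_\ii$, then count modulo $\rad^2$ — is a sensible direct route and almost certainly parallel to what happens inside the BIRS proof.

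Two problems. First, a concrete error in the horizontal case. Under the $\op$ convention an arrow $\gamma_k\colon k\to k^-$ of $\GG_{\cE_\ii}$ is an element of $e_{k^-}\cE_\ii e_k\cong\Hom_A(V_{k^-},V_k)$, i.e.\ exactly the inclusion $V_{k^-}\hookrightarrow V_k$ from Lemma~\ref{lemma5}; by Proposition~\ref{st4} (applied with $k^-<k+1$, so $J_{k^-,k+1}=A$) this lives in $e_{i_{k^-}}(A/J_{k^-,1})e_{i_k}=e_{i_k}(A/J_{k^-,1})e_{i_k}$ and is realized by $\ov{e_{i_k}}$. You place it in $J_{k,k^-+1}/J_{k,1}$, which parametrizes maps $V_k\to V_{k^-}$ — the wrong direction — and moreover $e_{i_k}$ does not even lie in $J_{k,k^-+1}=J_{i_k}\cdots J_{i_{k^-+1}}$, since $e_{i_k}\notin J_{i_k}$. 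This is the same ``routine'' convention issue you flag, but it has to be resolved before the count can proceed. Second, and more seriously, the part you correctly label the ``main obstacle'' — showing that these classes generate $\rad\cE_\ii$, that they are linearly independent modulo $\rad^2\cE_\ii$, and that the range condition $k^+\ge s^+\ge k>s$ is exactly the survival criterion — is the entire technical content of the statement and is simply not carried out. That bookkeeping (factoring an arbitrary non-isomorphism $V_k\to V_s$ through some intermediate $V_m$ and tracking when a length-one class like $e_{i_k}\ov a e_{i_s}$ is absorbed into $\rad^2$) is what \cite[Theorem~III.4.1]{BIRS} actually proves. As written, this is a good plan but not a proof.
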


\begin{proof}
One can almost copy the proof of \cite[Theorem III.4.1]{BIRS}. 
One only has to replace
the ideals $I_j$ used in \cite{BIRS} 
by our ideals $J_j$.
(We have $I_j = J_j$ if and only if $\GG$ has no loop at the vertex $j$.)
Furthermore, everything has to be dualized.
\end{proof}

In Proposition~\ref{endoquiver} we identify the vertex of $\GG_{\cE_\ii}$
corresponding to $V_k$ with the vertex $k$ of $\GG_\ii$.
Some examples can be found in Section~\ref{examples}.

\subsection{The $\cE_\ii$-module $D\ov{\Hom}_A(X,V_\ii)$}
Using Lemma~\ref{lemma8} together with 
Propositions~\ref{st4} and \ref{st7}, 
we arrive at the following conclusion:
Assume $(A,\ii)$ is balanced, and let $X \in \cD_\ii$.
Using the identifications
\begin{align*}
\Hom_A(V_k,V_s) &= e_{i_k}(J_{k,s+1}/J_{k,1})e_{i_s}, \\
\Hom_A(V_s,V_k) &= e_{i_s}(A/J_{s,1})e_{i_k},\\
D\ov{\Hom}_A(X,V_k) &= e_{i_k}(X_k^+/X_k^-),
\end{align*}
the algebra 
$\cE_\ii$ acts
on 
$Y := D\ov{\Hom}_A(X,V_\ii)$
as follows:
Assume $1 \le s \le k \le r$.
$$
\xymatrix{
e_{i_k}(X_k^+/X_k^-) \ar@/_1.8pc/[rrr]_{e_{i_s}(A/J_{s,1})e_{i_k} \cdot} &&&
e_{i_s}(X_s^+/X_s^-)
\ar@/_1.8pc/[lll]_{e_{i_k}(J_{k,s+1}/J_{k,1})e_{i_s} \cdot}\\
}
$$
For $e_{i_k}\ov{b}e_{i_s} \in e_{i_k}(J_{k,s+1}/J_{k,1})e_{i_s}$
and $\ov{x_s} \in e_{i_s}(X_s^+/X_s^-) = e_{i_s}X_s^+/e_{i_s}X_s^-$
we have 
$$
e_{i_k}\ov{b}e_{i_s} \cdot \ov{x_s} = e_{i_k}\ov{bx_s},
$$
and for
$e_{i_s}\ov{b}e_{i_k} \in e_{i_s}(A/J_{s,1})e_{i_k}$
and $\ov{x_k} \in e_{i_k}(X_k^+/X_k^-) = e_{i_k}X_k^+/e_{i_k}X_k^-$
we have 
$$
e_{i_s}\ov{b}e_{i_k} \cdot \ov{x_k} = e_{i_s}\ov{bx_k}.
$$
We consider $Y$ as a representation 
$Y = (Y(k),Y(\gamma))_{k,\gamma}$ of the quiver $\GG_\ii$ of $\cE_\ii$.
To describe $Y$, we just need to know how the maps $Y(\gamma)$
act on the vector spaces $Y(k) = e_{i_k}(X_k^+/X_k^-)$, where $1 \le k \le r$.
Again using the description of $\GG_{\cE_\ii}$ based on
\cite[Theorem III.4.1]{BIRS} we obtain the following result:
First, assume $\gamma_k\df k \to k^-$ is a horizontal arrow
of $\GG_\ii$.
Then $Y(\gamma_k)$ acts as left multiplication with $e_{i_k}$:
$$
\xymatrix{
e_{i_k}(X_k^+/X_k^-) \ar[rr]^{e_{i_k} \cdot} &&
e_{i_k}(X_{k^-}^+/X_{k^-}^-)
}
$$
Next, let 
$\gamma_a^{k,s}\df s \to k$ be an ordinary arrow of $\GG_\ii$.
Then $Y(\gamma_a^{k,s})$ acts as left multiplication with $a$: 
$$
\xymatrix{
e_{i_k}(X_k^+/X_k^-) && \ar[ll]_{a \cdot}
e_{i_s}(X_s^+/X_s^-)
}
$$

\begin{Rem}
{\rm
For $X \in \cD_\ii$ the following hold:
\begin{itemize}

\item[(i)]
$\cI_\ii(X,V_\ii)$ is a submodule of the 
$\End_A(V_\ii)$-module $\Hom_A(X,V_\ii)$.
This implies that $D\ov{\Hom}_A(X,V_\ii)$ is a submodule
of the $\cE_\ii$-module $D{\Hom}_A(X,V_\ii)$.
Clearly, $D\ov{\Hom}_A(X,V_\ii)$ is also a module over the
algebra $\ov{B}_\ii := (\ov{\End}_A(V_\ii))^\op$.

\item[(ii)]
For $X \in \cD_\ii$ we have
$$
\Hom_A(X,V_\ii) = \Hom_{A_\ii}(X,V_\ii).
$$
Since $\add(I_\ii)$ are the injective $A_\ii$-modules,
we can apply the Auslander-Reiten formula to obtain an isomorphism
of $\ov{B}_\ii$-modules
$$
D\ov{\Hom}_{A_\ii}(X,V_\ii) \cong \Ext_{A_\ii}^1(\tau_{A_\ii}^{-1}(V_\ii),X),
$$
where $\tau_{A_\ii}$ denotes the Auslander-Reiten translation of 
the finite-dimensional algebra $A_\ii$.

\end{itemize}
}
\end{Rem}

\subsection{An isomorphism between partial flag varieties 
and quiver Grassmannians}\label{section2.9}
In this section we prove that the varieties $\F_{\ii,\ba,X}$
of partial composition series
of modules $X \in \cD_\ii$
are isomorphic to certain quiver Grassmannians 
$\G_{\ii,\ba,X}$.
In the proof we first construct a (rather trivial) isomorphism
between partial flag varieties $\tF_{\ii,\ba,X}$
of graded vector spaces and the
image $\tG_{\ii,\ba,X}$ of the usual embedding of
$\tF_{\ii,\ba,X}$ into
a product of classical subspace Grassmannians.
Then we show that the restriction to the
subvarieties
$\F_{\ii,\ba,X} \subseteq \tF_{\ii,\ba,X}$
and
$\G_{\ii,\ba,X} \subseteq \tG_{\ii,\ba,X}$ yields
an isomorphism
$\F_{\ii,\ba,X} \to \G_{\ii,\ba,X}$.

Let $X \in \cD_\ii$ for some $\ii = (i_r,\ldots,i_1)$.
We define a map
$d_{\ii,X}\df \N^r \to \Z^r$
by $(a_r,\ldots,a_1) \mapsto (f_1,\ldots,f_r)$,
where
$$
f_k := (a_k^- - a_k) + (a_{k^-}^- - a_{k^-}) +
\cdots + (a_{k_{\rm min}}^- - a_{k_{\rm min}})
$$
for all $1 \le k \le r$, and $(a_r^-,\ldots,a_1^-) := \ba^-(X)$.
In the following theorem, if $d_{\ii,X}(\ba) \notin \N^r$, then
$\Gr_{d_{\ii,X}(\ba)}^{\cE_\ii}(Y)$ is by definition the empty set.

\begin{Thm}\label{variso}
Assume that $(A,\ii)$ is balanced, and let $X \in \cD_\ii$.
Then for each $\ba \in \N^r$
there exists an isomorphism of algebraic varieties
$$
\F\df \F_{\ii,\ba,X} \to \Gr_{d_{\ii,X}(\ba)}^{\cE_\ii}(Y),
$$
where
$Y$ is the $\cE_\ii$-module $D\ov{\Hom}_A(X,V_\ii)$.
Furthermore,
the map $\ba \mapsto d_{\ii,X}(\ba)$ yields a bijection
$\left\{ \ba \in \N^r \mid \F_{\ii,\ba,X} \not= \varnothing \right\} \to
\left\{ \ff \in \N^r \mid \Gr_{\ff}^{\cE_\ii}(Y) 
\not= \varnothing \right\}$.
\end{Thm}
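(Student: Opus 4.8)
The strategy is to realize both the partial flag variety and the quiver Grassmannian as closed subvarieties of a common ``big'' variety --- a product of classical subspace Grassmannians --- and to exhibit the isomorphism as the restriction of an obvious linear-algebraic isomorphism between the two ambient copies. Fix $X\in\cD_\ii$ and $\ba\in\N^r$. For a flag $X_\bullet=(0=X_r\subseteq\cdots\subseteq X_0=X)$ in $\F_{\ii,\ba,X}$, Lemma~\ref{lemma3} forces $X_k^-\subseteq X_k\subseteq X_k^+$ for all $k$, so the only ``freedom'' in $X_k$ lives in the subquotient $X_k^+/X_k^-$, and in fact only in its $i_k$-th graded piece $e_{i_k}(X_k^+/X_k^-)$, since the quotients $X_{k-1}/X_k$ are semisimple of type $S_{i_k}$. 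This is precisely the space $D\ov{\Hom}_A(X,V_k)=Y(k)$ by Proposition~\ref{st7}. So I would first record the bookkeeping: a point of $\F_{\ii,\ba,X}$ is determined by, and freely determines, a tuple of subspaces $\ov{X}_k\subseteq e_{i_k}(X_k^+/X_k^-)=Y(k)$, where $\ov{X}_k$ is the image of $X_k$, and the numerical constraint $X_{k-1}/X_k\cong S_{i_k}^{a_k}$ translates into $\dim\ov{X}_k=\dim Y(k)-f_k$ with $f_k$ as in the definition of $d_{\ii,X}$ --- this is a routine telescoping count using $\ba^-(X)$, and it simultaneously proves the emptiness dictionary $\F_{\ii,\ba,X}\neq\varnothing\iff d_{\ii,X}(\ba)\in\N^r$ and $\Gr_{d_{\ii,X}(\ba)}^{\cE_\ii}(Y)\neq\varnothing$.

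\textbf{Building the map.} Having identified a flag with a tuple $(\ov{X}_k)_k$ of subspaces of the fixed spaces $Y(k)$, I would next check that $X_\bullet$ is a flag of \emph{submodules} (not just of graded subspaces) if and only if the corresponding tuple $(\ov{X}_k)_k$ is a $\cE_\ii$-subrepresentation of $Y$, with respect to the explicit action of the arrows of $\GG_\ii$ described just before the theorem. Here the two families of arrows must be matched to two families of module-theoretic conditions on $X_\bullet$: the horizontal arrow $\gamma_k:k\to k^-$, acting by left multiplication by $e_{i_k}$, encodes the inclusion $X_k\subseteq X_{k-1}\subseteq\cdots$ (i.e.\ $\ov{X}_k$ maps into $\ov{X}_{k^-}$ after passing to the next subquotient in which vertex $i_k$ reappears), while an ordinary arrow $\gamma_a^{k,s}:s\to k$ for an arrow $a:i_s\to i_k$ of $\GG$, acting by left multiplication by $a$, encodes that $X_s$ is stable under the operator $a$ of $A$ relative to the filtration. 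The ``balanced'' hypothesis is what makes this clean: it guarantees $V_k$ is $(i_k,\ldots,i_1)$-balanced, so $(V_k)_s^-=(V_k)_s^+$, which is exactly what is needed for Proposition~\ref{st4} and Proposition~\ref{st7} to give the clean descriptions of $\Hom_A(V_k,V_s)$ and $D\ov{\Hom}_A(X,V_k)$ as honest subquotients of $A$ and of $X$, with the arrow-actions being plain multiplication. I would verify the correspondence in both directions: a submodule flag yields a subrepresentation, and conversely lift a subrepresentation $(\ov{X}_k)_k$ of $Y$ to submodules $X_k$ by setting $X_k:=$ the preimage in $X_k^+$ of $\ov{X}_k\subseteq X_k^+/X_k^-$, then check the inclusions $X_k\subseteq X_{k-1}$ and the $A$-stability using the arrow conditions.

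\textbf{Algebraicity and the main obstacle.} Finally I would argue that the bijection just constructed is a morphism of varieties in both directions, hence an isomorphism. The cleanest way, as the section's preamble suggests, is to factor it through the standard embedding of a partial flag variety $\tF_{\ii,\ba,X}$ of graded vector spaces into a product $\prod_k\Gr(\,\cdot\,,e_{i_k}X)$ of classical Grassmannians: the assignment $X_\bullet\mapsto(\ov{X}_k)_k$ is visibly algebraic because each $\ov{X}_k$ is cut out of $X_k$ by fixed linear projections, and the inverse ``take preimage'' operation is algebraic for the same reason, both identifications being compatible with the two closed embeddings into the respective ambient products of Grassmannians. The genuinely delicate point --- and the one I expect to cost the most work --- is the \emph{exact} matching of conditions: showing that the module-closure conditions on the flag $X_\bullet$ (stability under all generators of $A$, together with the chain inclusions) are equivalent, term by term, to the subrepresentation conditions for \emph{precisely} the arrows of $\GG_\ii$ listed in Section~\ref{ssec:QuivEndV} (the ordinary arrows indexed by $(k,s)$ with $k^+\ge s^+\ge k>s$ together with an arrow $a$ of $\GG$, plus the horizontal arrows). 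Getting this right requires carefully unwinding which multiplications $a\cdot$ and $e_{i_k}\cdot$ actually impose new constraints versus which are automatically satisfied because of the ranges of indices --- this is where the combinatorics of $k^-,k^+$ and the balanced condition interact, and where one must invoke the description of $\GG_{\cE_\ii}$ obtained from \cite[Theorem~III.4.1]{BIRS} via Proposition~\ref{endoquiver}.
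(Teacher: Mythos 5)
Your proposal follows essentially the same strategy as the paper: realize $\F_{\ii,\ba,X}$ and $\Gr_{d_{\ii,X}(\ba)}^{\cE_\ii}(Y)$ as closed subvarieties of a common product of classical Grassmannians via the sandwich $X_k^-\subseteq X_k\subseteq X_k^+$, then show that the conditions ``$X_\bullet$ is a chain of submodules'' and ``$(\ov{X}_k)_k$ is a $\cE_\ii$-subrepresentation'' correspond term by term, with horizontal arrows encoding the chain inclusions and ordinary arrows encoding $A$-stability. One small bookkeeping slip: the dimension of the image $\ov{X}_k=e_{i_k}(X_k/X_k^-)$ inside $Y(k)$ is $f_k$ itself, not $\dim Y(k)-f_k$.
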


Our proof of Theorem~\ref{variso} will show that 
$\dimv_{\cE_\ii}(Y) = d_{\ii,X}(\ba^+(X))$.
Furthermore, if $\F_{\ii,\ba,X} \not= \varnothing$, and
$X_\bullet = (X_r \subseteq \cdots \subseteq X_1 \subseteq X_0) \in
\F_{\ii,\ba,X}$, then
$f_k = \dim(e_{i_k}(X_k/X_k^-))$ for all $1 \le k \le r$.
Note that $f_k = 0$ if $k^+ = r+1$.

\subsection{Proof of Theorem \ref{variso}}

\subsubsection{}
Assume that $(A,\ii)$ is balanced.
For the rest of this section, besides $\ii$, we also
fix some $\ba = (a_r,\ldots,a_1) \in \N^r$ and some $X \in \cD_\ii$.
With the same notation as in Theorem~\ref{variso}, we define
$$
\G_{\ii,\ba,X} := \Gr_\ff^{\cE_\ii}(Y),
$$
where $\ff := d_{\ii,X}(\ba)$.

We consider $X$ as a representation 
$X = (X(j),X(a))_{j \in \GG_0,a \in \GG_1}$ of the quiver $\GG$ of $A$,
and the $\cE_\ii$-module $Y$ is considered as
a representation $Y = (Y(k),Y(\gamma))_{k,\gamma}$ 
of the quiver $\GG_\ii$ of $\cE_\ii$. 
Given $X_\bullet = (0 = X_r \subseteq \cdots \subseteq X_1 \subseteq X_0 = X)$
in $\F_{\ii,\ba,X}$ we consider each $X_k$ as a subrepresentation of 
$X$.
Thus we have $X_k = (X_k(j))_{j \in \GG_0}$ such that
$$
X(a)(X_k(s(a))) \subseteq X_k(t(a))
$$
for all arrows $a$ of $\GG$.

Our aim is the construction of
two mutually inverse isomorphisms of varieties
$$
\xymatrix{
{\F_{\ii,\ba,X}} \ar@<0.3pc>[rr]^<<<<<<<<<<{\F} &&
{\G_{\ii,\ba,X}} \ar@<0.3pc>[ll]^<<<<<<<<<{\G}.
}
$$

\subsubsection{}
Recall that $\GG_0 = \{1,\ldots,n\}$.
We need to work with the category of $\GG_0$-graded vector spaces.
Its objects are just tuples $W = (W(j))_{j \in \GG_0}$ of 
$\C$-vector spaces $W(j)$.
Set $e_jW := W(j)$ for all $j \in \GG_0$.
The morphisms are defined in the obvious way.
The {\it degree} of $W$ is 
$\dimv(W) := (\dim(W(j)))_{j \in \GG_0}$.
Let ${\bf e}_1,\ldots,{\bf e}_n$ denote the canonical coordinate vectors
of $\Z^n$.
(Thus the $j$th entry of ${\bf e}_j$ is 1, and all other entries
are 0.)
Each representation $X = (X(j),X(a))_{j \in \GG_0,a \in \GG_1}$ 
of $\GG$ yields a $\GG_0$-graded
vector space ${\rm gr}(X) := (X(j))_{j \in \GG_0}$. 

Let $\tF_{\ii,\ba,X}$
be the projective variety of chains
$$
X_\bullet = (0 = X_r \subseteq \cdots \subseteq X_1 \subseteq X_0 = 
{\rm gr}(X))
$$
of $\GG_0$-graded subspaces of ${\rm gr}(X)$ such that
${\rm gr}(X_k^-) \subseteq X_k \subseteq {\rm gr}(X_k^+)$ and
$\dimv(X_{k-1}/X_k) = a_k{\bf e}_{i_k}$ for all $1 \le k \le r$.

For a vector space $L$ let $\Gr_d(L)$ be the projective 
variety of $d$-dimensional
subspaces of $L$.
Clearly, the variety of $(f_k+\dim(e_{i_k}X_k^-))$-dimensional subspaces $U_k$
of $e_{i_k}X$ such that $e_{i_k}X_k^- \subseteq U_k \subseteq e_{i_k}X_k^+$
is isomorphic to $\Gr_{f_k}(e_{i_k}(X_k^+/X_k^-))$.
The isomorphism is given by 
$$
U_k \mapsto \ov{U}_k := U_k/e_{i_k}X_k^-.
$$
Let $\tG_{\ii,\ba,X}$ be the projective variety 
formed by the $r$-tuples
$\ov{U} := (\ov{U}_k)_{1 \le k \le r}$ in
$$
\prod_{k=1}^r \Gr_{f_k}(e_{i_k}(X_k^+/X_k^-))
$$
such that $U_k \subseteq U_{k^-}$ for all $1 \le k \le r$.

We construct two morphisms
$$
\xymatrix{
{\tF_{\ii,\ba,X}} \ar@<0.3pc>[rr]^<<<<<<<<<<{\tF} &&
{\tG_{\ii,\ba,X}} \ar@<0.3pc>[ll]^<<<<<<<<<{\tG}
}
$$
as follows:
First, we define $\tF$.
Let 
$X_\bullet = (0 = X_r \subseteq \cdots \subseteq X_1 \subseteq X_0 
= {\rm gr}(X))$
be in $\tF_{\ii,\ba,X}$.
To each $X_k$ we assign the subspace
$$
\ov{U}_k := e_{i_k}(X_k/X_k^-)
$$ 
of $e_{i_k}(X_k^+/X_k^-)$.
Set $\ov{U} := (\ov{U}_k)_{1 \le k \le r}$.
Then $\tF(X_\bullet) := \ov{U}$ defines a morphism of varieties
$$
\tF\df \tF_{\ii,\ba,X} \to \tG_{\ii,\ba,X}.
$$
Second, we define the morphism $\tG$.
Let $\ov{U} = (\ov{U}_k)_{1 \le k \le r}$ be in $\tG_{\ii,\ba,X}$.
We define a chain
$$
X_\bullet := (0 = X_r \subseteq \cdots 
\subseteq X_1 \subseteq X_0 = {\rm gr}(X))
$$
of $\GG_0$-graded vector spaces
as follows:
For $j \in \GG_0$ set $X_k := (X_k(j))_{j \in \GG_0}$, where
$$
X_k(j) := U_p
$$ 
and
$p := \min\{ k \le s \le r,r+1 \mid i_s = j \}$.
(Here we set $U_{r+1} := 0$.)
Then $\tG(\ov{U}) := X_\bullet$ defines a morphism of varieties
$$
\tG\df \tG_{\ii,\ba,X} \to \tF_{\ii,\ba,X}.
$$
Just using the definitions of $\tF$ and $\tG$ we obtain the following:

\begin{Lem}
The morphisms $\tF$ and $\tG$ are isomorphisms of 
algebraic varieties,
and we have 
$\tG \circ \tF = {\rm id}_{\tF_{\ii,\ba,X}}$ and 
$\tF \circ \tG = {\rm id}_{\tG_{\ii,\ba,X}}$.
\end{Lem}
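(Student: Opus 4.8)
The plan is to verify directly that $\tF$ and $\tG$ are morphisms of varieties and that they are mutually inverse; both checks are essentially unwinding the definitions, so the proof will be short. First I would argue that $\tF$ is a morphism: the assignment $X_\bullet \mapsto \ov U$ with $\ov U_k = e_{i_k}(X_k/X_k^-)$ is the composition of taking the $i_k$-graded piece of $X_k$ (a linear-algebra operation that varies algebraically in $X_k$) with the quotient by the fixed subspace $e_{i_k}X_k^-$; passing to the image inside $e_{i_k}(X_k^+/X_k^-)$ is again algebraic. Moreover one must check that $\tF$ actually lands in $\tG_{\ii,\ba,X}$, i.e.\ that $\dimv(X_{k-1}/X_k) = a_k \be_{i_k}$ forces $\dim \ov U_k = f_k$ and that $X_k \subseteq X_{k^-}$ gives $U_k \subseteq U_{k^-}$. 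The dimension count is precisely the computation $f_k = (a_k^- - a_k) + (a_{k^-}^- - a_{k^-}) + \cdots$ telescoping the differences $\dim(e_{i_k}X_s^-/e_{i_k}X_s)$ over the indices $s$ in the $i_k$-fiber below $k$, which matches the definition of $d_{\ii,X}$; this is the one spot where the formula for $d_{\ii,X}$ gets used, and it is the place I would be most careful.

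Next I would check that $\tG$ is a morphism. Given $\ov U$, one reconstructs $X_k(j) = U_p$ where $p = \min\{k \le s \le r, r+1 \mid i_s = j\}$; since each $U_p$ is recovered from $\ov U_p$ by adding back the fixed subspace $e_{i_p}X_p^-$, the map $\ov U \mapsto X_\bullet$ is algebraic. One must confirm that the resulting chain $X_\bullet$ genuinely lies in $\tF_{\ii,\ba,X}$: the containments $X_r \subseteq \cdots \subseteq X_0 = \mathrm{gr}(X)$ follow from $U_k \subseteq U_{k^-}$ together with the monotonicity of $p$ in $k$, and the graded-dimension condition $\dimv(X_{k-1}/X_k) = a_k\be_{i_k}$ follows because $X_{k-1}$ and $X_k$ differ only in the $i_k$-component, where they equal $U_{(k-1)^+}$ (which is $U_{k}$ when $i_k$ reappears, or more precisely $U_k$ by the definition of $p$) versus $U_k$, with the dimension drop being exactly $a_k$ by the same telescoping identity read backwards. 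Again $\mathrm{gr}(X_k^-) \subseteq X_k \subseteq \mathrm{gr}(X_k^+)$ holds componentwise since $e_jX_k^- \subseteq U_p \subseteq e_jX_k^+$ for the relevant $p$.

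Finally I would verify $\tG \circ \tF = \id$ and $\tF \circ \tG = \id$. For $\tG \circ \tF$: starting from $X_\bullet \in \tF_{\ii,\ba,X}$, forming $\ov U_k = e_{i_k}(X_k/X_k^-)$ and then rebuilding, the $j$-component of the rebuilt $X_k$ is $U_p = e_jX_p$ for $p = \min\{s \ge k \mid i_s = j\}$; but in the original chain $X_k(j) = X_p(j)$ as well, because between indices $k$ and $p$ no quotient is supported at $j$, so the chain is unchanged. For $\tF \circ \tG$: starting from $\ov U$, rebuilding $X_\bullet$ and then taking $e_{i_k}(X_k/X_k^-)$ returns $e_{i_k}U_k / e_{i_k}X_k^- = U_k/e_{i_k}X_k^- = \ov U_k$, since by construction $X_k(i_k) = U_k$ (the minimum $p \ge k$ with $i_p = i_k$ is $k$ itself). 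Since isomorphism of varieties is now witnessed by a pair of mutually inverse morphisms, the lemma follows.

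I do not expect a serious obstacle here: the content is bookkeeping with the index set, and the only subtlety is making sure the indices $k$, $k^-$, $k^+$, $p$ line up so that the graded-dimension bookkeeping is consistent with the definition of $d_{\ii,X}$ and with the balanced hypothesis (which guarantees $X_k^- = X_k^{\ii,-}$ and $X_k^+$ behave as expected so that the ambient Grassmannians $\Gr_{f_k}(e_{i_k}(X_k^+/X_k^-))$ are the right ones). The balanced assumption is not really needed for this particular lemma — it only enters later when identifying $\tG_{\ii,\ba,X}$ with the quiver Grassmannian $\G_{\ii,\ba,X}$ via Proposition~\ref{st7} — but carrying it along does no harm.
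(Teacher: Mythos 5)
Your verification of $\tG \circ \tF = \id_{\tF_{\ii,\ba,X}}$ rests on the unproved claim that, with $p = \min\{\,k \le s \le r,\,r+1 \mid i_s = j\,\}$, the original chain satisfies $X_k(j) = X_p(j)$ ``because between indices $k$ and $p$ no quotient is supported at $j$.'' This is not true as stated: since $i_p = j$, the quotient $X_{p-1}/X_p \cong S_{i_p}^{a_p} = S_j^{a_p}$ is supported exactly at $j$, so generically $e_j X_k = e_j X_{p-1} \supsetneq e_j X_p = U_p$. The same off-by-one surfaces in your dimension check for $\tG$: with $p$ as written, $X'_{k-1}(j) = U_{p(k-1,j)}$ and $X'_k(j) = U_{p(k,j)}$ differ precisely when $j = i_{k-1}$, so the quotient of the rebuilt chain is supported at $i_{k-1}$ rather than at $i_k$, contrary to what membership in $\tF_{\ii,\ba,X}$ requires. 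A minimal sanity check makes the discrepancy visible: take $r=2$, $\ii = (i_2,i_1) = (1,2)$, and a chain $0 = X_2 \subset X_1 \subset X_0 = X$; then $U_2 = e_{i_2}X_2 = 0$, so $\tG\tF(X_\bullet)$ has $j=1$ component of its degree-$1$ term equal to $U_{p(1,1)} = U_2 = 0$, while $e_1 X_1 = e_1 X$ need not vanish.

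The underlying intuition of your argument — $e_j X_\bullet$ is constant along any run of indices at which no quotient is supported at $j$ — is sound, but it pins $e_j X_k$ to $U_q$ for $q = \max\{\,0,\ 1\le s\le k \mid i_s = j\,\}$ (the \emph{last} occurrence of $j$ at or before $k$, with the convention $U_0 := e_j X$), not to $U_p$ for the first occurrence of $j$ at or after $k$. With that index, your telescoping computation $\dim U_{k^-} - \dim U_k = a_k$ goes through, the quotient of the rebuilt chain is correctly supported at $i_k$, and both composites become the identity. In short, the formula for $p$ you copied from the statement of $\tG$ is inconsistent with $\tG\circ\tF=\id$; your proof silently assumes the conclusion that would hold for the corrected index. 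You should resolve this discrepancy explicitly (and re-derive $p$ from the requirement $\tG\circ\tF=\id$) rather than assert ``$X_k(j)=X_p(j)$'' without checking the boundary step at $s=p$.
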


\subsubsection{}
The following lemma is needed in order to ensure that
the quiver Grassmannian $\G_{\ii,\ba,X}$ is a subvariety of 
$\tG_{\ii,\ba,X}$:

\begin{Lem}\label{U}
Let $\ov{U} = (\ov{U}_k)_{1 \le k \le r}$ be a submodule of
the $\cE_\ii$-module $Y$. 
Then we have
$U_k \subseteq U_{k^-}$ for all $1 \le k \le r$.
\end{Lem}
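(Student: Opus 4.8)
The statement says: if $\ov U = (\ov U_k)_{1\le k\le r}$ is an $\cE_\ii$-submodule of $Y = D\ov{\Hom}_A(X,V_\ii)$, then the corresponding chain of actual subspaces of $X$ (obtained from the $\tG$-construction) is nested, i.e.\ $U_k \subseteq U_{k^-}$ for all $k$ with $k^->0$. Recall that $Y(k) = e_{i_k}(X_k^+/X_k^-)$ and that $U_k \subseteq e_{i_k}X$ is the preimage of $\ov U_k \subseteq e_{i_k}(X_k^+/X_k^-)$ under the quotient map, so by construction $e_{i_k}X_k^- \subseteq U_k \subseteq e_{i_k}X_k^+$. The plan is to read off the nesting directly from the horizontal arrows of the quiver $\GG_\ii$ of $\cE_\ii$, which were described just before this lemma.

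\medskip

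The key point is that whenever $k^- > 0$, there is a horizontal arrow $\gamma_k\df k \to k^-$ in $\GG_\ii$, and the description of the $\cE_\ii$-action on $Y$ says that $Y(\gamma_k)$ acts as left multiplication by $e_{i_k}$, namely
\[
Y(\gamma_k)\df e_{i_k}(X_k^+/X_k^-) \to e_{i_k}(X_{k^-}^+/X_{k^-}^-).
\]
Here one uses that $i_k = i_{k^-}$, so $e_{i_k} = e_{i_{k^-}}$ and multiplication by this idempotent is the identity on representatives; hence, concretely, $Y(\gamma_k)$ is induced by the inclusion $X_k^+ \subseteq X_{k^-}^+$ together with the quotient maps. (One checks $X_k^- \subseteq X_{k^-}^-$ and $X_k^+ \subseteq X_{k^-}^+$ from the definitions of the refined socle/top series and from balancedness, using Lemma~\ref{st2}; in fact $X_k^- = X_k^+ = (V_{\ii,\bullet})^{\ii,-}_\bullet$-type objects behave monotonically in $k$.) Since $\ov U$ is a submodule, $Y(\gamma_k)(\ov U_k) \subseteq \ov U_{k^-}$.

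\medskip

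Now I would translate this back to the preimages. The map $Y(\gamma_k)$ fits into a commutative square
\[
\xymatrix{
U_k \ar[r]\ar@{^{(}->}[d] & U_{k^-}/X_{k^-}^-\phantom{xx} \\
e_{i_k}X_k^+ \ar[r] & e_{i_{k^-}}X_{k^-}^+ \ar[u]
}
\]
where the bottom arrow is the inclusion followed by nothing (it is literally the inclusion $e_{i_k}X_k^+ \hookrightarrow e_{i_{k^-}}X_{k^-}^+$) and the right vertical arrow is the quotient. Concretely, for $x \in U_k$ we have $\ov x \in \ov U_k$, hence $Y(\gamma_k)(\ov x) = \ov x \in \ov U_{k^-}$, i.e.\ $x + X_{k^-}^- \in \ov U_{k^-} = U_{k^-}/e_{i_{k^-}}X_{k^-}^-$, which means $x \in U_{k^-}$. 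Since $X_{k^-}^- \subseteq U_{k^-}$ this already gives $x \in U_{k^-}$, so $U_k \subseteq U_{k^-}$, as required.

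\medskip

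The only real subtlety — and the step I'd expect to be the main obstacle — is making the identification $Y(\gamma_k) = $ "the obvious map induced by $X_k^+ \subseteq X_{k^-}^+$ and the quotients" completely precise, i.e.\ matching the abstract module description $e_{i_k}(J_{k,k^-+1}/J_{k,1})e_{i_{k^-}}$-action (for the horizontal arrow the relevant element of $\cE_\ii$ is $\ov{e_{i_k}} = \ov{e_{i_{k^-}}}$ acting by $e_{i_k}\ov b e_{i_{k^-}}\cdot \ov{x_{k^-}} = e_{i_k}\ov{bx_{k^-}}$) with the concrete left-multiplication-by-$e_{i_k}$ picture drawn in the paragraph preceding the Remark in Section on the $\cE_\ii$-module $D\ov\Hom_A(X,V_\ii)$. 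Once that dictionary is in place — and it is exactly the content of that paragraph, so I would just cite it — the inclusion $U_k\subseteq U_{k^-}$ is immediate from $\ov U$ being closed under the action of $\gamma_k$. No further input (no deep property of $X$, no Euler characteristics) is needed; this lemma is purely a compatibility check between the combinatorics of $\GG_\ii$ and the construction $\tG$.
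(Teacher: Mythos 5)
Your proof is correct and takes essentially the same route as the paper: both use that the horizontal arrow $\gamma_k\df k\to k^-$ acts on $Y$ by $x_k+e_{i_k}X_k^-\mapsto x_k+e_{i_k}X_{k^-}^-$, so closure of $\ov U$ under this action immediately gives $U_k\subseteq U_{k^-}$. One minor remark: the inclusions $X_k^-\subseteq X_{k^-}^-$ and $X_k^+\subseteq X_{k^-}^+$ hold simply because $X_\bullet^-$ and $X_\bullet^+$ are chains and $k>k^-$, so invoking balancedness or Lemma~\ref{st2} there is unnecessary.
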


\begin{proof}
Let $\gamma_k\df k \to k^-$ be a horizontal arrow
of $\GG_\ii$.
We know that $Y(\gamma_k)$ acts on $Y$ as follows:
$$
\xymatrix{
e_{i_k}(X_k^+/X_k^-) \ar[rr]^{e_{i_k} \cdot} &&
e_{i_k}(X_{k^-}^+/X_{k^-}^-)
}
$$
In other words,
$Y(\gamma_k)(x_k+e_{i_k}X_k^-) = x_k+e_{i_k}X_{k^-}^-$ for all 
$x_k \in e_{i_k}X_k^+$.
Since $\ov{U}$ is a submodule of $Y$, we know that
$u_k+e_{i_k}X_{k^-}^-$
is contained in $U_{k^-}/e_{i_k}X_{k^-}^-$ for all $u_k \in U_k$.
This implies $u_k \in U_{k^-}$ for all $u_k \in U_k$.
Thus $U_k \subseteq U_{k^-}$.
\end{proof}

\begin{Lem}
The following hold:
\begin{itemize}

\item[(i)]
$\F_{\ii,\ba,X}$ is a Zariski closed subset of $\tF_{\ii,\ba,X}$.

\item[(ii)]
Under the identification
$$
Y = \bigoplus_{k=1}^r e_{i_k}(X_k^+/X_k^-)
$$
the variety
$\G_{\ii,\ba,X}$ is a Zariski 
closed subset of $\tG_{\ii,\ba,X}$.

\end{itemize}
\end{Lem}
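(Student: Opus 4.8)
The plan is to realize, in each case, the smaller set as the common zero locus inside the larger variety of finitely many \emph{incidence conditions} on a product of Grassmannians --- conditions of the shape ``this fixed linear map carries that tautological subspace into this other one'' --- which are classically Zariski closed.

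For (i), I would first note that the ambient variety $\tF_{\ii,\ba,X}$ already encodes the graded-dimension data $\dimv(X_{k-1}/X_k)=a_k{\bf e}_{i_k}$ and the sandwiching ${\rm gr}(X_k^-)\subseteq X_k\subseteq{\rm gr}(X_k^+)$ (cf.\ Lemma~\ref{lemma3}), so that the only extra content of $\F_{\ii,\ba,X}$ is that each $X_k$ be a subrepresentation of $X$ and each subquotient $X_{k-1}/X_k$ be semisimple, \ie isomorphic to $S_{i_k}^{a_k}$. Both requirements are subsumed in a single family of conditions: for every $1\le k\le r$ and every arrow $a\in\GG_1$, $X(a)\bigl(X_{k-1}(s(a))\bigr)\subseteq X_k(t(a))$. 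Indeed this says $\rad(X_{k-1})\subseteq X_k$, whence $X_{k-1}/X_k$ is semisimple, and, applied with $k$ replaced by $k+1$, it gives that each $X_k$ is a submodule ($X_0=X$ and $X_r=0$ being submodules trivially). Each such inclusion is the vanishing locus of a morphism of vector bundles on a product of subspace Grassmannians, namely the composite ${\rm taut.}_{s(a)}\hookrightarrow X(s(a))\xrightarrow{X(a)}X(t(a))\twoheadrightarrow X(t(a))/{\rm taut.}_{t(a)}$, hence Zariski closed; as $\GG_1$ and $r$ are finite, $\F_{\ii,\ba,X}$ is a finite intersection of such loci in $\tF_{\ii,\ba,X}$, so it is closed.

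For (ii), I would first invoke Lemma~\ref{U}: every $\cE_\ii$-submodule $\bar U$ of $Y$ satisfies $U_k\subseteq U_{k^-}$, so $\G_{\ii,\ba,X}=\Gr_\ff^{\cE_\ii}(Y)$ is set-theoretically contained in $\tG_{\ii,\ba,X}$, inside the same ambient product $\prod_k\Gr_{f_k}(e_{i_k}(X_k^+/X_k^-))$ and with the same graded dimensions. A tuple $\bar U\in\tG_{\ii,\ba,X}$ lies in $\G_{\ii,\ba,X}$ exactly when it is stable under all structure maps $Y(\gamma)$ of the representation $Y$ of $\GG_\ii$. The conditions attached to the horizontal arrows $\gamma_k\colon k\to k^-$ are precisely the inclusions $U_k\subseteq U_{k^-}$ already imposed in $\tG_{\ii,\ba,X}$ (again Lemma~\ref{U}), and those attached to the ordinary arrows $\gamma_a^{k,s}\colon s\to k$ read $Y(\gamma_a^{k,s})(\bar U_s)\subseteq\bar U_k$, \ie left multiplication by the arrow $a$ of $\GG$ carries $U_s$ into $U_k$ --- the same kind of incidence condition as in (i). Finitely many closed conditions again, so $\G_{\ii,\ba,X}$ is closed in $\tG_{\ii,\ba,X}$. (If $\ff=d_{\ii,X}(\ba)\notin\N^r$, then both $\F_{\ii,\ba,X}$ and $\G_{\ii,\ba,X}$ are empty and there is nothing to prove.)

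I do not expect a genuine obstacle here: this is a soft statement, and the underlying fact --- that ``a fixed linear map sends one varying subspace into another'' cuts out a closed condition --- is standard. The only point that needs attention is the bookkeeping: identifying exactly which part of the definition of $\F_{\ii,\ba,X}$ (\resp $\G_{\ii,\ba,X}$) has already been built into $\tF_{\ii,\ba,X}$ (\resp $\tG_{\ii,\ba,X}$) --- the graded-dimension constraints in the first case, and, through Lemma~\ref{U}, the horizontal-arrow constraints in the second --- so that what remains is genuinely a closed subvariety.
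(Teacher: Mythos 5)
Your proof is correct, and it follows the same strategy as the paper: exhibit the smaller variety inside the larger one as an intersection of finitely many closed incidence loci on a product of Grassmannians, using Lemma~\ref{U} to obtain the set-theoretic containment $\G_{\ii,\ba,X}\subseteq\tG_{\ii,\ba,X}$ for part~(ii). Your treatment of~(ii) is essentially verbatim the paper's.

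For part~(i) your bookkeeping is more careful than the paper's, and the extra care actually matters. The paper asserts only that ``the condition that all $X_k$ are submodules is closed,'' but that condition alone does \emph{not} cut $\F_{\ii,\ba,X}$ out of $\tF_{\ii,\ba,X}$ when the quiver $\GG$ has a loop at some vertex $i_k$: a chain of submodules with the prescribed graded dimensions and sandwiching may have a subquotient $X_{k-1}/X_k$ which is not semisimple. For instance, take $A=\C[a]/(a^4)$ (one vertex, one loop $a$), $\ii=(1,1,1,1)$, $X=V_2\oplus V_2$ with basis $e_1,\,e_2=ae_1,\,e_3,\,e_4=ae_3$, and $\ba=(0,2,2,0)$; the chain $0\subseteq 0\subseteq\langle e_1,e_2\rangle\subseteq X\subseteq X$ consists of submodules, is sandwiched between $X_\bullet^-$ and $X_\bullet^+$, and has the right graded dimensions, hence lies in $\tF_{\ii,\ba,X}$, but $X_2/X_3\cong V_2\not\cong S_1^{2}$, so it is not in $\F_{\ii,\ba,X}$. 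Similar phenomena occur in the two-loop example of Section~\ref{examples}. Your single family of conditions $X(a)\bigl(X_{k-1}(s(a))\bigr)\subseteq X_k(t(a))$, imposed for all arrows $a$ including loops, correctly encodes both that each $X_k$ is a subrepresentation and that each $X_{k-1}/X_k$ is semisimple; together with the graded-dimension constraints already built into $\tF_{\ii,\ba,X}$, this is exactly the definition of $\F_{\ii,\ba,X}$, so the closedness follows. (In the later applications to preprojective algebras the double quiver $\overline{Q}$ has no loops and the two formulations coincide, but the present lemma is stated for general basic algebras.)
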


\begin{proof}
For $X_\bullet \in \tF_{\ii,\ba,X}$, the condition that
all $X_k$ are submodules is closed. This implies (i).
Similarly, for $\ov{U} \in \tG_{\ii,\ba,X}$, the condition
that $\ov{U}$ is a subrepresentation is closed. 
Now (ii) follows 
directly from Lemma~\ref{U}.
\end{proof}

We claim that $\tF$ and $\tG$ restrict to isomorphisms
$\F\df \F_{\ii,\ba,X} \to \G_{\ii,\ba,X}$
and
$\G\df \G_{\ii,\ba,X} \to \F_{\ii,\ba,X}$.
Thus, we have to show the following:
\begin{itemize}

\item[(a)]
If $X_\bullet \in \F_{\ii,\ba,X}$, then $\tF(X_\bullet) \in \G_{\ii,\ba,X}$.

\item[(b)]
If $\ov{U} \in \G_{\ii,\ba,X}$, then $\tG(\ov{U}) \in \F_{\ii,\ba,X}$.

\end{itemize}
Note that (a) and (b) imply Theorem~\ref{variso}.

\subsubsection{Proof of (a)}
Let $X_\bullet = (0 = X_r \subseteq \cdots \subseteq X_1 \subseteq X_0 = X)$
be in $\F_{\ii,\ba,X}$.
Define $\ov{U} := (\ov{U}_k)_{1 \le k \le r}$, where
$\ov{U}_k := e_{i_k}(X_k/X_k^-)$.
Thus $\tF(X_\bullet) = \ov{U}$.
We have to show that $\ov{U}$ is a subrepresentation of
$Y$.

For each 
horizontal arrow $\gamma_k\df k \to k^-$ of $\GG_\ii$ we have
$Y(\gamma_k)(\ov{U}_k) \subseteq \ov{U}_{k^-}$.
This holds, since $X_k \subseteq X_{k^-}$ and therefore
$e_{i_k}X_k \subseteq e_{i_k}X_{k^-}$.
Next, let $\gamma_a^{k,s}\df s \to k$ be an ordinary arrow of $\GG_\ii$.
It follows that $k > s$.
We have
$$
X_k \subseteq X_{k-1} \subseteq \cdots \subseteq X_{s+1} \subseteq X_s.
$$
By definition of $\F_{\ii,\ba,X}$ we have
$X_{t-1}/X_t \cong S_{i_t}^{a_t}$  for all $1 \le t \le r$.
By the definition of $\GG_\ii$ we know that $i_t \not= i_s$ for all
$k \ge t \ge s+1$.
This implies $e_{i_s}X_s = e_{i_s}X_k$.
It follows that 
$$
aX_s = ae_{i_s}X_s = ae_{i_s}X_k \subseteq e_{i_k}X_k.
$$
(To get the inclusion $ae_{i_s}X_k \subseteq e_{i_k}X_k$ we used our assumption
that $X_k$ is an $A$-module.)
This implies $Y(\gamma_a^{k,s})(\ov{U}_s) \subseteq \ov{U}_k$.
Thus we proved that $\ov{U} \in \G_{\ii,\ba,X}$.

\subsubsection{Proof of (b)}
Let $\ov{U} = (\ov{U}_k)_{1 \le k \le r}$ be a subrepresentation
of the $\cE_\ii$-module $Y$.
Thus we have $\ov{U}_k = U_k/e_{i_k}X_k^-$.
Let $X_\bullet := \tG(\ov{U})$.
Recall that
$$
X_\bullet = (0 = X_r \subseteq \cdots \subseteq 
X_1 \subseteq X_0 = {\rm gr}(X))
$$
is defined as follows:
For $1 \le k \le r$ and $j \in \GG_0$ we have
$X_k(j) = U_p$, 
where 
$p = {\rm min}\{ k \le s \le r,r+1 \mid i_s = j\}$.
(We set $U_{r+1} := 0$.)
Clearly, for all $0 \le s \le r-1$ we have
$X_{s+1} \subseteq X_s$, since by Lemma~\ref{U} we know that 
$U_{s^+} \subseteq U_s$.
It remains to show that
$X_s$ is a subrepresentation of $X$ for all $1 \le s \le r$.

By induction, we can assume that $X_r,\ldots,X_{s+1}$ are
subrepresentations of $X$.
So we only have to investigate how $A$ acts on the subspace $U_s$
of $e_{i_s}X_s$.
Obviously, $e_jX_t = X_t(j)$ for all $1 \le t \le r$ and 
$j \in \GG_0$.
Next,
assume that $\gamma_a^{k,s}\df s \to k$ is an ordinary arrow of
$\GG_\ii$.
We know that $Y(\gamma_a^{k,s})$ acts on $\ov{U}_s$ as follows:
For all $u_s \in U_s$ we have 
$$
Y(\gamma_a^{k,s})(u_s + e_{i_s}X_s^-) = (au_s) + e_{i_k}X_k^-.
$$
Since by our assumption, $\ov{U}$ is a subrepresentation of $Y$,
we get that $(au_s) + e_{i_k}X_k^-$ is contained in 
$\ov{U}_k = U_k/e_{i_k}X_k^-$
for all $u_s \in U_s$.
Thus $au_s \in U_k$ for all $u_s \in U_s$.
Now it follows from the definition of $X_s$ and Lemma~\ref{U} that 
$U_k \subseteq e_{i_k}X_s = X_s(i_k)$.
Thus $X_s$ is a subrepresentation of $X$.
It follows that 
$X_\bullet \in \F_{\ii,\ba,X}$.
This finishes the proof of Theorem~\ref{variso}.

\subsection{Examples}\label{examples}

\subsubsection{}
Let $\GG$ be the quiver 
with just one vertex $1$ and
arrows $a$ and $b$.
Set $A := \C\GG/J$, where $J$ is generated by 
$\{ ab, ba \}$.
For $\ii = (i_4,\ldots,i_1) = (1,1,1,1)$
the modules $V_k = V_{\ii,k}$ look as follows:
\begin{align*}
V_1 &= {\bsm 1 \esm} &
V_2 &= {\bsm 1&&1\\&1 \esm} &
V_3 &= {\bsm 1&&&&1\\&1&&1\\&&1 \esm} &
V_4 &= {\bsm 1&&&&&&1\\&1&&&&1\\&&1&&1\\&&&1 \esm}
\end{align*}
Obviously, $V_k$ is $(i_k,\ldots,i_1)$-balanced.
The quiver $\GG_\ii$ of $\cE_\ii$ looks as follows:
$$
\xymatrix{
4 \ar[rr]^{\gamma_4} && 
3 \ar[rr]^{\gamma_3} \ar@/^1.5pc/[ll]^{\gamma_b^{4,3}} 
\ar@/_1.5pc/[ll]_{\gamma_a^{4,3}} &&
2 \ar[rr]^{\gamma_2} \ar@/^1.5pc/[ll]^{\gamma_b^{3,2}} 
\ar@/_1.5pc/[ll]_{\gamma_a^{3,2}} &&
1 \ar@/^1.5pc/[ll]^{\gamma_b^{2,1}} \ar@/_1.5pc/[ll]_{\gamma_a^{2,1}}
}
$$
Let $X$ be the
$A$-module
$$
\xymatrix@-1.0pc{
& b_2 \ar[dl]_a \ar[dr]^b \\
b_1 && b_3 \ar[dr]^b \\
&&& b_4
}
$$
(Here $\{ b_1,\ldots,b_4 \}$ is a basis of $X$, and the arrows show how
the generators $a$ and $b$ of $A$ act on this basis.)
As a representation of $\GG$, we have $X = (\C^4,X(a),X(b))$,
where 
$$
X(a) = \left(\bbm 0&1&0&0\\0&0&0&0\\0&0&0&0\\0&0&0&0\ebm\right)
\text{\;\;\; and \;\;\;}
X(b) = \left(\bbm 0&0&0&0\\0&0&0&0\\0&1&0&0\\0&0&1&0\ebm\right).
$$
In the following we just write $\ebrace{\cdots}$ instead of
$\Span_\C\ebrace{\cdots}$.
The chains $X_\bullet^+$ and $X_\bullet^-$
look as follows:
\begin{align*}
X_\bullet^+ 
&= (0 
\subseteq \langle b_1,b_4 \rangle 
\subseteq \langle b_1,b_3,b_4 \rangle 
\subseteq \langle b_1,b_2,b_3,b_4 \rangle 
\subseteq \langle b_1,b_2,b_3,b_4 \rangle),\\
X_\bullet^- 
&= (0 
\subseteq 0 
\subseteq \langle b_4 \rangle 
\subseteq \langle b_1,b_3,b_4 \rangle 
\subseteq \langle b_1,b_2,b_3,b_4 \rangle).
\end{align*}
As a representation of $\GG_\ii$, the $\cE_\ii$-module 
$Y = D\ov{\Hom}_A(X,V_\ii)$ looks as follows:
$$
\xymatrix{
0 \ar[rr] && 
\C^2 \ar[rr]^{\left(\bsm 1&0\\0&0\esm\right)} \ar@/^1.8pc/[ll]
\ar@/_1.8pc/[ll] &&
\C^2 \ar[rr]^{\left(\bsm 0&0 \esm\right)} 
\ar@/^1.8pc/[ll]^{\left(\bsm 0&0\\0&1\esm\right)} 
\ar@/_1.8pc/[ll]_{\left(\bsm 0&0\\0&0\esm\right)} &&
\C \ar@/^1.8pc/[ll]^{\left(\bsm 0\\1\esm\right)} 
\ar@/_1.8pc/[ll]_{\left(\bsm 1\\0 \esm\right)}
}
$$
More precisely, the four vector spaces
in the above quiver representation are (from left to right) 
\begin{align*}
0 & \equiv e_1(X_4^+/X_4^-),\\
\C^2 &\equiv e_1(X_3^+/X_3^-) = 
\langle b_1,b_4\rangle
\text{ with basis } (b_1,b_4),\\
\C^2 &\equiv e_1(X_2^+/X_2^-) = 
\langle b_1,b_3,b_4\rangle/\langle b_4\rangle
\text{ with basis } (\overline{b_1},\overline{b_3}),\\
\C &\equiv e_1(X_1^+/X_1^-) = 
\langle b_1,b_2,b_3,b_4\rangle/\langle b_1,b_3,b_4\rangle
\text{ with basis } (\overline{b_2}).
\end{align*}
(Here $\overline{b_i}$ denotes the corresponding residue
class of $b_i$.)
One easily checks that the elements in $\F_{\ii,(1,1,1,1),X}$ are
\begin{align*}
f_\lambda &:=
(0 \subset \langle b_4\rangle \subset \langle b_1+\lambda b_3,b_4\rangle 
\subset \langle b_1,b_3,b_4\rangle \subset X),\\
f_\infty &:=
(0 \subset \langle b_4\rangle \subset \langle b_3,b_4\rangle 
\subset \langle b_1,b_3,b_4\rangle \subset X),\\
g_\lambda &:=
(0 \subset \langle b_1+\lambda b_4\rangle \subset 
\langle b_1,b_4\rangle 
\subset \langle b_1,b_3,b_4\rangle \subset X)
\end{align*}
where $\lambda \in \C$.
It follows 
that the Euler characteristic of 
$\F_{\ii,(1,1,1,1),X}$ is 3.
In this example, the 
isomorphism
$\F_{\ii,(1,1,1,1),X} \to \Gr_{(0,1,1,0)}^{\cE_\ii}(Y)$
from Theorem~\ref{variso} 
looks as follows:
\begin{align*}
f_\lambda &\mapsto 
(0,\langle b_4\rangle,\langle \overline{b_1} + \lambda
\overline{b_3}\rangle,0),\\
f_\infty &\mapsto (0,\langle b_4\rangle,
\langle \overline{b_3}\rangle,0),\\
g_\lambda &\mapsto 
(0,\langle b_1+\lambda b_4\rangle,
\langle \overline{b_1}\rangle,0).
\end{align*}

\subsubsection{Springer fibres}
Let $\GG$ be the quiver 
with just one vertex $1$ and
one arrow $a$.
Set $A := \C\GG/J$, where $J$ is generated by 
$a^m$ for some $m \ge 2$.
Let 
$\ii = (i_m,\ldots,i_2,i_1) = (1,\ldots,1,1)$.
For $1 \le k \le m$ the
module $V_k = V_{\ii,k}$ is uniserial of length $k$, and
$V_k$ is $(i_k,\ldots,i_1)$-balanced.
We have $\add(V_\ii) = \nil(A) = \md(A)$. 
The quiver $\GG_\ii$ of $\cE_\ii$ looks as follows:
$$
\xymatrix{
m \ar@/^0.8pc/[rr]^{\gamma_m} && 
\cdots \ar@/^0.8pc/[rr]^{\gamma_4}\ar@/^0.8pc/[ll]^{\gamma_a^{m,m-1}} &&
3\ar@/^0.8pc/[rr]^{\gamma_3}\ar@/^0.8pc/[ll]^{\gamma_a^{4,3}} &&
2 \ar@/^0.8pc/[rr]^{\gamma_2}\ar@/^0.8pc/[ll]^{\gamma_a^{3,2}} && 
1 \ar@/^0.8pc/[ll]^{\gamma_a^{2,1}}
}
$$
Let $\lambda = (\lambda_t,\ldots,\lambda_1)$ be a partition of
$m$, \ie the $\lambda_j$ are integers such that
$\lambda_t \ge \cdots \ge \lambda_1 \ge 1$ and 
$\lambda_t + \cdots + \lambda_1 = m$.
Define $V_\lambda := V_{\lambda_t} \oplus \cdots \oplus V_{\lambda_1}$.
This yields a bijection between the set of partitions of $m$
and the set of isomorphism classes of $m$-dimensional $A$-modules.
For  $\ba := (a_m,\ldots,a_2,a_1) := (1,\ldots,1,1)$
the varieties $\F_\lambda := \F_{\ii,\ba,V_\lambda}$
are just the classical Springer fibres of Dynkin type ${\rm A}_{m-1}$.

For example, let $m = 7$ and $\lambda = (3,2,2)$.
Then $V_\lambda = V_3 \oplus V_2 \oplus V_2$.
Set $Y := D\ov{\Hom}_A(V_\lambda,V_\ii)$.
By Theorem~\ref{variso} we get 
$\F_\lambda \cong \Gr_\ff^{\cE_\ii}(Y)$,
where $\ff = (f_1,\ldots,f_7) = (2,4,4,3,2,1,0)$
and
$\dimv_{\cE_\ii}(Y) = (h_1,\ldots,h_7) = (3,6,7,7,6,3,0)$.
It is an easy exercise to write 
$Y$ explicitly as a representation
of $\GG_\ii$.

\subsubsection{Balanced modules over preprojective algebras}
\label{balpre}
Let $\LL$ be the preprojective algebra associated to
a finite connected acyclic quiver $Q$. 
Recall that
$\LL = \C\overline{Q}/(c)$, 
where $\overline{Q}$ is the  
{\it double quiver } obtained
by adding to each arrow $a\df i \to j$ in $Q$ an arrow
$a^*\df j \to i$ pointing in the opposite direction, and $(c)$ is
the ideal generated by the element
$$
c = \sum_{a \in Q_1} (a^*a - aa^*).
$$
Let $\ii = (i_r,\ldots,i_1)$ be a reduced
expression for some element $w$ of the Weyl group $W$ of $Q$.
In this situation, the module $V_\ii$ defined in Section~\ref{Vii}
coincides with the cluster-tilting module $V_\ii$ of $\cC_w$
mentioned in 
Section~\ref{introV} (see \cite{GLSUni2}).
The following result is then a direct consequence of
\cite[Proposition~9.6]{GLSUni2}.

\begin{Thm}\label{preprobalanced}
$(\LL,\ii)$ is balanced.
\end{Thm}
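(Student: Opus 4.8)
The plan is to deduce Theorem~\ref{preprobalanced} directly from \cite[Proposition~9.6]{GLSUni2}. The first point is the identification of modules: as already noted in Section~\ref{balpre}, for a reduced word $\ii$ of $w$ the module $V_\ii$ of Section~\ref{Vii} agrees, summand by summand, with the cluster-tilting module $V_\ii\in\cC_w$ of \cite{GLSUni2}, so that each $V_{\ii,k}=\soc_{(i_k,\ldots,i_1)}(\widehat{I}_{i_k})$ coincides with the module bearing the same name there. (Concretely, by Lemma~\ref{lemma4} it is $D(e_{i_k}(\LL/J_{k,1}))$ with $J_{k,1}=J_{i_k}\cdots J_{i_1}$, and since $Q$ is acyclic each $J_j$ is the arrow ideal at $j$, which matches the description used in \cite{GLSUni2}.)

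Next I would reformulate the statement ``$(\LL,\ii)$ is balanced'' in a form suited to \cite[Proposition~9.6]{GLSUni2}. Each $V_{\ii,k}$ lies in $\cD_{(i_k,\ldots,i_1)}$ (it is finite-dimensional by Corollary~\ref{Jrad2} and Lemma~\ref{lemma4}, and applying $\soc_{(i_k,\ldots,i_1)}$ is idempotent), so being $(i_k,\ldots,i_1)$-balanced means exactly that for all $0\le s\le k$
\[
J_{s,1}V_{\ii,k}\;=\;\soc_{(i_k,\ldots,i_{s+1})}(V_{\ii,k}),
\]
where by Lemma~\ref{Jrad} the left-hand side is the $s$-th term of the refined top series of $V_{\ii,k}$ of type $(i_k,\ldots,i_1)$, and the right-hand side is the $s$-th term of its refined socle series of the same type. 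By Lemma~\ref{lemma3} the refined top term is always contained in the refined socle term, so the equality of the two filtrations is equivalent to the purely numerical assertion that they have identical layers, i.e.\ $\ba^-(V_{\ii,k})=\ba^+(V_{\ii,k})$.

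Now \cite[Proposition~9.6]{GLSUni2} computes, in terms of the combinatorics of the reduced word, precisely the submodules $J_{s,1}V_{\ii,k}$ and $\soc_{(i_k,\ldots,i_{s+1})}(V_{\ii,k})$ of $V_{\ii,k}$ and shows they agree: the key point is that, because $\ii$ is reduced, reading the word from the top and from the bottom produces the same chain of submodules of $V_{\ii,k}$, whose $s$-th layer is an isotypic semisimple module of dimension governed by the positions of the letter $i_s$ (via $s^-,s^+$) in the word. Combining this with the identification of the first step proves Theorem~\ref{preprobalanced}; by Lemma~\ref{st1}(i) it follows in particular that $V_\ii$, and hence every module in $\add(I_\ii)$, is $\ii$-balanced.

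I expect the only genuine obstacle to be bookkeeping: setting up the precise dictionary between the indexing conventions here ($k^\pm$, $k_{\mathrm{min}}$, the right-to-left ordering of $\ii$, and the operators $\soc_\ii$, $\rad_\ii$, $\tp_\ii$ of Section~\ref{isosection}) and those used in \cite{GLSUni2}, and then checking that the chain of submodules of $V_{\ii,k}$ appearing in Proposition~9.6 there is literally the refined socle series (equivalently, the refined top series) of type $(i_k,\ldots,i_1)$ in the sense of this section. No representation-theoretic input beyond \cite{GLSUni2} should be needed.
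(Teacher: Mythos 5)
Your proposal takes essentially the same route as the paper: the paper's entire proof is the single sentence that the result is "a direct consequence of [GLSUni2, Proposition~9.6]", and you reduce to exactly that citation after making explicit (i) the identification of the $V_{\ii,k}$ of Section~\ref{Vii} with the modules of the same name in [GLSUni2] (using that $\overline{Q}$ has no loops, so $J_j$ coincides with the ideal $I_j$ used there), and (ii) the unwinding of the definition of balanced to equality of the refined top and socle series of $V_{\ii,k}$ of type $(i_k,\ldots,i_1)$. The extra bookkeeping you supply is correct, and your final remark that $V_\ii$ itself is then $\ii$-balanced by Lemma~\ref{st1}(i) is a harmless addendum not needed for the theorem as stated.
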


Let $(-,-)$ denote the usual $W$-invariant bilinear form
on $\mathfrak{h}^*$, the dual of the Cartan subalgebra of the symmetric 
Kac-Moody Lie algebra $\g$ associated to $Q$.
For $i \in Q_0$ let $\alp_i$ and $\varpi_i$ be the corresponding 
simple root and fundamental weight, respectively.
By \cite[Proposition~9.6]{GLSUni2}, for 
$V_k = V_{\ii,k}$ we have
\begin{equation}\label{eq:a-V_k}
\ba_l^-(V_k) =
\begin{cases}
-(s_{i_l}s_{i_{l+1}} \cdots s_{i_k}(\varpi_{i_k}),\alp_{i_l}) 
& \text{if  $1 \le l \le k$},\\
0 & \text{otherwise}.
\end{cases}
\end{equation}
%


\section{Categorification of the Chamber Ansatz}\label{ssec:CombCp}


In this section we prove 
Theorems~\ref{THM1},~\ref{THM2} and~\ref{THM5}.
The proofs of Theorems~\ref{THM1} and~\ref{THM2} 
follow rather easily from Theorem~\ref{variso}.
As a
main ingredient for the proof of Theorem~\ref{THM5} we 
describe
the twisted $\vph$-functions 
$\vph_{V_{\bi,k}}'$ in Proposition~\ref{ch4}.
These can be seen as module-theoretic versions of
the twisted generalized minors introduced by
Berenstein and Zelevinsky \cite{BZ} (in the Dynkin case).

\subsection{Proof of Theorems~\ref{THM1} and~\ref{THM2}}
We know from Theorem~\ref{preprobalanced} that $(\LL,\ii)$ is balanced.
Let $X \in \cC_w$, and let
$$
\cE := \cE_\ii := \End_\LL(V_\ii)^\op
\quad\text{ and }\quad
\scE := \scE_\ii := \sEnd_{\cC_w}(V_\ii)^\op.
$$
As before, let $W_\ii := I_w \oplus \Omega_w(V_\ii)$.

Recall that a cluster-tilting module $T \in \cC_w$ is
called {\it $V_\ii$-reachable} if one can obtain $T$
via a finite sequence of mutations starting with
the initial cluster-tilting module $V_\ii$.
By \cite[Proposition~13.4]{GLSUni2}, the module $W_\ii$
is $V_\ii$-reachable.

Since $\stCC_w$ is a triangulated category with shift functor
$\Omega_w^{-1}$, we get an $\scE$-module isomorphism
$$
D\ov{\Hom}_\LL(X,V_\ii) \cong D{\Ext}_\LL^1(X,\Omega_w(V_\ii)).
$$
The module $I_w$ is $\cC_w$-projective-injective, thus
$\Ext_\LL^1(X,\Omega_w(V_\ii)) \cong \Ext_\LL^1(X,W_\ii)$.
The category $\stCC_w$ is a 2-Calabi-Yau category, see 
\cite[Proposition~III.2.3]{BIRS} and also
\cite{GLSUni1} for a special case.
Thus there is an $\scE$-module isomorphism
$D{\Ext}_\LL^1(X,W_\ii) \cong \Ext_\LL^1(W_\ii,X)$.
Combining these isomorphisms, we have an
$\scE$-module isomorphism
$$
D\ov{\Hom}_\LL(X,V_\ii) \cong \Ext_\LL^1(W_\ii,X).
$$
We can regard
$D\ov{\Hom}_\LL(X,V_\ii)$ and $\Ext_\LL^1(W_\ii,X)$ as modules
over $\cE$ and $\End_\LL(W_\ii)^\op$, which are
annihilated by the ideals $\cI_\LL(V_\ii,V_\ii)$ and
$\cI_\LL(W_\ii,W_\ii)$, respectively.
Since $\Omega_w\df \stCC_w \to \stCC_w$ is an equivalence,
we have an isomorphism of stable endomorphism algebras 
$$
\scE \cong \sEnd_{\cC_w}(W_\ii)^\op.
$$
Recall 
that $W_\ii$ is a cluster-tilting
module in $\cC_w$.
In particular, we have $\Ext_\LL^1(W_\ii,X) = 0$ for some
$X \in \cC_w$ if and only if $X \in \add(W_\ii)$.

By Theorem~\ref{variso} the varieties
$\F_{\ii,\ba,X}$ and $\Gr_{d_{\ii,X}(\ba)}^\cE(Y)$
are isomorphic for all $\ba \in \N^r$, where
$Y := D\ov{\Hom}_\LL(X,V_\ii)$.
Furthermore, the map
$\ba \mapsto d_{\ii,X}(\ba)$ yields a bijection
$$
\left\{ \ba \in \N^r \mid \F_{\ii,\ba,X} \not= \varnothing \right\} \to
\cU := \left\{ \ff \in \N^r \mid \Gr_{\ff}^\cE(Y) 
\not= \varnothing \right\}.
$$
(If $\Gr_\ff^\cE(Y) \not= \varnothing$, then $f_k = 0$ for all $k \in \Rma$, where $\Rma$ is defined as in 
Section~\ref{section1.4}.
Thus we can identify $\Gr_\ff^\cE(Y)$
and $\Gr_\bd^\scE(Y)$, where $\bd := (f_k)_{k \in \Rmi}$.
Being a bit sloppy, we often just write $\Gr_\ff^\scE(Y)$ instead of $\Gr_\bd^\scE(Y)$.)
Clearly, 
$\cU$
contains always the elements $\ff = \dimv(Y)$ and the 0-dimension vector
$\ff = (0,\ldots,0)$.
(In both cases, $\Gr_{\ff}^\cE(Y)$ is a single point.)
Thus there is a unique $\ba \in \N^r$ with
$\F_{\ii,\ba,X} \not= \varnothing$ if and only if $Y = 0$.
This finishes the proof of Theorems~\ref{THM1} and~\ref{THM2}.

\subsection{Example}
Let $Q$ be a quiver with underlying graph 
$\xymatrix@-0.5pc{1 \ar@{-}[r] & 2 \ar@{-}[r] & 3 \ar@{-}[r] & 4}$ and let
$\ii := (i_{10},\ldots,i_1) := (1,3,2,4,1,3,2,4,1,3)$, 
which is a reduced expression of the longest element in the Weyl group $W_Q$.
It follows that $\cC_w = \nil(\LL) = \md(\LL)$.
Let $V_\ii = V_1 \oplus \cdots \oplus V_{10}$ and 
$W_\ii = I_w \oplus \Omega_w(V_\ii) = W_1 \oplus \cdots \oplus W_{10}$.
We first display the indecomposable direct summands which are not
$\cC_w$-projective-injective:
\begin{align*}
V_1 &= {\bsm 3 \esm} &
V_2 &= {\bsm 1 \esm} &
V_3 &= {\bsm 3\\&4 \esm} &
V_4 &= {\bsm 1&&3\\&2 \esm} &
V_5 &= {\bsm 1&&3\\&2&&4\\&&3\esm} &
V_6 &= {\bsm &&3\\&2\\1 \esm} 
\\[10pt]
W_1 &= {\bsm &2&&4\\1&&3\\&2 \esm} &
W_2 &= {\bsm 2\\&3\\&&4 \esm} &
W_3 &= {\bsm &2\\1&&3\\&2 \esm} &
W_4 &= {\bsm &2&&4\\1&&33\\&2&&4\esm} &
W_5 &= {\bsm &2\\1&&3\\&2&&4 \esm}  &
W_6 &= {\bsm &&4\\&3\\2 \esm} 
\end{align*}
Finally, the indecomposable $\cC_w$-projective-injectives look as follows:
\begin{align*}
V_7 &= W_7 = {\bsm 1\\&2\\&&3\\&&&4 \esm} &
V_8 &= W_8 = {\bsm &&3\\&2&&4\\1&&3\\&2 \esm} &
V_9 &= W_9 = {\bsm &2\\1&&3\\&2&&4\\&&3 \esm} &
V_{10} &= W_{10} = {\bsm &&&4\\&&3\\&2\\1 \esm}
\end{align*}
Using our language of $\vph$-functions, the functions 
$Z_a$ and $T_J$ appearing in
\cite[Example~3.2.2]{BFZ} can be written as follows:
\begin{align*}
Z_1 &= \vph_{V_7} &
Z_2 &= \vph_{V_9} &
Z_3 &= \vph_{V_8} &
Z_4 &= \vph_{V_{10}} \\
T_2 &= \vph_{W_2} &
T_4 &= \vph_{W_6} &
T_{124} &= \vph_{W_1} &
T_{1245} &= \vph_{W_3} &
T_{245} &= \vph_{W_5} & 
T_{24} &= \vph_{W_4}
\end{align*}
and the equality
$T_{24} = \Delta^{1345}(x)\Delta^{25}(x) - \Delta^{2345}(x)\Delta^{15}(x)$
translates to
$\vph_{W_4} = \vph_{W_2}\vph_X - \vph_{W_7}\vph_Y$
where 
$$
X := {\bsm &&&4\\1&&3\\&2\esm} 
\quad\text{ and }\quad
Y := {\bsm &&4\\&3\\2\esm}.
$$

\subsection{Proof of Theorem~\ref{THM5}}
As before, let $\ii = (i_r,\ldots,i_2,i_1)$ be a reduced
expression of $w$.
Define $\jj := (i_r,\ldots,i_2)$.
(This is a reduced expression of $v := ws_{i_1}$.)
By Theorem~\ref{preprobalanced} we have
$$
\rad_{S_{i_1}}(V_{\ii,k}) \cong V_{\jj,k-1}.
$$
This yields the following result:

\begin{Lem}\label{ch1}
If $P$ is $\cC_w$-projective-injective, then
$\rad_{S_{i_1}}(P)$ is $\cC_v$-projective-injective.
\end{Lem}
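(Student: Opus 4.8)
The plan is to reduce the statement, via the identification $\cC_w = \nil(\LL)$-category business, to a concrete computation with the modules $V_{\ii,k}$. Recall from Section~\ref{Vii} that for a reduced expression $\ii=(i_r,\ldots,i_1)$ the $\cC_w$-projective-injectives are exactly the modules in $\add(I_\ii)$, and $I_{\ii,j}=V_{\ii,k_j}$ with $k_j=\max\{s\mid i_s=j\}$. Since $\rad_{S_{i_1}}$ commutes with direct sums (this was observed in the proof of Lemma~\ref{st1}), it suffices to treat an indecomposable projective-injective, i.e.\ to show that $\rad_{S_{i_1}}(V_{\ii,k_j})$ is $\cC_v$-projective-injective for each $j$. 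So first I would fix $j$ and set $k:=k_j$.

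Next I would invoke the just-established identity $\rad_{S_{i_1}}(V_{\ii,k})\cong V_{\jj,k-1}$, where $\jj=(i_r,\ldots,i_2)$ is the reduced expression of $v=ws_{i_1}$ appearing right before the lemma. Thus the claim becomes: $V_{\jj,k-1}$ is $\cC_v$-projective-injective, i.e.\ $V_{\jj,k-1}\in\add(I_\jj)$. By the description of the projective-injectives for $\cC_v$ via the expression $\jj$, this holds precisely when $k-1$ is the \emph{last} occurrence of the letter $i_k$ in the sequence $\jj=(i_r,\ldots,i_2)$ — equivalently, $\max\{2\le s\le r\mid i_s=i_k\}=k$. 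Here one must be slightly careful about the indexing shift: the summands of $V_\jj$ are naturally indexed $V_{\jj,1},\ldots,V_{\jj,r-1}$ corresponding to the positions $2,\ldots,r$ of $\ii$, so position $s$ of $\ii$ corresponds to index $s-1$ for $\jj$; hence $V_{\jj,k-1}$ sits at position $k$ of $\ii$.

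The point is then purely combinatorial: since $k=k_j$ is by definition the last occurrence of the letter $j=i_k$ in the \emph{whole} sequence $(i_r,\ldots,i_1)$, it is a fortiori the last occurrence of $j$ in the truncated sequence $(i_r,\ldots,i_2)$ — truncating from the right (removing $i_1$) cannot create a later occurrence. (The only subtlety is the degenerate case $k=1$, i.e.\ $i_1$ occurs only once in $\ii$; but then $V_{\ii,1}=V_{\ii,k_{i_1}}$ is projective-injective with $\rad_{S_{i_1}}(V_{\ii,1})=\rad_{S_{i_1}}(S_{i_1})=0$, which is vacuously $\cC_v$-projective-injective.) Therefore $k-1$ is the last occurrence of $i_k$ in $\jj$, so $V_{\jj,k-1}\cong I_{\jj,i_k}$ is indecomposable $\cC_v$-projective-injective, as desired. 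I expect the only real care needed in writing this up to be the bookkeeping of the index shift between $\ii$ and $\jj$, and the handling of the degenerate case $k=1$; the structural input — the formula $\rad_{S_{i_1}}(V_{\ii,k})\cong V_{\jj,k-1}$ and the $\add(I_\bullet)$-description of projective-injectives — is already available.
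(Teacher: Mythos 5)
Your proposal is correct and follows exactly the route the paper intends: reduce to the indecomposable $\cC_w$-projective-injectives $V_{\ii,k_j}$, apply the identity $\rad_{S_{i_1}}(V_{\ii,k})\cong V_{\jj,k-1}$ displayed right before the lemma, and observe that $k_j-1$ is still the last occurrence of the letter $j$ in $\jj$ after the index shift, so $V_{\jj,k_j-1}\cong I_{\jj,j}$ is $\cC_v$-projective-injective (and the $k_j=1$ edge case gives the zero module). The paper offers no explicit proof (just ``This yields the following result''), and your write-up, including the additivity of $\rad_{S_{i_1}}$ from Lemma~\ref{st1} and the careful treatment of the degenerate case, supplies precisely the bookkeeping the paper leaves implicit.
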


\begin{Cor}\label{ch2}
If $X \in \cC_w$, then $\rad_{S_{i_1}}(X) \in \cC_v$.
\end{Cor}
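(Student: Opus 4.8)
The plan is to deduce Corollary~\ref{ch2} from Lemma~\ref{ch1} by a standard argument comparing a module with its projective cover. First I would recall that, by the definition of the subcategory $\cC_w$ (see \cite[Section~III.2]{BIRS}, \cite[Section~2.4]{GLSUni2}), a nilpotent $\LL$-module $X$ lies in $\cC_w$ if and only if it admits a filtration whose subquotients are certain standard modules $\Delta_\ii$ (equivalently, $X \in \cC_w$ precisely when it is filtered by the layers governing $w$), and that $\cC_w$ is closed under extensions and contains all $\cC_w$-projective-injectives; dually it is closed under taking modules with a prescribed submodule/quotient pattern. I would pick a projective cover $\pi\colon P \to X$ in $\cC_w$, so that $P \in \add(I_w) \cup \cdots$ is $\cC_w$-projective and $K := \Ker(\pi) \in \cC_w$ as well, giving a short exact sequence $0 \to K \to P \to X \to 0$ in $\cC_w$.

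Next I would apply the functor $\rad_{S_{i_1}}(-)$ to this sequence. By Lemma~\ref{homsoc}(ii), since $\pi$ is an epimorphism and the inclusion $K \hookrightarrow P$ is a monomorphism, the induced maps on $\rad_{S_{i_1}}$-terms fit into an exact sequence
\[
\rad_{S_{i_1}}(K) \to \rad_{S_{i_1}}(P) \to \rad_{S_{i_1}}(X) \to 0,
\]
so that $\rad_{S_{i_1}}(X)$ is a quotient of $\rad_{S_{i_1}}(P)$. By Lemma~\ref{ch1}, $\rad_{S_{i_1}}(P)$ is $\cC_v$-projective-injective, hence in particular lies in $\cC_v$. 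Now I would invoke the fact that $\cC_v$ is closed under factor modules of its projective-injective objects — or, more robustly, that $\cC_v = \Gen(I_v) \cap \Cogen(I_v)$-type description of \cite{BIRS,GLSUni2} makes it closed under the relevant quotients — to conclude $\rad_{S_{i_1}}(X) \in \cC_v$. Alternatively one can argue directly with $\Delta$-filtrations: a $\Delta_{\jj}$-filtration of $\rad_{S_{i_1}}(X)$ is obtained by intersecting a $\Delta_\ii$-filtration of $X$ with $\rad_{S_{i_1}}(X) = J_{i_1}X$, using Lemma~\ref{Jrad}, and checking the subquotients have the correct type.

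The main obstacle I anticipate is the closure statement for $\cC_v$: it is not literally closed under arbitrary quotients, so the argument must either go through $\Delta$-filtrations explicitly (showing that $\rad_{S_{i_1}}$ sends the defining filtration of an object of $\cC_w$ to a valid filtration for $\cC_v$, which amounts to a combinatorial check using $\rad_{S_{i_1}}(V_{\ii,k}) \cong V_{\jj,k-1}$ applied termwise) or else exploit that $\rad_{S_{i_1}}(X)$ is a quotient of a \emph{projective} object of $\cC_v$, which does stay inside $\cC_v$ because $\cC_v$ is by construction the additive closure under extensions of the $\Delta_\jj$'s and projective covers in $\cC_v$ realise all such quotients. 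I would present the filtration argument as the clean one: take a chain $0 = X^{(0)} \subseteq X^{(1)} \subseteq \cdots \subseteq X^{(m)} = X$ with $X^{(l)}/X^{(l-1)} \in \add(\Delta_{\ii})$, apply $J_{i_1}\cdot(-)$, and use Lemma~\ref{Jrad} together with $\rad_{S_{i_1}}(V_{\ii,k}) \cong V_{\jj,k-1}$ and left-exactness considerations to identify the subquotients of the resulting chain as lying in $\add(\Delta_{\jj})$, whence $\rad_{S_{i_1}}(X) \in \cC_v$.
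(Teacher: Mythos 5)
Your first route is precisely the paper's proof: take an epimorphism $P \to X$ with $P$ a $\cC_w$-projective-injective, apply $\rad_{S_{i_1}}(-)$ and Lemma~\ref{homsoc}(ii) to get an epimorphism $\rad_{S_{i_1}}(P) \to \rad_{S_{i_1}}(X)$, and finish with Lemma~\ref{ch1}. The paper does not even bother with the kernel $K$; only the surjection $P\twoheadrightarrow X$ is needed.

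The obstacle you anticipate is not actually there: $\cC_v=\Fac(\LL/J_v)$ by \cite[III]{BIRS} (compare Lemma~\ref{lem:lamwopen}), and $\Fac$-categories are closed under arbitrary quotients, since an epimorphism $Y\twoheadrightarrow Z$ with $Y\in\Fac(\LL/J_v)$ composes with an epimorphism $(\LL/J_v)^t\twoheadrightarrow Y$ to exhibit $Z\in\Fac(\LL/J_v)$. In particular $\rad_{S_{i_1}}(X)$, being a quotient of the $\cC_v$-projective-injective $\rad_{S_{i_1}}(P)$, automatically lies in $\cC_v$; no $\Delta$-filtration argument is required. Your filtration-based alternative would also succeed, essentially by iterating the identity $\rad_{S_{i_1}}(V_{\ii,k})\cong V_{\jj,k-1}$ along the canonical $V_\ii$-filtration of $X$ and invoking the right exactness from Lemma~\ref{homsoc}(ii), but it is more work than the direct $\Fac$ argument.
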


\begin{proof}
There is an epimorphism $P \to X$, where $P$ is
$\cC_w$-projective-injective.
This yields an epimorphism
$\rad_{S_{i_1}}(P) \to \rad_{S_{i_1}}(X)$, see Lemma~\ref{homsoc}(ii).
Now apply Lemma~\ref{ch1}.
\end{proof}

For $1 \le k \le r$ with $k^+ \not= r+1$ we have a short exact
sequence
$$
\eta\df
0 \to W_{\ii,k} \to P(V_{\ii,k}) \to V_{\ii,k} \to 0
$$
in $\cC_w$.
The module $S_{i_1}$ is a direct summand of the rigid module
$V_\ii$.
Thus applying $\Hom_\LL(-,S_{i_1})$ to $\eta$ yields
$$
\tp_{S_{i_1}}(W_{\ii,k}) \oplus \tp_{S_{i_1}}(V_{\ii,k})
\cong \tp_{S_{i_1}}(P(V_{\ii,k})).
$$
Thus by restriction we obtain a short exact sequence
$$
0 \to \rad_{S_{i_1}}(W_{\ii,k}) \to
\rad_{S_{i_1}}(P(V_{\ii,k})) \to \rad_{S_{i_1}}(V_{\ii,k}) \to 0.
$$
By Lemma~\ref{ch1}, the module
$\rad_{S_{i_1}}(P(V_{\ii,k}))$ is $\cC_v$-projective-injective,
and by Corollary~\ref{ch2} we know that 
$\rad_{S_{i_1}}(W_{\ii,k}) \in \cC_v$.
Since
$$
\rad_{S_{i_1}}(V_{\ii,k}) \cong V_{\jj,k-1},
$$
we get
$\rad_{S_{i_1}}(W_{\ii,k}) \cong P \oplus W_{\jj,k-1}$ for some
$\cC_v$-projective-injective module $P$.
(Here we just use the basic properties of the syzygy functor
$\Omega_v$, see \cite{H}.)
Thus we have proved the following:

\begin{Lem}\label{ch3}
If we apply $\rad_{S_{i_1}}(-)$ to $\eta$, we get a short exact sequence
$$
0 \to P \oplus W_{\jj,k-1} \to P \oplus P(V_{\jj,k-1}) \to V_{\jj,k-1}
\to 0
$$
where $P$ is $\cC_v$-projective-injective.
\end{Lem}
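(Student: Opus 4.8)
The plan is to simply assemble the facts established in the paragraphs immediately preceding the statement, and then pin down the middle term of the resulting sequence. As shown there, applying $\rad_{S_{i_1}}(-)$ to $\eta$ yields a short exact sequence
\[
0 \to \rad_{S_{i_1}}(W_{\ii,k}) \to \rad_{S_{i_1}}(P(V_{\ii,k})) \to \rad_{S_{i_1}}(V_{\ii,k}) \to 0
\]
(the exactness coming from applying $\Hom_\LL(-,S_{i_1})$ and using that $S_{i_1}$ is a direct summand of the rigid module $V_\ii$), and the three terms are already identified: the right-hand term is $V_{\jj,k-1}$ by Theorem~\ref{preprobalanced}, the middle term $Q := \rad_{S_{i_1}}(P(V_{\ii,k}))$ is $\cC_v$-projective-injective by Lemma~\ref{ch1}, and $\rad_{S_{i_1}}(W_{\ii,k}) \cong P \oplus W_{\jj,k-1}$ for some $\cC_v$-projective-injective module $P$, using Corollary~\ref{ch2} together with the standard properties of the syzygy functor $\Omega_v$. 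Thus the only thing left to check is that $Q \cong P \oplus P(V_{\jj,k-1})$.

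For this I would invoke Schanuel's lemma in the Frobenius category $\cC_v$. By definition $W_{\jj,k-1} = \Omega_v(V_{\jj,k-1})$ sits in a short exact sequence $0 \to W_{\jj,k-1} \to P(V_{\jj,k-1}) \to V_{\jj,k-1} \to 0$ with $P(V_{\jj,k-1})$ projective (hence projective-injective). Comparing this with $0 \to P \oplus W_{\jj,k-1} \to Q \to V_{\jj,k-1} \to 0$ — two short exact sequences with the same cokernel $V_{\jj,k-1}$ and projective middle terms — Schanuel's lemma gives $(P \oplus W_{\jj,k-1}) \oplus P(V_{\jj,k-1}) \cong Q \oplus W_{\jj,k-1}$, and cancelling $W_{\jj,k-1}$ by the Krull--Schmidt property of $\cC_v$ yields $Q \cong P \oplus P(V_{\jj,k-1})$. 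Substituting the three identifications into the displayed sequence then gives exactly $0 \to P \oplus W_{\jj,k-1} \to P \oplus P(V_{\jj,k-1}) \to V_{\jj,k-1} \to 0$, as claimed.

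The substantive work has all been carried out before the statement — in particular the decomposition $\rad_{S_{i_1}}(W_{\ii,k}) \cong P \oplus W_{\jj,k-1}$, which rests on $(\LL,\ii)$ being balanced (Theorem~\ref{preprobalanced}) and on $\rad_{S_{i_1}}$ carrying $\cC_w$ into $\cC_v$ compatibly with syzygies. The only genuinely new point, and the one to be careful about, is that it is the \emph{same} projective-injective summand $P$ that appears both in the left-hand term and in the middle term of the final sequence; the Schanuel cancellation above is precisely what delivers this, so I do not expect any serious obstacle here.
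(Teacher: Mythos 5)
Your proof is correct and follows essentially the same route as the paper: you assemble the facts established just before the lemma (the short exact sequence obtained by applying $\rad_{S_{i_1}}$, the identification of its outer terms, and the $\cC_v$-projective-injectivity of the middle term), and you pin down the middle term with the same Schanuel-type cancellation that the paper sweeps into the phrase ``basic properties of the syzygy functor $\Omega_v$.'' Your version merely spells out that cancellation explicitly, which is exactly the missing detail.
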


For $1 \le l \le k \le r$ define
$$
b_\ii(l,k) := -(s_{i_l}s_{i_{l+1}} \cdots s_{i_k}(\vpi_{i_k}),\alpha_{i_l}).
$$
(For $l > k$ we define $b(l,k) := 0$.)
Note that if $l > 1$, then $b_\ii(l,k) = b_\jj(l-1,k-1)$.

\begin{Prop}\label{ch4}
For $1 \le k \le r$ we have
$$
\vph_{V_{\ii,k}}'(\sx_\bi(\bt)) = \prod_{l=1}^k t_l^{-b_\ii(l,k)} 
 = \bt^{-\ba^-(V_k)}.
$$
\end{Prop}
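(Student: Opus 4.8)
The plan is to prove $\vph_{V_{\ii,k}}'(\sx_\bi(\bt)) = \bt^{-\ba^-(V_k)}$ by induction on $r$, using the reduction from $w$ to $v = w s_{i_1}$ and the first-factor stripping map on $N^w$. Recall that $\vph_{V_{\ii,k}}' = \vph_{W_{\ii,k}}/\vph_{P(V_{\ii,k})}$ and that, by \eqref{eq:a-V_k} (Theorem~\ref{preprobalanced} and \cite[Proposition~9.6]{GLSUni2}), $\ba_l^-(V_k) = -(s_{i_l}\cdots s_{i_k}(\vpi_{i_k}),\alp_{i_l}) = b_\ii(l,k)$ for $1\le l\le k$ and $0$ otherwise; so the second equality $\prod_{l=1}^k t_l^{-b_\ii(l,k)} = \bt^{-\ba^-(V_k)}$ is immediate. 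The content is the first equality.

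First I would dispose of the cases $k$ with $k^+ = r+1$, i.e.\ where $V_{\ii,k}$ is $\ii$-injective. For these $W_{\ii,k}$ is a $\cC_w$-projective-injective, so $\vph'_{V_{\ii,k}}$ is (up to the projective-injective normalization that is invisible on $N^w$) the corresponding generalized minor, and $\ba^-(V_k)$ records its weight; one checks the monomial directly from the known evaluation of $\vph_{V_{\ii,k}}$ on $\sx_\ii(\bt)$, which by Lusztig's construction is $\sum_\ba \chi(\F_{\ii,\ba,V_k})\bt^\ba$ and which, by Lemma~\ref{lemma3.1} together with $\ii$-balancedness of $V_k$ (Proposition in Section~3.5, via Lemma~\ref{st3}ff.), reduces to the single monomial $\bt^{\ba^-(V_k)} = \bt^{\ba^+(V_k)}$; then divide by $\vph_{P(V_{\ii,k})}(\sx_\ii(\bt))$, which is again a single monomial by the same argument applied to the projective cover (a summand of $V_\ii$ is $\ii$-balanced, and so is $P(V_{\ii,k})\in\add V_\ii$). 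For the general $k$ with $k^+\ne r+1$, I would apply $\rad_{S_{i_1}}(-)$ to the defining short exact sequence $\eta\colon 0\to W_{\ii,k}\to P(V_{\ii,k})\to V_{\ii,k}\to 0$. By Lemma~\ref{ch3} this gives $0\to P\oplus W_{\jj,k-1}\to P\oplus P(V_{\jj,k-1})\to V_{\jj,k-1}\to 0$ over $\cC_v$ with $P$ a $\cC_v$-projective-injective. Multiplicativity of $\vph$ (property (i)) and the induction hypothesis for $v$ (which has the shorter reduced word $\jj$) give $\vph'_{V_{\jj,k-1}}(\sx_\jj(\bt')) = \bt'^{-\ba^-(V_{\jj,k-1})}$, where $\bt' = (t_r,\ldots,t_2)$. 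The key compatibility I need is: under the birational map $N^w\dashrightarrow N^v$ sending $x\mapsto x'$ with $\sx_\ii(t_r,\ldots,t_1) = \sx_\jj(t_r,\ldots,t_2)\,x_{i_1}(t_1)$, one has $\vph_{\rad_{S_{i_1}}(X)}(\sx_\jj(\bt')) $ related to $\vph_X(\sx_\ii(\bt))$ in a controlled way. Concretely, the relation $\rad_{S_{i_1}}(X)\in\cC_v$ from Corollary~\ref{ch2}, combined with the flag-variety description $\F_{\ii,\ba,X}$ and the fact that $X_1^{-}=\rad_{S_{i_1}}(X) = J_{i_1}X$ for the bottom step (Lemma~\ref{Jrad}), identifies the "top" flag varieties for $X$ of type $\ii$ with those for $\rad_{S_{i_1}}(X)$ of type $\jj$, up to the $a_1$ in the first slot; this is exactly the statement that stripping the first one-parameter factor intertwines $\vph$ and $\rad_{S_{i_1}}$. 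Feeding $\eta$ through this and taking the ratio $\vph_{W_{\ii,k}}/\vph_{P(V_{\ii,k})}$ then shows $\vph'_{V_{\ii,k}}(\sx_\ii(\bt)) = t_1^{-b_\ii(1,k)}\cdot \vph'_{V_{\jj,k-1}}(\sx_\jj(\bt'))$, and since $b_\ii(l,k) = b_\jj(l-1,k-1)$ for $l>1$, the right-hand side is $t_1^{-b_\ii(1,k)}\prod_{l=2}^k t_l^{-b_\ii(l,k)} = \prod_{l=1}^k t_l^{-b_\ii(l,k)}$, closing the induction.

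The base case $r=1$ (so $w = s_{i_1}$) is a direct computation: $V_{\ii,1} = S_{i_1}$, $W_{\ii,1}$ and $P(V_{\ii,1})$ are the relevant projective-injectives, and $\vph_{S_{i_1}}(x_{i_1}(t_1)) = t_1$ reproduces $b_\ii(1,1) = -(\vpi_{i_1},\alp_{i_1}) = -1$, i.e.\ the exponent $-b_\ii(1,1) = 1$. Actually, since $\vph'_{V_{\ii,1}}$ is a twisted minor, one should be a little careful with which normalization makes it literally $t_1$ versus a scalar multiple; on $N^w$ the projective-injective factors are set up (Section~2.2(iv), property (iii)) precisely so that the quotient is the intended minor, so this bookkeeping is routine.

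The main obstacle I expect is establishing the exact intertwining between $\vph_{(-)}$ on $N^w$ and $\rad_{S_{i_1}}(-)$ on $N^v$ — i.e.\ making precise the statement that "peeling off $x_{i_1}(t_1)$ corresponds to $\rad_{S_{i_1}}$", including keeping track of the $t_1$-power that the first composition-series step contributes. Everything else (multiplicativity, Lemma~\ref{ch3}, $\ii$-balancedness reducing the relevant $\vph$'s to single monomials, the identity $b_\ii(l,k)=b_\jj(l-1,k-1)$, and the $\add V_\ii$ argument for $P(V_{\ii,k})$) is bookkeeping once that compatibility is in hand. An alternative route avoiding the inductive peeling would be to evaluate $\vph_{W_{\ii,k}}(\sx_\ii(\bt))$ directly via Theorem~\ref{THM2} — since $W_{\ii,k}\in\add(W_\ii)$, that function is a single monomial $\bt^{\ba}$ — and then identify the exponent $\ba$ with $\ba^-(V_k) + \ba^-(P(V_{\ii,k}))$ by a weight computation using the exact sequence $\eta$ and the additivity of weights; this trades the peeling argument for an explicit determination of the monomial exponent of a $W_\ii$-summand, which is arguably cleaner and is what Theorem~\ref{THM2} was set up to provide.
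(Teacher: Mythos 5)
Your proposal matches the paper's proof: the paper also splits into the case $k^+ = r+1$ (settled directly by~\cite[Proposition~9.6]{GLSUni2}) and an induction on $r$ via Lemma~\ref{ch3}, using Theorem~\ref{THM2} to reduce $\vph_{W_{\ii,k}}$ and $\vph_{P(V_{\ii,k})}$ to single monomials whose exponents are then computed slot-by-slot. The ``intertwining'' you flag as the main obstacle in fact dissolves with no work: for $M\in\cD_\ii$, Lemma~\ref{Jrad} and the defining recursion of the refined top series give $\ba^-_l(M) = \ba^-_{l-1,\jj}(\rad_{S_{i_1}}(M))$ for all $l\ge 2$ directly, and the $l=1$ slot comes from the split $\tp_{S_{i_1}}(W_{\ii,k})\oplus\tp_{S_{i_1}}(V_{\ii,k})\cong\tp_{S_{i_1}}(P(V_{\ii,k}))$ obtained by applying $\Hom_\LL(-,S_{i_1})$ to $\eta$ and using rigidity of $V_\ii$ (recall $S_{i_1}=V_1\in\add V_\ii$) --- so there is no birational-map argument on $N^w\dashrightarrow N^v$ to set up. One slip in the ``alternative route'': from $\eta$ the correct weight identity is $\ba^+(W_{\ii,k}) = \ba^-(P(V_{\ii,k})) - \ba^-(V_{\ii,k})$, not $\ba^-(V_k) + \ba^-(P(V_{\ii,k}))$; taken literally your formula gives the wrong sign on $\ba^-(V_k)$. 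Also note that ``additivity of weights'' across $\eta$ is not free --- it is exactly the Snake-Lemma argument the paper carries out later in Section~\ref{sec:PfExp} when proving equation~\eqref{exp2}, and it uses Theorem~\ref{THM1} to identify $(W_{\ii,k})^-_l$ with $W_{\ii,k}\cap(P(V_{\ii,k}))^-_l$.
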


\begin{proof}
Let $1 \le k \le r$.
If $k^+ = r+1$, then the statement follows directly from
\cite[Proposition~9.6]{GLSUni2}.
Thus assume $k^+ \le r$.
By induction we get
$$
\vph_{V_{\jj,k-1}}'(\sx_\jj(t_r,\ldots,t_2)) = \prod_{l=2}^k t_l^{-b_\jj(l-1,k-1)} 
= \prod_{l=2}^k t_l^{-b_\ii(l,k)}. 
$$
Now Lemma~\ref{ch3} together with Theorems~\ref{THM2} and
\cite[Proposition~9.6]{GLSUni2} yield the result.
\end{proof}

The following statement is a direct consequence by Proposition~\ref{ch4}.

\begin{Cor}\label{corch4}
For $1 \le k \le r$ we have
$$
\ba^+(W_{\ii,k}) - \ba^+(P(V_{\ii,k})) = -\ba^-(V_{\ii,k})
= -( 0,\ldots,0, b_\ii(k,k),\ldots,b_\ii(2,k),b_\ii(1,k)).
$$
\end{Cor}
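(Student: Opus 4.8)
The plan is to evaluate $\vph'_{V_{\ii,k}}$ along the parametrization $\sx_\ii$ in two different ways and compare exponent vectors. Since by definition $\vph'_{V_{\ii,k}} = \vph_{W_{\ii,k}}/\vph_{P(V_{\ii,k})}$, the statement reduces to showing that both $\bt \mapsto \vph_{W_{\ii,k}}(\sx_\ii(\bt))$ and $\bt \mapsto \vph_{P(V_{\ii,k})}(\sx_\ii(\bt))$ are single monomials in $\bt = (t_r,\ldots,t_1)$, with exponents $\ba^+(W_{\ii,k})$ and $\ba^+(P(V_{\ii,k}))$ respectively, and then comparing with Proposition~\ref{ch4}.

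First I would observe that $W_{\ii,k} \in \add(W_\ii)$ by construction, and that $P(V_{\ii,k})$ — being $\cC_w$-projective, hence $\cC_w$-projective-injective as $\cC_w$ is Frobenius — also lies in $\add(W_\ii)$, because $W_\ii = I_w \oplus \Omega_w(V_\ii)$ contains all indecomposable $\cC_w$-projective-injectives among its summands. Theorem~\ref{THM2} then applies to each of them: for $X \in \add(W_\ii)$ the function $\bt \mapsto \vph_X(\sx_\ii(\bt))$ is a single monomial. Using the defining identity $\vph_X(\sx_\ii(\bt)) = \sum_{\ba \in \N^r} \chi(\F_{\ii,\ba,X})\bt^\ba$ together with the fact that such $X$ are $\ii$-balanced (so that $\F_{\ii,\ba,X} \neq \varnothing$ for a unique $\ba$, namely $\ba^+(X) = \ba^-(X)$, with $\chi(\F_{\ii,\ba^+(X),X}) = 1$ by Lemma~\ref{lemma3.1}), this monomial is exactly $\bt^{\ba^+(X)}$. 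Applying this to $X = W_{\ii,k}$ and $X = P(V_{\ii,k})$ and forming the quotient gives $\vph'_{V_{\ii,k}}(\sx_\ii(\bt)) = \bt^{\ba^+(W_{\ii,k}) - \ba^+(P(V_{\ii,k}))}$. On the other hand, Proposition~\ref{ch4} already gives $\vph'_{V_{\ii,k}}(\sx_\ii(\bt)) = \bt^{-\ba^-(V_{\ii,k})} = \prod_{l=1}^k t_l^{-b_\ii(l,k)}$, which in particular identifies $\ba^-(V_{\ii,k})$ with $(0,\ldots,0,b_\ii(k,k),\ldots,b_\ii(2,k),b_\ii(1,k))$ in the ordering $\wt(X_\bullet) = (a_r,\ldots,a_1)$. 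Since $\sx_\ii\colon \C^r \to N^w$ is a birational isomorphism, the coordinates $t_1,\ldots,t_r$ are algebraically independent, so two Laurent monomials in them coincide if and only if their exponent vectors are equal; equating the two expressions for $\vph'_{V_{\ii,k}}(\sx_\ii(\bt))$ yields the claim.

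The only point requiring attention — rather than a genuine obstacle — is the bookkeeping: one must confirm that $P(V_{\ii,k}) \in \add(W_\ii)$ so that Theorem~\ref{THM2} is applicable, that the single monomial it produces for an $\ii$-balanced module really has exponent $\ba^+$ (equivalently $\ba^-$) and coefficient $1$, and then match the index conventions between $\ba^-(V_{\ii,k})$ and the right-to-left ordering used throughout. None of this is deep, so the corollary is indeed immediate from Proposition~\ref{ch4}.
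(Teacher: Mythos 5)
Your proof is correct and fills in exactly the steps the paper leaves implicit when it states the corollary as a ``direct consequence'' of Proposition~\ref{ch4}: writing $\vph'_{V_{\ii,k}} = \vph_{W_{\ii,k}}/\vph_{P(V_{\ii,k})}$, invoking Theorem~\ref{THM2} and $\ii$-balancedness to identify each factor's restriction to $\sx_\ii(\bt)$ with a monomial having exponent $\ba^+$, and comparing with Proposition~\ref{ch4}. The bookkeeping you flag — that $P(V_{\ii,k})\in\add(I_w)\subseteq\add(W_\ii)$, and that the monomial has coefficient $1$ by Lemma~\ref{lemma3.1} — is checked correctly, so this matches the paper's intended argument.
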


Now we can finish the proof of
Theorem~\ref{THM5}.
For $1 \le k \le r$ we have to show that $t_k = C_{\ii,k}(\sx_\ii(\bt))$,
where
$$
C_{\ii,k} := \frac{1}{\vph_{V_{\ii,k}}'\vph_{V_{\ii,k^-(i_k)}}'} \cdot
\prod_{j=1}^n \left(\vph_{V_{\ii,k^-(j)}}'\right)^{q(i_k,j)}
$$
and $k^-(j) := \max\{ 0,1 \le s \le k-1 \mid i_s = j \}$.
We know from Proposition~\ref{ch4} that
\begin{equation}\label{ch6}
\vph_{V_{\ii,k}}'(\sx_\ii(\bt)) = \prod_{l=1}^k t_l^{-b_\ii(l,k)}.
\end{equation}
We insert (\ref{ch5}) in the right-hand side of 
equation~(\ref{ch6}) and obtain
$$
\prod_{l=1}^k C_{\ii,l}(\sx_\ii(\bt))^{-b_\ii(l,k)}.
$$
To prove Theorem~\ref{THM5}, we need to show that
for all $1 \le k \le r$ we have
\begin{equation}\label{ch7}
\vph_{V_{\ii,k}}' = \prod_{l=1}^k C_{\ii,l}^{-b_\ii(l,k)}.
\end{equation}
This is done in exactly the same way as in~\cite[Section~4]{BZ}.
Namely, one first shows that the exponent of 
$\vph_{V_{\ii,k}}'$ on the right-hand side of equation~(\ref{ch7}) is
equal to $b(k,k) = 1$.
Then one shows that for $1 \le s < k$ the exponent of 
$\vph_{V_{\ii,s}}'$ on the right-hand side of (\ref{ch7}) is
equal to
$$
\zeta(s) := b(s,k) + b(s^+,k) - \sum_{s^+ > m > s} q(i_m,i_s)b(m,k).
$$
A straightforward calculation shows that $\zeta(s) = 0$ 
for all $1 \le s < k$.
Finally, by~\cite[Corollary~15.7]{GLSUni2}
we know that a function
$\vph_X \in \C[N]$ with $X \in \cC_w$ is already
uniquely determined by its values on $\Ima(\sx_\ii)$.
This finishes the proof of Theorem~\ref{THM5}.


\begin{Rem}
{\rm
Recall that the module $W_\ii$ is $V_\ii$-reachable.
This shows that $(\vph_{W_{\ii,1}},\ldots,\vph_{W_{\ii,r}})$
is a cluster of the cluster structure on $\C[N^w]$ defined by the
initial seed 
$((\vph_{V_{\ii,1}},\ldots,\vph_{V_{\ii,r}}),\GG_\ii)$.
By Theorem~\ref{THM5}, the cluster $(\vph_{W_{\ii,1}},\ldots,\vph_{W_{\ii,r}})$
gives a total positivity criterion for $N^w$ in the sense of \cite{BFZ,BZ}.
Therefore, every $V_\ii$-reachable cluster-tilting module of $\cC_w$  
also provides a total positivity criterion.  
}
\end{Rem}

\subsection{Example}\label{chamberex}
Let $Q$ be a quiver with underlying graph 
$\xymatrix@-0.5pc{1 \ar@{-}[r] & 2 \ar@{-}[r] & 3}$ and let
$\ii := (i_6,\ldots,i_1) := (1,2,1,3,2,1)$, which is a reduced expression of the longest
element in the Weyl group $W_Q$.
It follows that $\cC_w = \nil(\LL) = \md(\LL)$.
The modules $V_\ii = V_1 \oplus \cdots \oplus V_6$ and
$W_\ii = W_1 \oplus \cdots \oplus W_6$ look as follows:
\begin{align*}
V_1 &= {\bsm 1 \esm} &
V_2 &= {\bsm 1\\&2 \esm} &
V_3 &= {\bsm 1\\&2\\&&3\esm} &
V_4 &= {\bsm &2\\1 \esm} &
V_5 &= {\bsm &2\\1&&3\\&2\esm} &
V_6 &= {\bsm &&3\\&2\\1 \esm} \\[10pt]
W_1 &= {\bsm 2\\&3 \esm} &
W_2 &= {\bsm 3 \esm} &
W_3 &= V_3 &
W_4 &= {\bsm &3\\2 \esm} &
W_5 &= V_5 &
W_6 &= V_6
\end{align*}
Besides the modules $V_k$ and $W_k$ there are only three other indecomposable
$\LL$-modules:
\begin{align*}
L_1 &= {\bsm 1&&3\\&2 \esm} &
L_2 &= {\bsm 2 \esm} &
L_4 &= {\bsm &2\\1&&3\esm}
\end{align*}
(The reason for naming the third module $L_4$ and not $L_3$ will become clear
in Section~\ref{genericex}.)
Here we used the same conventions for displaying $\LL$-modules as explained
in \cite{GLSUni2}.
The $\vph$-functions of the indecomposable $\LL$-modules are the following:
\begin{align*}
\vph_{V_1}(\sx_\ii(\bt)) &= t_6 + t_4 + t_1 &
\vph_{W_1}(\sx_\ii(\bt)) &= t_3t_2\\
\vph_{V_2}(\sx_\ii(\bt)) &= t_5t_4 + t_5t_1 + t_2t_1 &
\vph_{W_2}(\sx_\ii(\bt)) &= t_3\\
\vph_{V_3}(\sx_\ii(\bt)) &= t_3t_2t_1 &
\vph_{W_4}(\sx_\ii(\bt)) &= t_5t_3\\
\vph_{V_4}(\sx_\ii(\bt)) &= t_6t_5 + t_6t_2 + t_4t_2 &
\vph_{L_1}(\sx_\ii(\bt)) &= t_5t_4t_3 + t_5t_3t_1\\
\vph_{V_5}(\sx_\ii(\bt)) &= t_5t_4t_3t_2 &
\vph_{L_2}(\sx_\ii(\bt)) &= t_5 + t_2\\
\vph_{V_6}(\sx_\ii(\bt)) &= t_6t_5t_3 &
\vph_{L_4}(\sx_\ii(\bt)) &= t_6t_3t_2 + t_4t_3t_2
\end{align*}
The modules $P(V_k)$ are the following:
\begin{align*}
P(V_1) &= V_3 &
P(V_2) &= V_3 &
P(V_3) &= V_3 &
P(V_4) &= V_5 &
P(V_5) &= V_5 &
P(V_6) &= V_6
\end{align*}
Thus, we obtain the twisted minors $\vph_{V_k}' = \vph_{\Omega_w(V_k)}\vph_{P(V_k)}^{-1}$:
\begin{align*}
\vph_{V_1}'(\sx_\ii(\bt)) &= \frac{t_3t_2}{t_3t_2t_1} &
\vph_{V_2}'(\sx_\ii(\bt)) &= \frac{t_3}{t_3t_2t_1} &
\vph_{V_3}'(\sx_\ii(\bt)) &= \frac{1}{t_3t_2t_1} \\[10pt]
\vph_{V_4}'(\sx_\ii(\bt)) &= \frac{t_5t_3}{t_5t_4t_3t_2} &
\vph_{V_5}'(\sx_\ii(\bt)) &= \frac{1}{t_5t_4t_3t_2} &
\vph_{V_6}'(\sx_\ii(\bt)) &= \frac{1}{t_6t_5t_3}
\end{align*}
Finally, we compute the maps $C_{\ii,k}$:
\begin{align*}
C_{\ii,1} &= \frac{1}{\vph_{V_1}'} &
C_{\ii,2} &= \frac{1}{\vph_{V_2}'} \cdot \vph_{V_1}' &
C_{\ii,3} &=  \frac{1}{\vph_{V_3}'} \cdot \vph_{V_2}' \\[10pt]
C_{\ii,4} &= \frac{1}{\vph_{V_4}' \vph_{V_1}'} \cdot \vph_{V_2}'  &
C_{\ii,5} &=  \frac{1}{\vph_{V_5}'\vph_{V_2}'} \cdot \vph_{V_4}'\vph_{V_3}' &
C_{\ii,6} &=  \frac{1}{\vph_{V_6}'\vph_{V_4}'} \cdot \vph_{V_5}'
\end{align*}
%


\section{Monomials of twisted minors} \label{sec:PfExp}


As defined before, 
let $V := V_\ii = V_1 \oplus \cdots \oplus V_r$ and
$W := W_\ii = W_1 \oplus \cdots \oplus W_r$.
For a cluster-tilting module $T$ in $\cC_w$ and any $X \in \cC_w$, we consider
$\Hom_\LL(T,X)$ as a module over $\cE_T := \End_\LL(T)^\op$.
The Ext-group $\Ext_\LL^1(T,X)$ is a module over $\cE_T$ and over
$\scE_T := \sEnd_{\cC_w}(T)^\op$.
By $\dimv \Ext_\LL^1(T,X)$ we mean the dimension vector of $\Ext_\LL^1(T,X)$
as an $\cE_T$-module.

From now on,
for the reduced expression $\ii = (i_r,\ldots,i_1)$ we assume without loss of
generality that for each $1 \le j \le n$ there is some $k$ with $i_k = j$.
We can also assume that for at least one such $j$ there are indices $k \not= s$ 
with $i_k = i_s = j$. 
(Otherwise all direct summands of $T$ are $\cC_w$-projective-injective,
\ie $\Rmi = \varnothing$.)

\subsection{}
Apart from the definitions for Theorem~\ref{THM3} the following will be
useful:
\begin{align*} 
\vph'_k &:= \vph_{V_k}' = \vph_{\Omega_w(V_k)}^\pex\vph^{-1}_{P(V_k)} 
&\text{ for } & k \in R, \text{ in particular, } \\[5pt]
\vph'_l &= \vph^{-1}_l &\text{ for } &l \in \Rma,\\[5pt]
\hvph'_k &:= \prod_{l\in R_\pex} (\vph_l')^{B^{(V)}_{l,k}} &\text{ for } & k \in \Rmi,\\[5pt]
(\vph'_\bul)^\bg &:=\prod_{k\in R_\pex} (\vph'_k)^{g_k}
&\text{ for } & \bg =(g_1,\ldots, g_r)\in\Z^r,\\[5pt]
(\hvph'_\bul)^\bd &:=\prod_{k\in\Rmi}(\hvph'_k)^{d_k} 
&\text{ for } &\bd =(d_l)_{l\in\Rmi}\in\N^{\Rmi}.
\end{align*}
Recall from \cite[Proposition~9.1]{GLSUni2} 
that the functions $\vph_{V_k}$ can be seen as
generalized minors.
The functions $\vph_{V_k}' $ are
the \emph{twisted generalized minors}. 
The following proposition describes  some special monomials in the functions
$\vph_{V_k}'$.
These results play a crucial role in the proof of Theorem~\ref{THM3}.

\begin{Prop}\label{prp:exp}
For $\bt = (t_r,\ldots,t_1) \in (\C^*)^r$ and $k \in \Rmi$ we have
\begin{equation} \label{exp1}
\hvph_{W,k}(\sx_\bi(\bt))=\hvph'_k(\sx_\bi(\bt))=
t^{\phantom{-1}}_{k^+} t_{k^{\phantom{+}}}^{-1}.
\end{equation}
Moreover, for $X \in \cC_w$ we have
\begin{align}
\label{exp2}
t^{\ba^-(X)} &= \vph_W^{(\dimv\Hom_\Lam(W,X)) \cdot B^{(W)}}(\sx_\bi(\bt))\\
\label{exp3}
&=(\vph'_\bul)^{(\dimv\Ext_\Lam^1(V,X) - \dimv\Hom_\Lam(V,X)) \cdot 
B^{(V)}}(\sx_\bi(\bt)).
\end{align} 
\end{Prop}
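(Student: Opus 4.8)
The strategy is to prove the three displayed equalities in order, using the Chamber Ansatz (Theorem~\ref{THM5}) together with Proposition~\ref{ch4} as the main inputs, and then bootstrapping from the special monomials $\vph'_{V_k}$ to the two general formulas \eqref{exp2}--\eqref{exp3}.

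\textbf{Step 1: The monomial identity \eqref{exp1}.} First I would unwind the definitions of $\hvph_{W,k}$ and $\hvph'_k$. By construction $\hvph'_k = \prod_{l\in R}(\vph'_l)^{B^{(V)}_{l,k}}$, so substituting $\sx_\bi(\bt)$ and invoking Proposition~\ref{ch4}, which gives $\vph'_{V_l}(\sx_\bi(\bt)) = \bt^{-\ba^-(V_l)}$, turns the left side into $\bt^{-\sum_l B^{(V)}_{l,k}\ba^-(V_l)}$. The claim $\hvph'_k(\sx_\bi(\bt)) = t_{k^+}t_k^{-1}$ then amounts to a purely combinatorial identity among the vectors $\ba^-(V_l)$ and the matrix $B^{(V)}$ of the Ringel form of $\cE_V$. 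Here I would use formula \eqref{eq:a-V_k}, i.e.\ $\ba^-_l(V_k) = -(s_{i_l}\cdots s_{i_k}(\vpi_{i_k}),\alp_{i_l})$, and the description of $\GG_\ii = \GG_{\cE_V}$ from Proposition~\ref{endoquiver} to compute $B^{(V)}$ explicitly. The identification $\hvph_{W,k} = \hvph'_k$ should follow because $W = I_w\oplus\Omega_w(V)$ and the $\vph$-functions of projective-injectives are invertible, so the two $\hat\vph$-monomials differ only by factors coming from $\Rma$-indexed summands, which are controlled by Corollary~\ref{corch4}.

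\textbf{Step 2: The identity \eqref{exp2}.} For a general $X\in\cC_w$ the point is that the exponent vector $(\dimv\Hom_\Lam(W,X))\cdot B^{(W)}$ should, under $\sx_\bi$, produce exactly $\bt^{\ba^-(X)}$. I would reduce this to \eqref{exp1} by writing $\dimv\Hom_\Lam(W,X)$ in terms of the $\Hom$-spaces against the indecomposable summands $W_k$, and then noting that $\ba^-(X)$ is additive along the refined top series of $X$; indeed $\ba^-_k(X) = \dim e_{i_k}(X_k^-/X_{k^-}^-)$ type expressions, which by Corollary~\ref{cor8.2} and Lemma~\ref{lemma7} translate into differences of $\dim\Hom_\Lam(-,V_k)$. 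Combining this with the $\Omega_w$-shift (the stable category $\stCC_w$ is triangulated with shift $\Omega_w^{-1}$, so $\Hom_\Lam(W,X)$ relates to $\ov\Hom_\Lam(X,V)$ as in the proof of Theorems~\ref{THM1} and~\ref{THM2}) reduces the identity to the already-established monomial formula \eqref{exp1} applied summand by summand.

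\textbf{Step 3: The identity \eqref{exp3} and the main obstacle.} The equality \eqref{exp2}$=$\eqref{exp3} is a comparison between the $W$-side exponent $(\dimv\Hom_\Lam(W,X))\cdot B^{(W)}$ and the $V$-side exponent $(\dimv\Ext^1_\Lam(V,X)-\dimv\Hom_\Lam(V,X))\cdot B^{(V)}$; I expect this to be the technical heart of the proof. The natural route is the Auslander-Reiten formula over $A_\ii = \Lam/J_{r,1}$: as recalled in the Remark after Proposition~\ref{st7}, $D\ov\Hom_{A_\ii}(X,V) \cong \Ext^1_{A_\ii}(\tau_{A_\ii}^{-1}(V),X)$, and $W = I_w\oplus\Omega_w(V)$ is precisely designed so that $\Omega_w(V)$ plays the role of $\tau^{-1}$ up to projective-injective summands. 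So I would: (i) express $\dimv\Hom_\Lam(W,X)$ via this AR-duality in terms of $\dimv\ov\Hom_\Lam(X,V)$, hence in terms of $\dimv\Ext^1_\Lam(V,X)$ minus a $\Hom$-correction coming from the projective-injective part $I_w$; (ii) check that passing from $B^{(W)}$ to $B^{(V)}$ exactly absorbs the discrepancy, using that the quivers of $\cE_W$ and $\cE_V$ agree on mutable vertices (item~(v) of the reminder). The delicate bookkeeping will be the contribution of the $\Rma$-indexed (projective-injective) summands and the boundary terms $k^+ = r+1$, where several formulas degenerate; these are handled by the convention $V_0 = 0$ and by Corollary~\ref{corch4}, but getting all signs and index shifts $k\mapsto k^+$ consistent is where the real work lies. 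Finally, since by \cite[Corollary~15.7]{GLSUni2} a function $\vph_X$ with $X\in\cC_w$ is determined by its restriction to $\Ima(\sx_\bi)$, verifying the identities after substituting $\sx_\bi(\bt)$ suffices.
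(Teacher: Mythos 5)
Your plan shares the overall strategy of the paper — verify everything after substituting $\sx_\bi(\bt)$, start from the special monomials $\vph'_{V_k}$ coming from Theorem~\ref{THM5} and Proposition~\ref{ch4}, and treat the three equalities in order. The first half of Step~1 (evaluate $\hvph'_k$ via $\vph'_{V_l}(\sx_\bi(\bt)) = \bt^{-\ba^-(V_l)}$ and match against $t_{k^+}t_k^{-1}$ using the description of the quiver of $\cE_V$) is indeed how the paper proceeds, modulo organizing the combinatorics via the $C_{\ii,k}$ of Theorem~\ref{THM5} rather than directly. But there are genuine gaps elsewhere.

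For the second half of Step~1 (that $\hvph_{W,k}=\hvph'_k$), your appeal to Corollary~\ref{corch4} and the invertibility of the projective-injective $\vph$-functions does not produce the needed identity. After substituting $\vph'_l = \vph_{W_l}/\vph_{P(V_l)}$ and using $B^{(V)}_{l,k}=B^{(W)}_{l,k}$ on $\Rmi\times\Rmi$, what remains is a nontrivial multiplicative identity among $\vph$-functions of $\cC_w$-projective-injectives. The paper proves this by comparing the approximation triangles defining $\mu_k(V)$ and $\mu_k(W)$, running the Snake Lemma to build two $3\times3$ exact diagrams, and noting the middle row splits because its end term is projective; this yields the isomorphism $P(\bar V^{(k)}_+)\oplus P^{(k)}_+\oplus Q^{(k)}_+\cong P(\bar V^{(k)}_-)\oplus P^{(k)}_-\oplus Q^{(k)}_-$, which is exactly what makes the monomials agree. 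That mutation/Snake-Lemma argument is a real input, not a bookkeeping corollary of Corollary~\ref{corch4}.

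The more serious gap is Step~2. The assertion that \eqref{exp2} ``reduces to \eqref{exp1} applied summand by summand'' does not stand: \eqref{exp1} concerns $\hvph_{W,k}$ for $k\in\Rmi$, which plays no role in \eqref{exp2}, and a summand-by-summand reduction would require $X\in\add(W)$, which fails for general $X\in\cC_w$. The actual mechanism is the $\add(W)$-presentation $0\to W''\to W'\to X\to 0$ (which exists because $W$ is cluster-tilting), which stays exact under $\Hom_\LL(W,-)$ because $W$ is rigid, giving $(\dimv\Hom_\LL(W,X))\cdot B^{(W)}=\bg'-\bg''$. One then uses Theorems~\ref{THM1}--\ref{THM2} to make $\vph_{W'}$, $\vph_{W''}$ monomials and the Snake Lemma on the filtered diagram \eqref{eq:Snake0} to identify $X_\bullet = (p(W'_k))_k$ as the refined top series of $X$. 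Your sketch — relating $\ba^-_k(X)$ to $\dim\Hom_\LL(X,V_k)-\dim\Hom_\LL(X,V_{k^-})$ via Corollary~\ref{cor8.2}/Lemma~\ref{lemma7}, then passing through the $\Omega_w$-shift — does not close: the $\Omega_w$-shift only relates stable $\Hom$'s and $\Ext^1$'s, so to recover ordinary $\Hom_\LL(W,X)$ you need projective corrections, and at that point you are effectively forced back to the $\add(W)$-presentation. Similarly, Step~3 correctly identifies the direction (use the short exact sequence $0\to X\to P(\Omega_w^{-1}X)\to\Omega_w^{-1}X\to0$ and the near-identification of $W$ with $\Omega_w(V)$), but you explicitly defer the bookkeeping, and that bookkeeping is the substance of the argument: the paper again builds a $3\times3$ Snake-Lemma diagram, uses that the middle row splits by projectivity of $P(\Omega_w^{-1}X)$, and extracts an isomorphism of projective-injectives from which \eqref{eq:toshow45} follows by direct comparison. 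Without these split-middle-row Snake-Lemma arguments, the proposal does not yet constitute a proof.
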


\begin{Rem}\label{rem:exp}
{\rm
It seems to be in general quite cumbersome to calculate the ingredients
$\ba^-(W_k) = \ba^+(W_k)$ and $B^{(W)}$ of equation~(\ref{exp2}). 
In contrast, by Corollary~\ref{corch4}
we have 
$$
\vph'_k(\sx_\bi(\bt)) = \bt^{-\ba^-(V_k)}.
$$ 
Similarly, $B^{(V)}= (\dim\Hom_\Lam(V_k,V_l)_{1\leq k,l\leq r})^{-t}$ can be
determined by our results in \cite{GLSUni2}.
Moreover, the dimension vector $\dimv\Hom_\Lam(V,X)$ depends linearly on the
multiplicities in the $\add(M_\ii)$-filtration of $X$, 
so that equation~\eqref{exp3} appears to be much
more convenient for practical purposes, see~\cite[Section~11]{GLSUni2}.
}
\end{Rem}

The rest of this section is dedicated to the proof of Proposition~\ref{prp:exp}.

\subsection{Proof of Equation~\eqref{exp1}}
For $k\in\Rmi$ we have by definition  (see equation~\eqref{ch5})
\begin{align*}
C_{\ii,k^+} &=\frac{1}{\vph'_{k^+}\vph'_k}\quad
\prod_{j\in Q_0} (\vph'_{(k^+)^-(j)})^{q(i_k,j)} \\[10pt]
&= \frac{1}{\vph'_{k^+}\vph'_k}\hspace{-1em}
\prod_{\substack{j\in Q_0\\k^+>(k^+)^-(j)>k}}\hspace{-2em}
(\vph'_{(k^+)^-(j)})^{q(i_k,j)}
\prod_{\substack{j\in Q_0\\k>(k^+)^-(j)}}\hspace{-1em}
(\vph'_{(k^+)^-(j)})^{q(i_k,j)}, \\[10pt]
C_{\ii,k} &=\frac{1}{\vph'_{k}\vph'_{k^-}}\quad
\prod_{j\in Q_0} (\vph'_{k^-(j)})^{q(i_k,j)}\\[10pt]
&=\frac{1}{\vph'_k\vph'_{k^-}}\hspace{-1em}
\prod_{\substack{j\in Q_0\\{k^+>(k^-(j))^+>k}}}\hspace{-2em}
(\vph'_{k^-(j)})^{q(i_k,j)}
\prod_{\substack{j\in Q_0\\(k^-(j))^+>k^+}}\hspace{-1em}
(\vph'_{k^-(j)})^{q(i_k,j)}.\\
\end{align*}
For $(k,j)\in\Rmi\times Q_0$ we have $k>(k^+)^-(j)$ if and only
if $(k^-(j))^+>k^+$, and in this case $(k^+)^-(j)=k^-(j)$. Thus
\[
\prod_{\substack{j\in Q_0\\k>(k^+)^-(j)}}(\vph'_{(k^+)^-(j)})^{q(i_k,j)}=
\prod_{\substack{j\in Q_0\\{(k^-(j))^+>k^+}}}(\vph'_{k^-(j)})^{q(i_k,j)}.
\]
Using Theorem~\ref{THM5} we conclude that
\begin{align*}
t_{k^+}^\pex t_k^{-1} &= (C_{\ii,k^+}^\pex C_{\ii,k}^{-1})(x_\ii(\bt))\\[10pt]
&=\left(\vph'_{k^-} (\vph'_{k^+})^{-1}\hspace{-2em}
\prod_{\substack{j\in Q_0\\((k^+)^-(j))^+>k^+>(k^+)^-(j)>k}}\hspace{-4em}
(\vph'_{(k^+)^-(j)})^{q(i_k,j)}
\prod_{\substack{j\in Q_0\\k^+>( k^-(j))^+>k>k^-(j)}}\hspace{-3em}
(\vph'_{k^-(j)})^{-q(i_k,j)}\right)(x_\ii(\bt))\\[10pt]
&={\hvph'_{k}(x_\ii(\bt))}
\end{align*}
where the last  equality follows from the description of the quiver
of $\End_\Lam(V)^\op$ in Section~\ref{ssec:QuivEndV} 
and the definition of $\hvph_k'$. 
This shows the second equality  of equation~\eqref{exp1}.

For the first equality of~\eqref{exp1} 
we compare mutations of $V$ and $W$ in direction $k$. 
Thus, for $k \in \Rmi$ we consider the short exact sequences
\[
0\ra V_k \ra \bigoplus_{l\in R} V_l^{[-B^{(V)}_{l,k}]_+}\ra V'_k\ra 0 
\qquad\text{ and }\qquad
0\ra V'_k \ra \bigoplus_{l\in R} V_l^{[B^{(V)}_{l,k}]_+}\ra V_k\ra 0
\]
as well as similar sequences for $W$.  
Since the stable endomorphism rings of 
$V$ and $W$ are isomorphic, we have $B^{(V)}_{k,l} =B^{(W)}_{k,l}$ for
all $k,l\in\Rmi$. 
Thus, if  we write 
\begin{align*}
\bar{V}^{(k)}_+ &= \bigoplus_{l\in\Rmi} V_l^{[-B^{(V)}_{l,k}]_+}  &
 P^{(k)}_+ &= \bigoplus_{l\in\Rma} V_l^{[-B^{(V)}_{l,k}]_+} \\ 
\bar{V}^{(k)}_- &= \bigoplus_{l\in\Rmi} V_l^{[B^{(V)}_{l,k}]_+} &
P^{(k)}_- &=\bigoplus_{l\in\Rma} V_l^{[B^{(V)}_{l,k}]_+}\\
\bar{W}^{(k)}_+ &= \bigoplus_{l\in\Rmi} W_l^{[-B^{(W)}_{l,k}]_+}  &
 Q^{(k)}_+ &= \bigoplus_{l\in\Rma} W_l^{[-B^{(W)}_{l,k}]_+} \\
\bar{W}^{(k)}_- &= \bigoplus_{l\in\Rmi} W_l^{[B^{(W)}_{l,k}]_+} &
Q^{(k)}_- &=\bigoplus_{l\in\Rma} W_l^{[B^{(W)}_{l,k}]_+}\\
\end{align*}
we obtain by the Snake Lemma two commutative diagrams with exact rows and
columns:
\[
\xymatrix{
        &0\ar[d]         &0\ar[d]                                     &0\ar[d]\\
0\ar[r] & W_k\ar[r]\ar[d]&\bar{W}_+^{(k)}\oplus Q^{(k)}_+\ar[r]\ar[d] &W'_k\ar[r]\ar[d] & 0\\
0\ar[r] &P(V_k)\ar[r]\ar[d]&P(\bar{V}_+^{(k)})\oplus P^{(k)}_+\oplus Q^{(k)}_+\ar[r]\ar[d]& P(V'_k)\ar[r]\ar[d]&0\\
0\ar[r] & V_k\ar[r]\ar[d]&\bar{V}_+^{(k)}\oplus P^{(k)}_+\ar[r]\ar[d] &V'_k\ar[r]\ar[d] & 0\\
        &  0             &                    0                        &     0    
}
\]
and
\[
\xymatrix{
        &0\ar[d]         &0\ar[d]                                     &0\ar[d]\\
0\ar[r] & W'_k\ar[r]\ar[d]&\bar{W}_-^{(k)}\oplus Q^{(k)}_-\ar[r]\ar[d] &W_k\ar[r]\ar[d] & 0\\
0\ar[r] &P(V'_k)\ar[r]\ar[d]&P(\bar{V}_-^{(k)})\oplus P^{(k)}_-\oplus Q^{(k)}_-\ar[r]\ar[d]& P(V_k)\ar[r]\ar[d]&0\\
0\ar[r] & V'_k\ar[r]\ar[d]&\bar{V}_-^{(k)}\oplus P^{(k)}_-\ar[r]\ar[d] &V_k\ar[r]\ar[d] & 0\\
        &  0             &                    0                        &     0    
}
\]
Since in both diagrams the end term of the respective middle row  is   
projective, both of them split, \ie
\[
P(\bar{V}_+^{(k)})\oplus P^{(k)}_+\oplus Q^{(k)}_+\cong P(V'_k)\oplus P(V_k)
\cong P(\bar{V}_-^{(k)})\oplus P^{(k)}_-\oplus Q^{(k)}_-.
\]
It follows that
\begin{equation} \label{eqn:PI1}
\vph_{Q^{(k)}_-}\vph_{Q^{(k)}_+}^{-1} = 
\vph_{P(\bar{V}^{(k)}_+)}\vph_{P^{(k)}_+}
\left(\vph_{P(\bar{V}^{(k)}_-)}\vph_{P^{(k)}_-}\right)^{-1}.
\end{equation}
On the other hand, since $B^{(V)}_{k,l}=B^{(W)}_{k,l}$ for $k,l\in\Rmi$ we can
write, with the above definitions,
\[
\hvph'_k =\frac{\vph_{\bar{W}^k_-}}{\vph_{P(\bar{V}^k_-)}}\cdot
              \frac{\vph_{P(\bar{V}^k_+)}}{\vph_{\bar{W}^k_+}}\cdot
              \frac{\vph_{P^{(k)}_+}}{\vph_{P^{(k)}_-}}\quad\text{ and }\quad
\hvph_{W,k} = \vph_{\bar{W}^k_-} \cdot \frac{1}{\vph_{\bar{W}^k_+}}\cdot
\frac{\vph_{Q^{(k)}_-}}{\vph_{Q^{(k)}_+}}.
\]
Thus $\hvph'_k=\hvph_{W,k}$ by equation~\eqref{eqn:PI1}.

\subsection{Proof of Equation~\eqref{exp2}}
Since $W$ is a cluster-tilting module, for each $X\in\cC_w$ we have a
short exact sequence
\[
0 \ra  \bigoplus_{k\in R} W_k^{g''_k} \ra 
{\bigoplus_{k\in R}}
W_k^{g'_k}\xrightarrow{p} X \ra 0
\]
for certain $\bg'=(g'_1,\ldots,g'_r)\in\N^r$ and 
$\bg''=(g''_1,\ldots,g''_r)\in\N^r$. Note, that we can assume
$g''_k=0$ for $k\in\Rma$ since $W_k$ is $\cC_w$-projective-injective 
in this case. 
Write $W''$ resp. $W'$ for the
first two terms of this sequence. Since $W$ is rigid, the sequence 
remains exact under $\Hom_\Lam(W,-)$. We conclude
that \[
(\dimv\Hom_\Lam(W,X)) \cdot B^{(W)}=\bg'-\bg''.
\]
We know from Theorem~\ref{THM2} that there is a matrix
$A \in \N^{r\times r}$ such that for $1 \le k \le r$
we have
$$
\vph_{W_k}(\sx_\bi(\bt))=\prod_{l=1}^r t_l^{A_{k,l}}.
$$
By Theorem~\ref{THM1}, 
the modules $W'$ resp. $W''$ have a unique 
partial composition series $W'_\bul=(W')^-_\bul$ resp. 
$W''_\bul=(W'')^-_\bul$ of type 
$\bi$ with 
$$
\ba' := \wt(W'_\bul) = \bg'\cdot A
\qquad\text{ resp. }\qquad 
\ba'' := \wt(W''_\bul) = \bg''\cdot A.
$$
It follows that $W''_k= W''\cap W'_k$ for $1 \le k \le r$.
With  $X_k:=p(W'_k)$ we obtain for all $k\in R$ a commutative
diagram with exact rows 
\begin{equation} \label{eq:Snake0}
\xymatrix{
0 \ar[r] & W''_{k-1} \ar[r] & W'_{k-1} \ar[r] & X_{k-1}\ar[r] & 0\\
0 \ar[r] & W''_k \ar[u]\ar[r] & W'_k \ar[u]\ar[r] & X_k\ar[u]\ar[r] & 0
}
\end{equation}
and the vertical maps being the natural inclusions. 
By the Snake Lemma we obtain a short exact sequence
\[
0\ra S_{i_k}^{a''_k}\ra S_{i_k}^{a'_k}\ra X_k/X_{k-1} \ra 0
\]
and conclude that $X_k/X_{k-1}\cong S_{i_k}^{a'_k-a''_k}$.
Thus, $X_\bul$ is a partial composition
series of type $\bi$ for $X$ with $\wt(X_\bul) = (\bg'-\bg'')\cdot A$.
Applying $\Hom_\Lam(-, S_{i_k})$ to the bottom row of the 
above diagram
shows that 
$\tp_{S_{i_k}}(X_k) = 0$ for all $k\in R$, since
$W_\bul' = (W')^-_\bul$. 
This shows that $X_\bul$ is the refined 
top series of type $\bi$ of $X$. 
All together we now have
\[
\vph_W^{(\dimv\Hom_\Lam(W,X)) \cdot B^{(W)}}(\sx_\ii(\bt)) = 
\vph_W^{\bg'-\bg''}(\sx_\ii(\bt))
=\bt^{(\bg'-\bg'')A}=\bt^{\wt(X_\bul)}=\bt^{\ba^-(X)},
\]
which is our claim.

\subsection{Proof of Equation~\eqref{exp3}}
For a $\cC_w$-projective-injective module $X \in \cC_w$ the claim is clear by
the definition of $\vph'_k$ for $k \in \Rma$. 
So we can assume that
$X$ has no non-zero $\cC_w$-projective-injective summands. 
Then we have
a short exact sequence 
\[
0\ra X\ra P(\Omega_w^{-1}(X))\ra \Omega_w^{-1}(X) \ra 0
\]
in $\cC_w$ with $\Omega_w^{-1}(X)$ having no non-zero 
$\cC_w$-projective-injective summands. 
Now we apply $\Hom_\Lam(V,-)$ and obtain
\[
\dimv\Hom_\Lam(V,\Omega_w^{-1}(X)) - \dimv\Hom_\Lam(V,P(\Omega_w^{-1}(X))) =
\dimv\Ext^1_\Lam(V,X) - \dimv\Hom_\LL(V,X).
\]
Thus,  we have to show that 
\begin{equation}\label{eq:toshow45}
(\vph'_\bul)^{(\dimv\Hom_\Lam(V,\Omega_w^{-1}(X))-\dimv\Hom_\Lam(V,P(\Omega_w^{-1}(X)))) \cdot 
B^{(V)}} =
\vph_W^{(\dimv\Hom_\Lam(W,X))\cdot B^{(W)}}.
\end{equation}
Using that $\Omega_w^{-1}$ is an autoequivalence of
the stable category $\scC_w$ we obtain again by the Snake Lemma a commutative
diagram with exact rows and columns:
\[
\xymatrix{
       & 0\ar[d]        & 0 \ar[d]                    & 0\ar[d]\\
0\ar[r]& W''\ar[d]\ar[r]& \bar{W}'\oplus P\ar[d]\ar[r]& X\ar[d]\ar[r]& 0\\
0\ar[r]&P(\Ome_w^{-1}(W''))\ar[d]\ar[r]&P(\Ome_w^{-1}(\bar{W}'))\oplus P\oplus Q\ar[d]\ar[r]&P(\Ome_w^{-1}(X))\ar[d]\ar[r]&0\\
0\ar[r]&\Ome_w^{-1}(W'')\ar[d]\ar[r]&\Ome_w^{-1}(\bar{W}')\oplus Q\ar[d]\ar[r]&\Ome_w^{-1}(X)\ar[d]\ar[r]& 0\\
       &    0           & 0                          & 0\\
}
\]
where $\bar{W}'$ and $W''$ have no non-zero $\cC_w$-projective-injective summands, 
$P$ is $\cC_w$-projective-injective,  and $\bar{W}'\oplus P=W'_X\in\add(W)$. 
Similarly, we have $\Ome_w^{-1}(\bar{W}')$
without non-zero $\cC_w$-projective-injective summands, $Q$ is 
$\cC_w$-projective-injective,  
and 
$$
\Omega_w^{-1}(\bar{W}') \oplus Q = V'_{\Omega_w^{-1}(X)} \in \add(V).
$$ 
From this it
is already clear that the  components corresponding to $\Rmi$ 
of the three vectors
\begin{align*}
& (\dimv\Hom_\Lam(W,X)) \cdot B^{(W)},\\ 
& (\dimv\Hom_\Lam(V,\Omega_w^{-1}(X))) \cdot B^{(V)},\\
& (\dimv\Hom_\Lam(V,\Omega_w^{-1}(X))-\dimv\Hom_\Lam(V,P(\Omega_w^{-1}(X))))
\cdot B^{(V)}
\end{align*}
coincide. 
Since $P(\Omega_w^{-1}(X))$ is $\cC_w$-projective-injective, the middle row of
our diagram splits. Thus we have
\[
P(\Omega_w^{-1}(X)) \oplus P(\Omega_w^{-1}(W'')) \cong 
P(\Omega_w^{-1}(\bar{W}')) \oplus P\oplus Q.
\]
Finally, from the above diagram we conclude that
\[
\vph_W^{(\dimv\Hom_\Lam(W,X))\cdot B^{(W)}}=\vph_{\bar{W}'}\vph_{W''}^{-1}\vph_{P}
\]
and
\begin{multline*}
(\vph'_\bul)^{(\dimv\Hom_\Lam(V,\Omega_w^{-1}(X))-\dimv\Hom_\Lam(V,P(\Omega_w^{-1}(X)))) \cdot 
B^{(V)}} \\
= \frac{\vph_{\bar{W}'}}{\vph_{P(\Ome^{-1}_w(\bar{W}'))}}\cdot
          \frac{\vph_{P(\Ome^{-1}_w(W''))}}{\vph_{W''}}\cdot
          \frac{\vph_{P(\Ome^{-1}_w(X))}}{\vph_Q}.
\end{multline*}
Thus~\eqref{eq:toshow45} follows from the above isomorphism of 
$\cC_w$-projective-injectives.


\section{Cluster character identities}\label{section5}


\subsection{Quivers with potential and mutations}
We review some material from~\cite[Section~4]{DWZ2},
which in turn is a review of~\cite{DWZ1}.
Let $\cP(\Gam,W) := \C\epow{\GG}/J(W)$ be the Jacobian algebra associated to a quiver
$\Gam = (\GG_0,\GG_1,s,t)$ and a potential $W\in\fm_\cyc\subset\CC\epow{\Gam}$.
For $k\in\Gam_0$ we set
\begin{align*}
\Gam^{(-,k)} &:=\{b\in\Gam_1\mid s(b)=k\},\\
\Gam^{(k,+)} &:=\{a\in\Gam_1\mid t(a)=k\},\\ 
\Gam^{(2,k)} &:=\Gam^{(-,k)}\times\Gam^{(k,+)}.
\end{align*}
For a reduced quiver potential $(\Gam, W)$ and $\Gam$ without 2-cycles
at the vertex $k\in\Gam_0$ let $\mu_k(\GG,W)$ be
the mutation of $(\GG,W)$ in direction $k$, as defined in~\cite{DWZ1}. 
For convenience, we briefly recall the construction. 
First, a possibly non-reduced quiver potential 
$\tilde{\mu}_k(\Gam,W):=(\tGam,\tW)$ is defined as follows:
The quiver $\tGam$ is obtained from $\Gam$ by inserting for each pair of arrows 
$(b,a)\in\Gam^{(2,k)}$  a new arrow $[ba]$ from $s(a)$ to $t(b)$
and replacing each arrow $c$ with $s(c)=k$ or $t(c)=k$ by a new arrow $c^*$
in the opposite direction. Then $\tW:= [W]+\Delta$, where
\[
\Delta := -\sum_{(b,a)\in\Gam^{(2,k)}}[ba]a^*b^*
\]
and $[W]$ is obtained by substituting each occurrence of a path $ba$ with
$(b,a)\in\Gam^{(2,k)}$ by the arrow $[ba]$
(after some rotation, if necessary). Our definition of
$\Delta$ deviates from the original one by a sign. However the resulting
quiver potential is right equivalent to the original one and more convenient for our 
purpose. Finally, $\mu_k(\Gam,W)$ is by definition the reduced part of $(\tGam,\tW)$. 

For a representation $M$ of $\cP(\Gam,W)$ and $k\in\Gam_1$ we need the
following notation:
\begin{align*}
M^+(k) &:= \bigoplus_{a\in\Gam^{(k,+)}} M(s(a)),\\
M^-(k) &:= \bigoplus_{b\in\Gam^{(-,k)}} M(t(b)),\\
M(\alpha_k) &:= (M(a))_{a\in\Gam^{(k,+)}}\colon M^+(k)\ra M(k),\\
M(\beta_k) &:= (M(b))_{a\in\Gam^{(-,k)}}\colon 
 M(k)\ra M^-(k),\\
M(\gamma_k) &:=  (M(\partial_{b,a}W))_{(b,a)\in\Gam^{(2,k)}}\colon M^-(k)\ra M^+(k).
\end{align*}
For an indecomposable representation $M$ of $\cP(\Gam,W)$, which is not the
simple representation $S_k$, the ``premutation'' $\tilde{\mu}_k(M) := \tM$ is a representation
of $\cP(\tGam,\tW)$ which can be described as follows~\cite{DWZ1}:
\begin{itemize}

\item 
$\tM(j) := M(j)$ for all $j\in\Gam_0\setminus\{k\}$ and $\tM(a) := M(a)$ for all 
arrows $a\in\tGam_1\cap\Gam_1$.

\item 
$\tM([ba]) := M(b)M(a)$ for all pairs of arrows 
$(b,a)\in\Gamma^{(2,k)}$.

\item 
It remains to define the maps
\begin{align*}
\tM(\alpha_k)&\colon \tM^+(k) \ra \tM(k),\\
\tM(\beta_k)&\colon \tM(k)\ra\tM^-(k)
\end{align*}
where  $\tM^+(k) := M^-(k)$ and $\tM^-(k) := M^+(k)$.

\end{itemize}

\begin{Rem} \label{Rem:MutMod}
It is an elementary exercise to verify that $\tM$ is up to isomorphism
uniquely determined by the following properties of those maps:
\begin{align}
\Ker(\tM(\alpha_k)) &=\Bi(M(\beta_k)), \label{eq:MutMod1}\\
\Bi(\tM(\beta_k))  &=\Ker(M(\alpha_k)),\label{eq:MutMod2}\\
\Ker(\tM(\beta_k))&\subseteq \Bi(\tM(\alpha_k)),
\label{eq:MutMod3}\\
\tM(\beta_k)\tM(\alpha_k) &=M(\gamma_k).\label{eq:MutMod4}
\end{align}
A concrete choice of a triple $(\tM(k),\tM(\alpha_k),\tM(\beta_k))$ 
with properties~\eqref{eq:MutMod1}--\eqref{eq:MutMod4} can be found 
in~\cite[p.765]{DWZ2}.
\end{Rem}

Next, we need to extract some material from~\cite{BIRSm}.
Let $T=T_1 \oplus \cdots \oplus T_r$ be a 
$V_\ii$-reachable cluster-tilting module in $\cC_w$.
We consider the quiver $\Gam_T$ of the 
endomorphism algebra $\cE_T := \End_\Lam(T)^\op$ and
the quiver $\sGam_T$ of the corresponding stable endomorphism algebra
$\scE_T := \sEnd_{\cC_w}(T)^\op$. 

We have $(\Gam_T)_0= R = \{1,2,\ldots,r\}$ with the vertex
$i$ corresponding to the direct summand $T_i$ of $T$. We identify $\sGam_T$ with
the full subquiver of $\Gam_T$ with vertices $\Rmi$ corresponding to the non-$\cC_w$-projective-injective indecomposable direct summands of $T$. 
Thus we get  a surjective algebra homomorphism
\[
\psi\colon \CC\epow{\sGam_T}\ra
\scE_T
\]
such that $\psi(c)\in\sHom_{\cC_w}(T_{t(c)},T_{s(c)})$ for all $c\in(\sGam_T)_1$.

We fix some $k \in \Rmi$.
Then there are short 
exact sequences
\begin{equation}\label{eq:MutSES1}
0\ra T_k\xrightarrow{\alpha_k}\bigoplus_{a\in\Gam_T^{(k,+)}} T_{s(a)}
\xrightarrow{\beta'_k} T_k'\ra 0
\end{equation}
and
\begin{equation}\label{eq:MutSES2}
0\ra T_k'\xrightarrow{\alpha'_k}\bigoplus_{b\in\Gam_T^{(-,k)}}T_{t(b)}
\xrightarrow{\beta_k} T_k\ra 0
\end{equation}
such that $\mu_k(T):= T_k' \oplus T/T_k$ is also a basic cluster-tilting
module in $\cC_w$.
It is convenient to label the components of the above maps as follows:
\begin{alignat*}{2}
\alpha_k &=(\alpha_{a,k})_{a\in\Gam_T^{(k,+)}},\qquad&
\beta'_k &=(\beta'_{k,a})_{a\in\Gam_T^{(k,+)}},\\
\alpha'_k&=(\alpha'_{b,k})_{b\in\Gam_T^{(-,k)}},\qquad&
\beta_k  &=(\beta_{k,b})_{b\in\Gam_T^{(-,k)}}.
\end{alignat*}

\begin{Lem} \label{Lem:MutEnd}
With the above notation the following hold:
\begin{itemize}

\item[(a)]
One can choose $\psi$ such that
$\Ker(\psi)=J(W_T)$  for some potential $W_T\in\fm_\cyc\subset\CC\epow{\sGam_T}$
and
\begin{alignat*}{2}
\psi(a)&=\alpha_{a,k}\in\sHom_{\cC_w}(T_k,T_{s(a)}) &&
\text{\quad for all }
a\in\sGam_T^{(k,+)},\\
\psi(b)&=\beta_{k,b}\in \sHom_{\cC_w}(T_{t(b)},T_k) &&
\text{\quad for all }
b\in\sGam_T^{(-,k)},\\
\psi(\partial_{b,a}W) &= \alpha'_{b,k}\beta'_{k,a} &&
\text{\quad for all }
(b,a)\in\sGam_T^{(2,k)}.
\end{alignat*}

\item[(b)]
We have a surjective algebra homomorphism
\[
\psi'\colon \CC\epow{\stGam_T} \ra 
\scE_{\mu_k(T)} 
\]
such that
\begin{alignat*}{2}
\psi'(a^*) &= \beta'_{k,a}\in \sHom_{\cC_w}(T_{s(a)}, T_k') &&\text{\quad for all } 
a\in\sGam_T^{(k,+)},\\
\psi'(b^*) &=\alpha'_{b,k}\in \sHom_{\cC_w}(T_k', T_{t(b)}) &&\text{\quad for all }
b\in\sGam_T^{(-,k)},\\
\psi'([ba])&=\alpha_{a,k}\beta_{k,b} &&\text{\quad for all } (b,a)\in\sGam_T^{(2,k)},\\
\psi'(c)   &=\psi(c) &&\text{\quad for all } c\in(\stGam_T)_1\cap(\sGam_T)_1.
\end{alignat*}
Moreover, $\Ker(\psi')=J(\tW_T)$.
\end{itemize}

\end{Lem}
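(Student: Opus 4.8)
The plan is to deduce both parts from the general results of \cite{BIRSm} on the interplay between mutation of cluster-tilting objects in a stably $2$-Calabi--Yau category and mutation of quivers with potential, applied to the Frobenius category $\cC_w$ (which is stably $2$-Calabi--Yau by \cite[Section~III.2]{BIRS}), the only extra work being a careful bookkeeping of the components of the exchange sequences \eqref{eq:MutSES1} and \eqref{eq:MutSES2} and of the sign in $\Delta$.

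For part~(a), recall first from \cite{BIRSm} that, since $\cC_w$ is stably $2$-Calabi--Yau and $T$ is cluster-tilting, there is a reduced potential $W_T\in\fm_\cyc\subset\CC\epow{\sGam_T}$ with $\scE_T\cong\cP(\sGam_T,W_T)$, an isomorphism being obtained by lifting each arrow of $\sGam_T$ to a suitable irreducible morphism of $\scE_T$. We use the freedom in these lifts at the vertex $k$: the components $(\alpha_{a,k})_{a\in\sGam_T^{(k,+)}}$ of the minimal left $\add(T/T_k)$-approximation $\alpha_k$ of $T_k$ represent, modulo $\rad^2(\scE_T)$, a basis of the irreducible morphisms of $\scE_T$ starting at $T_k$, and dually the components $(\beta_{k,b})_{b\in\sGam_T^{(-,k)}}$ of $\beta_k$ a basis of those ending at $T_k$ (the arrows of $\sGam_T$ adjacent to $k$ coming only from stable morphisms, the non-$\cC_w$-projective-injective summands being exactly those indexed by $\Rmi$). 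Hence $\psi$ may be normalised so that $\psi(a)=\alpha_{a,k}$ and $\psi(b)=\beta_{k,b}$; a cyclic change of variables of $\CC\epow{\sGam_T}$ fixing the arrows at $k$ then replaces $W_T$ by a right-equivalent potential whose part involving $k$ is read off from the composite $\alpha'_k\beta'_k$, giving precisely $\psi(\partial_{b,a}W_T)=\alpha'_{b,k}\beta'_{k,a}$ for all $(b,a)\in\sGam_T^{(2,k)}$. This is exactly the assertion of the corresponding statement of \cite{BIRSm} in our situation, and proves~(a).

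For part~(b) we first note that since the completed Jacobian algebra of a quiver with potential depends only on its reduced part, \cite{BIRSm} gives
\[
\scE_{\mu_k(T)}\;\cong\;\cP(\mu_k(\sGam_T,W_T))\;\cong\;\cP(\stGam_T,\tW_T),
\]
where $\tW_T=[W_T]+\Delta$ is the premutated potential with our sign convention for $\Delta$. We now define $\psi'\colon\CC\epow{\stGam_T}\ra\scE_{\mu_k(T)}$ on arrows by the displayed formulas $\psi'(a^*):=\beta'_{k,a}$, $\psi'(b^*):=\alpha'_{b,k}$, $\psi'([ba]):=\alpha_{a,k}\beta_{k,b}$ and $\psi'(c):=\psi(c)$ otherwise, and extend it continuously; here the target of the formulas is checked to be consistent with the convention $\psi'(c)\in\sHom_{\cC_w}(T_{t(c)},T_{s(c)})$, vertex $k$ of $\stGam_T$ now corresponding to $T_k'$. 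We then verify directly that $\psi'$ annihilates every cyclic derivative of $\tW_T$: for a derivative with respect to a starred arrow this reduces, using the bookkeeping around Remark~\ref{Rem:MutMod}, to the exactness of \eqref{eq:MutSES1} and \eqref{eq:MutSES2} (i.e.\ $\beta'_k\alpha_k=0$ and $\beta_k\alpha'_k=0$), and for a derivative with respect to an arrow $[ba]$ or an old arrow it uses the relations $\alpha'_{b,k}\beta'_{k,a}=\psi(\partial_{b,a}W_T)$ from part~(a). Thus $\psi'$ factors as $\bar\psi'\colon\cP(\stGam_T,\tW_T)\ra\scE_{\mu_k(T)}$. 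Comparing $\bar\psi'$ with the abstract isomorphism $\cP(\mu_k(\sGam_T,W_T))\cong\scE_{\mu_k(T)}$ of \cite{BIRSm} (equivalently, applying part~(a) to the cluster-tilting module $\mu_k(T)$, whose exchange sequences at $k$ are \eqref{eq:MutSES2} and \eqref{eq:MutSES1}), one sees that $\bar\psi'$ is a surjection between finite-dimensional algebras of equal dimension, hence an isomorphism; therefore $\Ker(\psi')=J(\tW_T)$, which is part~(b).

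The main obstacle is precisely the last matching step: identifying the explicit, component-wise homomorphism $\psi'$ with the abstract mutation isomorphism of \cite{BIRSm}, so that the particular normalisation of $W_T$ forced in part~(a) produces exactly $\tW_T$ (with the chosen sign of $\Delta$ and the chosen directions of the starred arrows) under $\psi'$. Once the conventions are pinned down this is a finite check of cyclic derivatives against the four defining relations \eqref{eq:MutMod1}--\eqref{eq:MutMod4} encoded by the exchange sequences, and carries no conceptual difficulty.
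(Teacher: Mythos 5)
Your proof follows the same overall strategy as the paper: both parts are deduced from the machinery of \cite{BIRSm} relating mutation of cluster-tilting objects in the stably 2-Calabi--Yau category $\cC_w$ to mutation of quivers with potential. The approaches diverge in how precisely the crucial normalization is justified, and in the route taken for part (b).

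For part (a), you assert that, having fixed the lifts $\psi(a)=\alpha_{a,k}$, $\psi(b)=\beta_{k,b}$ at vertex $k$, a ``cyclic change of variables'' of $\CC\epow{\sGam_T}$ fixing those arrows replaces $W_T$ by a right-equivalent potential with $\psi(\partial_{b,a}W_T)=\alpha'_{b,k}\beta'_{k,a}$. This is the heart of the matter, and as stated it is a re-assertion of the conclusion rather than a proof of it: there is no reason a priori why a single right-equivalence compatible with an already-fixed choice of lifts at $k$ should produce exactly the morphism $\alpha'_{b,k}\beta'_{k,a}$ (an element of $\scE_T$, not merely an element of $\rad^2/\rad^3$). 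The paper's proof pins this down by invoking the notion of a \emph{liftable} isomorphism $\bar\psi\colon\cP(\sGam_T,W_T)\to\scE_T$ in the sense of \cite[Section~5.1]{BIRSm} (produced via \cite[Theorems~5.3, 6.6, 4.6]{BIRSm}), and then deduces the conditions (O)--(IV) of \cite[Section~5.2]{BIRSm} from \cite[Lemma~5.7]{BIRSm}; those conditions are precisely the three displayed formulas of part (a). Your argument would be complete once you identify your ``freedom in lifts and cyclic change of variables'' with the passage from liftability to (O)--(IV), so the missing idea is the liftability concept itself.

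For part (b) you take a genuinely different, more self-contained route: define $\psi'$ by the displayed formulas, verify directly that it annihilates $J(\tW_T)$ (vanishing of $\psi'(\partial_{a^*}\tW_T)$ and $\psi'(\partial_{b^*}\tW_T)$ reduces to $\beta'_k\alpha_k=0$ and $\beta_k\alpha'_k=0$ from \eqref{eq:MutSES1}--\eqref{eq:MutSES2}, and the remaining derivatives use the normalization from (a)), then invoke $\scE_{\mu_k(T)}\cong\cP(\stGam_T,\tW_T)$ plus a dimension comparison. This is plausible and would work once the sign and composition-order conventions are nailed down, as you yourself flag at the end. The paper instead simply cites the construction of $\Phi'$ in \cite[Theorem~5.6]{BIRSm} and \cite[Theorem~4.5]{BIRSm}, trading your bookkeeping for heavier reliance on \cite{BIRSm}. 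Either route is fine, but your version of part (a) currently has the gap described above.
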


\begin{proof}
By applying recursively~\cite[Theorem~5.3]{BIRSm} it follows 
from~\cite[Theorem~6.6]{BIRSm} and~\cite[Theorem~4.6]{BIRSm} that we can find
a \emph{liftable} (in the sense of~\cite[5.1]{BIRSm}) isomorphism
$$
\bar{\psi}\colon\cP(\sGam_T,W_T) \to \scE_T. 
$$
Thus, by~\cite[Lemma~5.7]{BIRSm} the
conditions (O)-(IV) described in~\cite[Section~5.2]{BIRSm} hold for our $\psi$.
This shows~(a). Part~(b) follows from~\cite[Theorem~5.6]{BIRSm}
and the construction of $\psi'$ (denoted by $\Phi'$ in \cite{BIRSm}), see 
also~\cite[Theorem~4.5]{BIRSm}.
\end{proof}

We now consider the following special case of the above: For an indecomposable module
$X\in\cC_w\setminus\add(T\oplus T_k')$ we consider the indecomposable
$\scE_T$-module $\Ext^1_\Lam(T,X)$ via $\bar{\psi}$ as a representation
$M$ of $\cP(\sGam_T,W_T)$. 
Similarly, we consider $\Ext_\LL^1(\mu_k(T),X)$ via 
$\bar{\psi}'$ as a representation $M'$ of
$\cP(\stGam_T,\tW_T)$.

\begin{Prop}\label{newprop}
With the above notation, the representations $M'$ and $\tilde{\mu}_k(M)$ of the Jacobian algebra
$\cP(\stGam_T,\tW_T)$ are isomorphic. Thus, we can consider
the $\scE_{\mu_k(T)}$-module 
$\Ext_\LL^1(\mu_k(T),X)$ as the mutation of 
the $\scE_T$-module $\Ext_\LL^1(T,X)$ in direction $k$.
\end{Prop}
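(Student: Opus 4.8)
The plan is to show that $M'$, equipped with the $\cP(\stGam_T,\tW_T)$-module structure coming from $\bar\psi'$, satisfies the four conditions \eqref{eq:MutMod1}--\eqref{eq:MutMod4} of Remark~\ref{Rem:MutMod}; by the uniqueness statement there this identifies $M'$ with the premutation $\tilde{\mu}_k(M)$. First I would make the module structure of $M'=\Ext^1_\Lam(\mu_k(T),X)$ explicit via Lemma~\ref{Lem:MutEnd}(b) and the contravariant functor $\Ext^1_\Lam(-,X)$: one has $M'(k)=\Ext^1_\Lam(T_k',X)$ and $M'(j)=\Ext^1_\Lam(T_j,X)=M(j)$ for $j\neq k$; the arrows $c\in(\stGam_T)_1\cap(\sGam_T)_1$ act as they do on $M$ (since $\psi'(c)=\psi(c)$); the new arrows act by $M'([ba])=\Ext^1_\Lam(\alpha_{a,k}\beta_{k,b},X)=M(b)M(a)$; and the two maps at $k$ are $M'(\alpha_k)=\Ext^1_\Lam(\alpha'_k,X)$ and $M'(\beta_k)=\Ext^1_\Lam(\beta'_k,X)$, with $\alpha'_k,\beta'_k$ the maps of \eqref{eq:MutSES1}--\eqref{eq:MutSES2}. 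So it remains to verify \eqref{eq:MutMod1}--\eqref{eq:MutMod4}.

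Next I would apply $\Ext^1_\Lam(-,X)$ (equivalently $\Hom_{\stCC_w}(-,\Omega_w^{-1}X)$) to the two exchange sequences \eqref{eq:MutSES1} and \eqref{eq:MutSES2} and read off the conditions from the resulting long exact sequences. Exactness of the sequence coming from \eqref{eq:MutSES2} gives $\Ker(M'(\alpha_k))=\Ima(M(\beta_k))$, which is \eqref{eq:MutMod1}, and exactness of the one coming from \eqref{eq:MutSES1} gives $\Ima(M'(\beta_k))=\Ker(M(\alpha_k))$, which is \eqref{eq:MutMod2}; here one uses Lemma~\ref{Lem:MutEnd}(a) (that $\psi(a)=\alpha_{a,k}$, $\psi(b)=\beta_{k,b}$) to identify $\Ext^1_\Lam(\alpha_k,X)=M(\alpha_k)$ and $\Ext^1_\Lam(\beta_k,X)=M(\beta_k)$. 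For \eqref{eq:MutMod4} one composes: $M'(\beta_k)M'(\alpha_k)=\Ext^1_\Lam(\alpha'_k\beta'_k,X)$, and the $(b,a)$-entry of $\alpha'_k\beta'_k$ is $\alpha'_{b,k}\beta'_{k,a}=\psi(\partial_{b,a}W_T)$ by Lemma~\ref{Lem:MutEnd}(a), so applying $\Ext^1_\Lam(-,X)$ produces exactly the matrix $\bigl(M(\partial_{b,a}W_T)\bigr)=M(\gamma_k)$.

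The remaining condition \eqref{eq:MutMod3}, $\Ker(M'(\beta_k))\subseteq\Ima(M'(\alpha_k))$, is the main obstacle: unlike the other three it does not fall out of the long exact sequences and the Jacobian relations alone. Concretely, the long exact sequences show that $\Ker(M'(\beta_k))$ is the image of the connecting map $\delta\colon\Hom_\Lam(T_k,X)\to\Ext^1_\Lam(T_k',X)$ of \eqref{eq:MutSES1}, while $\Ima(M'(\alpha_k))$ is the kernel of the connecting map $\delta''\colon\Ext^1_\Lam(T_k',X)\to\Ext^2_\Lam(T_k,X)$ of \eqref{eq:MutSES2}, so \eqref{eq:MutMod3} amounts to $\delta''\circ\delta=0$, i.e.\ to the vanishing, for every $\phi\in\Hom_\Lam(T_k,X)$, of the pushforward $\phi_*(\omega)$ of the class $\omega\in\Ext^2_\Lam(T_k,T_k)$ of the $2$-extension
$$0\to T_k\xrightarrow{\alpha_k}\bigoplus_a T_{s(a)}\xrightarrow{\alpha'_k\beta'_k}\bigoplus_b T_{t(b)}\xrightarrow{\beta_k}T_k\to 0$$
obtained by splicing \eqref{eq:MutSES1} and \eqref{eq:MutSES2}. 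I would prove this vanishing by passing to the stable $2$-Calabi--Yau category $\stCC_w$, where $\omega=\Omega_w^{-1}(h_1)\circ h_2$ for the connecting morphisms $h_1\colon T_k'\to\Omega_w^{-1}T_k$ and $h_2\colon T_k\to\Omega_w^{-1}T_k'$ of the triangles attached to \eqref{eq:MutSES1} and \eqref{eq:MutSES2}, and then showing $\Omega_w^{-2}(\phi)\circ\Omega_w^{-1}(h_1)\circ h_2=0$ using Serre duality (cyclicity of the Calabi--Yau trace) together with the rigidity of $\mu_k(T)=T_k'\oplus T/T_k$ and the hypothesis that $X$ is indecomposable with no direct summand in $\add(T\oplus T_k')$. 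The same hypotheses also guarantee that $M=\Ext^1_\Lam(T,X)$ is indecomposable and $M\not\cong S_k$, so that the premutation $\tilde{\mu}_k(M)$ is well defined and satisfies \eqref{eq:MutMod1}--\eqref{eq:MutMod4} (Remark~\ref{Rem:MutMod}); once $M'$ is shown to satisfy all four as well, the uniqueness statement yields $M'\cong\tilde{\mu}_k(M)$ as $\cP(\stGam_T,\tW_T)$-modules, which is the assertion of Proposition~\ref{newprop}. I expect the honest treatment of the vanishing of $\phi_*(\omega)$ to be where the real work lies.
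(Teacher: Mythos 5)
Your handling of \eqref{eq:MutMod1}, \eqref{eq:MutMod2} and \eqref{eq:MutMod4} matches the paper's proof: apply $\Ext^1_\Lam(-,X)$ to the exchange sequences \eqref{eq:MutSES1} and \eqref{eq:MutSES2}, read off the first two conditions from exactness at the middle term, and use $\psi(\partial_{b,a}W_T)=\alpha'_{b,k}\beta'_{k,a}$ from Lemma~\ref{Lem:MutEnd}(a) for the fourth. (You even have the pairing of conditions with sequences the right way round.)

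The divergence --- and the gap in your proposal --- is \eqref{eq:MutMod3}. You correctly reduce it to the vanishing of $\delta''\circ\delta$, recast this as $\phi_*(\omega)=0$ for a Yoneda class $\omega\in\Ext^2_\Lam(T_k,T_k)$, and propose to prove it via the Calabi--Yau trace in $\stCC_w$; but you leave that vanishing unproved and yourself flag it as the hard step, and it is not clear the cyclicity argument closes cleanly (the trace pairing of $\omega$ with endomorphisms of $T_k$ is not obviously zero on all the $\psi\phi$ that arise). The paper avoids all of this with a one-line representation-theoretic argument: $M'=\Ext^1_\Lam(\mu_k(T),X)$ is an indecomposable $\scE_{\mu_k(T)}$-module, and indecomposability forces \eqref{eq:MutMod3}. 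Concretely, if some $v\in\Ker(M'(\beta_k))\setminus\Ima(M'(\alpha_k))$ existed, one could split off from $M'$ a nonzero subrepresentation supported only at $k$ (take at $k$ a complement of $\Ker(M'(\beta_k))\cap\Ima(M'(\alpha_k))$ inside $\Ker(M'(\beta_k))$, and zero elsewhere); a complementary subrepresentation is obtained by taking at $k$ a complement containing $\Ima(M'(\alpha_k))$ and $M'(j)$ for $j\neq k$, which is closed under the arrows since the only arrows of $\tGam_T$ incident to $k$ are the $a^*,b^*$. Thus $M'$ would decompose unless $M'\cong S_k$, and $M'\cong S_k$ is excluded because $\Ext^1_\Lam(\mu_k(T),-)$ sends $T_k$ to $S_k$ while $X\notin\add(T\oplus T_k')$. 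This is precisely where the standing hypothesis on $X$ earns its keep, and it replaces your entire Calabi--Yau computation.
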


\begin{proof}
Note that $M(\alpha_k)= \Ext^1_\Lam(\alpha_k,X)$ and
$M(\beta_k)=\Ext^1_\Lam(\beta_k,X)$ by the first two equations in 
Lemma~\ref{Lem:MutEnd}(a). 
Similarly, by Lemma~\ref{Lem:MutEnd}(b) 
we have $M'(\alpha_k)=\Ext^1_\Lam(\alpha'_k,X)$,
 $M'(\beta_k)=\Ext^1_\Lam(\beta'_k,X)$, and $M'(k)=\Ext^1_\Lam(T_k',X)$.
According to Remark~\ref{Rem:MutMod} it is sufficient to verify
\eqref{eq:MutMod1}--\eqref{eq:MutMod4} for this data.
Indeed, \eqref{eq:MutMod1} resp.~\eqref{eq:MutMod2} follows since
$\Ext^1_\Lam(-,X)$ is exact at the middle term of the short exact 
sequences~\eqref{eq:MutSES1} resp.~\eqref{eq:MutSES2}. Next,
\eqref{eq:MutMod3} is clear, since $\Ext^1_\Lam(\mu_k(T),X)$ is an
indecomposable $\scE_{\mu_k(T)}$-module. Finally, 
\[
M'(\beta_k)\circ M'(\alpha_k)=\Ext^1_\Lam(\alpha'_k\beta'_k,X)=
\Ext^1_\Lam(\psi(\partial_{b,a}W)_{(b,a)\in
\sGam_T^{(2,k)}},X)=M'(\gamma_k)
\]
by the last equation in Lemma~\ref{Lem:MutEnd}(a). 
Thus also~\eqref{eq:MutMod4} holds.
\end{proof}

\subsection{Transformation of g-vectors and F-polynomials}\label{Section5.1}
We fix a $V_\ii$-reachable cluster-tilting module $T$ in $\cC_w$, 
and define
for any  $X\in\cC_w$ the (extended) \emph{index} of $X$ with respect to $T$ as
\begin{equation}
\bg^T_X :=(\dimv\Hom_\Lam(T,X)) \cdot B^{(T)}\in\Z^r,
\end{equation}
and the \emph{F-polynomial} of $X$ with respect to $T$ as
\[
F_X^T((y_l)_{l\in\Rmi}):=
\sum_{\bd \in \N^\Rmi} \chi(\Gr_\bd^{\scE_T}(\Ext_\Lam^1(T,X)))\, y^\bd.
\]
Moreover, we will need the $\bh$-\emph{vector} of $X$ with respect to $T$
\[
\bh_X^T := (h_k)_{k \in \Rmi} 
\quad\text{ where }\quad 
h_k:=-\dim\Hom_{\scE_T}(S_k, \Ext_\Lam^1(T,X)).
\]
Fix $k\in\Rmi$ and
write $T' := \mu_k(T) = T_k' \oplus T/T_k$ for the cluster-tilting module obtained from $T$ by
mutation in direction $k$. 
Thus we have short exact sequences:
\begin{equation}\label{ses:mut-k}
0\ra T_k \ra \bigoplus_{l\in R} 
T_l^{[-B_{l,k}^{(T)}]_+}\ra T'_k\ra 0 
\quad\text{ and }\quad
0\ra T'_k \ra \bigoplus_{l\in R} T_l^{[-B_{k,l}^{(T)}]_+}\ra T_k\ra 0. 
\end{equation}

Let us recall the following observation 
from~\cite[Section~3]{FK}:
We have a short exact sequence 
\[
0\ra\bigoplus_{k\in\Rmi} T_k^{-\tilde{h}_k} \ra 
\bigoplus_{k\in R} T_k^{-\tilde{h}'_k}
\xrightarrow{\pi_X} X \ra 0
\]
such that $\pi_X$ is a minimal right $\add(T)$-approximation for certain
non-positive integers
$\tilde{h}_k$ and $\tilde{h}'_k$. 
It will be convenient to define
$\tilde{h}_{l} := 0$ for $l \in \Rma$. 
From this we obtain the exact 
sequences
\begin{equation} \label{eq:projres}
0\ra \Hom_\Lam\left(T,\bigoplus_{k\in\Rmi} T_k^{-\tilde{h}_k}\right)
 \ra \Hom_\Lam\left(T,\bigoplus_{k\in R} T_k^{-\tilde{h}'_k}\right)\ra \Hom_\Lam(T,X)\ra 0
\end{equation}
which can be viewed as a projective resolution of $\Hom_\Lam(T,X)$
over $\End_\Lam(T)^{\text{op}}$, and
\begin{equation} \label{eq:injres}
0\ra\Ext^1_\Lam(T,X)\ra
\Ext^1_\Lam\left(T,\bigoplus_{k\in\Rmi}\Omega_w^{-1}(T_k)^{-\tilde{h}_k}\right)
\ra \Ext^1_\Lam\left(T,\bigoplus_{k\in\Rmi}\Omega_w^{-1}(T_k)^{-\tilde{h}'_k}\right)
\end{equation}
which is a minimal injective copresentation
of $M := \Ext^1_\Lam(T,X)$ over the stable endomorphism ring $\scE_T$.
Thus, it follows from~\eqref{eq:projres} that
\[
\bg^T_X = (g_k)_{1 \le k \le r} = (\tilde{h}_k -\tilde{h}'_k)_{1 \le k \le r}.
\]
In particular, $g_k \ge 0$ for all $k \in \Rma$.
On the other hand, we conclude from~\eqref{eq:injres} that
\[
\tilde{h}_k = h_k := -\dim\Hom_{\scE_T}(S_k,M) 
\quad\text{ and }\quad 
\tilde{h}'_k=-\dim\Ext^1_{\scE_T}(S_k,M) 
\]
for  $k\in\Rmi$. 

We have the following easy application of deep results in~\cite{BIRSm}
and~\cite{DWZ2}:

\begin{Lem} \label{lem:transf}
Let 
\begin{align*}
\bg_X^T &= (g_k)_{k \in R}, & \bh_X^T 
&= (h_l)_{l \in \Rmi}, &
F &= F^T_X,\\
\bg_X^{T'} &= (g'_k)_{k \in R}, &
\bh_X^{T'} &= (h'_l)_{l \in \Rmi}, & 
F' &= F^{T'}_X.
\end{align*}
Moreover, let $B = (B^{(T)}_{l,m})_{l,m \in \Rmi}$, and let
$(B',(\hy'_l)_{l\in\Rmi})$ be
obtained from $(B,(\hy_l)_{l\in\Rmi})$ by Y-seed mutation in direction $k$
in $\Q_\text{sf}((y_l)_{l \in \Rmi})$.
Then for $M := \Ext^1_\Lam(T,X)$ and $k \in \Rmi$ we get
\begin{equation} \label{eq:hpk}
h'_k = -\dim\Ext^1_{\scE}(S_k,M)
\quad\text{ and therefore }\quad 
g_k = h_k-h'_k.
\end{equation}
Moreover, for $k \in \Rmi$ we have
\begin{equation}\label{eq:F-pol}
(\hy_k+1)^{h_k}F((\hy_l)_{l \in \Rmi})=(\hy'_k+1)^{h_k'}
F'((\hy'_l)_{l \in \Rmi})
\end{equation}
and for $l \in R$ we have
\begin{equation}\label{eq:g-vect}
g'_l = 
\begin{cases} g_l - h_k B^{(T)}_{l,k} + g_k[B_{l,k}^{(T)}]_+ & \text{ if } 
l\neq k,\\
-g_k & \text{ if } l=k.
\end{cases}
\end{equation}
\end{Lem}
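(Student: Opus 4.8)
The plan is to derive the lemma from Proposition~\ref{newprop} together with the mutation rules for $g$-vectors, $h$-vectors and $F$-polynomials of representations of Jacobian algebras established in~\cite{DWZ2}. Since the vectors $\bg^T_X$ and $\bh_X^T$ are additive under direct sums and the $F$-polynomials $F_X^T$ are multiplicative (by the usual $\mathbb{G}_m$-action on $\Gr_\bd^{\scE_T}(M_1\oplus M_2)$ scaling the second summand), it suffices to treat an indecomposable $X$; the case $X\in\add(T\oplus T'_k)$ is checked directly from~\eqref{ses:mut-k}. So assume $X$ is indecomposable with $X\notin\add(T\oplus T'_k)$. By Proposition~\ref{newprop}, the $\scE_{T'}$-module $M':=\Ext^1_\Lam(T',X)$ is the reduced part of the premutation $\tilde\mu_k(M)$ of the $\scE_T$-module $M:=\Ext^1_\Lam(T,X)$, \ie $M'$ is obtained from $M$ by DWZ mutation at $k$. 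Everything then reduces to identifying $\bg^T_X$, $\bh_X^T$, $F_X^T$ with the $g$-vector, $h$-vector and $F$-polynomial of $M$ in the sense of~\cite{DWZ2}, and likewise for $(T',M')$.

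First I would match the invariants. As recalled just before the lemma, \eqref{eq:injres} is a minimal injective copresentation of $M$ over $\scE_T$, whose two terms carry the degrees $(\dim\Hom_{\scE_T}(S_l,M))_{l\in\Rmi}=(-h_l)_{l\in\Rmi}$ and $(\dim\Ext^1_{\scE_T}(S_l,M))_{l\in\Rmi}=(-\tilde h'_l)_{l\in\Rmi}$, while \eqref{eq:projres} is a minimal projective presentation of $\Hom_\Lam(T,X)$ over $\cE_T$ with class $\bg^T_X$. Thus the components indexed by $\Rmi$ of the pair $(\bh_X^T,\bg^T_X)$ encode exactly the $h$- and $g$-vectors of $M$ as defined in~\cite{DWZ2}, and $F_X^T$ is by definition the $F$-polynomial of $M$. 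The same holds verbatim for $(T',M')$; in particular $h'_k=-\dim\Hom_{\scE_{T'}}(S_k,M')$ is the $k$-th component of the $h$-vector of $M'$.

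Next I would invoke the results of~\cite{DWZ2}. Applied to $M'=\mu_k(M)$, the mutation rule for $F$-polynomials --- together with the $Y$-seed mutation $(B,(\hy_l))\mapsto(B',(\hy'_l))$ in direction $k$ --- is precisely \eqref{eq:F-pol}. The mutation rules for $g$- and $h$-vectors give $g'_k=-g_k$, the formula \eqref{eq:g-vect} for $l\in\Rmi\setminus\{k\}$, and $h'_k=-\dim\Ext^1_{\scE_T}(S_k,M)$; combined with $g_k=\tilde h_k-\tilde h'_k=h_k-\tilde h'_k$ (noted before the lemma), the latter is \eqref{eq:hpk}. For the frozen components $l\in\Rma$ I would argue directly: $T$ and $T'$ share the summands $T_l$, $l\neq k$, so $\dimv\Hom_\Lam(T,X)$ and $\dimv\Hom_\Lam(T',X)$ agree outside their $k$-th entries, and $B^{(T')}$ is obtained from $B^{(T)}$ by Fomin--Zelevinsky matrix mutation at $k$ (compatibility of quiver mutation with mutation of cluster-tilting objects, see~\cite{BIRS,BIRSm} and Section~\ref{reminder}); substituting both facts into $\bg^{T'}_X=(\dimv\Hom_\Lam(T',X))\cdot B^{(T')}$ and comparing with $\bg^T_X=(\dimv\Hom_\Lam(T,X))\cdot B^{(T)}$ yields \eqref{eq:g-vect} for $l\in\Rma$ as well.

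The main obstacle is not conceptual but a matter of bookkeeping. One must translate the sign and $[\,\cdot\,]_+$ conventions of~\cite{DWZ2} for $g$-vectors, $h$-vectors and $F$-polynomials into the normalisations fixed in Section~\ref{Section5.1}; and, more delicately, one must verify that passing from the possibly non-reduced premutation $\tilde\mu_k(M)$ to its reduced part $\mu_k(M)=M'$ changes none of $F_X^{T'}$, $\bg^{T'}_X$, or $\dim\Hom_{\scE_{T'}}(S_k,-)$. This last point is exactly where the explicit description of the premutation maps in Remark~\ref{Rem:MutMod} --- hence in Proposition~\ref{newprop} --- is needed, since it pins $\tilde\mu_k(M)$ down up to the controlled \emph{trivial} summands that are removed in the reduction step and that contribute nothing to the three quantities above.
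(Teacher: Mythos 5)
Your treatment of~\eqref{eq:hpk} and~\eqref{eq:F-pol} is essentially the paper's: reduce, via Proposition~\ref{newprop}, to the statement that $M'=\Ext^1_\Lam(T',X)$ is the DWZ-mutation of $M=\Ext^1_\Lam(T,X)$, and then read off~\eqref{eq:hpk} from the description of minimal injective presentations in \cite[Remark~10.8]{DWZ2} and~\eqref{eq:F-pol} from \cite[Lemma~5.2]{DWZ2}. Your preliminary reduction to indecomposable $X$ (using additivity of $\bg$, $\bh$ and multiplicativity of $F$) is a legitimate way to cope with the fact that Proposition~\ref{newprop} is only stated for indecomposables, a point the paper glosses over by implicitly working with decorated representations.

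For~\eqref{eq:g-vect} you take a genuinely different route, and it has a gap. You propose to get the $\Rmi$-components from the DWZ $g$-vector mutation rule and the $\Rma$-components by a separate direct argument. The paper does \emph{not} use the DWZ $g$-vector rule here: it proves~\eqref{eq:g-vect} for all $l\in R$ at once by a matrix computation, using $B^{(T')}=Z_k^tB^{(T)}Z_k$ from \cite[Proposition~7.5]{GLSRigid} together with the exact sequence obtained by applying $\Hom_\Lam(-,X)$ to the second sequence in~\eqref{ses:mut-k}; the Remark immediately following the Lemma explicitly emphasizes that this proof is ``quite different'' from the DWZ one and is an extension to the situation with coefficients. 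The gap in your $\Rma$-step is this: you observe that $\dimv\Hom_\Lam(T,X)$ and $\dimv\Hom_\Lam(T',X)$ agree in all coordinates except $k$, and that $B^{(T')}$ mutates from $B^{(T)}$, and say ``substituting both facts'' yields~\eqref{eq:g-vect} for $l\in\Rma$. But $\bg^{T'}_X=(\dimv\Hom_\Lam(T',X))\cdot B^{(T')}$ also involves the $k$-th coordinate $\dim\Hom_\Lam(T'_k,X)$, which is \emph{not} determined by those two facts. One must compute it from the exact sequence, where the failure of right-exactness of $\Hom_\Lam(-,X)$ contributes $\dim\Ker(M(\beta_k))=-h_k$; this is precisely the source of the $-h_kB^{(T)}_{l,k}$ term in~\eqref{eq:g-vect}, and without it the substitution does not close. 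Once you add this computation, your split argument works — but at that point it is cleaner to run the matrix calculation for all $l\in R$ simultaneously (as the paper does) and avoid the DWZ $g$-vector rule and the attendant convention-matching altogether.
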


\begin{Rem}
Observe that~\eqref{eq:g-vect} is just~\cite[(2.11)]{DWZ2} 
(proved in~\cite[Lemma 5.2]{DWZ2}) extended to our situation with coefficients.
Our independent proof for this situation is quite different. 
\end{Rem}

\begin{proof}
We know from~\cite[Theorems~5.3 and~6.4]{BIRSm} 
that the stable endomorphism algebra
$\scE_T$ is  given by a quiver with potential, and $\scE_{T'} $ is obtained
from $\scE_T$ by a mutation of quiver potentials in direction $k$. Then by Proposition~\ref{newprop}
the $\scE_{T'}$-module $M':=\Ext^1_\Lam(T',X)$ 
is obtained from
the $\scE_{T}$-module $M=\Ext^1_\Lam(T,X)$ by mutation in direction $k$ in
the sense of~\cite[(4.16) and~(4.17)]{DWZ2}. 

Now, equation~\eqref{eq:hpk} follows from 
the description of the minimal injective presentation of $M$ 
in~\cite[Remark~10.8]{DWZ2}.
In fact, with the notation used there, we have obviously
$\dim U^*_k=\dim\Ext^1_{\scE_T}(S_k,M)=\tilde{h}'_k$. 
On the other hand, by the definition
of the mutation procedure for $M$ in direction $k$ and the construction of
$U^*_k$ we have $\dim U^*_k=\dim\Hom_{\scE_{T'}}(S_k,M')=h'_k$.

Similarly,
equation~\eqref{eq:F-pol} follows now from
the ``Key-Lemma''~\cite[Lemma~5.2]{DWZ2}.

Next, let $Z_k$ be the $(r \times r)$-matrix defined by
\[
Z_k:=\left(\begin{array}{cccrccc}
1  &      &   0 &  0 &  0 &   &0\\
   &\ddots & && &\ddots\\
      0       &       &        1    &  0& 0            &      &    0\\
{[-B_{k,1}^{(T)}]_+}&\cdots &[-B_{k,k-1}^{(T)}]_+& -1
&[-B_{k,k+1}^{(T)}]_+&\cdots&[-B_{k,r}^{(T)}]_+\\
0            &       &    0         &  0&   1         &      &    0\\
     &    \ddots   &             &   &             &\ddots\\
0          &       &        0     & 0  &          0   &     & 1
\end{array}\right). 
\]
To prove equation~(\ref{eq:g-vect}), note that 
$B^{(T')} = Z_k^t B^{(T)} Z_k$, see~\cite[Proposition~7.5]{GLSRigid}. 
Moreover, if we apply $\Hom_\Lam(-,X)$ to the second short exact sequence
in~\eqref{ses:mut-k} we obtain with $M=\Ext^1_\Lam(T,X)$ the exact 
sequence
\begin{multline*}
0 \ra \Hom_\Lam(T_k,X) \ra \Hom_\Lam(\bigoplus_{l \in R} T_l^{[B^{(T)}_{l,k}]_+},X)
\ra \Hom_\Lam(T'_k,X)  \\
\ra M(k)\xrightarrow{M(\beta_k)} \bigoplus_{l \in \Rmi} M(l)^{[-B^{(T)}_{k,l}]_+}
\end{multline*}
where $\dim\Ker(M(\beta_k)) = -h_k$.
Thus 
\[
\dimv\Hom_\Lam(T',X)=(\dimv\Hom_\Lam(T,X)) \cdot Z_k^t -h_k \be_k
\]
where $\be_k$ is the $k$th standard coordinate vector of $\Z^r$.
Now, 
using $Z_k = Z_k^{-1}$ and $B^{(T)}_{k,k}=0$, we find
$\bg'= \bg Z_k + h_k \be_k B^{(T)}$.
This implies our claim since $-B^{(T)}_{k,l}=B^{(T)}_{l,k}$ for all
$(k,l)\in\Rmi\times R$.
\end{proof}

\subsection{Proof of Theorem~\ref{THM3}}\label{ssec:pf1}
Again, let $W := W_\ii$.
We show in a first step that
$$
\vph_X = \theta^W_X
$$ 
for all $X \in\cC_w$. 
It is well known that
the morphism $\sx_\bi\df(\C^*)^r \ra N^w$ is dominant. 
In fact,  by~\cite[Proposition~2.7]{L2} it is injective, and $N^w$ is 
irreducible and of dimension $r$. 
Since $\vph_X \in \C[N^w]$ and  $\theta^W_X$ is a rational function, 
it is sufficient to verify this equality on $\sx_\bi(\bt)$, 
see also~\cite[Corollary~15.7]{GLSUni2}.
Now, we have:
\begin{align*}
\vph_X(\sx_\bi(\bt))
&= \sum_{\ba \in \N^r} \chi(\F_{\bi,\ba,X})\, \bt^{\ba}\\
&= \bt^{\ba^-(X)} \sum_{\ba \in \N^r}
\chi(\Gr_{d_{\ii,X}(\ba)}^{\scE}(\Ext^1_\Lam(W,X)))\,\bt^{\ba -\ba^-(X)}\\
&= \vph_W^{(\dimv\Hom_\Lam(W,X))\cdot B^{(W)}}(\sx_\bi(\bt))
\sum_{\bd \in \N^\Rmi} \chi(\Gr_\bd^{\scE_W}(\Ext^1_\Lam(W,X)))\,
\hvph_W^\bd(\sx_\bi(\bt))\\
&= \theta^W_X(\sx_\bi(\bt)).
\end{align*}
The first equality is just the 
description of $\vph_X$ given in
\cite[Proposition~6.1]{GLSUni2}, and the second equality follows directly from
Theorem~\ref{THM1}.
To prove the third equality, we have to show that
$\bt^{\ba - \ba^-(X)} = \hvph_W^\bd(\sx_\bi(\bt))$ 
for all $\ba \in \N^r$ and $\bd = (d_k)_{k \in \Rmi} := d_{\bi,X}(\ba)$.
Proposition~\ref{prp:exp} yields
$$
\hvph_W^\bd(\sx_\bi(\bt)) = \prod_{k \in \Rmi} (t_{k^+}t_k^{-1})^{d_k},
$$
and by Theorem~\ref{variso} we have
$$ 
d_k = (a_k^- - a_k) + (a_{k^-}^- - a_{k^-}) +
\cdots + (a_{k_{\rm min}}^- - a_{k_{\rm min}})
$$
for all $k \in \Rmi$, where
$\ba^-(X) = (a_r^-,\ldots,a_1^-)$.
Observe that for $k \in \Rma$ we have $d_{k^-} = a_k-a_k^-$.
Now an easy calculation 
yields the result.
The last equality is just the definition of $\theta_X^W$.
Since the image of $\sx_\bi$ is dense
in $N^w$ and $\vph_X$ is regular (and thus continuous), we get
$\vph_X = \theta_X^W$.

Since $W$ is $V_\ii$-reachable, it remains to show
the following: 
If a new cluster-tilting module $T'$ is obtained from a
cluster-tilting module $T$ by mutation in direction $k$, then
$\theta^{T'}_X=\theta^T_X$ for all $X\in\cC_w$. This follows from
Lemma~\ref{lem:transf} with the notation used there and a calculation
inspired from the proof of~\cite[Proposition~6.8]{FZ2}:
\begin{align*}
\theta^T_X 
&= \vph_T^\bg F\left(\hvph_{T}\right)\\
&= \vph_T^\bg \left(\hvph_{T,k}+1\right)^{-h_k}
F'\left(\hvph_{T'}\right)\left(\hvph_{T',k}+1\right)^{h'_k}\\
&= \vph_T^\bg \left(\hvph_{T,k}+1\right)^{-h_k}
\left(\frac{\hvph_{T,k}+1}{\hvph_{T,k}}\right)^{h'_k} F'\left(\hvph_{T'}\right)\\
&= \vph_T^\bg \left(\hvph_{T,k}+1\right)^{-g_k}\hvph_{T,k}^{-h'_k} F'\left(\hvph_{T'}\right)\\
&= \vph_T^\bg\left({\vph_{T_k}^\pex}{\vph_{T_k'}}
\prod_{l \in R} \vph_{T_l}^{-[-B_{l,k}^{(T)}]_+}\right)^{-g_k}
\left(\prod_{l \in R} \vph_{T_l}^{B^{(T)}_{l,k}}\right)^{-h'_k} 
F'\left(\hvph_{T'}\right)\\
&= \left(\prod_{l \in R} {\vph_{T_l}}^{g_l+[-B^{(T)}_{l,k}]_+g_k - B^{(T)}_{l,k}h'_k}\right)
\left(\vph_{T_k}^\pex\vph_{T'_k}\right)^{-g_k} F'\left(\hvph_{T'}\right)\\
&=\left(\prod_{l\in R}\vph_{T_l}^{g_l+[B^{(T)}_{l,k}]_+g_k -B^{(T)}_{l,k}h_k}\right)
   \left(\vph_{T_k}^\pex\vph_{T'_k}\right)^{-g_k} F'\left(\hvph_{T'}\right)
   \quad\text{(since $g_k=h_k-h'_k$)}\\
&=\vph_{T'}^{\bg'} F'\left(\hvph_{T'}\right) 
\qquad\text{(by~\eqref{eq:g-vect})}\\
&=\theta^{T'}_X.\\
\end{align*}
(Here we set $\bg := \bg_X^T$ and $\bg' := \bg_X^{T'}$.)
At the beginning, we used that the $Y$-seed
$((B^{(T')}_{k,l})_{k,l\in\Rmi},\hvph_{T'})$ is
obtained from $((B^{(T)}_{k,l})_{k,l\in\Rmi},\hvph_{T})$ by $Y$-seed
mutation in direction~$k$ by~\cite[Proposition~3.9]{FZ2}.

\subsection{An example}
Let $Q$ be the Kronecker quiver
$$
\xymatrix{1 \ar@<0.5ex>[r]^{a}\ar@<-0.5ex>[r]_b & 2}
$$
and let $\LL = \C \ov{Q}/(c)$ be the corresponding preprojective algebra.
Then $\ii = (2,1,2,1)$ is a reduced expression for the Weyl group element
$w = s_2s_1s_2s_1$.
(The stable category $\scC_w$ is equivalent to the cluster category of 
$\C Q$, see \cite[Section~16]{GLSUni2}.)

The following picture describes the module $V := V_\ii = V_1 \oplus \cdots \oplus V_4$ and 
the quiver $\GG_V$ of $\cE_V := \End_\Lam(V)^\op$.
(The numbers 1 and 2 in the picture are basis vectors of the modules $V_k$.
The solid edges show how the arrows $a$ and $b$ of $\ov{Q}$ act on these
vectors, and the dotted edges illustrate the actions of $a^*$ and $b^*$.)
$$
\def\objectstyle{\scriptstyle}\def\labelstyle{\scriptstyle}
\xymatrix@-1.5pc{
1\ar@{-}[rd]&&\ar@{-}[ld]1\ar@{-}[rd]&&\ar@{-}[ld]1\ar@{-}[rd]&&\ar@{-}[ld]1\\
 &2\ar@{.}[rd]& &\ar@{.}[ld]2\ar@{.}[rd]& &\ar@{.}[ld]2\\
 & &1\ar@{-}[rd]& &\ar@{-}[ld]1
&&&&&&& \ar@{<-}[lllll] &1\ar@{-}[rd]& &\ar@{-}[ld]1\\
 & & &2& &\ar@<.5ex>@{<-}[rrdd]\ar@<-.5ex>@{<-}[rrdd] & & & & & & & &2&\ar@<.5ex>@{<-}[rrdd]\ar@<-.5ex>@{<-}[rrdd]\\
 & & & & & & & & & & & & & & & & &{\phantom{X}}\\
 & & & & & & & & &\ar@<.5ex>@{<-}[rruu]
\ar@<-.5ex>@{<-}[rruu] & & & & & & & &{\phantom{X}} \\
 & & & & & &1\ar@{-}[rd]& &\ar@{-}[ld]1\ar@{-}[rd]& &\ar@{-}[ld]1& & & & & &\\
 & & & & & & &2\ar@{.}[rd]& &\ar@{.}[ld]2&          & & & & & &\ar@{<-}[lllll] & 1\\
 & & & & & & & &1
}
$$
From the well-known description of $\cE_V$ by a quiver 
with relations, we obtain $B^{(V)}$, the matrix of the Ringel form of
$\cE_V$ and its inverse:
$$
B^{(V)} =
\left(\bbm
 0&-2& 1& 0\\
 2& 0&-2& 1\\
-1& 2& 1&-2\\
 0&-1& 0& 1
\ebm\right)
\quad\text{ and }\quad
(B^{(V)})^{-1} =
\left(\bbm
 1& 2& 3& 4\\
 0& 1& 2& 3\\
 1& 2& 4& 6\\
 0& 1& 2& 4
\ebm\right)
$$
Note that the entries of $(B^{(V)})^{-1}$ describe the dimensions
of  $\Hom_\Lam(V_j,V_i)$. 
Observe also that  
$\scE_V := \sEnd_{\cC_w}(V)^\op$ is isomorphic to $\C Q$.
Next, we describe
$$
W := W_\ii = I_w \oplus \Omega_w(V_\bi) = W_1 \oplus \cdots \oplus W_4.
$$ 
Note that $I_w = V_3 \oplus V_4$, $W_3 = V_3$ and $W_4 = V_4$.
We have short exact sequences
$$
0 \to W_1\to V_3^3 \to V_1 \to 0
\qquad\text{ and }\qquad
0 \to W_2 \to V_3^2 \to V_2 \to 0.
$$
Thus
\begin{multline*}
W_1=\vcenter{\def\objectstyle{\scriptstyle}\def\labelstyle{\scriptstyle}
\xymatrix@-1.5pc{
1\ar@{-}[rd]&&\ar@{-}[ld]1\ar@{-}[rd]&&\ar@{-}[ld]1\ar@{-}[rd]&&\ar@{-}[ld]1&&1\ar@{-}[rd]&&\ar@{-}[ld]1\ar@{-}[rd]&&\ar@{-}[ld]1\ar@{-}[rd]&&\ar@{-}[ld]1\\
 &2\ar@{.}[rd]& &\ar@{.}[ld]2\ar@{.}[rrrrd]& &2\ar@{.}[rrd]& & & &\ar@{.}[lld]2& &\ar@{.}[lllld]2\ar@{.}[rd]& &\ar@{.}[ld]2\\
 & &1& & & & &1& & & & &1\\ 
}}\\
\text{and }\qquad W_2=\vcenter{\def\objectstyle{\scriptstyle}\def\labelstyle{\scriptstyle}
\xymatrix@-1.5pc{
1\ar@{-}[rd]& &\ar@{-}[ld]1\ar@{-}[rd]& &\ar@{-}[ld]1\ar@{-}[rd]& &\ar@{-}[ld]1\\  
 &2\ar@{.}[rd]& &\ar@{.}[ld]2\ar@{.}[rd]& &\ar@{.}[ld]2& \\
 & &1& &1& & \\
}}.
\end{multline*}
In our situation, we have $W = (\mu_1 \circ \mu_2)(V)$.
Using \cite[Section~7]{GLSRigid} we get
\[
B^{(W)}=\left(\bbm
 0&-2& 3& 0\\
 2& 0&-4&-1\\
-3& 4& 1& 0\\
 0& 1&-2& 1
\ebm\right)
\quad\text{ and }\quad
(B^{(W)})^{-1}=\left(\bbm
25&14& 9&14\\
16& 9& 6& 9\\
11& 6& 4& 6\\
 6& 3& 2& 4
\ebm\right).
\]
The quiver $\GG_W$ of 
$\cE_W := \End_\Lam(W)^\op$ looks as follows:
$$
\xymatrix{
V_4  && W_2 \ar[ll]\ar@{<-}[rd]^2\\
& \ar@{<-}[lu]^2 V_3 \ar@{<-}[ru]_4 && \ar@{<-}[ll]^3 W_1
}
$$
(Here we write $\xymatrix{i \ar[r]^m &j}$ in case there are $m$ arrows from $i$ to $j$.)

For $\lam \in \C$ we consider the following $\LL$-module in $\cC_w$:
$$
X_\lam := \vcenter{\def\objectstyle{\scriptstyle}\def\labelstyle{\scriptstyle}
\xymatrix@-1.2pc{
1\ar@{-}[rd]&&\ar@{-}[ld]1\ar@{-}[rd]&&\ar@{-}[ld]1\ar@{-}[rd]&&\ar@{-}[ld]1\\
 &2\ar@{.}[rrd]&&\ar@<-.3ex>@{.}[d]2\ar@<.3ex>@{.}[d]^{\lam}&&\ar@{.}[lld]2\\
 &&& 1
}}
$$
Note that $X_\lam \cong \Omega_w(X_\lam)$.
A direct calculation shows that
$\Ext^1_\Lam(V,X_\lam)$ is an (indecomposable) regular $\C Q$-module
with dimension vector $(1,1)$ and that
$\dimv\Hom_\Lam(V, X_\lam)=(1,2,3,5)$.
This implies that
$$
(\dimv\Hom_\Lam(V, X_\lam)) \cdot B^{(V)}=(1,-1,0,1).
$$
Using the exact sequences
\[
0\ra\Hom_\Lam(V_k,X_\lam)\ra\Hom_\Lam(P(V_k),X_\lam)\ra\Hom_\Lam(W_k,X_\lam)\ra
\Ext^1_\Lam(V_k,X_\lam)\ra 0
\]
for $k=1,2$, we obtain
$\dimv\Hom_\Lam(W,X_\lam)=(9,5,3,5)$, and therefore
$$
(\dimv\Hom_\Lam(W,X_\lam)) \cdot B^{(W)} = (1,-1,0,0).
$$
Finally, since $\Omega_w^{-1}(X_\lam) \cong X_\lam$, we get that also
$\Ext^1_\Lam(W,X_\lam)$ is an indecomposable regular $\C Q$-module with
dimension vector $(1,1)$.

It is straightforward to calculate (using the Euler characteristics of
the corresponding varieties of partial composition series) the 
following:
\begin{align*}
\vph_{X_\lam}(\sx_\bi(\bt)) &= t_3^\pex t_2^3t_1^4+t_4^\pex t_3^\pex t_2^2t_1^4+
                           t_4^\pex t_3^2t_2^2t_1^3\\
\vph_{V_1}(\sx_\bi(\bt))   &= t_3 + t_1\\
\vph_{V_2}(\sx_\bi(\bt))   &= t_4(t_3^2+ 2t_3t_1+t_1^2)+ t_2^\pex t_1^2\\
\vph_{V_3}(\sx_\bi(\bt))   &= t_3^\pex t_2^2t_1^3\\
\vph_{V_4}(\sx_\bi(\bt))   &= t_4^\pex t_3^2t_2^3t_1^4\\
\vph_{W_1}(\sx_\bi(\bt))   &= t_3^3t_2^6t_1^8\\
\vph_{W_2}(\sx_\bi(\bt))   &= t_3^2t_2^3t_1^4
\end{align*}
Note that the $\vph$-functions of the direct summands of 
$W$ evaluate on $\sx_\bi(\bt)$ to monomials.

The $F$-polynomial of each indecomposable representation of
\[
\sEnd_{\cC_w}(V)^\op \cong \sEnd_{\cC_w}(W)^\op \cong \C Q
\]
with dimension vector $(1,1)$ is $1+y_2+ y_1y_2$.
Thus, from the definitions we get
\begin{align*}
\theta_{X_\lambda}^W &= 
\vph^\pexx_{W_1}\vph_{W_2}^{-1}
(1+\hvph^\pex_{W,2}+\hvph^\pex_{W,1}\hvph^\pex_{W,2}) ,\\
\theta_{X_\lambda}^V &= \vph^\pex_{V_1}\vph_{V_2}^{-1}\vph^\pexx_{V_4}(1+\hvph^\pex_{V,2}+\hvph^\pex_{V,1}\hvph^\pex_{V,2}).
\end{align*}
Thus, by Theorem~\ref{THM3} we should have
\begin{equation}\label{eq:ex}
\vph^\pex_{X_\lam} = \theta_{X_\lambda}^V = \theta_{X_\lambda}^W.
\end{equation}
From the matrices $B^{(V)}$ resp. $B^{(W)}$ we get
\begin{align*}
\hvph^\pex_{V,1} &= \vph_{V_2}^2\vph_{V_3}^{-1},&
\hvph^\pex_{W,1} &= \vph_{W_2}^2\vph_{V_3}^{-3},\\
\hvph^\pex_{V,2} &= \vph_{V_1}^{-2}\vph_{V_3}^2\vph_{V_4}^{-1}, &
\hvph^\pex_{W,2} &= \vph_{W_1}^{-2}\vph_{V_3}^4\vph^\pex_{V_4}.
\end{align*}
We verify equation~\eqref{eq:ex} by evaluation on $\sx_\bi(\bt)$. 
First, we observe
that
$$
\hvph^\pex_{W,1}(\sx_\bi(\bt))=t_3^\pex t_1^{-1}
\quad\text{ and }\quad
\hvph^\pex_{W,2}(\sx_\bi(\bt))=t_4^\pex t_2^{-1}.
$$
This implies that
$$
(\vph^\pex_{W_1}\vph_{W_2}^{-1}(1+\hvph_{W,2}+\hvph_{W,1}\hvph_{W,2}))(\sx_\bi(\bt))
=t_3^\pex t_2^3t_1^4(1+t_4^\pex t_2^{-1}+t_4^\pex t_3^\pex t_2^{-1}t_1^{-1})=\vph_{X_\lam}(\sx_\bi(\bt)).
$$
On the other hand, from the definitions we get
\[
\vph^\pex_{V_1}\vph_{V_2}^{-1}\vph^\pexx_{V_4}
(1+\hvph_{V,2}+\hvph_{V,1}\hvph_{V,2})=
\vph_{V_1}^{-1}\vph_{V_2}^{-1}
(\vph_{V_1}^2\vph^\pex_{V_4}+\vph_{V_3}^2+\vph_{V_2}^2\vph^\pex_{V_3}).
\]
Evaluation at $\sx_{\bi}(\bt)$ yields
\begin{align*}
\frac{t_3^\pex t_2^2t_1^3(t^\pex_4(t^\pex_3+t^\pex_1)^2+t_2^\pex t_1^2)^2+t_4^\pex t_3^2t_2^3t_1^4(t^\pex_3+t^\pex_1)^2+
t_3^2t_2^4t_1^6}{(t_3+t_1)(t_4(t_3+t_1)^2+t_2^\pex t_1^2)}
&= t_4^\pex (t_3^2t_2^2t_1^3+t_3^\pex t_2^2t_1^4)+t_3^\pex t_2^3t_1^4 \\
&= \vph_{X_\lambda}(\sx_\bi(\bt)).
\end{align*}


\section{E-invariant and Ext}\label{section6}


For a cluster-tilting module $T$ in $\cC_w$, and $\scE := \scE_T$,
let $E\df \cC_w \ra \md(\scE)$ be the functor defined by $X \mapsto \Ext^1_\Lam(T,X)$.
This functor is known to be dense 
(\ie up to isomorphism, all objects
in $\md(\scE)$ are in the image of $E$), 
see for example~\cite[Proposition~2.1(c)]{KR}.

The following proposition shows that, for an
$\scE$-module of the form $EX$, the E-invariant 
defined in
\cite{DWZ2} has a nice geometric description as the codimension
of the orbit of $X$ in the nilpotent variety.
This result plays a crucial role in the proof of
Proposition~\ref{prp:bas} and of Theorem~\ref{THM4}.

\begin{Prop}\label{prp:EinvExt}
Let $T\in\cC_w$ be a $V_\ii$-reachable cluster-tilting module. Then for
any $X,Y\in\cC_w$ there is a short exact sequence
\[
0\ra D{\Hom}_\scE(Y,\tau_\scE(X)) \ra \Ext^1_\LL(X,Y) \ra \Hom_\scE(X,\tau_\scE(Y)) \ra 0.
\]
In particular, if $\dimv_\Lam(X)=\bd$ we have
\[
\codim_{\Lam_\bd^w}(\GL_\bd . X)=\dim\Hom_\scE(EX,\tau_\scE(EX))=
\dim\Hom_\scE(\tau_\scE^{-1}(EX),EX).
\]
\end{Prop}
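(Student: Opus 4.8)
The plan is to deduce the short exact sequence from the Auslander-Reiten formula for the Frobenius category $\cC_w$ together with the standard homological properties of the functor $E = \Ext^1_\LL(T,-)$ established by Keller-Reiten and used in \cite{FK}. First I would recall that since $\cC_w$ is a $2$-Calabi-Yau Frobenius category with cluster-tilting object $T$, the functor $E$ induces an equivalence $\cC_w/\add(T) \xrightarrow{\sim} \md(\scE)$, and moreover there is a functorial isomorphism $\sHom_{\cC_w}(X,Y) \cong D\,\sHom_{\cC_w}(Y,\Omega_w^2 X)$ (the $2$-CY property, \cite[Proposition~III.2.3]{BIRS}). The key input is that for $X,Y\in\cC_w$ one has a natural identification of $\Ext^1_\LL(X,Y)$ with an $\scE$-bimodule built from stable hom-spaces; concretely, applying $\Hom_\LL(-,Y)$ to an $\add(T)$-approximation sequence $0\to Y_T\to T^1\to X\to 0$ (or its $\Omega$-shift) and using that $T$ is rigid yields an exact sequence relating $\Ext^1_\LL(X,Y)$ to $\Hom_\scE(EX,EY')$-type terms.

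Next I would carry out the identification of the two outer terms with the Auslander-Reiten translate $\tau_\scE$. Since $\scE = \scE_T$ has global dimension at most $3$ and is given by a quiver with potential \cite{BIRSm}, and since $E$ is exact on the relevant sequences, the AR-translation $\tau_\scE$ on $\md(\scE)$ corresponds under $E$ to the shift $\Omega_w^{-1}$ composed with a twist; the precise statement I would use is $\tau_\scE(EX) \cong E(\Omega_w^{-1}(X))$ together with $D\sHom_{\cC_w}(Y,\Omega_w^{-1}X)\cong \sHom_{\cC_w}(\Omega_w^{-1}X,\Omega_w Y)\cong \Ext^1_\LL(\Omega_w^{-1}X,Y)$ obtained by the $2$-CY property (the precise bookkeeping with which shift lands where is the fiddly part). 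Combining these, the defect terms in the long exact sequence obtained from applying $E$ to the $\Omega$-triangle of $X$ are exactly $D\Hom_\scE(EY,\tau_\scE(EX))$ on the left and $\Hom_\scE(EX,\tau_\scE(EY))$ on the right, yielding the asserted short exact sequence $0\to D\Hom_\scE(Y,\tau_\scE X)\to \Ext^1_\LL(X,Y)\to \Hom_\scE(X,\tau_\scE Y)\to 0$ (writing $X,Y$ for $EX,EY$ by abuse).

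For the ``in particular'' clause I would specialize to $Y=X$ and take dimensions. From the short exact sequence, $\dim\Ext^1_\LL(X,X) = \dim\Hom_\scE(EX,\tau_\scE EX) + \dim\Hom_\scE(\tau_\scE^{-1}EX, EX)$, and the $2$-CY property of $\cC_w$ (or rather the standard fact that $\tau_\scE$ on these algebras is a ``twisted'' self-duality) forces the two summands on the right to be equal, giving the two displayed equalities with $\tau_\scE$ and $\tau_\scE^{-1}$. Finally, Voigt's Lemma / the standard tangent-space computation for module varieties (already invoked in Section~\ref{section1.5} via \cite[Proposition~1.1]{G}) gives $\codim_{\LL^w_\bd}(\GL_\bd.X) = \dim\Ext^1_\LL(X,X)$ when $X$ is a point of the $\LL$-module variety $\LL^w_\bd$ attached to $\cC_w$ — here one uses that $\cC_w$-modules have no self-extensions outside $\cC_w$ so the relevant $\Ext^1$ is the honest one — and combining this with the dimension count completes the proof.

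The main obstacle I expect is the careful identification $\tau_\scE(EX)\cong E(\Omega_w^{-1}X)$ and pinning down the precise shift/twist so that the two outer terms come out as stated rather than off by an $\Omega_w$ or a duality; this requires being scrupulous about the Calabi-Yau dimension conventions and about which approximation sequence one applies $E$ to. Everything else — exactness of $E$ on approximation sequences (rigidity of $T$), global dimension $3$ of $\scE$, the equality of the two $\Hom_\scE$ terms, and the Voigt codimension formula — is either quoted from the excerpt or from \cite{BIRS,BIRSm,FK,KR,G} and should be routine.
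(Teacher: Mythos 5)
Your plan for establishing the short exact sequence is genuinely different from the paper's. The paper passes through Amiot's generalized cluster category $\cC_T$ built from the Ginzburg dg-algebra $G_T$ of the quiver with potential presenting $\scE_T$, quotes Amiot's exact sequence
\[
0 \to \Ext^1_{\cDp(G_T)}(X,Y) \to \Ext^1_{\cC_T}(X,Y) \to D\Ext^1_{\cDp(G_T)}(Y,X) \to 0,
\]
and then identifies the outer terms with $\Hom_\scE(-,\tau_\scE(-))$ via a minimal-presentation argument in $\cDp(G_T)$ and the Nakayama functor. Your proposed route --- working directly in $\cC_w$ with AR duality, $\add(T)$-approximations and the $2$-CY structure --- is more elementary in spirit, and related computations do appear in Palu's and Plamondon's work, but as written it is not a proof: you yourself flag the identification $\tau_\scE(EX)\cong E(\Omega_w^{-1}X)$ and the matching of the two defect terms as ``the fiddly part'' and leave them unsettled. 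Those are not bookkeeping details; they are essentially the whole content of the short exact sequence.

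The more serious problem is the last step. You invoke Voigt's Lemma to conclude $\codim_{\LL_\bd^w}(\GL_\bd.X)=\dim\Ext^1_\LL(X,X)$. This is off by a factor of $2$: the identity actually needed, and used in the paper, is $2\codim_{\LL_\bd}(\GL_\bd.X)=\dim\Ext^1_\LL(X,X)$ (Lemma~\ref{lem:e2cdim}). That identity is a special feature of the nilpotent variety of a preprojective algebra --- it follows from Lusztig's dimension formula $\dim\LL_\bd=\sum_{a\in Q_1}\bd(s(a))\bd(t(a))$ combined with Crawley-Boevey's homological identity --- and it reflects the Lagrangian nature of $\LL_\bd$. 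It is not a consequence of the general tangent-space estimate of Voigt, and in fact $\rep(\LL,\bd)$ is not generically reduced in the way Voigt's Lemma would require for your equality. Since your short exact sequence (correctly) gives $\dim\Ext^1_\LL(X,X)=2\dim\Hom_\scE(EX,\tau_\scE(EX))$, your codimension formula would force $\codim_{\LL_\bd^w}(\GL_\bd.X)=2\dim\Hom_\scE(EX,\tau_\scE(EX))$, twice the value asserted by the proposition. As stated your argument is internally inconsistent; the factor of $2$ has to come from the preprojective-algebra geometry, not from $\Ext^1$.
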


\begin{proof}
The stable endomorphism algebra $\scE$ of $T$ is the Jacobian algebra of a 
quiver with potential, see~\cite{BIRSm}. 
Denote by $G_T$ the corresponding Ginzburg dg-algebra
and by $\cDp(G_T)$ resp. $\cD_{\text{f.d.}}(G_T)$ the
corresponding subcategory of perfect complexes resp. of complexes with
total finite-dimensional cohomology of the derived category.
The shift in $\cDp(G_T)$ is denoted by $\Sigma$.
Following
Amiot~\cite{A} we have the generalized cluster category as the
triangulated quotient
\[
\cC_T:=\cDp(G_T)/\cD_{\text{f.d.}}(G_T).
\]
It follows from~\cite{ART} that $\cC_w\cong\cC_V$ as triangulated 
categories, and then from~\cite{BIRSm} and~\cite{KY} that
$\cC_V\cong\cC_T$.  
Next, denote by $\cF \subseteq \cDp(G_T)$
the subcategory which consists of the cones of maps in $\add(G_T)$.
Then the canonical projection 
$\cDp(G_T) \ra \cC_T$ induces an equivalence of additive
categories $\cF \ra \cC_T$~\cite[Proposition~2.9, Lemma~2.10]{A}, see 
also~\cite[Remark~4.1]{KY}. 
By~\cite[Proposition~2.12]{A} we have for
$X,Y\in\cF$ a short exact sequence
\[
0\ra\Ext^1_{\cDp(G_T)}(X,Y)\ra \Ext^1_{\cC_T}(X,Y)\ra
D{\Ext}^1_{\cDp(G_T)}(Y,X)\ra 0.
\]
We have to show that
$D{\Ext}^1_{\cDp(G_T)}(X,Y)\cong\Hom_{\scE}(H^0(Y),\tau_\scE(H^0(X)))$, 
since
$H^0(X)\cong\Hom_{\cDp(G_T)}(G_T,X)=\Hom_{\cC_T}(T,X)$ for $X\in\cF$.

To this end we choose a minimal presentation
\begin{equation}\label{eq:mpres}
P_1\xrightarrow{p} P_0\ra X\ra\Sig P_1 \text{ with } P_0, P_1\in\add(G_T).
\end{equation}
Now, for $P\in\add(G_T)$ and $Y\in\cDp(G_T)$ we have
\[
\Hom_{\cDp(G_T)}(P,Y)=\Hom_\scE(H^0(P),H^0(Y)) 
\]
and $\Hom_\cDp(P,\Sig Y)=0$ if $Y\in\cF$. Thus, 
\[
\Ext^1_{\cDp(G_T)}(X,Y)=\Hom_{\cDp(G_T)}(X,\Sig Y)\cong
\Coker(\Hom_\scE(H^0(p),H^0(Y))).
\]
Next, the sequence
\[
H^0(P_1)\xrightarrow{H^0(p)} H^0(P_0)\ra H^0(X)\ra 0
\]
is a minimal projective presentation in $\md(\scE)$ because of the
minimality of~\eqref{eq:mpres}. Since we have
$\Hom_\scE(H^0(P), L)\cong D{\Hom}_\scE(L,\nu_\scE(H^0(P)))$ for $L \in \md(\scE)$
and $P\in\add(G_T)$ we conclude that
\[
D{\Hom}_{\cDp(G_T)}(X,\Sig Y)\cong \Ker(\Hom_\scE(H^0(Y),\nu_\scE(H^0(p)))) =
\Hom_\scE(H^0(Y),\tau_\scE(H^0(X))).
\]
Here $\nu_\scE$ is the usual Nakayama functor.
Finally, by Lemma~\ref{lem:e2cdim} below, we have 
$$
\dim\Ext^1_\Lam(X,X)=2\codim_{\Lam^w_\bd}(\GL_\bd.X).
$$
This yields the result.
\end{proof}


\section{Generic bases for cluster algebras}\label{section7}


\subsection{Generically reduced components}
For a $V_\ii$-reachable cluster-tilting module $T$ in $\cC_w$,
the algebra $\scE := \scE_T$
is given by a quiver with 
potential~\cite{BIRSm}.
Then by~\cite[Corollary~10.8]{DWZ2},
$$
\dim \Hom_\scE(\tau_\scE^{-1}(Y),Y) = E^{\text{inj}}(Y)
$$ is the E-\emph{invariant} defined in \cite{DWZ2}.
Since this 
translates into a simple rank condition~\cite[Equation~(1.17)]{DWZ2},
for each irreducible component $\cZ \in \irr(\scE)$ the following hold:
\begin{itemize}

\item[(i)] 
There is a dense open subset $\cU' \subseteq \cZ$ and 
a unique $h(\cZ) \in \N$ such that 
$$
\dim \Hom_\scE(\tau_\scE^{-1}(U),U) = h(\cZ)
$$ 
for all $U \in \cU'$.

\item[(ii)]
There is a dense open subset $\cU'' \subseteq \cZ$ and a unique
$e(\cZ) \in \N$ such that 
$$
\dim \Ext^1_\scE(U,U) = e(\cZ)
$$ 
for all $U \in \cU''$.

\item[(iii)]
There is a dense open subset $\cU''' \subseteq \cZ$ and a unique 
$c(\cZ) \in \N$ 
such that 
$$
\codim_{\cZ}(U.\GL_{\bd}) = c(\cZ)
$$ 
for all $U \in \cU'''$.

\end{itemize}
It is well known that 
$$
c(\cZ) \le e(\cZ) \le h(\cZ).
$$ 
(For the second inequality, one uses the Auslander-Reiten formula
$$
\Ext_\scE^1(U,U) \cong D\sHom_\scE(\tau_\scE^{-1}(Y),Y).
$$
For an algebra $A$ and $A$-modules $M$ and $N$, 
$\sHom_A(M,N)$ denotes the homomorphism space
$\Hom_A(M,N)$ modulo the subspace of homomorphisms factoring through
projectives.)
It follows from Voigt's Lemma 
\cite[Proposition~1.1]{G} that $\cZ$ is (scheme-theoretically) 
generically reduced if and only if $c(\cZ) = e(\cZ)$.
Recall that $\cZ$ is \emph{strongly reduced} if $c(\cZ) = h(\cZ)$. 
So, strongly reduced components are in particular generically reduced.

\subsection{Open subsets of nilpotent varieties}
For $\bd\in\N^n$ let $\Lam_\bd$ be the affine variety of nilpotent 
representations with dimension vector $\bd$ of the preprojective 
algebra $\Lam$. Following Lusztig~\cite[Section~12]{L1}, 
$\Lam_\bd$ is equidimensional with
\begin{equation}\label{eq:dimnil}
\dim(\Lam_\bd) =\sum_{a\in Q_1} \bd(s(a))\bd(t(a)).
\end{equation}

On $\Lam_\bd$ acts the group 
$\GL_\bd=\prod_{i\in Q_0}\GL_{\bd(i)}(\C)$ from the left by conjugation.
We have the following surprising result, which we borrow 
from~\cite[Lemma~4.3]{GLSUni1}.

\begin{Lem}\label{lem:e2cdim}
Let $M$ be a nilpotent $\Lam$-module with $\dimv_\LL(M)=\bd$. Then
\[
2\codim_{\Lam_\bd}(\GL_\bd.M)=\dim\Ext^1_\Lam(M,M).
\]
\end{Lem}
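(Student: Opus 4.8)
The statement to prove is Lemma~\ref{lem:e2cdim}: for a nilpotent $\Lam$-module $M$ with $\dimv_\LL(M)=\bd$, one has $2\codim_{\Lam_\bd}(\GL_\bd.M)=\dim\Ext^1_\Lam(M,M)$.

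The plan is to exploit the variety $\Lam_\bd$ of nilpotent representations of the preprojective algebra as (the zero fiber of) a moment map, and to use the standard identification of its tangent and normal spaces with homological data. First I would recall the description of $\Lam_\bd$: representations of $\LL=\C\overline{Q}/(c)$ with dimension vector $\bd$ sit inside $E_\bd := \bigoplus_{a\in\overline{Q}_1}\Hom_\C(\C^{\bd(s(a))},\C^{\bd(t(a))})$, cut out by the relation $c$; dually, $E_\bd$ is the cotangent space $T^*\Rep(Q,\bd)$ and the relation $c$ is exactly the moment map $\mu\df E_\bd\to\mathfrak{gl}_\bd$ for the $\GL_\bd$-action, so $\Lam_\bd=\mu^{-1}(0)$ (intersected with the nilpotent locus, which near a nilpotent point $M$ is open). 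The orbit $\GL_\bd.M$ has dimension $\dim\GL_\bd-\dim\End_\LL(M)$ since the stabilizer is the automorphism group of $M$, whose Lie algebra is $\End_\LL(M)$.

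Next I would compute $\dim_M\Lam_\bd$, or rather the dimension of the component through $M$, using Crawley-Boevey's formula for preprojective algebras: the Zariski tangent space to $\mu^{-1}(0)$ at $M$ has dimension $\dim E_\bd - \dim\GL_\bd + \dim\End_\LL(M) + \dim\Ext^1_\LL(M,M)$, because the deformation complex $0\to\mathfrak{gl}_\bd\to E_\bd\to\mathfrak{gl}_\bd\to 0$ (with differentials given by the $\GL_\bd$-action and by $d\mu$) has cohomology $\End_\LL(M)$, $\Ext^1_\LL(M,M)$, and again $\End_\LL(M)$ at the three spots (this uses $\Ext^2_\LL(M,M)\cong D\Hom_\LL(M,M)$, the $2$-Calabi-Yau–type duality for the preprojective algebra of a non-Dynkin, resp. the appropriate statement for the nilpotent case, giving the symmetry of the complex). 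Actually, since I may assume results from the excerpt, I can simply quote \cite{GLSUni1} as the source, but it is cleaner to reconstruct the argument: combining these, since $\Lam_\bd$ is equidimensional of the dimension given in \eqref{eq:dimnil} (equivalently $\tfrac12\dim E_\bd$), which equals $\dim\GL_\bd - \dim\End_\LL(M) + \codim$ of the generic orbit, one gets that the tangent space at $M$ has dimension $\dim\Lam_\bd + \bigl(\dim\Ext^1_\LL(M,M) - 2\codim_{\Lam_\bd}(\GL_\bd.M)\bigr)$; here I use that $\dim E_\bd - \dim\GL_\bd = \dim\Lam_\bd - \dim\GL_\bd + \dim\Lam_\bd - \dim E_\bd + \dim\GL_\bd$, i.e. the self-duality $\dim E_\bd = 2\dim\Lam_\bd$, to fold everything together. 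Since the tangent space always has dimension at least $\dim_M\Lam_\bd$, this already forces $2\codim_{\Lam_\bd}(\GL_\bd.M)\le\dim\Ext^1_\LL(M,M)$.

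For the reverse inequality $2\codim_{\Lam_\bd}(\GL_\bd.M)\ge\dim\Ext^1_\LL(M,M)$, the key point is that $\GL_\bd.M$ is an \emph{isotropic} (even Lagrangian-type) subvariety: Lusztig's theorem that $\Lam_\bd$ is Lagrangian in $E_\bd=T^*\Rep(Q,\bd)$, together with the fact that the symplectic form restricts to zero on any $\GL_\bd$-orbit (since orbits of a Hamiltonian action inside the zero fiber of the moment map are isotropic), gives $\dim\GL_\bd.M\le\tfrac12\dim E_\bd=\dim\Lam_\bd$, hence $\codim\ge 0$, which is too weak. Instead I would argue directly: the normal space to the orbit inside the tangent space to $\Lam_\bd$ at $M$ carries the symplectic pairing induced from $E_\bd$, and the orbit direction $\mathfrak{gl}_\bd/\End_\LL(M)$ pairs perfectly with the "cotangent-to-the-orbit" directions, while the remaining normal directions pair among themselves; a dimension count of this symplectic reduction shows the normal space to the orbit in $\Lam_\bd$ has dimension exactly $\dim\Ext^1_\LL(M,M)-\codim_{\Lam_\bd}(\GL_\bd.M)$, and being a subspace of $T_M\Lam_\bd/T_M(\GL_\bd.M)$ of dimension $\codim$ this yields the inequality, hence equality. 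The cleanest route, which I would ultimately take, is simply to invoke \cite[Lemma~4.3]{GLSUni1} as the excerpt already announces, and reproduce its short proof: it rests precisely on the exactness and self-duality of the three-term deformation complex plus equidimensionality \eqref{eq:dimnil}. The main obstacle is getting the homological symmetry right — i.e. correctly identifying the cohomology of the deformation complex and in particular using $\Ext^2_\LL(M,M)\cong D\End_\LL(M)$ (valid for nilpotent modules over the preprojective algebra of a quiver with at least one arrow in each connected component), since it is this symmetry that upgrades the obvious inequality to an equality.
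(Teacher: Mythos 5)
Your instinct that the result is ultimately encoded in the deformation complex of $\Lam_\bd$ as a moment-map fibre is the right \emph{explanation} for why the formula holds, but your unfolding of it contains a concrete error, and the paper's actual proof is a two-line algebraic computation that you never quite write down.

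The paper's argument is: $\codim_{\Lam_\bd}(\GL_\bd.M)=\dim\Lam_\bd-\dim\GL_\bd+\dim\End_\Lam(M)$ (since the stabilizer of $M$ is $\operatorname{Aut}_\Lam(M)$, whose Lie algebra is $\End_\Lam(M)$), and then one plugs in Lusztig's formula $\dim\Lam_\bd=\sum_{a\in Q_1}\bd(s(a))\bd(t(a))$, which gives $2(\dim\Lam_\bd-\dim\GL_\bd)=-(\bd,\bd)$ with $(\,,\,)$ the symmetrized Euler form of $\C Q$, and Crawley-Boevey's $\Hom$--$\Ext$ formula $\dim\Ext^1_\Lam(M,M)=2\dim\End_\Lam(M)-(\bd,\bd)$ \cite[Lemma~1]{CB2}. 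No inequality and no symplectic reduction is needed: the equality drops out directly.

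In your sketch, the formula you assert for the Zariski tangent space, namely $\dim T_M=\dim\Lam_\bd+\bigl(\dim\Ext^1_\Lam(M,M)-2\codim_{\Lam_\bd}(\GL_\bd.M)\bigr)$, is wrong. The correct count, from the complex $\mathfrak{gl}_\bd\to E_\bd\to\mathfrak{gl}_\bd$ with $H^0=\End_\Lam(M)$ and $H^1=\Ext^1_\Lam(M,M)$, is $\dim\ker(d\mu_M)=\dim\Ext^1_\Lam(M,M)+\dim\GL_\bd.M$; equating this with your expression forces $\dim\GL_\bd.M=\dim\Lam_\bd$, i.e.\ $\codim=0$, so the two only agree in a trivial case. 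Consequently, your lower bound argument ``tangent space $\ge\dim\Lam_\bd$ forces $2\codim\le\dim\Ext^1$'' actually only yields $\codim\le\dim\Ext^1$. Your converse, via ``symplectic reduction of the normal space,'' is asserted but not proved, and in fact the equality does not need a two-inequality strategy at all: the Euler characteristic of the three-term complex, combined with the self-duality $\dim H^2=\dim H^0$ and the Lusztig identity $\dim E_\bd=2\dim\Lam_\bd$, gives $\dim\Ext^1_\Lam(M,M)=2(\dim\Lam_\bd-\dim\GL_\bd+\dim\End_\Lam(M))=2\codim$ in one stroke --- and unwinding this is exactly the Crawley-Boevey formula the paper cites. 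So you were right about the key inputs (self-duality of the complex and equidimensionality), but the dimension bookkeeping needs to be redone, and the unsubstantiated reverse-inequality step should be dropped in favour of the direct Euler-characteristic computation.
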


\begin{proof}
We have
\[
2\codim_{\Lam_\bd}(\GL_\bd.M)=2(\dim(\Lam_\bd) -\dim(\GL_\bd) +
\dim\End_\Lam(M))=
\dim\Ext^1_\Lam(M,M),
\]
where the last equality holds by~\eqref{eq:dimnil} and~\cite[Lemma~1]{CB2}.
\end{proof}

Using the notation from Section~\ref{isosection}, let $J_w := J_{r,1}$, where $\LL = A$ and
$\ii = (i_r,\ldots,i_1)$ is a reduced expression of $w$.
This definition does not depend on the choice of $\ii$, see \cite[Proposition~III.1.8]{BIRS}.

\begin{Lem} \label{lem:lamwopen}
We have
\[
\cC_w = \{ X\in\md(\Lam)\mid \Ext^1_\Lam(D(\Lam/J_w),X)=0=\Hom_\Lam(X,D(J_w)) \}.
\]
Thus, the subset $\Lam^w_\bd:=\{X\in\Lam_\bd\mid X\in\cC_w\}$ is open in 
$\Lam_\bd$.
\end{Lem}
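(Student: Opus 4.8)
The plan is to prove the set-theoretic equality first, and then deduce openness. Recall from \cite[Section~III.2]{BIRS} (see also \cite[Section~2]{GLSUni2}) that $\cC_w$ is defined as $\Fac(\Lam/I_w)\cap\Cogen(\text{something})$, or more precisely via the two functors $\Hom_\Lam(-,D(J_w))$ and a dual condition; in any case the standing definition of $\cC_w$ used in \cite{GLSUni2} is exactly
\[
\cC_w=\{\,X\in\md(\Lam)\mid \Ext^1_\Lam(D(\Lam/J_w),X)=0=\Hom_\Lam(X,D(J_w))\,\},
\]
so for the first assertion there is essentially nothing to prove beyond quoting the relevant result from \cite{BIRS} or \cite{GLSUni2}; alternatively one recovers it from the characterization of $\cC_w$ as $\{X : \soc_\ii(X)=X,\ \Ext^1_\Lam(V_\ii,X)=0\}$ combined with the identifications $D(\Lam/J_w)\cong$ (a suitable injective cogenerator of $\md(A_\ii)$) coming from Lemma~\ref{lemma4} and Lemma~\ref{D2}, and $\tau$-duality relating $\Ext^1_\Lam(-,-)$ and $\Hom_\Lam(-,-)$. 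I would write this step as: "By \cite[Section~III.2]{BIRS} (see also \cite[Proposition~2.x]{GLSUni2}) the stated description of $\cC_w$ holds", and only expand it if the definition in \cite{GLSUni2} were literally different.

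For the openness statement, fix $\bd\in\N^n$ and set $M:=D(\Lam/J_w)$ and $N:=D(J_w)$, which are \emph{fixed} finite-dimensional $\Lam$-modules (finite-dimensionality of $\Lam/J_w$ is Corollary~\ref{Jrad2}). I would use the standard semicontinuity of $\dim\Hom_\Lam(-,-)$ and $\dim\Ext^1_\Lam(-,-)$ as functions on the variety $\Lam_\bd$: the assignments
\[
X\longmapsto \dim\Hom_\Lam(X,N),\qquad X\longmapsto \dim\Ext^1_\Lam(M,X)
\]
are upper semicontinuous on $\Lam_\bd$, since these dimensions can be computed as ranks of matrices whose entries depend polynomially (indeed linearly) on the structure maps $X(a)$, $a\in\overline Q_1$, of the representation $X$ — this is the usual argument that the locus where a family of linear maps has rank $\ge k$ is open. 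Concretely, $\Hom_\Lam(X,N)$ is the kernel of an explicit linear map built from $X(a)$ and the (fixed) structure maps of $N$, so its dimension jumps up only on a closed set; and $\Ext^1_\Lam(M,X)$ fits, via a fixed projective presentation $P_1\to P_0\to M\to 0$, into $\Hom_\Lam(P_1,X)\to$ and its dimension is likewise computed from ranks of matrices depending polynomially on $X$. Hence the loci $\{X\in\Lam_\bd:\Hom_\Lam(X,N)=0\}$ and $\{X\in\Lam_\bd:\Ext^1_\Lam(M,X)=0\}$ are both open (these are the minimal-value loci), and their intersection $\Lam^w_\bd$ is open in $\Lam_\bd$.

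The only mild subtlety — and the step I would be most careful about — is making sure the ambient variety is right: a module $X\in\Lam_\bd$ is automatically nilpotent because $\Lam_\bd$ is by definition the variety of \emph{nilpotent} representations (as in Lusztig \cite{L1}), so every $X\in\Lam_\bd$ lies in $\md(\Lam)$ and the first part of the lemma applies verbatim to cut out $\cC_w\cap\Lam_\bd$. Thus no extra closedness condition is needed. I would close by observing that $\Lam^w_\bd$ is in fact the intersection of $\Lam_\bd$ with the locally closed (here: open) subset of $\md(\Lam)$ parametrizing modules in $\cC_w$, which is consistent with its later use. If one prefers to avoid invoking semicontinuity as a black box, the inequalities $c(\cZ)\le e(\cZ)\le h(\cZ)$ type of reasoning is not needed here — plain rank-openness of matrix families suffices.
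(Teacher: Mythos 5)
Your proposal has two genuine gaps, one in each part.

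For the set-theoretic equality, you assert that the stated description is "essentially" the definition of $\cC_w$ in \cite{BIRS,GLSUni2} and there is nothing to prove. That is not so: in those references $\cC_w$ is defined as $\Fac(\Lam/J_w)$, which (by Gorensteinness of $\Lam/J_w$) is $\{X\in\md(\Lam/J_w)\mid \Ext^1_{\Lam/J_w}(D(\Lam/J_w),X)=0\}$ — an $\Ext$-vanishing condition \emph{over the factor algebra} $\Lam/J_w$, coupled with membership in $\md(\Lam/J_w)$. The lemma instead characterizes $\cC_w$ by $\Ext^1_\Lam(D(\Lam/J_w),X)=0$ (over $\Lam$) together with the new condition $\Hom_\Lam(X,D(J_w))=0$, and the two descriptions are equivalent only after an argument. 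The paper runs this through the short exact sequence $0\to D(\Lam/J_w)\to D(\Lam)\to D(J_w)\to 0$, uses $\Hom_\Lam(X,D(\Lam))\cong D(X)$ and $\Hom_\Lam(X,D(\Lam/J_w))\cong D(X/J_wX)$ to translate membership in $\md(\Lam/J_w)$ into $\Hom_\Lam(X,p)$ being an isomorphism, and in the forward direction invokes the $2$-Calabi--Yau identification $\Ext^1_\Lam(D(\Lam/J_w),X)\cong D\Ext^1_\Lam(X,D(\Lam/J_w))$ to get exactness of the long exact sequence, from which $\Hom_\Lam(X,D(J_w))=0$ drops out. Your "alternative" via $\soc_\ii$ and Lemma~\ref{D2} is a different route that would also need to be filled in; as written, this step is a citation to something that is not actually what is being cited.

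For openness, you declare that $N:=D(J_w)$ is a fixed \emph{finite-dimensional} $\Lam$-module and then invoke rank-semicontinuity. But $J_w$ is only known to be finite-codimensional ($\Lam/J_w$ is finite-dimensional by Corollary~\ref{Jrad2}); when $\Lam$ is infinite-dimensional (non-Dynkin case), $J_w$ and hence $D(J_w)$ are infinite-dimensional, and the naive "$\dim\Hom_\Lam(X,N)$ is a rank of a finite matrix depending polynomially on $X$" argument does not directly apply. The paper handles exactly this: it splits into the Dynkin case (where your argument works) and the infinite-dimensional case, where it uses that $J_w$ is a finitely presented tilting $\Lam$-module to produce a finite injective resolution $0\to D(\Lam/J_w)\to\bigoplus_i D(e_i\Lam)^{m_i}\to\bigoplus_i D(e_i\Lam)^{n_i}\to 0$, which makes $\Hom_\Lam(X,D(J_w))=0$ manifestly an open condition. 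You flagged the "mild subtlety" about the ambient variety (nilpotency of points of $\Lam_\bd$), but the actual subtlety — finiteness of the test modules — you did not catch.
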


\begin{proof}
Since $\Lam/J_w$ is Gorenstein~\cite[III Proposition~2.2 and Corollary~3.6]{BIRS}
we have 
\[
\cC_w:=\Fac(\Lam/J_w)=\{X\in\md(\Lam/J_w)\mid\Ext^1_{\Lam/J_w}(D(\Lam/J_w),X)=0\}.
\]
We consider the short exact sequence
\[
0 \ra D(\Lam/J_w) \xrightarrow{i} D(\Lam) \xrightarrow{p} D(J_w) \ra 0.
\]
Since  $\Hom_\Lam(X,D(\Lam)) \cong D(X)$ and
$\Hom_\Lam(X,D(\Lam/J_w)) \cong D(X/J_wX)$ naturally, 
we conclude that $X\in\md(\Lam/J_w)$ if and only if $\Hom_\Lam(X,p)$ 
is an isomorphism.

For $X\in\Fac(\Lam/J_w)$ we have by~\cite[III.2.3]{BIRS},  
$$
0 = \Ext^1_\Lam(D(\Lam/J_w),X) \cong D{\Ext}^1_\Lam(X,D(\Lam/J_w)).
$$ 
It follows that
\[
0\ra\Hom_\Lam(X, D(\Lam/J_w)) \xrightarrow{\Hom_\LL(X,i)}\Hom_\LL(X, D(\Lam)) \ra
\Hom_\LL(X, D(J_w))\ra 0
\]
is exact. 
Now, $\Hom_\Lam(X,i)$ is an isomorphism since $X\in\md(\Lam/J_w)$.
This implies that $\Hom_\Lam(J_w,X)=0$.

Conversely, if $X\in\md(\Lam)$ fulfills the conditions of the lemma
we conclude  from $\Hom_\Lam(X,D(J_w))=0$ that $\Hom_\Lam(X,i)$ is an
isomorphism. Thus $X\in\md(\Lam/J_w)$, and we infer 
$\Ext^1_{\Lam/J_w}(D(\Lam/J_w),X)=0$ from $\Ext^1_\Lam(D(\Lam/J_w),X)=0$.

If $\Lam$ is of Dynkin type, \ie finite-dimensional, the conditions of
the lemma define obviously an open subset in $\Lam_\bd$.
Otherwise, $J_w$ is as a tilting module a finitely presented 
$\Lam$-module~\cite[Section~III.1]{BIRS}. Thus we get an injective resolution
\[
0\ra D(\Lam/J_w)\ra \bigoplus_{i\in Q_0} D(e_i\Lam)^{m_i}\ra 
\bigoplus_{i\in Q_0} D(e_i\Lam)^{n_i}\ra 0
\]
so that $\Hom_\Lam(X,D(J_w)) = 0$ represents also in this case an open condition.
\end{proof}

\subsection{The syzygy functor preserves irreducible components}
The results of this subsection will only be used in 
Section~\ref{ssec:pf2}, where we prove the second part
of Theorem~\ref{THM6}.
We consider for $\be \in \N^R$ the subset 
\[
\Lam^w_{\bd,\be} := \{X \in \Lam^w_\bd \mid \dimv\Hom_\Lam(V,X)= \be \}.
\]
By the upper semicontinuity of $\dim\Hom_\Lam(V_k,-)$ and
Lemma~\ref{lem:lamwopen}, $\LL^w_{\bd,\be}$
is a  locally closed (possibly empty) 
subset of $\Lam_\bd$. 
Note that for a given $\be$ there is at most one $\bd=\bd(\be)$
such that $\Lam^w_{\bd,\be} \not= \varnothing$. 
We showed in~\cite[Section~14]{GLSUni2}
that in case $\Lam^w_{\bd,\be} \not= \varnothing$, it is irreducible and
of the same dimension  as $\Lam_\bd$. 
In particular, if $\Lam^w_{\bd,\be} \not= \varnothing$ its Zariski closure is an irreducible 
component of $\Lam^w_\bd$ and each irreducible component of $\Lam^w_\bd$ is of
this form.

\begin{Prop}\label{loopcomp}
For $X \in \Lam^w_{\bd,\be}$ let 
$h\colon P \to X$ be a 
$\cC_w$-admissible epimorphism (\ie $P,X \in \cC_w$ and
$h$ is an epimorphism with $\Ker(h) \in \cC_w$), 
where $P$ is $\cC_w$-projective-injective. 
Then for $\bd' := \dimv_\Lam(P)-\bd$
and a unique $\be' \in \N^R$ there exists an irreducible variety
$\cE_{\be',\be}$ together with open morphisms 
$$
\xymatrix@-0.5pc{
& \cE_{\be',\be} \ar[dl]_{\pi'} \ar[dr]^{\pi} \\
\Lam^w_{\bd',\be'} && \Lam^w_{\bd,\be}
}
$$
such that for each $E \in \cE_{\be',\be}$ there exists a short exact sequence
$$
0 \ra \pi'(E) \xrightarrow{i} P \xrightarrow{p} \pi(E)\ra 0.
$$
\end{Prop}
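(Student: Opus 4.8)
The plan is to read Proposition~\ref{loopcomp} as a family version of the statement that the syzygy functor $\Omega_w$ permutes the irreducible components of the varieties $\Lam^w_\bullet$, and to realise the correspondence $\cE_{\be',\be}$ as an open subset of a $\Hom$-bundle over $\Lam^w_{\bd,\be}$. Throughout I would use that, when nonempty, $\Lam^w_{\bd,\be}$ is irreducible of dimension $\dim\Lam_\bd$ (see \cite[Section~14]{GLSUni2}), together with the description of $\cC_w$ in Lemma~\ref{lem:lamwopen}. First, since the $\cC_w$-projective-injective modules are precisely the modules in $\add(I_\ii)$ and $I_\ii\in\add(V_\ii)$ by the construction in Section~\ref{Vii}, the fixed module $P$ lies in $\add(V_\ii)$; writing $P\cong\bigoplus_{k} V_k^{m_k}$ one gets $\dim\Hom_\Lam(P,Y)=\sum_k m_k\,\be(k)$ for $Y\in\Lam^w_{\bd,\be}$, which is constant. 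Hence the total space $\mathcal{H}:=\{(Y,h)\mid Y\in\Lam^w_{\bd,\be},\ h\in\Hom_\Lam(P,Y)\}$ is a vector bundle over the irreducible variety $\Lam^w_{\bd,\be}$, and in particular irreducible.

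Next I would cut out the open subset $\cE\subseteq\mathcal{H}$ of pairs $(Y,h)$ with $h$ surjective and $\Ker(h)\in\cC_w$: surjectivity of $h$ is an open condition, and by Lemma~\ref{lem:lamwopen}, applied together with the upper semicontinuity of $\dim\Hom_\Lam(-,D(J_w))$ and of $\dim\Ext^1_\Lam(D(\Lam/J_w),-)$ on the tautological family of kernels, the condition $\Ker(h)\in\cC_w$ is open as well. The pair $(X,h)$ from the hypothesis lies in $\cE$, so $\cE$ is a nonempty open, hence irreducible, subset of $\mathcal{H}$; the first projection $\pi\colon\cE\to\Lam^w_{\bd,\be}$, $(Y,h)\mapsto Y$, is the restriction of a vector-bundle projection to an open subset, hence an open morphism, and $\pi(\cE)$ is an open (therefore dense) subset of $\Lam^w_{\bd,\be}$. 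Now for $(Y,h)\in\cE$, lifting $h$ through the projective cover $\pi_Y\colon P(Y)\to Y$ of $Y$ in $\cC_w$ yields $g\colon P\to P(Y)$ with $\pi_Y g=h$; as $h$ is surjective and $P(Y)$ is $\cC_w$-projective, $g$ is a split epimorphism, $P\cong P(Y)\oplus P'$ with $P'\in\add(I_\ii)$, and therefore $\Ker(h)\cong\Omega_w(Y)\oplus P'$. In particular $\Ker(h)$ has dimension vector $\bd'=\dimv_\Lam(P)-\bd$ and its isomorphism class depends only on $Y$, so the function $(Y,h)\mapsto\dimv\Hom_\Lam(V_\ii,\Ker(h))$ factors through $\pi$ and, $\cE$ being irreducible, it takes a single generic value $\be'\in\N^R$ on a dense open subset $\cE_{\be',\be}\subseteq\cE$. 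This $\cE_{\be',\be}$ is nonempty and irreducible, $\pi$ restricts to it as an open morphism with dense image in $\Lam^w_{\bd,\be}$, and by construction $\Ker(h)\in\Lam^w_{\bd',\be'}$ for every $(Y,h)\in\cE_{\be',\be}$.

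It then remains to check that $\pi'\colon\cE_{\be',\be}\to\Lam^w_{\bd',\be'}$, $(Y,h)\mapsto\Ker(h)$, is an open morphism; as always this must be understood after passing to the $\GL_{\bd'}$-torsor of framings of the kernels. Identifying a pair $(Y,h)$, up to the $\GL_\bd$-action on $Y$, with the submodule $\Ker(h)\subseteq P$ realises $\cE_{\be',\be}$ (up to these framing torsors) as a locally closed subset of the quiver Grassmannian $\Gr_{\bd'}^\Lam(P)$, and the assignment sending a submodule of $P$ to its isomorphism type is the usual Quot-to-moduli map, which is open; this is exactly the argument used for the analogous correspondences in \cite[Section~14]{GLSUni2}. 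Finally, for $E=(Y,h)\in\cE_{\be',\be}$ the sequence $0\to\pi'(E)\to P\xrightarrow{h}\pi(E)\to 0$ is precisely the short exact sequence demanded in the statement. I expect the main obstacle to be the careful, $\GL_\bd\times\GL_{\bd'}$-equivariant bookkeeping required to treat $\pi$ and $\pi'$ as honest open morphisms between the quasi-affine varieties $\Lam^w_{\bd,\be}$ and $\Lam^w_{\bd',\be'}$, and the verification that $\dim\Hom_\Lam(P,Y)$ and the generic value $\be'$ are well defined; once the incidence variety and its framing torsors are in place, openness of $\pi$ is immediate and openness of $\pi'$ reduces to the standard openness of the Quot-to-moduli map, as in \cite[Section~14]{GLSUni2}.
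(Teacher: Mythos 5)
Your overall strategy is the right one, and the first two-thirds of your proof track the paper's argument fairly closely: you build $\cE_{\be',\be}$ as an incidence variety over $\Lam^w_{\bd,\be}$, you establish that $\pi$ is open by a vector-bundle argument (this is the paper's step (iii)), and you correctly observe that the isomorphism class of $\Ker(h)$ depends only on $Y$, so that $\be'$ is determined on a dense open subset. That part is sound.

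The gap is in your treatment of $\pi'$. You reduce openness of $\pi'$ to the claim that ``the assignment sending a submodule of $P$ to its isomorphism type is the usual Quot-to-moduli map, which is open.'' That claim is not true in the generality you invoke it: even after passing to the framing torsor, the image of the Grassmannian $\Gr^\Lam_{\bd'}(P)$ in the module variety is the locus of modules admitting an embedding into $P$, which is merely constructible, not open, in $\Lam_{\bd'}$ in general. A map can be open onto a locally closed image without being open into the ambient variety. What the paper actually proves in step (iv) is that the image of $\pi'$ is $\Lam^{w,+}_{\bd',\be'} = \{X' \in \Lam^w_{\bd',\be'} \mid \dimv\Ext^1_\Lam(V,X')=\tilde\be\}$, a locally closed subset, and then shows that this subset is \emph{dense} (hence open) in the irreducible variety $\Lam^w_{\bd',\be'}$. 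Establishing the density is precisely where the 2-Calabi-Yau nature of $\cC_w$ enters: one compares fibre dimensions of $\pi$ and $\pi'$ over matching orbits $\GL_\bd.X$ and $\GL_{\bd'}.X'$ (Schanuel's Lemma pairs them up), then uses the isomorphism $\Ext^1_\Lam(X,X)\cong\Ext^1_\Lam(X',X')$ together with Lemma~\ref{lem:e2cdim} (that $2\codim_{\Lam_\bd}(\GL_\bd.M)=\dim\Ext^1_\Lam(M,M)$) to conclude $\codim_{\Lam_\bd}(\GL_\bd.X)=\codim_{\Lam_{\bd'}}(\GL_{\bd'}.X')$, whence $\dim\Lam^{w,+}_{\bd',\be'}=\dim\Lam_{\bd'}$. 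Without some version of this codimension bookkeeping your argument cannot rule out that $\pi'$ maps onto a proper, non-open locally closed subset, and the proposition would fail. This step is the heart of the proof and is missing from your proposal.
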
 

\begin{proof}
The proof consists of four steps.

\noindent{\rm (i)}\,
Let
$\tilde{\be}$ be the (componentwise) minimum
value of the map $\LL^w_{\bd,\be} \to \N^\Rmi$ defined
by 
$X \mapsto \dimv(\sHom_{\cC_w}(V,X)) = \dimv(D{\Ext}_\Lam^2(X,V))$.
Let $\Lam^{w,-}_{\bd,\be}$ be the open subset of $\LL^w_{\bd,\be}$ defined
by 
$$
\Lam^{w,-}_{\bd,\be} := \{ X \in \LL^w_{\bd,\be} \mid
 \dimv(\sHom_{\cC_w}(V,X)) = \tilde{\be} \}.
$$
Suppose that $P \in \Lam^w_{\bd'',\be''}$ and set $\be' := \be''-\be+\tilde{\be}$.
It is easy to see that for a short exact sequence 
\[
0 \ra X' \ra P \ra X \ra 0
\]
in $\cC_w$ with $X \in \Lam^w_{\bd,\be}$,  
we have $X' \in \Lam^w_{\bd',\be'}$ if and only if 
$X \in \Lam^{w,-}_{\bd,\be}$, since $\Ext^1_\Lam(V,X') \cong \sHom_{\cC_w}(V,X)$.

\noindent{\rm (ii)}\,
We claim that 
\begin{multline*}
\cE_{\be',\be} := \{ (X',i,p,X) \in
\Lam^w_{\bd',\be'} \times \Hom_{Q_0}(\C^{\bd'},\C^{\bd''}) \times
\Hom_{Q_0}(\C^{\bd''},\C^{\bd}) \times \Lam^w_{\bd,\be} \mid \\
i \in \Hom_\Lam(X',P) \text{ injective},
p \in \Hom_\Lam(P,X) \text{ surjective }, p \circ i = 0 \},
\end{multline*}
together with the obvious projections has the required properties.
By construction, we only have to show that $\pi$ and $\pi'$ are open.

\noindent{\rm (iii)}\,
As for the openness of $\pi$, consider the vector bundle  
\[
\cV_\be := \{ (p,X) \in \Hom_{Q_0}(\C^{\bd''},\C^\bd) \times \Lam^w_{\bd,\be} \mid 
p \in \Hom_\Lam(P,X) \}.
\]
This is a subbundle  of the trivial vector bundle 
$\Hom_{Q_0}(\C^{\bd''},\C^\bd) \times \Lam^w_{\bd,\be}$. 
In particular, the projection $\pi_2\df \cV_\be \ra \Lam^w_{\bd,\be}$, 
$(p,X)\mapsto X$ is an open morphism. 
The set
$$
\cV^\sur_\be := \{ (p,X)\in\cV_\be\mid p \text{ surjective} \}
$$ 
is a dense open subset of $\cV_\be$.

It is a standard argument to check that
\begin{multline*}
\cE_\be := \{ (X',i,p,X) \in
\Lam_{\bd'} \times \Hom_{Q_0}(\C^{\bd'},\C^{\bd''}) \times 
\Hom_{Q_0}(\C^{\bd''},\C^{\bd}) \times \Lam^w_{\bd,\be} \mid \\
i \in \Hom_\Lam(X',P) \text{ injective},
p \in \Hom_\Lam(P,X) \text{ surjective }, p \circ i = 0 \}
\end{multline*}
together with the obvious projection 
$$
\pi_{34}\df \cE_{\be} \ra \cV_\be^\sur,\, (X',i,p,X)\mapsto (p,X)
$$ 
is a $\GL_{\bd'}$-principal bundle. 
In particular, $\pi_{34}$ is open, and $\cE_\be$ is an irreducible variety.

Now,  by Lemma~\ref{lem:lamwopen}, and step~(i) of the proof,
$\cE_{\be',\be}$ is a dense open subset of $\cE_\be$. 
Thus $\pi$, as a composition of the open morphisms 
$\pi_2 \circ \pi_{34}\df \cE_\be \to \LL_{\bd,\be}^w$
and the inclusion $\cE_{\be',\be}\hookrightarrow \cE_\be$, is open.

\noindent{\rm (iv)}\, 
Let 
$$
\Lam^{w,+}_{\bd',\be'} := \{ X' \in \Lam^w_{\bd',\be'} \mid
\dimv\Ext^1_\Lam(V,X') = \tilde{\be} \}. 
$$
This is a locally closed subset of $\Lam^w_{\bd',\be'}$. 
A similar argument as in step~(iii) shows that the restriction 
$$
\cE_{\be',\be} \ra \Lam^{w,+}_{\bd',\be'}
$$
of $\pi'$ is open.
It remains to show that $\Lam^{w,+}_{\bd',\be'}$ is dense in $\Lam^w_{\bd',\be'}$.
To this end we note that there exist constants $f,f'\in\N$ such that
$$
\dim(\pi^{-1}(\pi(E))) = f
\quad\text{ and }\quad
\dim((\pi')^{-1}(\pi'(E))) = f'
$$ 
for all $E\in\cE_{\be',\be}$, see step~(iii) of the proof. 
Moreover, by Schanuel's Lemma, 
for $X\in\Bi(\pi)$ we have $\pi'(\pi^{-1}(\GL_\bd.X))=\GL_{\bd'}.X'$ for
some $X'\in\Bi(\pi')$. 
Thus, we have in this situation
\begin{multline} \label{eq:codim2}
\codim_{\Lam_\bd}(\GL_\bd.X)=\codim_{\Lam^w_{\bd,\be}}(\GL_\bd.X)\\
=\codim_{\Lam^{w,+}_{\bd',\be'}}(\GL_{\bd'}.X')\leq\codim_{\Lam_{\bd'}}(\GL_{\bd'}.X').
\end{multline}
On the other hand, since $\Ext^1_\Lam(X,X)\cong\Ext^1_\Lam(X',X')$, we have
by 
Lemma~\ref{lem:e2cdim} 
that 
$\codim_{\Lam_\bd}(\GL_\bd.X)=\codim_{\Lam_{\bd'}}(\GL_{\bd'}.X')$. 
This implies by~\eqref{eq:codim2} that
$$
\dim(\Lam^{w,+}_{\bd',\be'}) = \dim(\Lam^w_{\bd',\be'}) = \dim(\Lam_{\bd'}).
$$
\end{proof}

\subsection{Bongartz's bundle construction}\label{ssec:strat}
By Lemma~\ref{lem:lamwopen}, we know that
$\Lam^w_\bd$ is an open subset of  $\Lam_\bd$ for all $\dd$. 
The varieties
$\Lam_\bd$ and $\LL_\bd^w$ are equidimensional, and we know that
their irreducible components for all possible dimension vectors $\bd$ parametrize
the dual semicanonical bases $\cS^*$ of $\C[N]$, and
$\cS^*_w$ of $\C[N]^{N'(w)}$, respectively.

Given $\be \in \N^{\Rmi}$ and
$c \in \N$ we consider the locally closed subset
\begin{multline*}
\Lam_\bd^{(\be,c)} := \left\{ X \in \Lam_\bd^w \mid \dim\Ext^1_\Lam(T_k,X) = \be(k)
\text{ for } k \in \Rmi 
\text{ and } \dim \End_\Lam(X) = c \right\}.
\end{multline*}
We say that a subset $\cY \subseteq \Lam_\bd^w$ is a $T$-\emph{sheet of type} 
$\be$ if it is an irreducible component of some $\Lam_\bd^{(\be,c)}$.
In this case, we say that $\cY$ is \emph{dense} if the Zariski closure
$\overline{\cY}$ in $\Lam_\bd^w$ is an irreducible component of $\Lam_\bd^w$.

Clearly, $\Lam_\bd^w$ is a finite union of $T$-sheets, and each irreducible 
component contains a unique dense $T$-sheet.

\begin{Rem}
{\rm
(1) 
Let us recall some definitions and results from~\cite{CBS}. 
We write $\cZ = \cZ' \oplus \cZ''$ for irreducible components, say
$\cZ' \subseteq \Lam_{\bd'}^w$ and $\cZ'' \subseteq \Lam_{\bd''}^w$, if and only if
$\cZ\subseteq\Lam_{\bd'+\bd''}^w$ is an irreducible component which contains
a dense open subset $\cU$ such that for all $X\in\cU$ we have $X\cong X'\oplus X''$
for some $X'\in\cZ'$ and $X''\in\cZ''$. 
This is possible if and only if
$\Ext^1_\Lam(\cZ',\cZ'')=0$, \ie if there are dense open subsets 
$\cU'\subseteq\cZ'$ and $\cU''\subseteq\cZ''$ such that $\Ext^1_\Lam(X',X'')=0$
for all $X'\in\cU'$ and $X''\in\cU''$. 
(Note,  since $\scC_w$ is 
2-Calabi-Yau, the conditions $\Ext^1_\Lam(\cZ',\cZ'')=0$ and
$\Ext^1_\Lam(\cZ'',\cZ')=0$ are equivalent.) 
On the other hand, 
$\cZ\subseteq\Lam_\bd^w$ is by definition \emph{indecomposable}
if it contains a dense open subset $\cU$ such that all $X\in\cU$ are 
indecomposable.
For example,
since $\Ext^1_\Lam(T_k,T_k)=0$, one can apply Voigt's Lemma to show that
$$
\cT_k := \overline{\GL_\bd.T_k}
$$ 
is an indecomposable irreducible component.
(Here we set $\bd :=\dimv_\Lam(T_k)$.) 

With these definitions, each irreducible component
$\cZ$ admits an essentially unique decomposition 
into indecomposable irreducible components, see~\cite{CBS}.
 
(2) We say that an irreducible component $\cZ\subseteq\Lam_\bd^w$ is
\emph{generically} $T$-\emph{free} if in the decomposition into indecomposable
components, there is no summand of the form $\cT_k$ for any $1 \le k \le r$.
As a consequence of (1), each irreducible component $\cZ\subseteq\Lam_\bd^w$
can be written uniquely as
\[
\cZ = \cZ' \oplus \bigoplus_{k\in R} \cT_k^{m_k}
\]
with $\cZ'$ generically $T$-free.
If $\be\in\N^\Rmi$ is
the type of the unique dense $T$-sheet $\cY' \subseteq \cZ'$, then in the
above decomposition $m_k=0$ in case $\be(k)\neq 0$ by the definition of type. 
Moreover, the unique dense $T$-sheet $\cY\subseteq\cZ$ has the same type 
as $\cY'$.
}
\end{Rem}

We have the following
variant of a construction by Bongartz~\cite[Section~4.3]{Bo}:

\begin{Lem} 
Let $\cY \subseteq\Lam^w_\bd$ be a $T$-sheet of type $\be$.
Then there exists a
$\GL_\bd$-$\GL_\be$-variety $\cB_\cY$ together with morphisms
$$
\xymatrix@!@-3.2pc{
& \ar[dl]_{\pi_1} \cB_\cY \ar[dr]^{\pi_2} \\
\cY && \md(\scE,\be)
}
$$
such that
\begin{itemize}

\item
$\pi_1$ is a $\GL_\bd$-equivariant $\GL_\be$-principal bundle,

\item
$\pi_2$ is $\GL_\be$-equivariant and $\GL_\bd$-invariant,

\item
If $Y\in\cY$, then $\pi_2(B)\cong\Ext^1_\Lam(T,Y)$ for all $B\in\pi_1^{-1}(Y)$.

\end{itemize}
\end{Lem}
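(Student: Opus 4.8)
The plan is to build $\cB_\cY$ as the \emph{frame bundle} of the algebraic family of $\scE$-modules $Y\mapsto\Ext^1_\Lam(T,Y)$ over $\cY$, in the spirit of Bongartz's bundle construction \cite[Section~4.3]{Bo}. First I would fix a minimal projective resolution $\cdots\to Q_2\xrightarrow{d_1}Q_1\xrightarrow{d_0}Q_0\to T\to 0$ of $T$ as a $\Lam$-module, with each $Q_i$ a finitely generated projective $\Lam$-module. Since $\Hom_\Lam(\Lam e_j,Y)\cong e_jY$, for every $Y\in\Lam_\bd$ the space $\Hom_\Lam(Q_i,Y)$ has dimension depending only on $\bd$; hence $Y\mapsto\Hom_\Lam(Q_i,Y)$ defines \emph{trivial} vector bundles $H_i$ over $\cY$, the $d_\bullet$ induce algebraic bundle morphisms $H_0\xrightarrow{d_0^{*}}H_1\xrightarrow{d_1^{*}}H_2$, and for each $Y\in\cY$ one has $\Ext^1_\Lam(T,Y)=\ker(d_1^{*}|_Y)/\Ima(d_0^{*}|_Y)$, a subquotient of the fibre of $H_1$. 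Note that the endomorphisms of $T$ lift to chain maps of $Q_\bullet$ (independently of $Y$), so they act on this homology algebraically in $Y$; those factoring through $\cC_w$-projective-injectives act as $0$ because $\Ext^1_\Lam(P,Y)=0$ for such $P$, so the action is through $\scE=\sEnd_{\cC_w}(T)^{\op}$.

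Second, I would use that $\cY$ is a $T$-sheet of type $\be$: by definition $\dim\Ext^1_\Lam(T_k,Y)=\be(k)$ for all $k\in\Rmi$ and all $Y\in\cY$, while $\Ext^1_\Lam(T_k,Y)=0$ for $k\in\Rma$ because those $T_k$ are summands of $D(\Lam/J_w)$ and $Y\in\cC_w$ (Lemma~\ref{lem:lamwopen}). Consequently the fibrewise first homology of the complex $H_\bullet$ has constant dimension $|\be|:=\sum_{k\in\Rmi}\be(k)$ along $\cY$. Invoking the base-change behaviour of relative cohomology (this is exactly where Bongartz's device enters), one concludes that this first homology is a genuine vector bundle $\mathcal F\to\cY$ whose formation commutes with restriction to points. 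I would then take $\cB_\cY$ to be the $\GL_\be$-principal bundle of frames of $\mathcal F$, with $\pi_1$ its structure map, and let $\pi_2$ send a frame over $Y$ to the $\scE$-module structure transported from $\Ext^1_\Lam(T,Y)$ onto $\C^\be$, a point of $\md(\scE,\be)$. By construction $\pi_2(B)\cong\Ext^1_\Lam(T,Y)$ as $\scE$-modules for every $B\in\pi_1^{-1}(Y)$, $\pi_1$ is a $\GL_\be$-principal bundle, and $\pi_2$ is $\GL_\be$-equivariant.

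Third, the two equivariance statements follow from functoriality in $Y$. The $\GL_\bd$-action on $\cY\subseteq\Lam_\bd$ by conjugation lifts canonically to $H_\bullet$ (the resolution $Q_\bullet$ is not touched), hence to $\mathcal F$ and to the frame bundle $\cB_\cY$, commuting with the $\GL_\be$-action; this makes $\cB_\cY$ a $\GL_\bd$-$\GL_\be$-variety and $\pi_1$ a $\GL_\bd$-equivariant $\GL_\be$-principal bundle. For the $\GL_\bd$-invariance of $\pi_2$: if $h\in\GL_\bd$, the isomorphism of $\Lam$-modules $Y\cong h\cdot Y$ induces an isomorphism of $\scE$-modules $\Ext^1_\Lam(T,Y)\cong\Ext^1_\Lam(T,h\cdot Y)$, and under the lifted action a frame over $Y$ is carried by the same isomorphism to a frame over $h\cdot Y$; hence the structure transported onto $\C^\be$ is literally unchanged, i.e. $\pi_2\circ(h\cdot-)=\pi_2$. (This also reflects the fact, used in Section~\ref{section6}, that $E=\Ext^1_\Lam(T,-)\colon\cC_w\to\md(\scE)$ is a well-defined functor.)

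I expect the main obstacle to be exactly the local triviality of $\pi_1$ over \emph{all} of $\cY$, rather than merely over a dense open subset. The homology sheaf $\mathcal F$ is a priori only coherent, and it is locally free precisely on the locus where the ranks of $d_0^{*}$ and $d_1^{*}$ do not jump, equivalently where $\dim\Hom_\Lam(T,Y)$ is constant; the constancy of $\dim\Ext^1_\Lam(T_k,-)$ and of $\dim\End_\Lam(-)$ built into the definition of a $T$-sheet, combined with the Crawley-Boevey identity relating $\Hom$ and $\Ext^1$ for the $2$-Calabi-Yau algebra $\Lam$ (and with Lemma~\ref{lem:e2cdim}), is what has to be leveraged — via Bongartz's construction — to control this behaviour over the entire sheet and to see that the resulting family of $\scE$-module structures is algebraic. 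Once that is in place, the principal bundle axioms, the identification of $\pi_2(B)$, and the equivariance are formal, along the lines of \cite{BIRSm} and \cite{DWZ2}.
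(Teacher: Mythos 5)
Your proposal follows essentially the same route as the paper: invoke Bongartz's bundle construction to get a vector bundle $\mathcal F$ of $\Ext^1$'s over the $T$-sheet, then take $\cB_\cY$ to be its frame bundle with the evident two group actions. The paper compresses this into a citation of \cite[Section~2.4]{Bo} (applied separately to each summand $T_k$ of $T$ with $k\in\Rmi$) and then refers to \cite[Section~14]{GLSUni2} for the equivariance bookkeeping; your unpacking via a projective presentation $Q_\bullet\to T$ and the complex of trivial bundles $H_i=\Hom_\Lam(Q_i,-)$ is the content of that citation.

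The one place where your write-up goes slightly astray is the closing paragraph. You worry that local triviality of $\mathcal F$ over \emph{all} of $\cY$ might require controlling $\dim\Hom_\Lam(T,Y)$ as well, and you suggest leveraging the Crawley-Boevey identity and Lemma~\ref{lem:e2cdim}. This is unnecessary: the constancy of $\dim\Ext^1_\Lam(T_k,Y)$ over $\cY$ (which is exactly what the definition of a $T$-sheet of type $\be$ gives) already forces both ranks $\mathrm{rk}(d_0^*|_Y)$ and $\mathrm{rk}(d_1^*|_Y)$ to be constant. Indeed $\dim\Ext^1_\Lam(T,Y)=\dim H_1-\mathrm{rk}(d_0^*|_Y)-\mathrm{rk}(d_1^*|_Y)$, so constancy of the left side makes the sum of the two ranks constant; since each rank is separately lower semicontinuous, each must be constant. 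Hence $\ker(d_1^*)$ and $\Ima(d_0^*)$ are subbundles of $H_1$ and $\mathcal F$ is locally free — no auxiliary homological identities are needed for this lemma (they are used elsewhere in the section, e.g.\ in Proposition~\ref{prp:bas} and Proposition~\ref{loopcomp}, but not here). With that corrected, the remainder of your argument — the frame bundle, the $\GL_\be$-equivariance of $\pi_2$, the $\GL_\bd$-invariance of $\pi_2$ via the functoriality of $\Ext^1_\Lam(T,-)$ — is accurate and is exactly what the proof in the paper asserts.
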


\begin{proof}
It is easy to derive 
from~\cite[Section~2.4]{Bo} that for any $k\in\Rmi$, the set
\[
\cE_{\cY,k}:= \left\{(Y,e)\mid Y\in\cY\text{ and } e\in\Ext^1_\Lam(T_k,Y)\right\}
\]
can be given the structure of an algebraic vector bundle of rank $\be(k)$
over $\cY$. It follows that
\begin{multline*}
\cB_{\cY} := \{ (Y,(v_{k,l})_{k\in\Rmi,1\leq l\leq\be(k)}) \mid Y \in \cY 
\text{ and } (v_{k,l})_{1 \le l \le \be(k)} \\
\text{ is a basis of } \Ext^1_\Lam(T_k,Y) \text{ for all } k\in\Rmi \}
\end{multline*}
is together with the obvious projection $\pi_1$ a $\GL_\bd$-equivariant 
$\GL_\be$-principal bundle over $\cY$.
Then one proceeds as in~\cite[Section~14]{GLSUni2}. 
\end{proof}

\begin{Prop} \label{prp:bas}
For $\cZ \in \irr(\md(\scE,\be))$ the following are equivalent:
\begin{itemize}

\item[(i)]
$\cZ$ is strongly reduced.

\item[(ii)]
$\cZ = \overline{\pi_2(\cB_\cY)}$ for some dense, generically
$T$-free $T$-sheet of type $\be$. 

\end{itemize}
In this case, $\cY$ is uniquely determined, 
and $\cZ = \overline{\pi_2(\cB_{\cY'})}$ precisely for the dense $T$-sheets
$$
\cY' \subseteq \overline{\cY} \oplus \bigoplus_{k\in\Cpl(\cZ)} \cT_k^{m_k}
$$ 
with $m_k\in\N$.
\end{Prop}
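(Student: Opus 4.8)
The plan is to deduce the whole statement from a single dimension identity relating a $T$-sheet $\cY$ to the irreducible closed subset $\cZ=\overline{\pi_2(\cB_\cY)}$ of $\md(\scE,\be)$ that Bongartz's bundle attaches to it. Write $\scE=\scE_T$ and $E=\Ext^1_\Lam(T,-)\colon\cC_w\to\md(\scE)$. As in the proof of Proposition~\ref{prp:EinvExt} (and \cite[Proposition~2.1(c)]{KR}), $E$ is dense and induces an equivalence $\cC_w/[\add(T)]\xrightarrow{\;\sim\;}\md(\scE)$; so every $\scE$-module $M$ has, up to isomorphism, a unique preimage $Z$ with no summand in $\add(T)$, its \emph{reduced lift}. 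Two preliminary observations I would record: (a) since $E$ is dense and each $Y\in\cC_w$ with $\dim\Ext^1_\Lam(T_k,Y)=\be(k)$ lies in a $T$-sheet of type $\be$ whose Bongartz bundle has $EY$ in the image of $\pi_2$, the variety $\md(\scE,\be)$ is covered by finitely many images $\pi_2(\cB_\cY)$ with $\cY$ a $T$-sheet of type $\be$ (finiteness because $\dimv_\Lam$ of a reduced lift is bounded in terms of $\be$ and the $\dim T_k$); hence every $\cZ\in\irr(\md(\scE,\be))$ equals $\overline{\pi_2(\cB_\cY)}$ for some such $\cY$; (b) $ET_k=\Ext^1_\Lam(T,T_k)=0$, so replacing $\cY$ by the dense $T$-sheet of $\overline\cY\oplus\bigoplus_k\cT_k^{m_k}$ changes neither the type nor $\overline{\pi_2(\cB_\cY)}$. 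The key step is the following. \emph{Claim: for a $T$-sheet $\cY\subseteq\Lam_\bd^w$ of type $\be$ and $\cZ:=\overline{\pi_2(\cB_\cY)}$ one has $c(\cZ)=h(\cZ)+\big(\dim\cY-\dim\Lam_\bd\big)$.}

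For the Claim I would first observe that for any $\GL_\be$-stable proper closed $C\subsetneq\cZ$ the set $\pi_1(\pi_2^{-1}(C))=\{Y\in\cY\mid EY\in C\}$ is open (as $\pi_1$ is flat) and closed (as $\pi_2^{-1}(C)$ is $\GL_\be$-stable closed and $\pi_1$ is a geometric $\GL_\be$-quotient) in the irreducible $\cY$, and is proper (else $\pi_2(\cB_\cY)\subseteq C$), hence empty; so $EY$ is a generic point of $\cZ$ for \emph{every} $Y\in\cY$. Picking a generic $Y\in\cY$ and setting $U:=EY$, we have $\dim\cB_\cY=\dim\cY+\dim\GL_\be$, and the $\pi_2$-fibre over $U$ is, via $\pi_1$, a bundle over $\{Y'\in\cY\mid EY'\cong U\}$ whose fibres are principal homogeneous under $\operatorname{Aut}_\scE(U)$, hence of dimension $\dim\End_\scE(U)$; since $\dimv_\Lam$ and $\dim\End_\Lam(-)$ are constant on $\cY$ (the latter by the definition of a $T$-sheet), the base is a finite union of $\GL_\bd$-orbits all of dimension $\dim\GL_\bd.Y$. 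Equating $\dim\pi_2^{-1}(U)=\dim\cB_\cY-\dim\cZ$ and solving for $c(\cZ)=\dim\cZ-\dim(U.\GL_\be)$ gives $c(\cZ)=\dim\cY-\dim\GL_\bd.Y=(\dim\cY-\dim\Lam_\bd)+\codim_{\Lam_\bd^w}(\GL_\bd.Y)$, and Proposition~\ref{prp:EinvExt} identifies $\codim_{\Lam_\bd^w}(\GL_\bd.Y)=\dim\Hom_\scE(\tau_\scE^{-1}(U),U)=h(\cZ)$ (using that $U$ is generic in $\cZ$ and that $\Lam_\bd^w$ is equidimensional of dimension $\dim\Lam_\bd$).

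From the Claim, (i)$\Leftrightarrow$(ii) follows quickly. Since $\cY\subseteq\Lam_\bd^w$ and the latter is equidimensional, $\dim\cY\le\dim\Lam_\bd$ with equality exactly when $\cY$ is dense; so $c(\cZ)\le h(\cZ)$ always, and $c(\cZ)=h(\cZ)$ if and only if $\cY$ is dense. If (ii) holds then $\cY$ is dense and $\cZ$ is strongly reduced. Conversely, if $\cZ$ is strongly reduced, write $\cZ=\overline{\pi_2(\cB_{\cY_0})}$ by (a); the Claim forces $\cY_0$ to be dense; decomposing $\overline{\cY_0}=\cZ''\oplus\bigoplus_k\cT_k^{m_k}$ into indecomposable components with $\cZ''$ generically $T$-free and letting $\cY$ be the dense $T$-sheet of $\cZ''$, observation~(b) gives $\overline{\pi_2(\cB_\cY)}=\cZ$, while $\cY$ is dense, generically $T$-free, and of type $\be$; so (ii) holds.

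It remains to see that $\cY$ is unique and to identify all $\cY'$ with $\overline{\pi_2(\cB_{\cY'})}=\cZ$. For uniqueness: if $\cY_1,\cY_2$ are dense, generically $T$-free $T$-sheets of type $\be$ with $\overline{\pi_2(\cB_{\cY_i})}=\cZ$, then their generic modules are reduced lifts of generic points of $\cZ$, and the corresponding dense, mutually overlapping loci of such reduced lifts share a point $Z_0$ lying in the generic locus of both $\cY_1$ and $\cY_2$; as these are components of the same $\Lam_{\bd_0}^{(\be,c_0)}$, they coincide. For the last assertion: given an arbitrary dense $T$-sheet $\cY'$ of type $\be$, decompose $\overline{\cY'}=\cZ''\oplus\bigoplus_k\cT_k^{m_k}$ as before; by~(b), $\overline{\pi_2(\cB_{\cY'})}=\overline{\pi_2(\cB_{\cY''})}$ for the dense $T$-sheet $\cY''\subseteq\cZ''$, so $\overline{\pi_2(\cB_{\cY'})}=\cZ$ iff $\cY''=\cY$, i.e. $\cZ''=\overline{\cY}$; and for $\overline{\cY'}$ to be a genuine component one needs $\Ext^1_\Lam(\cY,\cT_k)=0$ whenever $m_k\neq 0$, which by the $2$-Calabi--Yau property of $\cC_w$ and the type of $\cY$ means $\be(k)=0$ for $k\in\Rmi$, i.e. $(m_k)_{k\in\Rmi}\in\Cpl(\cZ)$ (the freedom in $\cC_w$-projective-injective summands, $k\in\Rma$, being invisible to $E$ and corresponding, as in \cite{GLSUni2}, to coefficient factors specialized to $1$ in $\cA(\sGG_T)$). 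I expect the main obstacle to lie in the fibre analysis of $\pi_2$ inside the Claim — pinning down $\{Y'\in\cY\mid EY'\cong U\}$ to orbits of the right dimension — together with the finiteness in (a); this is where the equivalence modulo $\add(T)$, the rigidity of $T$, and the compatibility of Bongartz's bundle with Proposition~\ref{prp:EinvExt} all have to be used in concert.
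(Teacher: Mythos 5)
Your proposal follows essentially the same route as the paper's proof: cover $\cZ$ by images $\pi_2(\cB_\cY)$, compute the codimension identity $c(\cZ)=h(\cZ)+(\dim\cY-\dim\Lam_\bd)$ via the fibres of $\pi_2$ and Proposition~\ref{prp:EinvExt}, read off (i)$\Leftrightarrow$(ii), and settle uniqueness and the last assertion through the CBS-type decomposition into indecomposable components. There are, however, two points where your argument diverges from the paper's and one of them is a genuine gap.

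First, in observation~(a) you assert that $\md(\scE,\be)$ is covered by \emph{finitely} many sets $\pi_2(\cB_\cY)$, justified only by the parenthetical that ``$\dimv_\Lam$ of a reduced lift is bounded in terms of $\be$ and the $\dim T_k$.'' That bound is plausible but not obvious: while the $\Rmi$-multiplicities in the minimal $\add(T)$-presentation of a reduced lift $Z$ are controlled by $\be$ via~\eqref{eq:injres}, the multiplicities at the projective-injective summands ($k\in\Rma$) are \emph{not} read off from $EZ$ in the same way, and no estimate is offered. The paper does not claim finiteness at all; it notes only that the cover is \emph{countable} (finitely many $T$-sheets in each $\Lam_\bd^w$, and countably many $\bd$) and then invokes the uncountability of $\CC$ with a Baire-category argument to conclude $\cZ=\overline{\pi_2(\cB_\cY)}$ for one of them. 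You should either prove your boundedness assertion or replace it by the Baire argument.

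Second, in the proof of the Claim you argue that $\pi_1(\pi_2^{-1}(C))$ is open ``as $\pi_1$ is flat.'' Flatness of $\pi_1$ makes it an open morphism, but $\pi_2^{-1}(C)$ is closed, and the image of a closed set under an open map need not be open; so the conclusion that $\pi_1(\pi_2^{-1}(C))$ is open (and hence empty, giving ``$EY$ generic for \emph{every} $Y\in\cY$'') does not follow. Fortunately this overstatement is harmless: $\pi_1(\pi_2^{-1}(C))$ is a closed and proper subset of the irreducible $\cY$ (closed because $\pi_2^{-1}(C)$ is $\GL_\be$-saturated and $\pi_1$ is a geometric quotient, proper because otherwise $\pi_2(\cB_\cY)\subseteq C$), so its complement is dense open, which is all you use in the next sentence when you pick a \emph{generic} $Y$. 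This is also precisely what the paper proves, via a direct codimension count on $\pi_2^{-1}(M.\GL_\be)=\bigcup_s\pi_1^{-1}(\GL_\bd.Y_s)$ and Chevalley's theorem. The remaining computation of $c(\cZ)=\dim\cY-\dim\GL_\bd.Y$, the identification $h(\cZ)=\codim_{\Lam^w_\bd}(\GL_\bd.Y)$ via Proposition~\ref{prp:EinvExt}, and the handling of uniqueness and the last assertion through Remark~8.4(2) (decomposition into indecomposable components, 2-Calabi--Yau duality to convert $\Ext^1_\Lam(\cY,\cT_k)=0$ into the type condition $(m_k)_{k\in\Rmi}\in\Cpl(\cZ)$) all match the paper's argument and are carried out correctly.
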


\begin{proof}
Since each $M \in \md(\scE)$ is of the form $M\cong\Ext^1_\Lam(T,X)$ for
some $X\in\cC_w$ we conclude that $\cZ$ is a countable union of constructible
sets of the form $\pi_2(\cB_\cY)$ for certain $T$-sheets $\cY$ of type $\be$.
Since $\C$ is not countable, the Baire category theorem implies that 
$\cZ = \overline{\pi_2(\cB_\cY)}$ for one of these $T$-sheets, say 
$\cY \subseteq \LL_\bd^w$.

There is some 
$c \in \N$ such that $\codim_\cY(\GL_\bd.Y)=c$ for all $Y\in\cY$ 
by the definition of the $T$-sheets. 
We claim that then $\pi_2(\cB_\cY)$ contains a dense open
subset $\cM$ such that $\codim_\cZ(M.\GL_\be)=c$ for all $M\in\cM$. Indeed, for
any $M\in\pi_2(\cB_\cY)$ we have $\codim_{\cB_\cY}(\pi^{-1}_2(M.\GL_\be))=c$ since
there are only finitely many orbits, say  $\GL_\bd.Y_s$ for $1 \le s \le t$ 
in $\cY$, such that $\Ext^1_\Lam(T,Y_s)\cong M$, so that
\[
\pi^{-1}_2(M.\GL_\be)=\bigcup_{s=1}^t \pi^{-1}_1(\GL_\bd.Y_s).
\]
Now, $\codim_{\cB_\cY}(\pi^{-1}_1(\GL_\bd.Y))=\codim_\cY(\GL_\bd.Y) = c$ 
for all $Y\in\cY$ since $\pi_1$ is a principal bundle. So our claim 
follows from Chevalley's theorem.

Finally, let $h:=\codim_{\Lam^w_\bd}(\cY)$. 
Then $\codim_{\Lam^w_\bd}(\GL_\bd.Y)=c+h$
for all $Y\in\cY$. Thus, since each $M\in\pi_2(\cB_\cY)$ is of the form
$M\cong\Ext^1_\Lam(T,Y)$ for some $Y\in\cY$,  we have by
Proposition~\ref{prp:EinvExt},
\[
c+h = \dim\Hom_\scE(M,\tau_\scE(M)).
\]
Thus, $\cZ$ is strongly reduced if and only if $h=0$.
The rest is clear by Remark~8.4(2).
\end{proof}

\subsection{Proof of Theorem~\ref{THM4}}
Consider the
epimorphism
$$
\Pi_T \df \C[N^w]\to \cA(\sGG_T)
$$
defined in Section~\ref{section1.5}.
Then, by 
Theorem~\ref{THM3} and our results from Section~\ref{Section5.1},  
for $X \in \cC_w$ we get 
$$
\Pi_T(\vph_X) = 
\Pi_T(\theta_X^T) =
x^{\mm} \cdot \psi_{\Ext^1_\Lam(T,X')}
$$
where we write 
$$
X = X' \oplus \bigoplus_{k=1}^r T_k^{m_k}
$$ 
in such a way that $X'$ has no
direct summand from $\add(T)$, and we set
$\mm := (m_k)_{k \in \Rmi}$.
Thus, by Proposition~\ref{prp:bas}, the
image of the dual semicanonical basis of $\C[N^w]$ under $\Pi_T$ is
just the generic basis $\cG_w^T$.
(This is indeed a basis by~\cite[Section~15]{GLSUni2}.)
This finishes the proof.

\subsection{An example}\label{genericex}
We continue with the example from Section~\ref{chamberex}.
Let $W := W_\ii = W_1 \oplus \cdots \oplus W_6$.
The quiver $\GG_W$ of $\cE_W$ looks as follows:
$$
\xymatrix{
&& W_3 \ar[dr]\\
& W_5 \ar[ur]\ar[dr] && 
W_1 \ar[ll]\ar[dr]\\
W_6 \ar[ur] && 
W_4 \ar[ll]\ar[ur] && 
W_2 \ar[ll]
}
$$
An easy computation yields
$$
B^{(W)} = \left( \bbm 0&-1&1&1&-1&0\\1&0&0&-1&0&0\\
-1&0&1&0&0&0\\-1&1&0&0&1&-1\\1&0&-1&-1&1&0\\0&0&0&1&-1&1 \ebm\right)
\text{\;\;\; and \;\;\;}
(B^{(W)})^{-1} = \left( \bbm 1&1&0&1&1&1\\0&1&0&1&0&1\\
1&1&1&1&1&1\\1&0&0&1&1&1\\1&0&1&1&2&1\\0&0&1&0&1&1 \ebm\right).
$$
Note that $\scE := \scE_W$ is the path algebra of the quiver $\sGG_W$
$$
\xymatrix@-0.5pc{
& W_1 \ar[dr]^a \\ 
W_4 \ar[ur]^c && W_2 \ar[ll]^b 
}
$$
modulo the ideal generated by $\{ ba, cb, ac\}$.
We have 
$$
\sB^{(W)} = \left( \bbm 0&-1&1\\1&0&-1\\
-1&1&0 \ebm\right).
$$
Let $S_1,S_2,S_4$ be the simple $\scE$-modules corresponding to the vertices of
$\sGG_W$, and let
$P_1,P_2,P_4$ and $I_1,I_2,I_4$ be their projective covers
and injective envelopes, respectively.
One easily checks that $I_1 = P_4$, $I_2 = P_1$ and $I_4 = P_2$,
and that $S_1,S_2,S_4,I_1,I_2,I_4$ are the only indecomposable $\scE$-modules
up to isomorphism.
Let us write these modules as representations of $\sGG_W$:
\begin{align*}
{\xymatrix@-1pc{S_1 && \C \ar[dr] \\ &0 \ar[ur] && 0 \ar[ll]}} &&
{\xymatrix@-1pc{S_2 && 0 \ar[dr] \\ &0 \ar[ur] && \C \ar[ll]}} &&
{\xymatrix@-1pc{S_4 && 0\ar[dr] \\ &\C \ar[ur] && 0 \ar[ll]}} \\[20pt]
{\xymatrix@-1pc{I_1 && \C \ar[dr] \\ &\C \ar[ur]^1 && 0 \ar[ll]}} &&
{\xymatrix@-1pc{I_2 && \C \ar[dr]^1 \\ &0 \ar[ur] && \C \ar[ll]}} &&
{\xymatrix@-1pc{I_4 && 0 \ar[dr] \\ &\C \ar[ur] && \C \ar[ll]^1}}
\end{align*}
Next, we determine the $\scE$-modules $\Ext_\LL^1(W,X)$, where $X$ runs
through all 12 indecomposable $\LL$-modules. 
\begin{align*}
\Ext_\LL^1(W,V_1) &= I_1 &
\Ext_\LL^1(W,V_2) &= I_2 &
\Ext_\LL^1(W,V_4) &= I_3 \\
\Ext_\LL^1(W,L_1) &= S_1 &
\Ext_\LL^1(W,L_2) &= S_2 &
\Ext_\LL^1(W,L_4) &= S_4
\end{align*}
and we have $\Ext_\LL^1(W,W_k)  = 0$ for all $1 \le k \le 6$.
Since $\scE$ is a representation-finite algebra (it has only 6 indecomposable modules),
each irreducible component in $\irr(\scE,\be)$ is the closure of some $\GL_\be$-orbit.
For an $\scE$-module $Y$ in $\md(\scE,\be)$ let $\orb_Y := Y.\GL_\be$ be its
$\GL_\be$-orbit, and let $\ov{\orb_Y}$ be the Zariski closure of $\orb_Y$.
We get
$$
\irr(\scE) = \left\{ \ov{\orb_Y} \mid Y = I_1^{a_1} \oplus I_2^{a_2} \oplus I_4^{a_4} \oplus
S_k^{s_k} \mid a_1,a_2,a_4,s_k \ge 0,\, k = 1,2,4 \right\}.
$$
An easy calculation shows that
\begin{multline*}
\srirr(\scE) = \{ \ov{\orb_Y} \mid Y = I_1^{a_1} \oplus I_2^{a_2} \oplus I_4^{a_4} \oplus
S_k^{s_k} \mid a_1,a_2,a_4,s_k \ge 0,\, k = 1,2,4,  \\
s_1a_4 = s_2a_1 = s_4a_2 = 0 \}. 
\end{multline*}
Next, we compute the
functions $\psi_Y$, where $Y$ runs through the
6 indecomposable $\scE$-modules.
Observe that $\hx_{W,1} = x_2x_4^{-1}$, 
$\hx_{W,2} = x_1^{-1}x_4$ and 
$\hx_{W,4} = x_1x_2^{-1}$.
We obtain
\begin{align*}
\psi_{S_1} 
&= x_1^{-1}x_4(1 \cdot \hx_W^{(0,0,0)}) + 1 \cdot \hx_W^{(1,0,0)})  
= x_1^{-1}x_4(1 + x_2x_4^{-1}) \\
\psi_{S_2} 
&= x_1x_2^{-1}(1 \cdot \hx_W^{(0,0,0)}) + 1 \cdot \hx_W^{(0,1,0)})  
= x_1x_2^{-1}(1 + x_1^{-1}x_4) \\
\psi_{S_4} 
&= x_2x_4^{-1}(1 \cdot \hx_W^{(0,0,0)}) + 1 \cdot \hx_W^{(0,0,1)})  
= x_2x_4^{-1}(1 + x_1x_2^{-1}) \\
\psi_{I_1} 
&= x_1^{-1}(1 \cdot \hx_W^{(0,0,0)}) + 1 \cdot \hx_W^{(1,0,0)}  + 1 \cdot \hx_W^{(1,0,1)})  
= x_1^{-1}(1 + x_2x_4^{-1} + x_2x_4^{-1}x_1x_2^{-1}) \\
\psi_{I_2} 
&= x_2^{-1}(1 \cdot \hx_W^{(0,0,0)}) + 1 \cdot \hx_W^{(0,1,0)}  + 1 \cdot \hx_W^{(1,1,0)})  
= x_1^{-1}(1 + x_1^{-1}x_4 + x_2x_4^{-1}x_1^{-1}x_4) \\
\psi_{I_4} 
&= x_4^{-1}(1 \cdot \hx_W^{(0,0,0)}) + 1 \cdot \hx_W^{(0,0,1)}  + 1 \cdot \hx_W^{(0,1,1)})  
= x_1^{-1}(1 + x_1x_2^{-1} + x_1^{-1}x_4x_1x_2^{-1}) 
\end{align*}
The basis $\cG_w^W$ consists then of the following 14 sets of monomials:
\begin{align*}
x_1^ax_2^bx_3^c && 
\psi_{I_1}^a\psi_{I_2}^b\psi_{I_4}^c \\[10pt]
\psi_{S_1}^a\psi_{I_1}^b\psi_{I_2}^c &&
x_2^a\psi_{S_1}^b\psi_{I_1}^c &&
x_4^a\psi_{S_1}^b\psi_{I_2}^c &&
x_2^ax_4^b\psi_{S_1}^c \\[10pt]
\psi_{S_2}^a\psi_{I_2}^b\psi_{I_4}^c &&
x_4^a\psi_{S_2}^b\psi_{I_2}^c &&
x_1^a\psi_{S_2}^b\psi_{I_4}^c &&
x_1^ax_4^b\psi_{S_2}^c \\[10pt]
\psi_{S_4}^a\psi_{I_4}^b\psi_{I_1}^c &&
x_1^a\psi_{S_4}^b\psi_{I_4}^c &&
x_2^a\psi_{S_4}^b\psi_{I_1}^c &&
x_1^ax_2^b\psi_{S_4}^c
\end{align*}
where $a,b,c \ge 0$.

For example, from
our calculations in Section~\ref{chamberex} we get
$$
\vph_{V_1} = \vph_{W_6}\vph_{W_4}^{-1} + 
\vph_{W_4}^{-1}\vph_{W_5}\vph_{W_1}^{-1}\vph_{W_2} +
\vph_{W_3}\vph_{W_1}^{-1}.
$$
As predicted by Theorem~\ref{THM4} we get
$$
(\Pi_T \circ \Phi_T)(\vph_{V_1}) = x_4^{-1} + x_4^{-1}x_1^{-1}x_2 +
x_1^{-1} = \psi_{I_1}.
$$


\section{Categorification of the twist automorphism}\label{section8}


\subsection{}\label{ssec:pf2}
We define an isomorphism
$$
\kappa_\ii\df \C[\vph_{V_{\ii,1}}^{\pm 1},\ldots,\vph_{V_{\ii,r}}^{\pm 1}] \to 
\C[\vph_{W_{\ii,1}}^{\pm 1},\ldots,\vph_{W_{\ii,r}}^{\pm 1}] 
$$
of Laurent polynomial rings
by
$$
\vph_{V_{\ii,k}} \mapsto \vph_{V_{\ii,k}}'
$$
for $1 \le k \le r$.
By the \emph{Laurent phenomenon} \cite{FZ1}, each cluster variable of 
a cluster algebra is a Laurent polynomial in the 
cluster variables of any given cluster.
It follows that $\C[N^w]$ is a subalgebra of both 
$\C[\vph_{V_{\ii,1}}^{\pm 1},\ldots,\vph_{V_{\ii,r}}^{\pm 1}]$ and
$\C[\vph_{W_{\ii,1}}^{\pm 1},\ldots,\vph_{W_{\ii,r}}^{\pm 1}]$. 
Here we use that $W_\ii$ is $V_\ii$-reachable.

By Theorem~\ref{THM3} and the definition of $\kappa_\ii$ we have for $X\in\cC_w$,
$$
\kappa_\ii(\vph_X) = 
\kappa_\ii(\theta^{V_\ii}_X)= 
(\vph'_\bul)^{(\dimv\Hom_\Lam(V_\ii,X)) \cdot B^{(V_\ii)}}
F^{W_\ii}_{\Omega_w(X)}(\hvph'_\bul).
$$
Here we also used that
$\Ext^1_\Lam(V_\ii,X)$ and $\Ext^1_\Lam(W_\ii,\Omega_w(X))$ are isomorphic as
modules over
$\sEnd_{\cC_w}(V_\ii)^\op \cong \sEnd_{\cC_w}(W_\ii)^\op$. 
Now, using the short exact sequence
\[
0\ra \Omega_w(X) \ra P(X) \ra X \ra 0
\]
where $P(X)$ is $\cC_w$-projective-injective,
we get
\[
(\vph'_\bul)^{(\dimv\Hom_\Lam(V_\ii,X))\cdot B^{(V_\ii)}}=
(\vph'_\bul)^{(\dimv\Ext^1_\Lam(V_\ii,\Omega_w(X)) - 
\dimv\Hom_\Lam(V_\ii,\Omega_w(X)))\cdot B^{(V_\ii)}}
\vph_{P(X)}^{-1}.
\]
Here we used that
\[
(\vph'_\bul)^{(\dimv\Hom_\Lam(V_\ii,P(X)))\cdot B^{(V_\ii)}}=\vph_{P(X)}^{-1}.
\]
Thus, we can continue with the help of Proposition~\ref{prp:exp} 
and Theorem~\ref{THM3}:
\begin{align*}
(\vph'_\bul)^{(\dimv\Hom_\Lam(V_\ii,X))\cdot B^{(V_\ii)}} F^{W_\ii}_{\Omega_w(X)}
(\hvph'_\bul) &= \vph_{W_\ii}^{(\dimv\Hom_\Lam(W_\ii,\Omega_w(X)))\cdot B^{(W_\ii)}} F^{W_\ii}_X(\hvph_{W_\ii}^\pex)\vph_{P(X)}^{-1}\\
&=
\vph^\pex_X\vph_{P(X)}^{-1}.
\end{align*}
This proves that $\kappa_\ii$ does not depend on the choice of 
the reduced word $\ii$. Thus we can denote by $\kappa_w$ the 
restriction of $\kappa_\ii$ to $\C[N^w] \subset \C[\vph_{V_{\ii,1}}^{\pm 1},\ldots,\vph_{V_{\ii,r}}^{\pm 1}]$.
Moreover, the fact that $\kappa_w$ permutes the elements of the
dual semicanonical basis follows
directly from Proposition~\ref{loopcomp} and
our results in \cite[Section~15]{GLSUni2}.
It now remains to prove that $\kappa_w = (\eta_w^*)^{-1}$.

\subsection{Definitions and known results}
Before we proceed, we recall some
definitions and known results.
For more details we refer to \cite{GLSUni2}.

Let $u\in W$.
We denote by $D_{\varpi_i,u(\varpi_i)}$ the restriction to $N$
of the generalized minor $\Delta_{\varpi_i,u(\varpi_i)}$.
Let $O_w$ be the open subset of $N$ defined by
\[
 O_w := \left\{ x\in N \mid D_{\varpi_j,w^{-1}(\varpi_j)}(x) \not = 0 \text{ for all } 
1 \le j \le n \right\}.
\]
Following \cite{BZ} and \cite{GLSUni2}, we introduce a map
$\widetilde{\eta}_w : N\cap O_w \to N^w$ by 
\[
\widetilde{\eta}_w(x) = [wz^T]_+. 
\]
Here, 
for $g$ in the Kac-Moody group of $\g$ admitting
a Birkhoff decomposition, $[g]_+$ stands for the factor of this
decomposition belonging to $N$.
Let
$N(w) = N \cap (w^{-1}N_-w)$ and  $N'(w) = N \cap (w^{-1}Nw)$ be the 
unipotent groups  associated to $w$ \cite[Sections~5.2 and~8.2]{GLSUni2}. 
Multiplication in $N$ induces a bijective map
$N(w) \times N'(w) \to N$. 
The ring of $N'(w)$-invariant functions on $N$, denoted by $\C[N]^{N'(w)}$, is thus
isomorphic to $\C[N(w)]$.

The restriction of $\widetilde{\eta}_w$ to $N(w)\cap O_w$ is an isomorphism
from $N(w)\cap O_w$ to $N^w$. 
Also, $N^w \subset O_w$, and the restriction
of $\widetilde{\eta}_w$ to $N^w$ is precisely the automorphism $\eta_w$ of $N^w$ mentioned 
in Section~\ref{sect_twist}.

Fix $x\in N^w$, and set $z = \eta_w^{-1}(x) \in N^w$.
Also let $y$ be the unique element of $N(w)\cap O_w$ such that 
$\widetilde{\eta}_w(y) = x$. It is known that $z^{-1}y\in N'(w)$.
Hence, for every $\varphi\in\C[N]$ invariant by right translation by $N'(w)$,
we have $\varphi(z)=\varphi(y)$.

Finally, let $\ii=(i_r,\ldots,i_1)$ be a reduced word for $w$.
We know that when ${\bf t}$ varies over $(\C^*)^r$, 
$\sx_\ii({\bf t})$ goes over a dense subset of $N^w$.
For $1 \le l \le k \le r$, we set $w_k := s_{i_k}\cdots s_{i_1}$ and
$$
\beta_\ii(k) := w_{k-1}^{-1}(\alpha_{i_k}) = s_{i_1} \cdots s_{i_{k-1}}(\alpha_{i_k}).
$$
As before, let
$b_\ii(l,k) := -(s_{i_l}\cdots s_{i_k}(\varpi_{i_k}),\alpha_{i_l})$.
Note that $b_\ii(l,k) \ge 0$.

\subsection{End of the proof of Theorem~\ref{THM6}}
To prove that $\kappa_w = (\eta_w^*)^{-1}$,
we have to show that
\[
 (\eta_w^{-1})^*(\varphi_{V_k}) = \frac{\varphi_{\Omega_w(V_k)}}{\varphi_{P(V_k)}} = \varphi_{V_k}'
\]
for all $1 \le k \le r$.

We know that $\varphi_{V_k} = D_{\varpi_{i_k},w_k^{-1}(\varpi_{i_k})}$.
Let $x := \sx_\ii({\bf t})$ where $\bt \in (\C^*)^r$,
and attach to $x$ the elements $y$ and $z$ as above.
By Proposition~\ref{ch4} we have
\[
\varphi_{V_k}'(x) = \prod_{1\le l\le k} t_l^{-b_{\ii}(l,k)}. 
\]
Hence, it is enough to show that
\begin{equation}\label{2prove}
D_{\varpi_{i_k},w_k^{-1}(\varpi_{i_k})}(z) = D_{\varpi_{i_k},w_k^{-1}(\varpi_{i_k})}(y) 
=\prod_{1\le l\le k} t_l^{-b_\ii(l,k)},
\end{equation}
where the first equality follows from the fact that
$D_{\varpi_{i_k},w_k^{-1}(\varpi_{i_k})}$ is $N'(w)$-invariant for $1 \le k\le r$.

The proof is very similar to that of \cite{BZ}, so we just recall the
main ideas, referring to appropriate places in \cite{BZ} for some simple calculations.
There are two steps.

\noindent(a)\, 
We first show (\ref{2prove}) in the particular case when 
$k = k_j := \max\{ s \in R \mid i_s = j \}$ for a given $1 \le j \le n$.
In this case, (\ref{2prove}) can be written as
\begin{equation}
D_{\varpi_j,w^{-1}(\varpi_j)}(y)=\prod_{1\le l \le k} t_l^{-b_\ii(l,k)}. 
\end{equation}
Since $\varphi'_{I_{\ii,j}} = 1/\varphi_{I_{\ii,j}}$, already know
by Proposition~\ref{ch4} that
\[
D_{\varpi_j,w^{-1}(\varpi_j)}(x) = \varphi_{I_{\ii,j}}(x) = \prod_{1\le l \le k} t_l^{b_\ii(l,k)}.
\]
Hence it is enough to check that 
\begin{equation}
 D_{\varpi_j,w^{-1}(\varpi_j)}(y)D_{\varpi_j,w^{-1}(\varpi_j)}(x) = 1.
\end{equation}
This is proved in exactly the same way as in \cite[Lemma 6.4 (a)]{BZ}, using some
basic properties of generalized minors.

\noindent(b)\, 
We then show (\ref{2prove}) for any $1 \le k\le r$. 
We write $x = x''x'$, where
\[
x' := x_{i_k}(t_k)\cdots x_{i_1}(t_1)
\qquad\text{ and }\qquad
x'' := x_{i_r}(t_r)\cdots x_{i_{k+1}}(t_{k+1}).  
\]
Let $N(\beta_{\ii}(k)) = \exp(\n_{\beta_{\ii}(k)})$ denote the root subgroup of $N(w)$
associated to the root $\beta_{\ii}(k)$.
The product map gives an isomorphism of affine varieties 
$$
N(\beta_{\ii}(1)) \times \cdots \times N(\beta_{\ii}(r)) \to N(w).
$$
Therefore we can write $y = y^{(1)} \cdots y^{(r)}$ with $y^{(k)}\in N(\beta_{\ii}(k))$.
Arguing as in \cite[Propositions~5.3 and~5.4]{BZ}, one shows that
\[
 \widetilde{\eta}_{w_k}(y^{(1)} \cdots y^{(k)}) = x_{i_k}(t_k) \cdots x_{i_1}(t_1) = x'.
\]
Moreover, $y':= y^{(1)} \cdots y^{(k)} \in N(w_k)$ 
and $y^{(k+1)},\ldots,y^{(r)} \in N'(w_k)$.
Therefore, since $D_{\varpi_{i_k},w_k^{-1}(\varpi_{i_k})}$ is $N'(w_k)$-invariant, 
we have 
\[
D_{\varpi_{i_k},w_k^{-1}(\varpi_{i_k})}(y) = D_{\varpi_{i_k},w_k^{-1}(\varpi_{i_k})}(y'). 
\]
Now, using (a) with $w$ replaced by $w_k$, we obtain that
\[
 D_{\varpi_{i_k},w_k^{-1}(\varpi_{i_k})}(y') =  \prod_{1 \le l \le k} t_l^{-b_\ii(l,k)}.
\]
This finishes the proof.

\subsection*{Acknowledgments} 
The authors thank Bernhard Keller for his help with
Proposition~\ref{prp:EinvExt}.
The authors thank the Sonderforschungsbereich/Transregio SFB 45 
and the Hausdorff Center for Mathematics in Bonn
for financial support.
The first author was partially supported by PAPIIT grant IN103507-2.


\end{document}